\newtheorem{theorem}{Theorem}[section]
\numberwithin{theorem}{section}
\newtheorem{prop}[theorem]{Proposition}
\newtheorem{lemma}[theorem]{Lemma}
\newtheorem{corollary}[theorem]{Corollary}
\theoremstyle{definition}
\newtheorem{definition}[theorem]{Definition}
\newtheorem{rem}[theorem]{Remark}
\newtheorem{example}[theorem]{Example}
\numberwithin{equation}{section}
\newcommand{\N}{\mathbb{N}}
\newcommand{\R}{\mathbb{R}}
\newcommand{\C}{\mathbb{C}}
\newcommand\id{\operatorname{id}}
\newcommand\e{\mathrm{e}}
\newcommand\I{\mathrm{i}}
\newcommand\re{\operatorname{Re}}
\newcommand\im{\operatorname{Im}}
\newcommand{\cV}{\mathcal V}
\newcommand{\sV}{\mathscr V}
\newcommand{\rd}{\mathrm{d}}
\newcommand\dom{\operatorname{dom}}
\newcommand\ran{\operatorname{ran}}
\DeclareMathOperator{\Div}{div}
\DeclareMathOperator{\curl}{curl}
\newcommand\diag{{\mathrm{diag}}}
\newcommand\cP{\mathcal P}
\newcommand\cT{\mathcal T}
\newcommand\cU{\mathcal U}
\newcommand\cA{\mathcal A}
\newcommand\cB{\mathcal B}
\newcommand\cD{\mathcal D}
\newcommand\cJ{\mathcal J}
\newcommand\cL{\mathcal L}
\newcommand\cH{\mathcal H}
\newcommand\cK{\mathcal K}
\newcommand\cW{\mathcal W}
\newcommand\ov\overline
\newcommand\eps\varepsilon
\renewcommand\epsilon\varepsilon
\renewcommand\rho\varrho
\newcommand\al\alpha
\newcommand\la\lambda
\newcommand\ds\displaystyle
\newcommand\p\partial
\newcommand{\sto}{\stackrel{s}{\rightarrow}}
\newcommand{\s}{\stackrel{s}{\rightarrow}}
\newcommand{\dist}{\operatorname{dist}}
\newcommand{\supp}{\operatorname{supp}}
\newcommand{\beq}{\begin{equation}}
\newcommand{\eeq}{\end{equation}}
\newcommand{\be}{\begin{equation*}}
\newcommand{\ee}{\end{equation*}}
\newcommand{\bmat}{\begin{pmatrix}}
\newcommand{\emat}{\end{pmatrix}}
\colorlet{DarkOlive}{OliveGreen!70!Black}
\newcounter{counter_a}
\newenvironment{myenum}{\begin{list}{{\rm \roman{counter_a})}}%
{\usecounter{counter_a}
\setlength{\itemsep}{0.5ex}\setlength{\topsep}{0.7ex}
\setlength{\leftmargin}{4ex}\setlength{\labelwidth}{5ex}}}{\end{list}}
\DeclarePairedDelimiter{\norma}{\lVert}{\rVert}
\author[S.\ B\"ogli]{Sabine B\"ogli}
\address[S.\,B.]{\!
Department of Mathematical Sciences, Durham University,
Upper Mount\-joy, South Road, Durham DH1 3LE, UK}
\email{sabine.boegli@durham.ac.uk}
\author[F.\ Ferraresso]{Francesco Ferraresso}
\address[F.\,F.]{\!
School of Mathematics,
Cardiff University, Abacws,
Senghennydd Road, Cathays, 
Cardiff, Wales, CF24 4AG, UK}
\email{FerraressoF@cardiff.ac.uk}
\author[M.\ Marletta]{Marco Marletta}
\address[M.\,M.]{\!
School of Mathematics,
Cardiff University, Abacws,
Senghennydd Road, Cathays,
Cardiff CF24 4AG, UK}
\email{MarlettaM@cardiff.ac.uk}
\author[C.\ Tretter]{Christiane Tretter}
\address[C.\,T.]{\!
Mathematisches Institut,
Universit\"at Bern, Sidlerstrasse 5, 3012 Bern, Switzerland}
\email{tretter@math.unibe.ch}
\subjclass[2020]{Primary\,35Q61,\,35P05; Secondary\,35P15,\,47A10,\,47A58}
\keywords{Maxwell equations, eigenvalue bounds, resolvent estimate, essential spectrum, domain truncation, spectral approximation}
\date{\today}
\thanks{}
\title[Spectral approximation for Maxwell's equations]%
{Spectral analysis and domain truncation methods for Maxwell's equations}
\begin{document}

\begin{abstract}
We analyse how the spectrum of the anisotropic Maxwell system with bounded conductivity $\sigma$ on a Lipschitz domain $\Omega$ is approximated by domain truncation. First we prove a new non-convex enclosure for the spectrum of the Maxwell system, with weak assumptions on the geometry of $\Omega$ and none on the behaviour of the coefficients at infinity. We also establish a simple criterion for non-accumulation of eigenvalues at $\I \R$ as well as resolvent estimates. For asymptotically constant coefficients, we describe the essential spectrum and show that spectral pollution may occur only in the essential numerical range $W_e(L_\infty)\!\subset\!\R$~of the quadratic pencil $L_\infty(\omega) \!=\! \mu_\infty^{-1}\curl^2 \!-\! \omega^2\eps_\infty\!$, acting on divergence-free vector fields. Further, every isolated spectral point of the Maxwell system lying outside $W_e(L_\infty)$ and outside the part of the essential spectrum on $\I\R$ is approximated by spectral points of the Maxwell system on the trun\-cated domains. Our analysis is based on two new abstract results on~the (limiting) essential spectrum of polynomial pencils and triangular block operator matrices, which are of general interest. We believe our strategy of proof could be~used to establish domain truncation spectral exactness for more general classes of non-self-adjoint differential operators and systems with non-constant  \vspace{-7.8mm} coefficients.
\end{abstract}

\maketitle

\section{\bf Introduction}
\label{section:1}

Given a possibly unbounded domain $\Omega \!\subset\! \R^3$ with Lipschitz bound\-ary, and an increasing sequence of bounded Lipschitz  
domains $\Omega_n \!\subset\! \Omega$ exhausting $\Omega$, we are interested in the spectral properties of the anisotropic Maxwell  system
\begin{alignat}{3}
\label{intro: Max1}
& \ \ \begin{array}{rll}
-\I\sigma E + \I\curl H \!\!\!\!& = \,\omega \eps E & %\textup{in $\Omega$},
\\
-\I\curl E \!\!\!\! & =\omega \mu H& %\textup{in $\Omega$},
\end{array}
\textup{in $\Omega$},
&& \quad \nu \times E = 0 \ \ \textup{on $\partial \Omega$}, \\
\intertext{and in its spectral approximation via the sequence of \vspace{-0.1mm}problems}
%\begin{equation}
\label{intro: Maxn}
& \begin{array}{rll}
-\I \sigma E_n \!+ \I\curl H_n \!\!\!\!\!& = \,\omega \eps E_n & %\textup{in $\Omega_n$},
\\
-\I\curl E_n \!\!\!\!\!&  =  \omega \mu H_n & %\textup{in $\Omega_n$}, \\
\end{array}
\!\!\!\!\textup{in $\Omega_n$,}
&& \quad \nu \times \!E_n \!= 0 \ \, \textup{on $\partial \Omega_n$}, \ \ n\in\N.
\end{alignat}
%\begin{cases}
%-i\sigma E_n + i\curl H_n = \omega \eps E_n, \quad &\textup{in $\Omega_n$}, \\
%-i\curl E_n =  \omega \mu H_n, \quad &\textup{in $\Omega_n$}, \\
%\nu \times E = 0, \quad &\textup{on $\partial \Omega_n$}.
%\end{cases} \quad \quad n \in \N
%\end{equation}
Here $\omega$ is the spectral parameter, $\eps$ the electric permittivity, $\mu$ the magnetic permeability and $\sigma$ the conductivity; $\nu$ is the outward unit normal vector to the boundary. We allow the coefficients $\eps$, $\mu$, $\sigma$ to be non-constant and tensor-valued, bounded on $\Omega$ with non-negative matrix values; for some~results, e.g.\  involving the essential spectrum, we assume $\eps \!=\! \eps_\infty \id$, $\mu\!=\! \mu_\infty \id$ and $\sigma \!=\! 0$ at infinity.

We denote by $V(\cdot)$ and $V_n(\cdot)$ the operator pencils associated with problem~\eqref{intro: Max1} and \eqref{intro: Maxn} in $L^2(\Omega,\mathbb{C}^3) \oplus L^2(\Omega,\mathbb{C}^3)$ and $L^2(\Omega_n,\mathbb{C}^3) \oplus L^2(\Omega_n,\mathbb{C}^3)$, respectively, given by the same matrix differential \vspace{-0.6mm} expression
\begin{equation}
\label{eq:Vdef0}
V(\omega) = \begin{pmatrix} -\I\sigma & \I\curl \\ -\I\curl_0 & 0 \end{pmatrix}
- \omega \begin{pmatrix} \eps & 0 \\ 0 & \mu \end{pmatrix}, \quad \omega \in \C,
\vspace{-0.5mm}
\end{equation}
%acting on vectors $\displaystyle \left(\begin{array}{c} E \\ H \end{array}\right)$,
on their respective domains which are independent of $\omega$, see \eqref{eq:Vdef} below.

An important feature of our Maxwell systems is that the conductivity $\sigma$ is assumed to be non-trivial, making the problem dissipative rather than self-adjoint, see e.g.\ \cite{MR2202183}, \cite{MR3959447}, \cite{MR4029529}, \cite{MR3942228}. 
Furthermore, we avoid any hypotheses on the permeability, permittivity and conductivity, or upon the geometry, which would allow the use of TE- and TM- mode reductions to second order operators of Schr\"{o}dinger or conductivity type. This lack of simplifying hypotheses introduces significant additional hurdles in the analysis compared to the self-adjoint case, some of which were already apparent in the paper of Alberti et al.\  \cite{MR3942228} on the essential spectrum (see also Lassas \cite{MR1637445} for bounded domains). The non-convexity of the essential spectrum, consisting of a part which is purely real and a part which is purely imaginary, might be expected to lead to much more {\em spectral pollution}.

In the self-adjoint case, this phenomenon is well known when variational approximation methods are used, see e.g.\ Rappaz et al.\ \cite{MR1431212}: following discretisation, the spectral gaps may fill up with eigenvalues of the discretised problem which are so closely spaced that it may be impossible to distinguish the spectral bands from the spectral gaps. For finite element approximations to Maxwell systems on bounded domains, this may be avoided by the use of appropriate {\em conforming} elements, see N\'{e}d\'{e}lec \cite{MR592160}. The study of which finite element bases pollute for a given class of problems has been taken up by many authors: see, e.g.\ Barrenechea et al.\ \cite{MR3285902} for self-adjoint Maxwell systems on bounded, convex domains; Costabel et al.\ \cite{MR2032869} for an application to Maxwell resonances; and Lewin and S\'{e}r\'{e} \cite{MR2640293} for self-adjoint Dirac and Schr\"{o}dinger equations. Unfortunately, in our non-self-adjoint context, elegant techniques such as quadratic relative spectrum \cite{MR2068829} or residual-minimisation algorithms \cite{MR2068830,MR1337263} are not available.

For particular differential operators on infinite domains or with singularities, spectral pollution caused by domain truncation is also well studied. To avoid it one may, for instance, devise {\em non-reflecting boundary conditions}  \cite{MR1005207}, or resort to the complex scaling method \cite{MR0345551}, which reappeared as the {\em perfectly matched layer} (PML) method in the computational literature \cite{MR1294924}. In fact this technique replaces a self-adjoint problem by a non-self-adjoint one.

In our opinion the clearest way to think about these methods, and about dissipative barrier methods more generally, is that they replace the underlying operator by one whose {\em essential numerical range} \cite{MR4083777} does not contain the eigenvalues of interest. The results in \cite{MR4083777} then give a unified explanation of why such methods work, within a wide operator-theoretic framework which also allows a uniform
treatment of many of the finite element approximation schemes.

The Maxwell system, however, presents some additional challenges: for a start, \eqref{eq:Vdef0} defines a {\em pencil} of operators, for which fewer
results on spectral pollution~are available. We generalise the concept of \emph{limiting essential spectrum}, pre\-sent\-ed in \cite{MR3831156}, to sequences of pencils of closed operators $T_n : \C \!\to\! C(\cH)$ with domains $\dom(T_n)$ constant for each $n\in\N$, by means of \vspace{-0.6mm} the~formula
\begin{align*}
%W_e(\cL) :=& \{\la \!\in\! \C : 0 \in W_e(\cL(\la)) \} \\
%          =& \{ \la \!\in\! \C : \exists \,(x_n)_{n\in\N} \!\subset\! \cH, \norma{x_n}\!=\!1, x_n \!\rightharpoonup\! 0,
%					                      \langle \cL(\la) x_n,x_n \rangle \!\to\! 0 \};
\sigma_{e}\big((T_n)_{n\in\N}\big)\!:=\!
\big\{\la\!\in\!\C:\,\exists\, I\!\subset\!\N,\,I\ \mbox{infinite},\, \exists\,&x_n\!\in\!\dom(T_n), \, \|x_n\|\!=\!1,\,n\!\in\! I, \\[-0.5mm]
&\!\!\!\text{with }x_n \rightharpoonup 0,\,\|T_n(\la)x_n\|\to 0\big\}.\\[-6mm]
\vspace{-3mm}
\end{align*}
%
%\ct{and similarly for the notion of \emph{limiting essential spectrum}.}
This generalisation is important because the best results on spectral pollution come not from considering the linear Maxwell pencil (\ref{eq:Vdef0}), but rather by eliminating the magnetic field $H$ to obtain a {\em quadratic pencil} $\cL(\cdot)$ whose numerical range is not convex. 
%In \cite{BM} the authors considered linear pencils only.
Another key ingredient is the operator matrix structure of the pencil $\cL(\cdot)$ induced by the Helmholtz decomposition.

Our main result, see Theorem \ref{theorem: main res}, establishes a surprisingly small enclosure for the set of spectral pollution of the domain truncation method for \eqref{intro: Max1}, which is much smaller than the one given by the essential numerical range $W_e(V)$, a convex set enclosing the essential spectrum, see \cite{MR4083777}, \cite{BM}. In fact, Theorem~\ref{theorem: main res} goes beyond what can be achieved using essential numerical ranges, whether for pencils or operators: it relies on new results which we develop
in Sections \ref{section:6} and~\ref{sec: triangular} on limiting essential spectra of sequences of polynomial operator pencils
and operator~matrices. 
%The concept of the limiting essential spectrum for a sequence of operators, and its importance in the study of the spectral convergence, are explained %  in \cite{MR3831156}.

To the best of our knowledge, the domain truncation results we present here for the Maxwell system in unbounded domains are new even in the
self-adjoint setting. Of course, there are good physical reasons why many problems for Maxwell systems on infinite domains have coefficients
which are constant outside a compact set, and in these cases domain truncation can be avoided by using domain decomposition and boundary
integral techniques. These approaches are extensively researched, see e.g.\ \cite{MR1935806}, \cite{MR2059447}, \cite{MR1742312}.

Much of the proof of Theorem \ref{theorem: main res} relies on new, non-convex enclosures for the spectra of Maxwell problems, which we present
in Theorem \ref{thm:spec-encl}. These are valid for the original problem \eqref{intro: Max1} on $\Omega$, for all the truncated problems \eqref{intro: Maxn} on $\Omega_n$ and, if they exist, for corresponding `limiting problems at $\infty$'. In particular, they provide what are, to our knowledge,
the first enclosures for the essential spectrum if the coefficients do \emph{not} have limits at $\infty$, and novel bounds for the non-real eigenvalues.
The non-convexity of our enclosures allows them to be much tighter than bounds obtained from the numerical range, which is a horizontal
strip below the real axis. In fact, apart from the imaginary axis, the new spectral enclosures are contained in a strip whose width is \emph{half} that of the numerical range. They also provide an incredibly simple criterion for non-accumulation of the spectrum at $\I\R$, including non-accumulation at $0$.

The paper is organised as follows. In Section \ref{section:2} we present our main results,~illu\-strate our new spectral enclosure and give some examples showing e.g\ that the~latter is sharp. Section \ref{sec:proof1} contains the proof of the spectral enclosure theorem and some auxiliary results such as resolvent estimates. In Section \ref{sec:V-L} we study the relations between the spectra and essential spectra of the Maxwell pencil $V(\cdot)$ and  the quadratic operator pencil $\cL(\cdot)$. This enables us to explicitly characterise the essential spectrum of the Maxwell pencil in terms of the asymptotic limits of the coefficients $\eps$, $\mu$ and $\sigma$ in Section \ref{section:5}. In Section \ref{section:6}
we prove abstract results on spectral pollution, limiting approximate point spectrum and limiting essential spectrum for polynomial operator pencils. In Section \ref{sec: limiting sigmae} we investigate the limiting essential spectrum of the Maxwell pencil $V(\cdot)$ via the associated quadratic operator pencil $\cL(\cdot)$. As a consequence, we prove absence of spectral pollution for domain truncation outside the union of two sets on the  real and the imaginary axis, the essential numerical range of the self-adjoint limiting quadratic operator pencil $L_\infty(\cdot)$ on the real axis and the convex hull of the essential spectrum on the imaginary axis. Section \ref{sec: triangular} and the Appendix contain the abstract results on essential spectra for upper triangular operator matrices and computational details for the example in Section 2, respectively.

%%%%%%%%%%%%%%%%%%%%%%%%%%%%%%%%%%%%%%%

\section{\bf Main results and examples}
\label{section:2}

As explained in the introduction, we are interested in domain truncation methods for the anisotropic Maxwell system (\ref{intro: Max1}).
%The set $\Omega \subset \R^3$ is a finitely connected ( usually} unbounded) Lipschitz domain, $\omega$ is the spectral parameter,
We assume that the coefficients $\eps$, $\mu$ and $\sigma$ are non-negative symmetric matrix valued functions in
$L^\infty(\Omega, \R^{3 \times 3})$ such that, for some constants $\eps_{\min}$, $\eps_{\max}$, $\mu_{\min}$, $\mu_{\max}$,
\vspace{-0.5mm}  $\sigma_{\max}$,
%with $0\!<\!\eps_{\min} \!<\! \eps_{\max}$, $0\!<\!\mu_{\min} \!<\! \mu_{\max}$, we have,  for all $\eta \in \R^3$,
\begin{align}
\label{conditions eps mu sigma}
\hspace{-3mm}
0\!<\!\eps_{\min} \!\leq\! \eta \!\cdot\! \eps \eta \!\leq\! \eps_{\max}, \ \
0\!<\!\mu_{\min}  \!\leq\! \eta \!\cdot\! \mu \eta \!\leq\! \mu_{\max}, \ \
0 \!\leq\! \eta \!\cdot\! \sigma \eta \!\leq\! \sigma_{\max},
\quad \eta \!\in\! \R^3, \,|\eta|\!=\!1. \hspace{-3mm}
\vspace{-1mm}
\end{align}

The magnetic field $H$ and electric field $E$  lie respectively in the function \vspace{-0.5mm} spaces
\[
H(\curl, \Omega) := \{u \in L^2(\Omega)^3: \curl u \in L^2(\Omega)^3 \},\]
\[H_0(\curl, \Omega) := \{ u \in H(\curl, \Omega) : \nu \times u|_{\partial \Omega} = 0\},
\]
with the canonical norm $\norma{u}_{H(\curl, \Omega)} \!:=\! ( \norma{u}^2 \!+\! \norma{\curl u}^2 )^{1/2}\!$.
%We will also need
%$$H_0(\curl 0, \Omega) := \{ u \in H_0(\curl, \Omega) : \curl u = 0 \: \textup{in $\Omega$} \: \}.$$
%\textcolor{blue}{We shall sometimes use the notation} $\curl_0$ \textcolor{blue}{to denote} $\curl |_{H_0(\curl, \Omega)}$;
%\textcolor{blue}{thus, $\curl_0$ is the adjoint of $\curl$ on the (maximal) domain $H(\curl, \Omega)$ in $L^2(\Omega)$.}
Unless stated otherwise, our function spaces consist of complex-valued functions and so we write, for example, $L^2(\Omega)=L^2(\Omega,\C)$ for short.

We asso\-ciate two operators with the symmetric differential expression $\curl$ in $L^2(\Omega)^3$, first,
the operator $\curl$ on its maximal domain $\dom \curl\!=\!H(\curl, \Omega)$ and, secondly, the adjoint $\curl_0\!=\!\curl^*$
of the operator $\curl$, given by $\curl$ on the domain~$\dom \curl_0\!=\!H_0(\curl, \Omega)$.

We now recall the definitions of other function spaces used in the sequel.
The ho\-mogeneous Sobolev spaces $\dot{H}^1_0(\Omega)$ and $\dot{H}^1(\Omega)$ are defined as the completions of the Schwartz spaces
 $\cD(\Omega)$ and $\cD(\overline{\Omega})$, respectively, with respect to the seminorm $\norma{u}_{\dot{H}^1(\Omega)} \!:=\! \norma{\nabla u}_{L^2(\Omega)}$. These spaces are in general strictly bigger~than the usual Sobolev spaces $H^1_0(\Omega)$ and $H^1(\Omega)$ if $\Omega$ does not have
finite measure~or if $|\Omega| \!<\! \infty$ but fails to have quasi-resolved boundary in the sense of Burenkov-Maz\-'ya, see \cite[Sect.\,4.3, p.\,148-150]{MR1622690} (note that Lipschitz domains have quasi-resolved~boundary).

The spaces $\nabla \dot{H}^1_0(\Omega)$ and $\nabla \dot{H}^1(\Omega)$ are the images of $\dot{H}^1_0(\Omega)$ and
$\dot{H}^1(\Omega)$, respectively, under the gradient. Further,
we~define
\begin{align}
H(\Div, \Omega) &:= \{ u \in L^2(\Omega)^3 : \Div u \in L^2(\Omega) \}, \\
H(\Div0, \Omega) &:= \{ u \in L^2(\Omega)^3  :  \Div u = 0 \: \textup{in $\Omega$} \}.
%H_0(\Div, \Omega) &:= \{ u \in H(\Div, \Omega)  : \nu \cdot u |_{\partial \Omega} = 0 \}, \\
%H_0(\Div0, \Omega) &:= \{ u \in H_0(\Div, \Omega) : \Div u = 0 \: \textup{in $\Omega$} \: \},
\end{align}
Here we equip $H(\Div, \Omega)$ with the canonical norm $\norma{u}_{H(\Div,\Omega)} \!:=\! ( \norma{u}^2 + \norma{\Div u}^2 )^{1/2}\!$ and $H(\Div0, \Omega)$ is considered as a closed subspace of $L^2(\Omega)^3$ with the $L^2$-norm~which coincides with $\|\!\cdot\!\|_{H(\Div,\Omega)}$ on $H(\Div0, \Omega)$. Finally, the space $H(\curl,\Omega)\cap H(\Div,\Omega)$ is equipped with the norm
$\norma{u}_{H(\curl, \Omega)\cap H(\Div,\Omega)} \!:=\!\norma{u}_{H(\curl, \Omega)}+\norma{u}_{H(\Div, \Omega)}$.
%, as is the subspace
%$$X_N(\Omega):=H_0(\curl,\Omega)\cap H(\Div 0,\Omega).$$}
%Moreover, we define the space $\dot{X}_T(\Omega)$ as the completion of $H(\curl, \Omega) \cap H_0(\Div0,\Omega)$ with respect to the
%seminorm $\norma{u} = \norma{\curl u}_{L^2(\Omega)} + \norma{\Div u}_{L^2(\Omega)} + \norma{u \cdot \nu}_{H^{-1/2}(\partial \Omega)}$.\\
%Similarly, the space $\dot{X}_N(\Omega)$ is the completion of $H_0(\curl, \Omega) \cap H(\Div 0, \Omega)$ with respect to the seminorm
%$\norma{u} = \norma{\curl u}_{L^2(\Omega)} + \norma{\Div u}_{L^2(\Omega)} + \norma{u \times \nu}_{H^{-1/2}(\p \Omega)}$.\\
%Finally, $K_T(\Omega) :=  \ker \curl|_{\dot{X}_T(\Omega)}$ and $K_N(\Omega) := \ker \curl|_{\dot{X}_N(\Omega)}$. \\[0.1cm]
%
%{We assume that $K_N(\Omega)$, $K_T(\Omega)$ are finite-dimensional spaces and that the following orthogonal decompositions hold:}
%\begin{align}
%L^2(\Omega)^3 = \nabla \dot{H}^1_0(\Omega) \oplus \curl(\dot{X}_T(\Omega)/K_T(\Omega)) \oplus K_N(\Omega) \label{Helmholtz T} \\
%L^2(\Omega)^3 = \nabla \dot{H}^1(\Omega) \oplus \curl(\dot{X}_N(\Omega)/K_N(\Omega)) \oplus K_T(\Omega) \label{Helmholtz N}
%\end{align}
%{A list of sufficient conditions on $\Omega$ are contained in Proposition 15 in \cite{MR3942228}}
%
%\textcolor{purple}{MM NOTE: We never seem to use $X_T$, $X_N$, $K_T$ or $K_N$ anywhere, following previous edits.}

We are now able to state our first new result, which yields non-convex spectral enclosures for dissipative Maxwell systems.
This enclosure yields the first bounds for both the essential spectrum and the non-real eigenvalues.

\begin{theorem}
\label{thm:spec-encl}
The Maxwell operator pencil in $L^2(\Omega)^3 \oplus L^2(\Omega)^3$ given by
\begin{equation}
\label{eq:Vdef}
  V(\omega) \!:=\!
  \begin{pmatrix}
   -\I\sigma \!\!&\!\! \I\curl \\ -\I\curl_0 \!\!&\!\! 0
  \end{pmatrix}
  \!-\omega \!\begin{pmatrix}
  \eps \!&\! 0 \\ 0 \!&\! \mu
  \end{pmatrix}\!,
  \, \
	\dom (V(\omega)) \!:=\! H_0(\curl,\Omega) \oplus H(\curl,\Omega),
\end{equation}
for $\omega\in\C$ satisfies the spectral \vspace{-1mm} enclosure
\begin{align*}
  \sigma(V) \subset \I\,
	\Big[\!-\!\frac{\sigma_{\max}}{\eps_{\min}},0 \Big] \cup 	
	\Big\{ \omega\!\in\! \C \!\setminus\! \I\R: 	& \im \omega \!\in\! \Big[\!-\! \frac 12 \frac{\sigma_{\max}}{\eps_{\min}},
	- \frac 12 \frac{\sigma_{\min}}{\eps_{\max}}\Big],   \\[-1mm]
 &(\re \omega)^2 \!\!-\! 3 (\im \omega)^2 \!+\! 2 \frac{\sigma_{\max}}{\eps_{\min}}  |\im \omega| \!\geq\! \frac{\lambda_{\min}^\Omega}{{\eps_{\max}}\mu_{\max}}
  \Big\}
 \\[-6.5mm]
\end{align*}
where $\lambda_{\min}^\Omega\!:=\!\min\sigma(\curl \curl_0|_{H(\Div 0,\Omega)}) \ge 0$.
%\marginpar{{\footnotesize Criteria for $\la_{\min}^\Omega\!>\!0$?}}
In particular, if $\lambda_{\min}^\Omega \!>\!0$, \vspace{-1mm} then
\begin{equation}
\label{gaps}
\sigma(V) \cap \bigg( \Big( \!-\!\Big(\frac{\lambda_{\min}^\Omega}{{\eps_{\max}}\mu_{\max}}\Big)^{\!\!1\!/2}\!,0\Big) \cup \Big(0,\Big(\frac{\lambda_{\min}^\Omega}{{\eps_{\max}}\mu_{\max}}\Big)^{\!\!1\!/2}\Big) \bigg) =\emptyset,
\vspace{-1.2mm}
\end{equation}
and if $\lambda_{\min}^\Omega \!>\! \frac 13 \frac{\sigma_{\max}^2\eps_{\max} \mu_{\max}}{\eps_{\min}^2}$, then
$\sigma(V) \cap \I\R \!\subset\! \I\big[\!-\!\frac{\sigma_{\max}}{\eps_{\min}},0 \big]$ is isolated from~$\sigma(V) \setminus \I\R$.
\end{theorem}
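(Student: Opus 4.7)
The plan is to prove the spectral inclusion by characterising every $\omega\in\sigma(V)$ through singular sequences, and then to deduce the two ``in particular'' statements as algebraic consequences. It suffices to treat the approximate point spectrum $\sigma_{\mathrm{app}}(V)$: the residual part $\sigma_r(V)$ is subsumed in the conjugate of the approximate point spectrum of the adjoint pencil, and since $V(\omega)^*$ differs from $V(\omega)$ only by switching $-\I\sigma$ to $+\I\sigma$, the same arguments produce an enclosure for $V^*$ that coincides with the one for $V$ after conjugation.

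For $\omega=x+\I y\in\sigma_{\mathrm{app}}(V)$ I would fix a normalised singular sequence $(E_n,H_n)\in\dom V(\omega)$ with $V(\omega)(E_n,H_n)=:(r_n,s_n)\to 0$ in $L^2\oplus L^2$, and introduce the non-negative scalars $\alpha_n:=\lgl\eps E_n,E_n\rgl$, $\beta_n:=\lgl\mu H_n,H_n\rgl$, $\gamma_n:=\lgl\sigma E_n,E_n\rgl$ and $a_n:=\lgl\mu^{-1}\curl_0 E_n,\curl_0 E_n\rgl$. Pairing the first component of $V(\omega)(E_n,H_n)$ with $E_n$, moving $\curl$ to $\curl_0$ via $E_n\in H_0(\curl,\Omega)$, and substituting $\curl_0 E_n=\I\omega\mu H_n+\I s_n$ from the second equation produces
\[\bar\omega\beta_n-\omega\alpha_n-\I\gamma_n=o(1).\]
Its real and imaginary parts, $x(\beta_n-\alpha_n)=o(1)$ and $y(\alpha_n+\beta_n)+\gamma_n=o(1)$, together with the uniform lower bound $\alpha_n+\beta_n\ge\min(\eps_{\min},\mu_{\min})$, immediately yield the imaginary-axis enclosure $\omega\in\I[-\sigma_{\max}/\eps_{\min},0]$ when $x=0$. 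When $x\neq 0$ they force $\alpha_n=\beta_n+o(1)$ and hence $y=-\gamma_n/(2\alpha_n)+o(1)$, giving the strip bound $\im\omega\in[-\sigma_{\max}/(2\eps_{\min}),-\sigma_{\min}/(2\eps_{\max})]$.

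For the non-convex inequality in the case $x\neq 0$, I would next compute the $\mu^{-1}$-weighted norm squared of the second Maxwell equation to obtain $a_n=|\omega|^2\beta_n+o(1)$, so that $|\omega|^2=a_n/\alpha_n+o(1)$. A Helmholtz decomposition $E_n=E_n^0+\nabla\phi_n$ with $E_n^0\in H_0(\curl,\Omega)\cap H(\Div 0,\Omega)$ and $\phi_n\in\dot H^1_0(\Omega)$ then gives $\curl_0 E_n=\curl_0 E_n^0$ and thus $a_n\ge \lambda_{\min}^\Omega\|E_n^0\|^2/\mu_{\max}$ by the variational definition of $\lambda_{\min}^\Omega$. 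The hard step, and the one that forces the precise constant $\lambda_{\min}^\Omega/(\eps_{\max}\mu_{\max})$ rather than a strictly weaker one, is the matching upper bound $\alpha_n\le\eps_{\max}\|E_n^0\|^2+o(1)$ on the $\eps$-form of the full $E_n$ in terms of the divergence-free piece alone. To extract this I would test the first Maxwell equation against $\nabla u$ with $u\in\dot H^1_0(\Omega)$: since $\lgl\curl H_n,\nabla u\rgl=0$, this yields the weak divergence constraint $\lgl(\omega\eps+\I\sigma)E_n,\nabla u\rgl=-\lgl r_n,\nabla u\rgl\to 0$. The Hermitian part of $\omega\eps+\I\sigma$ is $x\eps\ge x\eps_{\min}\id$, so this matrix is uniformly coercive for $x\neq 0$; the resulting elliptic estimate on $\phi_n$, combined with the cancellation between the cross term $2\re\lgl\eps E_n^0,\nabla\phi_n\rgl$ and the gradient self-energy $\lgl\eps\nabla\phi_n,\nabla\phi_n\rgl$ (which in the case $\sigma\equiv 0$ is exactly the clean identity $\alpha_n=\|\eps^{1/2}E_n^0\|^2-\|\eps^{1/2}\nabla\phi_n\|^2+o(1)$), delivers the required bound on $\alpha_n$. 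The claimed non-convex inequality then follows from the algebraic identity
\[x^2-3y^2+2(\sigma_{\max}/\eps_{\min})|y|=|\omega|^2+2|y|((\sigma_{\max}/\eps_{\min})-2|y|)\]
combined with $|y|\le\sigma_{\max}/(2\eps_{\min})$ from the strip bound.

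The two ``in particular'' statements follow algebraically. If $\lambda_{\min}^\Omega>0$, any real $\omega\in\sigma(V)\setminus\{0\}$ must lie in the strip with $y=0$ and therefore satisfies $x^2\ge\lambda_{\min}^\Omega/(\eps_{\max}\mu_{\max})$, which is exactly~\eqref{gaps}. For the isolation claim, the function $|y|\mapsto -3y^2+2(\sigma_{\max}/\eps_{\min})|y|$ on $[0,\sigma_{\max}/(2\eps_{\min})]$ attains its maximum $\sigma_{\max}^2/(3\eps_{\min}^2)$ at $|y|=\sigma_{\max}/(3\eps_{\min})$, so if $\lambda_{\min}^\Omega>\sigma_{\max}^2\eps_{\max}\mu_{\max}/(3\eps_{\min}^2)$ the non-convex inequality fails throughout the segment $x=0$; hence the closure of $\sigma(V)\setminus\I\R$ does not meet $\I\R$, which is the stated isolation.
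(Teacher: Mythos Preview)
Your derivation of the imaginary-axis enclosure and the strip bound for $\im\omega$ when $\re\omega\neq 0$ is essentially the same as the paper's (the paper first passes to the symmetrised operator $\cA$ in \eqref{def:calA}, which slightly streamlines bookkeeping, but the numerical-range identities are the same).  The ``in particular'' claims are indeed purely algebraic once the main enclosure is proved.

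The gap is in your Step~7, the bound $\alpha_n\le\eps_{\max}\|E_n^0\|^2+o(1)$.  Take $\eps=\eps_{\max}\id$ constant and $\sigma$ non-constant.  Then $\alpha_n=\eps_{\max}\|E_n\|^2=\eps_{\max}(\|E_n^0\|^2+\|\nabla\phi_n\|^2)$, so your claimed inequality forces $\|\nabla\phi_n\|\to 0$.  But the weak divergence constraint you derive is $\Div\bigl((\omega+\I\sigma)E_n\bigr)=o(1)$ in $\dot H^{-1}$, which for non-constant $\sigma$ does \emph{not} imply $\Div E_n\to 0$; indeed one only gets $|x|\,\|\nabla\phi_n\|\le \sigma_{\max}\|E_n^0\|+o(1)$, and the gradient part can be of the same order as $E_n^0$.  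The ``cancellation between the cross term and the gradient self-energy'' that you invoke is exact only when $\sigma=0$ (and then gives $\alpha_n=\|\eps^{1/2}E_n^0\|^2-\|\eps^{1/2}\nabla\phi_n\|^2+o(1)$); for general $\sigma$ the extra $\sigma$-terms do not cancel and your argument does not close.  Consequently your route via $|\omega|^2=a_n/\alpha_n$ does not yield $|\omega|^2\ge\lambda_{\min}^\Omega/(\eps_{\max}\mu_{\max})$, and the non-convex inequality is not established.

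The paper avoids this by \emph{not} decomposing $E_n$.  After the symmetrisation $f_n=\eps^{1/2}E_n$, $g_n=\mu^{1/2}H_n$, $B=\I\eps^{-1/2}\curl\mu^{-1/2}$, $Q=\eps^{-1/2}\sigma\eps^{-1/2}$, it decomposes the \emph{magnetic} component $g_n=g_n^1+g_n^2$ along $(\ker B)^\perp\oplus\ker B$ and uses the reduced minimum modulus $\gamma(B)\ge(\lambda_{\min}^\Omega/(\eps_{\max}\mu_{\max}))^{1/2}$ to get $\gamma(B)^2\|g_n^1\|^2\le\|Bg_n\|^2$.  Together with $\|g_n^1\|^2=\|f_n\|^2+o(1)$ (from $\re\omega\neq 0$) and the first equation $Bg_n=(\I Q+\omega)f_n+o(1)$, one obtains
\[
\gamma(B)^2 \le \frac{\|Qf_n\|^2}{\|f_n\|^2}+2\im\omega\,\frac{\langle Qf_n,f_n\rangle}{\|f_n\|^2}+|\omega|^2+o(1),
\]
and then $\|Qf_n\|^2\le\|Q\|\langle Qf_n,f_n\rangle$ together with $\langle Qf_n,f_n\rangle/\|f_n\|^2\to 2|\im\omega|$ gives exactly $(\re\omega)^2-3(\im\omega)^2+2\|Q\||\im\omega|\ge\gamma(B)^2$.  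The crucial point is that here the $\sigma$-contribution enters through $\|(\I Q+\omega)f_n\|^2$ and is controlled by $\|Q\|\le\sigma_{\max}/\eps_{\min}$, rather than through a divergence constraint on $E_n$.
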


%%%%%%% Possible future remark %%%%%%%%%%%%%%

\begin{rem}
The enclosure in Theorem \ref{thm:spec-encl} becomes larger when the domain $\Omega$ does, provided we choose the optimal values $\eps_{\min}^\Omega$, $\mu_{\min}^\Omega$, $\sigma_{\min}^\Omega$ and  $\eps_{\max}^\Omega$,  $\mu_{\max}^\Omega$,  $\sigma_{\max}^\Omega$ for $\Omega$ as the bounds in \eqref{conditions eps mu sigma}. In this case, $\eps_{\min}^\Omega$, $\mu_{\min}^\Omega$, $\sigma_{\min}^\Omega$ and $\lambda_{\min}^\Omega$ are decreasing with $\Omega$, while $\eps_{\max}^\Omega$,  $\mu_{\max}^\Omega$,  $\sigma_{\max}^\Omega$ are increasing.
\end{rem}

The possible different shapes of the above non-convex spectral enclosure are illustrated in Figure~\ref{fig:enclosure} below, see Remark \ref{rem:spec-encl} for details. While in all cases \vspace{-0.8mm}  accumulation of spectrum at $\I\R$ is excluded at the complex interval \vspace{-1mm} $\I\Big[\!\!-\!\!\tfrac{\sigma_{\max}}{\eps_{\min}},\!-\!\tfrac 12\tfrac{\sigma_{\max}}{\eps_{\min}}\Big]$, accumulation is also excluded successively i) near $0$,  ii) near $-\I \frac 12\frac{\sigma_{\max}}{\eps_{\min}}$ and iii) everywhere at $\I\R$  \vspace{-1mm} at the following thresholds for $\lambda_{\min}^\Omega$,
\begin{align}
\label{eq:thresholds}
   {\rm i)} \ \lambda_{\min}^\Omega\!>\!0, \quad
   {\rm ii)} \  \lambda_{\min}^\Omega\!>\!\frac 14 \frac{\sigma_{\max}^2 \eps_{\max} \mu_{\max}}{\eps_{\min}^2}, \quad
   {\rm iii)} \ \lambda_{\min}^\Omega\!>\!\frac 13 \frac{\sigma_{\max}^2 \eps_{\max} \mu_{\max}}{\eps_{\min}^2}. \quad
\end{align}%
The proof of Theorem~\ref{thm:spec-encl} is given in  Section \ref{sec:proof1}.

\begin{figure}
\includegraphics[width=0.58 \textwidth]{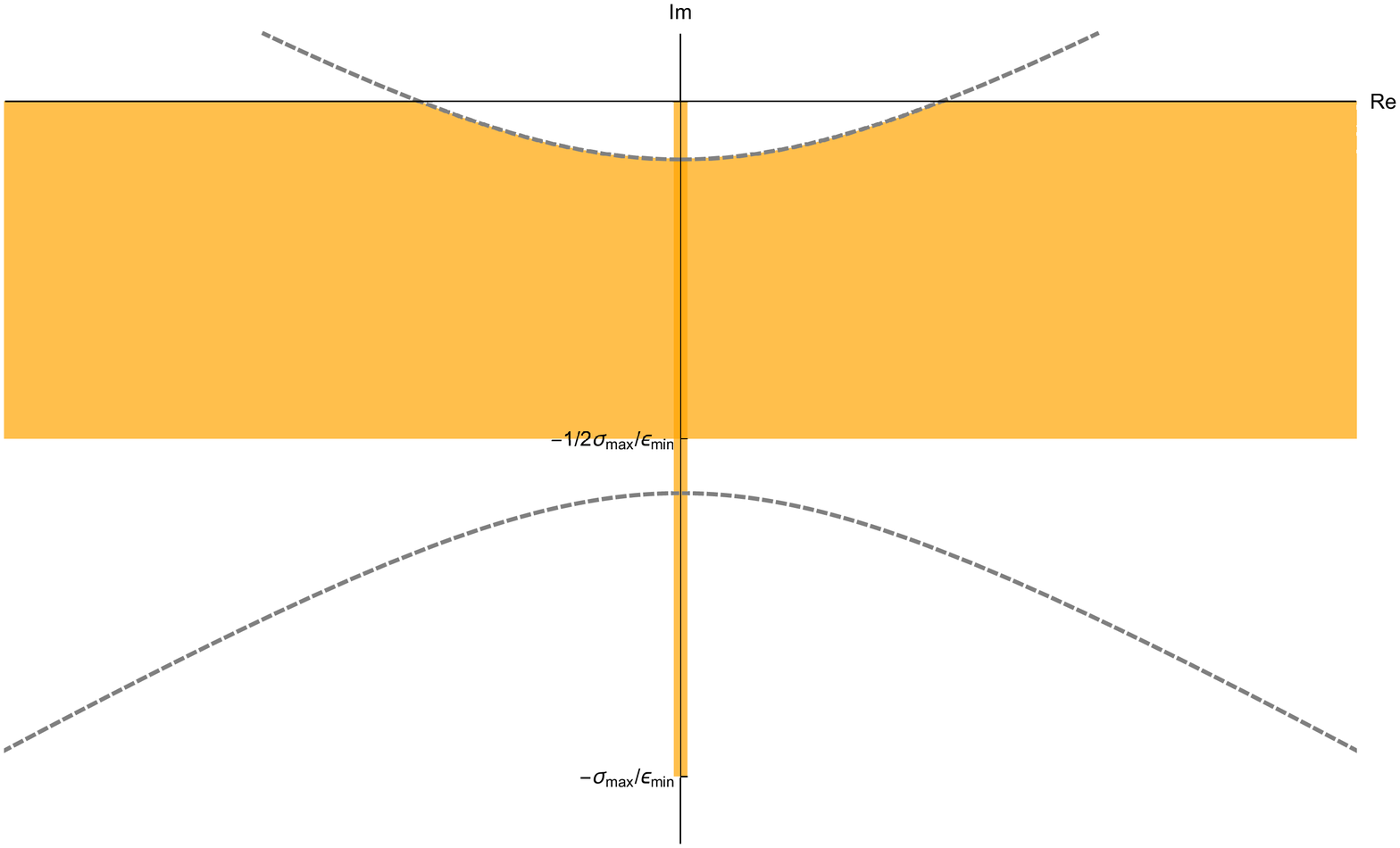}  %0.9 before everywhere
\\[2mm]
\includegraphics[width=0.58 \textwidth]{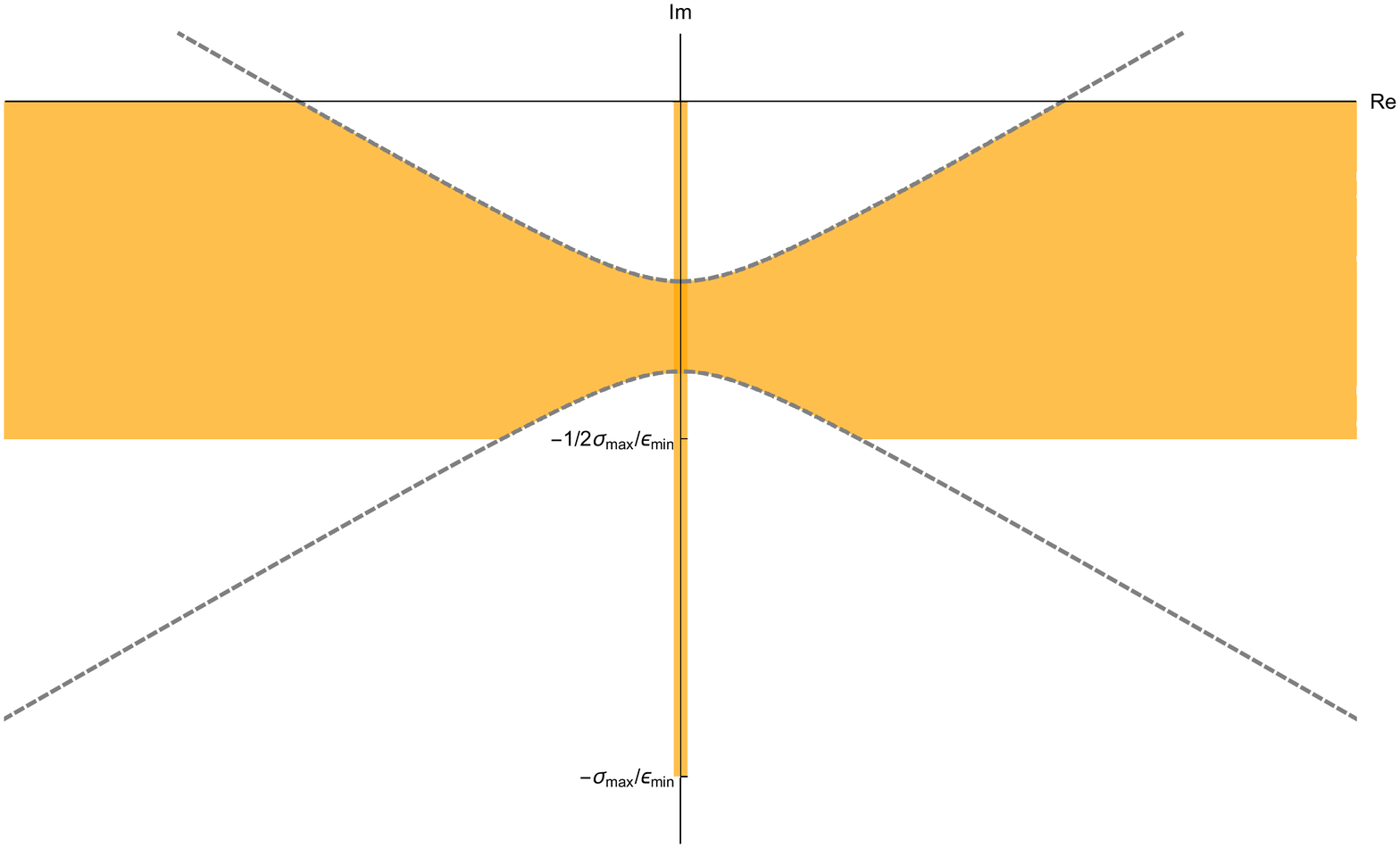}
\\[2mm]
\ \includegraphics[width=0.58 \textwidth]{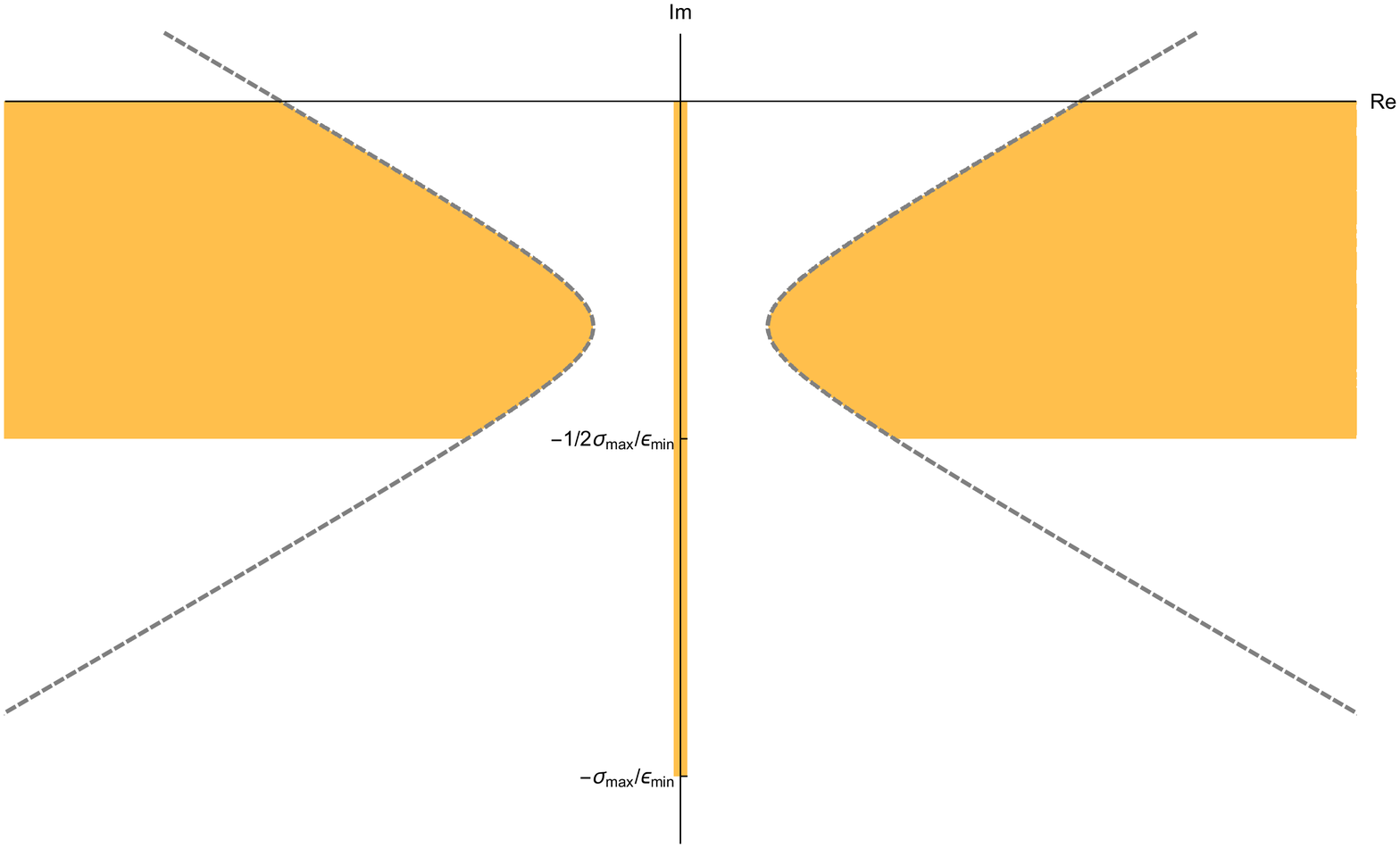}
\vspace{-2mm}
\caption{\small Spectral enclosure in Theorem \ref{thm:spec-encl} (yellow)
in cases i) (top), ii) (middle), iii) (bottom) of \eqref{eq:thresholds} for $\sigma_{\rm min}\!=\!0$; the dashed lines~are the boundary curves $(\re \omega)^2 \!\!-\! 3 (\im \omega)^2 \!+\! 2 \frac{\sigma_{\max}}{\eps_{\min}}  |\im \omega| \!=\!\lambda_{\min}^\Omega/\eps_{\max}\mu_{\max}$.}
\vspace{-2mm}
\label{fig:enclosure}
\end{figure}

For non-self-adjoint problems, it is crucial not only to establish spectral enclosures, but also resolvent estimates. The following resolvent bounds which we prove in Section  \ref{sec:proof1} also apply in the double  semistrip $\{z\!\in\!\C\!\setminus\!\I\R:
-  \frac {\sigma_{\max}}{\eps_{\min}} \!\le\! \im z \!<\! - \frac 12 \frac {\sigma_{\max}}{\eps_{\min}} \}$ inside of the closure of the numerical range of the Maxwell pencil.

\begin{theorem}
\label{thm:res-est-V}
For $\omega \in \{z\!\in\!\C\!\setminus\!\I\R:  \im z <  -  \frac 12 \frac {\sigma_{\max}}{\eps_{\min}} \}$, we \vspace{-1mm} have
\[
 \|V(\omega)^{-1}\| \!\leq\! \frac 1 {\min\{\eps_{\min},\mu_{\min}\}} \frac 1{|\im \omega|\!-\!\frac12 \frac{\sigma_{\max}}{\eps_{\min}}} \left( 1 \!+\! \frac {(\frac12 \frac{\sigma_{\max}}{\eps_{\min}})^2}{(\re\omega)^2} \right),
\vspace{-1mm} 
\]
and hence, for \vspace{-1mm}  $\omega \in \{z\!\in\!\C:  \im z <  - \frac {\sigma_{\max}}{\eps_{\min}} \}$
\[
   \|V(\omega)^{-1}\| \!\leq\! \frac 1 {\min\{\eps_{\min},\mu_{\min}\}\!} \min\left\{ \frac 1{|\im \omega|\!-\!\frac12 \frac{\sigma_{\max}}{\eps_{\min}}\!} \!\left( \!1 \!+\! \frac {(\frac12 \frac{\sigma_{\max}}{\eps_{\min}})^{2}}{(\re\omega)^2} \right)\!, \frac 1{|\im \omega|\!-\!\frac{\sigma_{\max}}{\eps_{\min}}}  \right\}\!.
\]
\end{theorem}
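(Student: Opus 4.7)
The plan is to prove both bounds as a priori estimates on $\|(E,H)\|$ given $V(\omega)(E,H)=(f,g)$. Set $K:=\tfrac12\sigma_{\max}/\eps_{\min}$, $m:=\min\{\eps_{\min},\mu_{\min}\}$, write $\omega=\alpha+\I\beta$ with $\beta<-K$, and abbreviate $b_E:=\langle\eps E,E\rangle$, $b_H:=\langle\mu H,H\rangle$, $c:=\langle\sigma E,E\rangle$. Taking the $L^2$ inner product of the equation with $(E,H)$ and using $\curl=\curl_0^*$ on $H_0(\curl,\Omega)$ collapses the cross-term $\I\langle\curl H,E\rangle-\I\langle\curl_0 E,H\rangle$ to the real quantity $2\,\im\langle\curl_0 E,H\rangle$. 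The second Maxwell row $-\I\curl_0 E-\omega\mu H=g$ gives $\curl_0 E=\I\omega\mu H+\I g$, hence $\im\langle\curl_0 E,H\rangle=\alpha b_H+\re\langle g,H\rangle$. Separating real and imaginary parts of the tested identity then yields the two scalar balances
\begin{align*}
\alpha(b_H-b_E)&=\re\langle f,E\rangle-\re\langle g,H\rangle,\\
|\beta|(b_E+b_H)-c&=\im\langle f,E\rangle+\im\langle g,H\rangle.
\end{align*}

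For the coarser second bound I use only the imaginary balance: the pointwise inequality $c\leq\sigma_{\max}\|E\|^2\leq 2K(b_E+b_H)$ gives $(|\beta|-2K)(b_E+b_H)\leq\|(f,g)\|\,\|(E,H)\|$ by Cauchy--Schwarz, and combining with $b_E+b_H\geq m\|(E,H)\|^2$ yields $\|V(\omega)^{-1}\|\leq (m(|\beta|-2K))^{-1}$ for $|\beta|>2K$.

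The main step is the first bound, which must also cover the slab $K<|\beta|<2K$. Setting $s:=b_E+b_H$, $\delta:=b_E-b_H$, the sharper inequality $c\leq 2Kb_E=K(s+\delta)$ turns the imaginary balance into $(|\beta|-K)s\leq\im\langle f,E\rangle+\im\langle g,H\rangle+K\delta$; the real balance rewrites $\delta=-(\re\langle f,E\rangle-\re\langle g,H\rangle)/\alpha$. Regrouping terms by $\langle f,E\rangle$ and $\langle g,H\rangle$, the right-hand side becomes
\begin{equation*}
\im\bigl((1-\I K/\alpha)\langle f,E\rangle\bigr)+\im\bigl((1+\I K/\alpha)\langle g,H\rangle\bigr),
\end{equation*}
and bounding each summand by its modulus picks up the factor $|1\pm\I K/\alpha|=\sqrt{1+K^2/\alpha^2}$. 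Together with $s\geq m\|(E,H)\|^2$, Cauchy--Schwarz, and the clean weakening $\sqrt{1+K^2/\alpha^2}\leq 1+K^2/\alpha^2$ (valid since $1+K^2/\alpha^2\geq 1$), this produces the stated first bound.

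The key conceptual point, and the only real obstacle, is the elimination of $\langle\curl_0 E,H\rangle$ via the second Maxwell equation: without it the cross-term is unconstrained, the numerical range of the pencil $V$ fills the entire horizontal strip $\{-2K\leq\im z\leq 0\}$, and no resolvent bound inside that strip is available. The substitution makes the real balance purely proportional to $\alpha=\re\omega$, which both permits the division by $\alpha$ in the coercivity step and explains the factor $1+K^2/\alpha^2$, reflecting how the imaginary spectrum on $\I[-2K,0]$ is approached as $\re\omega\to 0$.
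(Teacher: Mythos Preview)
Your proof is correct and takes a genuinely different route from the paper's.

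The paper works through the factorisation $V(\omega)=\diag(\eps^{1/2},\mu^{1/2})(\cA-\omega)\diag(\eps^{1/2},\mu^{1/2})$ and estimates $\|(\cA-\omega)^{-1}\|$ by a geometric numerical-range argument: it introduces the diagonal unitary $\cB=\diag(\e^{-\I\varphi},\e^{\I\varphi})$ with $\varphi=\arg(\alpha+\I K)$, shows that an open sector $S$ with vertex $\alpha-\I K$ and half-angle $\tfrac\pi2-\varphi$ is disjoint from the generalised numerical range $W(\cB\cA,\cB)$, and then invokes an external result \cite[Thm.~4.1~ii)]{BM} to obtain $\|(\cA-\omega)^{-1}\|\le (\cos(\varphi)\,\dist(\omega,\partial S))^{-1}$. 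The trigonometry $\dist(\omega,\partial S)=(|\beta|-K)\cos\varphi$ and $\cos^{-2}\varphi=1+K^2/\alpha^2$ then yields the stated bound.

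Your argument is more elementary and fully self-contained: you test the system against $(E,H)$, use the second Maxwell row to eliminate the cross term $\im\langle\curl_0 E,H\rangle$, and extract two scalar balances whose combination via the substitution $\delta=-(\re\langle f,E\rangle-\re\langle g,H\rangle)/\alpha$ gives the coercivity inequality directly. What this buys is transparency and independence from the pencil numerical-range machinery of \cite{BM}; it also yields the slightly sharper constant $\sqrt{1+K^2/\alpha^2}$ before you weaken it to $1+K^2/\alpha^2$ to match the statement. The paper's approach, by contrast, fits into a broader abstract framework and makes the sectorial structure of the resolvent visible geometrically. Both rely on Theorem~\ref{thm:spec-encl} (or an equivalent argument) to know $\omega\in\rho(V)$ a priori, though your estimate also establishes injectivity directly.
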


Note that the second resolvent bound in Theorem \ref{thm:res-est-V} follows since in the half-plane $\{z\!\in\!\C:  \im z <  - \frac {\sigma_{\max}}{\eps_{\min}} \}$ also the classical resolvent bound in terms of the numerical range of $V(\cdot)$ applies.

\begin{figure}[h]
\hspace{8mm}
\includegraphics[width=0.89\textwidth]{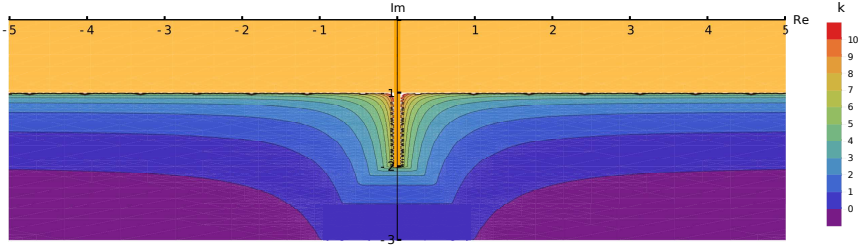}
\caption{\small\!Level curves of the resolvent norm bound in Theorem~\ref{thm:res-est-V}
%in~the region $\{z\!\in\!\C\!:\! \re z \!>\!0,  - \frac {\sigma_{\max}}{\eps_{\min}} \!\le\! \im z \!<\!  -  \frac 12 \frac {\sigma_{\max}}{\eps_{\min}} \}$
for the case~$\eps_{\min}\!=\!\mu_{\min}\!=\!1$, $\sigma_{\max}\!=\!2$, also for regions inside the numerical range $W(V)=\!\{z\!\in\!\C: \im z \!\in\! [-2,0]\}$ of the Maxwell pencil.}%$\frac {\sigma_{\max}}{\eps_{\min}}\!=\!2$.}
\end{figure}

%\subsection{{Spectral pollution for domain truncation}}

The next group of new results concerns approximations of the Maxwell pencil.
Since $\Omega$ is a Lipschitz domain, we may assume that there exists a strictly increasing sequence of
bounded Lipschitz domains $(\Omega_n)_{n\in \N}$ such that $\bigcup_{n \in \N} \Omega_n = \Omega$.
%$\partial \Omega$, if not empty, satisfies $\partial \Omega = \Gamma_0 \cup \bigcup^M_{i=1} \Gamma_i$, where $\Gamma_j$, $j=1, \dots, M$ are the connected components of $\partial \Omega$ and $\Gamma_0$ is the only non-compact component.
%Let $(\Omega_n)_{n\in \N}$ be an increasing sequence of bounded, finitely connected domains satisfying the following conditions:
%\begin{enumerate}
%\item $\Omega_n \subset \Omega_{n+1} \subset \dots \subset \Omega$, $n \in \N$, $\Omega_n$ is Lipschitz for every $n$, with uniformly bounded Lipschitz constants.
%\item $\bigcup_{n \in \N} \Omega_n = \Omega$;
%\item If $\partial \Omega \neq \emptyset$, the number of connected components of $\partial \Omega$ is equal to the number of connected components of $\partial \Omega_n$ for all $n \in \N$.
%\end{enumerate}

It is clear that if $\Omega = \R^3$, or $\Omega$ has smooth boundary, we may choose $\Omega_n$ to be smooth domains for every $n \in \N$. We note that sequences of domains $(\Omega_n)_{n \in \N}$ as described above can always be constructed by setting $ \Omega_n = \Omega \cap B(0,n)$, $n \in \N$.

Define $V_n(\cdot)$  to be the Maxwell pencil in $L^2(\Omega_n)^3\oplus L^2(\Omega_n)^3$ with~domain
\[
\dom(V_n(\omega)) = H_0(\curl, \Omega_n) \oplus H(\curl, \Omega_n),  \ \omega\in\C, \quad n \in \N,
\]
and the set of spectral pollution for the domain truncation method $(V_n)_{n \in \N}$ as
\begin{equation}
\label{eq:poll}
\sigma_{\rm poll}((V_n)_{n\in\N}) := \{ \omega \in \C : \omega \in \rho(V), \exists \, \omega_n \in \sigma(V_n) : \omega_n \to \omega \}.
\end{equation}
For approximations of an abstract linear pencil $A - \la B$, $\la\in\C$, spectral pollution for the domain truncation method was localised inside its essential numerical range in \cite[Thm.\ 3.5]{BM}. For the Maxwell pencil $V(\cdot)$, it is not difficult to show that the es\-sent\-ial numerical range $W_e(V)$ is %a closed convex set of $\C$ containing
%a horizontal strip of the form $\{z \in \C : - M \leq \im z \leq 0 \}$.
contained in the closed horizontal strip $\{z \!\in\! \C : - \frac{\sigma_{\max}}{\eps_{\min}}\!\leq\! \im z \!\leq\! 0 \}$.

Our second main result improves this enclosure substantially if we assume that the coefficients $\eps$, $\mu$, $\sigma$ have limits at $\infty$. It shows that, in fact, spectral pollution is confined to the real axis, with possible gaps on either side of $0$.

\begin{theorem}
\label{theorem: main res}
Suppose that $\Omega$ is an unbounded domain and that $\eps\!-\!\eps_\infty\id$, $\mu\!-\!\mu_\infty\id$ and $\sigma$ vanish at infinity
for some $\eps_\infty$, $\mu_\infty>0$, \vspace{-1.5mm} i.e.
\begin{equation}
\label{eq:coeffs-infty}
 \lim_{R\to\infty}\left\{ \sup_{\| x \| > R} \max \, \big(\| \eps(x)-\eps_\infty\id\|,\|\mu(x)-\mu_\infty\id\|,\|\sigma(x)\|\big)\right\} = 0.
\end{equation}
Let $L_\infty$ be the operator pencil in the subspace $H(\Div 0,\Omega)$ of $L^2(\Omega)^3$ defined by
\begin{align*}
& L_\infty(\omega) := \curl \mu_\infty^{-1} \curl_0 - \omega^2 \eps_\infty, \\
& \dom(L_\infty(\omega)) := \{ E \in H_0(\curl, \Omega){\cap H(\Div 0,\Omega)} : \curl E \in H(\curl, \Omega) \},
\end{align*}
and let $\mathcal W(\cdot)$ be the operator pencil in $L^2(\Omega)^3$ defined by $\mathcal W(\omega)\!:=\!-\omega(\omega\eps+\I\sigma)$, $\omega\!\in\!\C$.
Then,
%the non-negative operator %$T_\infty\!:=\!\big (\frac{\eps_\infty}{\mu_\infty}\big)^{1/2} \curl_0 P_{\ker\Div}$
%$C_\infty\!:=\! \curl\curl_0$ in the \chr{sub}space $H(\Div 0,\Omega)$ of $L^2(\Omega)$,
%\chr{$W_e(\cL_\infty)=(-\infty, -\min \big(\sigma(\eps_\infty^{1/2} \curl \mu_\infty^{-1} \curl_0 \eps_\infty^{1/2})\big)^{1/2}] \cup
%[\min \big(\sigma( \eps_\infty^{1/2} \curl \mu_\infty^{-1} \curl_0 \eps_\infty^{1/2} )\big)^{1/2},\infty)$ and}
%\begin{enumerate}[label={\rm (\roman*)}]
%\begin{myenum}
%\item
with \vspace{-1.5mm} $\lambda_{e,\min}^\Omega\!:=\!\min\sigma_e(\curl \curl_0|_{H(\Div 0,\Omega)}) \ge 0$,
\[
\sigma_{\rm poll}\big((V_n)_{n\in\N}\big) \subset  W_e(L_\infty)
\,=\Big(\!\!-\!\infty,\!-\Big(\frac{\lambda_{e,\min}^\Omega}{\eps_\infty\mu_\infty}\Big)^{\!\!1\!/2}\,\Big]\cup \Big[\Big(\frac{\lambda_{e,\min}^\Omega}{\eps_\infty\mu_\infty}\Big)^{\!\!1\!/2},\infty\Big)
\\\subseteq \mathbb{R};
\]
%\item
and for every isolated $\omega \in \sigma_p(V)$ outside $W_e(L_\infty) \cup \sigma_e(P_\nabla \cW(\cdot) |_{\nabla \!\dot H^1_0(\Omega)})$,
and hence outside the set $ W_e(L_\infty) \!\cup\! \I\big[\!-\!\frac{\sigma_{\max}}{\eps_{\min}},0\big]$,
%\[
%  W_e(L_\infty) \!\cup\! \I\Big[\!-\!\frac{\sigma_{\max}}{\eps_{\min}},0\Big] \!=\! \Big(\!\!-\!\infty,\!-\Big(\frac{\lambda_{e,\min}^\Omega}{\eps_\infty\mu_\infty}\Big)^{\!\!1\!/2}%\,\Big]\!\cup\! \Big[\Big(\frac{\lambda_{e,\min}^\Omega}{\eps_\infty\mu_\infty}\Big)^{\!\!1\!/2}\!\!,\infty\Big) \!\cup\!\I\Big[\!-\!\frac{\sigma_{\max}}{\eps_{\min}},0\Big]
%\]
%
there exists a sequence $\omega_n \in \sigma(V_n)$, $n\in\N$, such that $\omega_n \to \omega$ as $n \to \infty$.
%\end{enumerate}
%\end{myenum}
\end{theorem}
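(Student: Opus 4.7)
The strategy is to pass from the linear Maxwell pencil $V(\cdot)$ to the quadratic pencil $\cL(\omega) = \curl \mu^{-1}\curl_0 - \omega^2 \eps - \I\omega\sigma$ obtained by eliminating $H$, and then to exploit a Helmholtz decomposition on each $\Omega_n$. Using the spectral equivalence between $V$ and $\cL$ established in Section~\ref{sec:V-L}, spectral pollution of $(V_n)_{n\in\N}$ coincides with that of $(\cL_n)_{n\in\N}$ away from a small exceptional set around $\omega = 0$. On each $\Omega_n$, decomposing $L^2(\Omega_n)^3 = H(\Div 0,\Omega_n) \oplus \nabla \dot H^1_0(\Omega_n)$ puts $\cL_n(\omega)$ into upper triangular form, with diagonal blocks given, respectively, by a divergence-free block (a perturbation of $L_\infty|_{\Omega_n}$ vanishing at infinity) and a gradient block acting as $P_\nabla \cW_n(\omega)|_{\nabla \dot H^1_0(\Omega_n)}$. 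The off-diagonal coupling arises from commutators $[\eps,P_\nabla]$ and $[\sigma,P_\nabla]$, which by \eqref{eq:coeffs-infty} are asymptotically negligible in a sense compatible with the abstract results of Section~\ref{sec: triangular}.

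For the first containment, I invoke the general principle that, for polynomial pencils, $\sigma_{\rm poll}((V_n)) \subseteq \sigma_e\bigl((V_n)_{n\in\N}\bigr) \setminus \sigma(V)$, which is the content of Section~\ref{section:6}. The triangular-matrix result of Section~\ref{sec: triangular}, combined with the limiting essential spectrum calculus of Section~\ref{section:6}, then yields
\begin{equation*}
 \sigma_e\bigl((\cL_n)_{n\in\N}\bigr) \subseteq \sigma_e(L_\infty) \cup \sigma_e\bigl(P_\nabla \cW|_{\nabla \dot H^1_0(\Omega)}\bigr).
\end{equation*}
Since $L_\infty$ is self-adjoint on $H(\Div 0,\Omega)$ with $\eps_\infty\mu_\infty L_\infty(\omega) = \curl\curl_0|_{H(\Div 0,\Omega)} - \eps_\infty\mu_\infty\omega^2$, a short computation identifies $W_e(L_\infty) = \sigma_e(L_\infty)$ as the claimed union of two real half-rays determined by $\lambda_{e,\min}^\Omega$. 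The gradient-block contribution lies on $\I\R$ and, by Section~\ref{section:5}, already belongs to $\sigma_e(V)$, so cannot be pollution; this yields $\sigma_{\rm poll}((V_n)) \subseteq W_e(L_\infty)$. The transfer from $\cL_n$ back to $V_n$ uses Section~\ref{sec:V-L}, with the exceptional point $\omega = 0$ handled either trivially when $\lambda_{e,\min}^\Omega = 0$ (so $0 \in W_e(L_\infty)$) or by the non-accumulation statement in Theorem~\ref{thm:spec-encl}.

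For the second statement, a standard spectral inclusion theorem for pencils (provided by Section~\ref{section:6}) guarantees that every isolated eigenvalue of $V$ outside the limiting essential spectrum of $(V_n)$ is approximated by points of $\sigma(V_n)$. Combined with the displayed inclusion above, it suffices that $\omega$ lies outside $W_e(L_\infty) \cup \sigma_e(P_\nabla \cW|_{\nabla \dot H^1_0(\Omega)})$. To replace the second set by $\I\bigl[-\sigma_{\max}/\eps_{\min},0\bigr]$, a numerical range computation suffices: for $u \in \nabla \dot H^1_0(\Omega)$ with $\|u\|=1$,
\begin{equation*}
 \langle \cW(\omega) u, u\rangle = -\omega\bigl(\omega\langle \eps u, u\rangle + \I\langle \sigma u, u\rangle\bigr),
\end{equation*}
whose zeros in $\omega$ lie in $\{0\} \cup \I\bigl[-\sigma_{\max}/\eps_{\min},0\bigr]$; standard quadratic-numerical-range arguments then pass this containment to $\sigma_e(P_\nabla \cW|_{\nabla \dot H^1_0(\Omega)})$.

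The main obstacle is making the Helmholtz decomposition yield a genuine triangular structure despite non-constant coefficients: the commutators $[\eps,P_\nabla]$ and $[\sigma,P_\nabla]$ are nonzero off-diagonal pieces, and triangularity is only exact asymptotically. Verifying that these coupling terms fit the relative-compactness hypotheses of the Section~\ref{sec: triangular} result \emph{uniformly} across the exhausting sequence $(\Omega_n)$ will rely crucially on \eqref{eq:coeffs-infty} to recast them as operators whose contribution to the limiting essential spectrum is empty. A secondary technical point is the passage between $V_n$ and $\cL_n$ at $\omega = 0$, where the factor $\omega$ in $\cW(\omega)$ creates a spurious root that must be excised by hand using the spectral gap from Theorem~\ref{thm:spec-encl}.
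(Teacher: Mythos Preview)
Your overall strategy matches the paper's: reduce from $V$ to $\cL$, triangularise via the Helmholtz decomposition after a relatively compact correction, apply the abstract pollution enclosure from Section~\ref{section:6} together with the triangular limiting-essential-spectrum result from Section~\ref{sec: triangular}, and then discard the gradient-block contribution because it already lies inside $\sigma_e(V)$.

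One step needs correction. Your displayed inclusion $\sigma_e\bigl((\cL_n)_{n\in\N}\bigr) \subseteq \sigma_e(L_\infty) \cup \sigma_e\bigl(P_\nabla\cW|_{\nabla\dot H^1_0(\Omega)}\bigr)$ is stronger than what the machinery delivers, and the asserted equality $W_e(L_\infty) = \sigma_e(L_\infty)$ is not a short computation: it would require $\sigma_e\bigl(\curl\curl_0|_{H(\Div 0,\Omega)}\bigr)$ to be the full half-line $[\lambda_{e,\min}^\Omega,\infty)$, which is not assumed. What the paper actually proves (Theorem~\ref{thm: final inclusion}) is $\sigma_e\bigl((L_{\infty,n})_{n\in\N}\bigr) \subset W_e(L_\infty)$ directly: a singular sequence $(w_n)$ for $L_{\infty,n}$ on $\Omega_n$ is approximated in $H(\curl,\Omega_n)$-norm by smooth compactly supported divergence-free fields (Lemma~\ref{lemma: core}) and extended by zero to $\Omega$. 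The extension lies in the \emph{form} domain $H_0(\curl,\Omega)\cap H(\Div 0,\Omega)$ of $L_\infty$ but need not lie in $\dom(L_\infty)$, since extension by zero does not preserve the second-order regularity $\curl E \in H(\curl)$. Taking inner products with this sequence therefore yields membership in $W_e(L_\infty)$, not $\sigma_e(L_\infty)$; this is precisely why the essential numerical range enters the statement. Finally, the point $\omega = 0$ is handled more simply than you suggest: since $\{0\}\oplus\nabla\dot H^1_0(\Omega)\subset\ker V(0)$, one has $0\in\sigma_e(V)$ unconditionally (Theorem~\ref{Prop: spectra S_1 and L}), so $0$ is never a pollution point and no appeal to Theorem~\ref{thm:spec-encl} is needed.
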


The proof of Theorem \ref{theorem: main res} which relies on a combination of analytic and operator theoretic tools is given at the end of Section \ref{sec: limiting sigmae}.

\begin{rem}
The enclosure for spectral pollution in Theorem \ref{theorem: main res}
is a subset of the spectral enclosure in Theorem \ref{thm:spec-encl} on the real axis, see~\eqref{gaps}, since $\lambda_{e,\min}^\Omega \ge \lambda_{\min}^\Omega \ge 0$ and $\eps_\infty \!\le\! \eps_{\max}$, $\mu_\infty \!\le\! \mu_{\max}$.\\
Note that, depending on $\Omega$, it may happen that $\lambda_{\min}^\Omega \!>\! 0$ or $\lambda_{e,\min}^\Omega \!>\! \lambda_{\min}^\Omega \!\ge\! 0$;~in the former case, both enclosures for the spectrum and spectral pollution have a~gap on either side of $0$, in the latter case, the enclosure for spectral pollution has a gap on either side of $0$ and thus eigenvalues in these gaps are safe from spectral~pollution.
%In this result, the essential numerical range $W_e(\cL_\infty)$ is the essential numerical range of a quadratic pencils, and need not be convex.
%In fact, we have $W_e(\cL_\infty)=\{\pm z:\,z^2\in W_e(C)\}$ where $C$ is the operator  in the space $H(\Div 0,\Omega)$ given by
%$C=\curl^2$, $\dom(C)=\dom(\cL_\infty)$, and $W_e(C)=[c_\Omega,\infty)$ with $c_{\Omega}\geq 0$ depending on $\Omega$.
\end{rem}

As far as we know, Theorem \ref{theorem: main res} is new even in the self-adjoint case, see also Theorem \ref{thm: main res self-adj}.
In the general case, it yields spectral exactness for every non-real, isolated eigenvalue of the Maxwell system  and, if $\lambda_{e,\min}^\Omega \!>\!0$, also for the real eigenvalues in the gaps of the essential spectrum to either side of $0$.

The following examples illustrate our results on spectral enclosure, the essential spectrum  and spectral pollution. The first example also
provides an idea of~the complex spectral structure that may arise even for rather simple Maxwell systems~\eqref{intro: Max1}.

%\cm{The first example is not a good advertisement for our paper, it sends the message that we prove absence of spec\-tral pollution in an area where there are no eigenvalues at all! Can we add something to let eigenvalues appear in the gaps?}

%\cm{I also suggest to reorder the examples since the second example has the `complex spectral structure ...}

%\cm{what is the definition of a waveguide? We use it here but not before, and the truncated domains do not seem to be waveguides since they have no infinite direction.}

\begin{example}
\label{ex:waveguide}
We consider the semi-infinite cylinder $\Omega=(0, \infty)\times(0,L_2)\times(0,L_3)$
and suppose that $\eps=\mu = \id$ everywhere, and $\sigma = \id$ if $x_1\in (0,1)$, else $\sigma=0$,
i.e.\ $\sigma \!=\! \chi_K \id$ with $K:=(0,1)\times(0,L_2)\times(0,L_3)$, 
so that the Maxwell pencil $V(\cdot)$ is non-self-adjoint with piecewise constant coefficients.

%\cm{shifted, one should first see where ess.\ spec.\ is}
In the Appendix we show how Fourier expansion for $E$ together with \cite[Thm.~6]{MR3942228}, or Theorem \ref{sigma-ess} below, can be used to deduce that the essential spectrum of $V$ in the infinite half-cylinder $\Omega$ coincides with the essential spectrum for the infinite cylinder $\R \times (0,L_2) \times (0, L_3)$ and hence satisfies
%\cm{In the appendix we only state $\sigma_e(V) \cap {\rm i}\R \subset -{\rm i}[0,1]$.}
\begin{equation}
\label{eq:sigmaessV}
\sigma_{e}(V) = (-\infty, -\pi/L] \cup [\pi/L, + \infty) \cup( -{\rm i} \{0,1/2,1\}), \quad L = \max\{L_2, L_3\}.
\end{equation}
Now we truncate the domain to $\Omega_n:= (0,X_n)\times(0,L_2)\times(0,L_3)$, with $X_n\gg 1$
and let $V_n(\cdot)$ be the corresponding Max\-well pencil in \eqref{intro: Maxn}.
%In this waveguide we have .
It turns out that $\omega\in\C$ is an eigenvalue of $V_n(\cdot)$ if and only if, for some ${\bf n}=(n_2,n_3)\in{\mathbb N}^2$ with $|{\bf n}|>0$,
\begin{equation}
\label{eq:alphabeta}
 \alpha_{\bf n}(\omega)\coth(\alpha_{\bf n}(\omega))  + \beta_{\bf n}(\omega)\coth(\beta_{\bf n}(\omega)(X_n-1)) = 0, \quad n\!\in\!\N;
\end{equation}
the construction of the eigenfunctions is given in the appendix.
Here
\begin{equation}
\label{al-n-beta-n}
\begin{aligned}
&\alpha_{\bf n}(\omega) := \sqrt{\pi^2n_2^2/L_2^2 + \pi^2n_3^2/L_3^2 - \omega(\omega+\I)},
\\[-0.6mm]%\quad
&\beta_{\bf n}(\omega) := \sqrt{\pi^2n_2^2/L_2^2 + \pi^2n_3^2/L_3^2 - \omega^2},
\end{aligned}
\end{equation}
where the branch of the square root is taken with non-negative real part.
Note~that there are no square root singularities since $z\mapsto z\coth(z)$ is a meromorphic function.

A little change in the Fourier ansatz allows us to also compute the eigenvalues of the problem in the whole domain $\Omega=(0, \infty) \times (0, L_2) \times (0, L_3)$; the eigenvalue equation for $\omega \in \sigma_p(V)$ becomes
\begin{equation}
\label{true-ev-eq}
\alpha_{\bf n}(\omega)\coth(\alpha_{\bf n}(\omega))  + \beta_{\bf n}(\omega) = 0, \quad {\bf n} = (n_2, n_3) \in \N^2, \,|{\bf n}| > 0,
\end{equation}
which is also obtained from \eqref{eq:alphabeta} in the limit \vspace{-2mm} $X_n\to\infty$.

\begin{figure}[h]
\includegraphics[width= 0.75\textwidth]{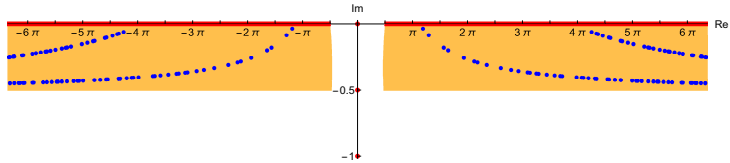}
\vspace{-3mm}
\caption{\small Spectrum of $V$ in $\Omega \!=\! (0, \infty) \!\times\! (0, 1) \!\times\! (0,2)$. The essential spec\-trum is in red, the eigenvalues in blue.
The yellow region is the enclosure in Theorem \ref{thm:spec-encl} for the eigenvalues away from ${\rm i}\R$ and $\R$.}
\label{fig: infinite cylinder}
\vspace{-2mm}
\end{figure}

\begin{figure}[h]
\includegraphics[width= 0.75 \textwidth]{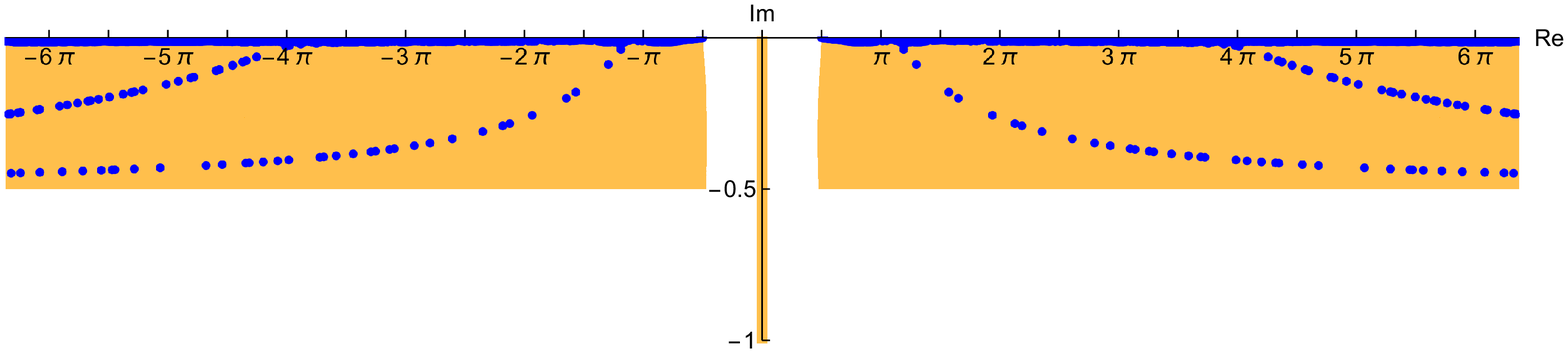}
\\[\smallskipamount]
\includegraphics[width= 0.75\textwidth]{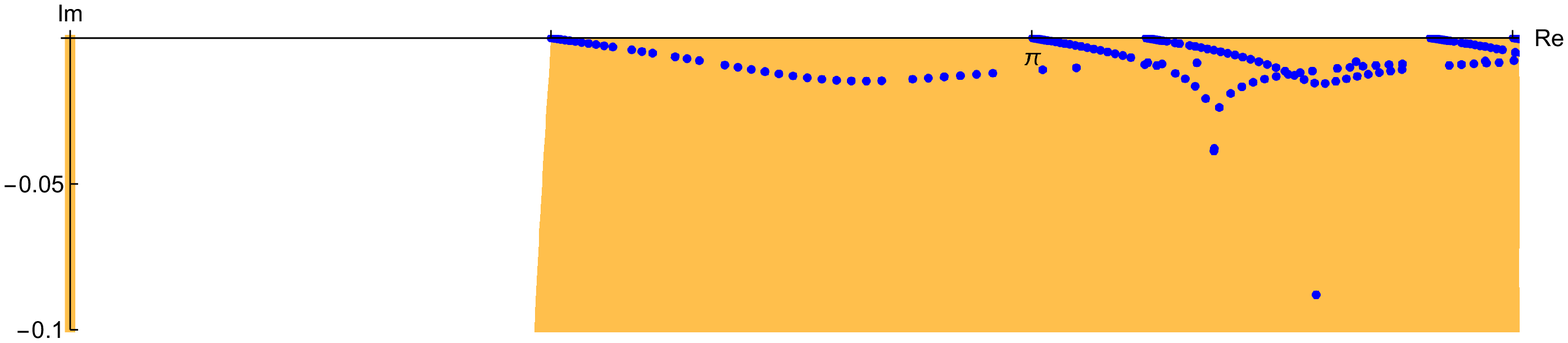}
\vspace{-2mm}
\caption{\small Eigenvalues of $V$ in $\Omega_{50} = (0,50) \times (0, 1) \times (0,2)$, $\eps = \mu = \id$, $\sigma(x) = \chi_{(0,1)}(x_1)$.
The yellow area is the spectral enclosure given by Theorem \ref{thm:spec-encl}. The picture below is a zoom in the area
$[0, \frac{3}{2}\pi] + \I[-0.1,0]$.}
\label{fig: waveguide}
\vspace{-2mm}
\end{figure}
%
%\cm{shifted}
The solutions to equations \eqref{true-ev-eq} and  \eqref{eq:alphabeta} can be plotted using a standard computational routine, see Figures~\ref{fig: infinite cylinder} and \ref{fig: waveguide}.
There are many isolated eigenvalues in the region $\R \times -\I [0,1/2]$ that seem to lie along determined curves, see Figure~\ref{fig: infinite cylinder}. Let us give a brief idea of what these curves are. Provided that $\alpha_{\bf n}(\omega) \neq 0$ and $\omega \neq 0$, we rewrite the eigenvalue equation \eqref{true-ev-eq} in the form
\[
\coth(\alpha_{\bf n}(\omega))
= - \frac{\beta_{\bf n}(\omega)}{\alpha_{\bf n}(\omega)}
= -\sqrt{1 + \frac{\I \omega}{\pi^2 n_2^2/L_2^2 + \pi^2 n_3^2/L_3^2 - \omega^2 - \I \omega}}.
\]
%\marginpar{{I fixed this paragraph.}}
%\cm{rather not a curve}
We follow an eigenvalue branch $(\omega_k)_k$ which we write as $\omega_k=\mu_k+\I(-1/2+\delta_k)$ with $\mu_k\in\R$ and
$\delta_k\in [0,1/2]$. We show that, if $|\mu_k|\to\infty$, then there exists a subsequence for which $\delta_k\to 0$ as $k\to\infty$.
Without loss of generality, let $\mu_k\to \infty$. We assume that $\liminf_{k\to\infty}\delta_k\!>\!0$ and show that this leads to a contradiction.~Clearly,
$$
\omega_k(\omega_k+\I)=(\omega_k+\I/2)^2+1/4=\mu_k^2+1/4-\delta_k^2+\I 2\mu_k\delta_k;
$$
note that the corresponding ${\bf n}$ for which $\omega_k$ satisfies \eqref{true-ev-eq} may depend on $k$. If we set
$c_k:=\pi^2 n_2^2/L_2^2 + \pi^2 n_3^2/L_3^2>0$, then
\begin{align*}
\alpha_{\bf n}(\omega_k)&=\sqrt{c_k-\mu_k^2-1/4+\delta_k^2-\I 2\mu_k\delta_k},\\
\coth(\alpha_{\bf n}(\omega_k)) &= -\sqrt{1 + \frac{1/2-\delta_k+\I \mu_k}{c_k-\mu_k^2-1/4+\delta_k^2-\I 2\mu_k\delta_k}}.
\end{align*}
If $|c_k-\mu_k^2| \gg \mu_k$, then $\coth(\alpha_{\bf n}(\omega_k))\to -1$ as $k\to\infty$ and
$$\re \alpha_{\bf n}(\omega_k)\sim
\begin{cases}
\sqrt{c_k-\mu_k^2} \ & \text{if }c_k-\mu_k^2\to \infty,\\
\ \frac{\mu_k\delta_k}{\sqrt{|c_k-\mu_k^2|}} \ & \text{if }c_k-\mu_k^2\to -\infty;
\end{cases}
$$
note that $\coth(\alpha_{\bf n}(\omega_k))\to -1$ requires $\re \alpha_{\bf n}(\omega_k)\to -\infty$, but in both cases we have $\re \alpha_{\bf n}(\omega_k)>0$ asymptotically.
It remains to consider the case $|c_k-\mu_k^2|={\rm O}(\mu_k)$.
By the assumption $\liminf_{k\to\infty}\delta_k\neq 0$, there is a subsequence on which $c_k-\mu_k^2-1/4+\delta_k^2-\I 2\mu_k\delta_k\sim C \mu_k$ with $\im C\neq 0$. Then $\coth(\alpha_{\bf n}(\omega_k))\to -\sqrt{1+\I/C}$.
But $\re \alpha_{\bf n}(\omega_k)\sim \re(\sqrt{C}) \sqrt{\mu_k}\to \pm\infty$ implies that $\coth(\alpha_{\bf n}(\omega_k))\to \pm 1$. The obtained contradiction proves $\liminf_{k\to\infty}\delta_k=0$.

For this example, we therefore see that the presence of the compactly supported conductivity generates infinitely many eigenvalues, both in unbounded and bounded domains. These eigenvalues are approximated without spectral pollution due to our result Theorem \ref{theorem: main res}, since in this example $W_e(L_\infty)$ and $ \sigma_e(P_\nabla \cW(\cdot) |_{\nabla \!\dot H^1_0(\Omega)})$ are subsets of the essential spectrum of $V$.

Moreover, one can verify that $\lambda_{\min}^\Omega \!=\! \pi^2/L^2$. This and
the fact that the eigenvalues approach the line $\im\omega = -1/2$ as $|x| \to \infty$ 
show that our spectral enclosure in Theorem~\ref{thm:spec-encl} is sharp.
\end{example}

%\vspace{-4mm}

\begin{example}
In the case of zero conductivity the Maxwell pencil is self-adjoint. Taking the same domain $\Omega$ as in Example \ref{ex:waveguide},
but now with coefficients $\mu \!=\! \id$, $\sigma \!=\!0$ and $\eps \! = \! (1+\delta)\id $ with constant $\delta>0$ if $x_1 \in(0,1)$, else $\eps \! = \id$, i.e.\ $\eps=(1+\delta\,\chi_K)\id$ with $K$ as in Example \ref{ex:waveguide}, we
lose the imaginary part of the essential spectrum from Example \ref{ex:waveguide}, leaving just
%$\sigma_{ess}(V) = (-\infty,-\pi/L]\cup[\pi/L,+\infty)$.
\begin{equation}
\label{eq:sigmaessV-sa}
\sigma_{e}(V) = (-\infty, -\pi/L] \cup \{0\} \cup [\pi/L, + \infty), \quad L = \max\{L_2, L_3\}.
\end{equation}
By calculations similar to
those which led to equation (\ref{true-ev-eq}), the eigenvalues are the real zeros of the set of analytic functions
\begin{equation} \label{eq:truemm}
\omega \mapsto \tilde{\alpha}_{\bf n}(\omega)\coth(\tilde{\alpha}_{\bf n}(\omega))  + \beta_{\bf n}(\omega),
\quad {\bf n} = (n_2, n_3) \in \N^2, \,|{\bf n}| > 0,
\end{equation}
in which now $\tilde{\alpha}_{\bf n}(\omega) = \sqrt{\pi^2 n_2^2/L_2^2 + \pi^2 n_3^2/L_3^2 - (1+\delta)\omega^2}$. Taking $L_2 = 1$, $L_3 = 2$ and
$\delta = 10$, we have $\sigma_{e}(V) = (-\infty,-\pi/2]\cup \{0\} \cup [\pi/2,+\infty)$. Elementary numerics show that the gap $(-\pi/2,\pi/2)$ contains
four eigenvalues, given approximately by $\pm 1.4622$ (both simple) and $\pm 1.5643$ (both multiplicity 2). These eigenvalues can be approximated
without pollution using a domain truncation method: this follows immediately from Theorem \ref{theorem: main res}, by verifying that
$\lambda_{e,\min}^\Omega = \pi^2/4$ and since $ \sigma_e(P_\nabla \cW(\cdot) |_{\nabla \!\dot H^1_0(\Omega)})=\{0\}\subset\sigma_e(V)$.
 It may also be seen from the fact that, just as in Example \ref{ex:waveguide}, the functions (\ref{eq:truemm}), whose zeros
are the eigenvalues, are the locally uniform limits as $n\to\infty$ of the functions
\[ \omega \mapsto
 \tilde{\alpha}_{\bf n}(\omega)\coth(\tilde{\alpha}_{\bf n}(\omega))  + \beta_{\bf n}(\omega)\coth(\beta_{\bf n}(\omega)(X_n-1)),
\]
whose zeros are the eigenvalues for the truncated domains. Thus we have a total absence of spectral pollution in this self-adjoint example
despite the fact that, by \cite[Thm.\ 3.8]{MR4083777}, it has $W_e(V) = \R$.
\end{example}

%%%%%%%%%%%%%%%%%%%%%%%%%%%%%%%%%%%%%%%%%%%%%%%%%%%%%%%%%%%%%%%%%%%%%%%%

\section{Proofs of the spectral enclosure result and resolvent estimate}
\label{sec:proof1}

In this section we prove the spectral enclosure in Theorem~\ref{thm:spec-encl} and the resolvent estimate in Theorem \ref{thm:res-est-V}. We also
show some auxiliary results that are used for the spectral pollution result.

Since $\eps$ and $\mu$ are bounded and uniformly positive, the linear Maxwell pencil $V(\cdot)$ in \eqref{eq:Vdef}
admits the factorisation
\begin{equation}
\label{Vfact1}
 V(\omega)= \begin{pmatrix} \eps^{1/2} & 0 \\ 0 & \mu^{1/2}\end{pmatrix}(\cA-\omega I) \begin{pmatrix} \eps^{1/2} & 0 \\ 0 & \mu^{1/2}\end{pmatrix},
\end{equation}
in which
\begin{equation}
\begin{aligned}
\label{def:calA}
 & \cA  :=
  \begin{pmatrix}
	-\I \epsilon^{-\frac 12} \sigma \epsilon^{-\frac 12}  & - \I \epsilon^{-\frac 12} \curl \mu^{-1/2} \\
	\I \mu^{-1/2} {\curl_0} \epsilon^{-\frac 12} &  0
	\end{pmatrix},\\
& \dom (\cA) := \eps^{1/2} H_0(\curl, \Omega) \oplus \mu^{1/2}H(\curl, \Omega).	
\end{aligned}
\end{equation}
%with domain $\dom (\cA) = \eps^{1/2} H_0(\curl, \Omega) \oplus \mu^{1/2}H(\curl, \Omega)$.

\begin{proof}[Proof of Theorem {\rm \ref{thm:spec-encl}.}]
Since the matrix multiplication operators $\eps$ and $\mu$ are bound\-ed and uniformly positive, $V(\omega)$ is bijective if and only if so is ${\mathcal A}-\omega$, and hence $\sigma(V)=\sigma(\cA)$. Observe that
%the spectra of the linear pencil $V$ coincides with the spectrum of the operator matrix
\begin{align}
\label{eq:cA}
  \cA-\omega  = \begin{pmatrix}
	-\I \eps^{-1/2} \sigma \eps^{-1/2} -\omega \!&\! \I \eps^{-1/2} \curl \mu^{-1/2} \\
	-\I \mu^{-1/2} \curl_0 \eps^{-1/2} \!&\! -\omega
	\end{pmatrix}
	=:
	\begin{pmatrix}
	-\I Q  & B \\ B^* & 0
	\end{pmatrix} - \omega; % \\
%\dom \cA &:= \eps^{1/2} H_0(\curl,\Omega) \oplus \mu^{1/2} H(\curl,\Omega);	
\end{align}
note that $(\mu^{-1/2} \curl_0 \eps^{-1/2} )^* = \eps^{-1/2} \curl \mu^{-1/2} $ since $\mu^{-1/2}$ is bounded and $\eps^{-1/2}$ is bounded
with range equal to the whole space, see \cite{MR230116}.
Since $\cA$ is a bounded per\-turbation of the self-adjoint off-diagonal part of $\cA$, it is obvious that both the upper and lower half-plane contain at least one point of the resolvent set of $\cA$. %This proves that 
Hence it suffices to prove the claimed enclosures 
for the approximate point spectrum~$\sigma_{app}(\cA)$.

So let $\omega \!\in\! \sigma_{app}(\cA)$. Then there exists a sequence $((f_n,g_n)^t)_{n\in\N} \!\subset\!  \dom(B^*) \oplus \dom(B)$,
 $\norma{f_n}^2 + \norma{g_n}^2 = 1$, with
\begin{align}
  (-\I Q - \omega) f_n + B g_n & =: h_n \to 0,  \quad n\to\infty, \label{proof:eigeneq1} \\
  B^*f_n - \omega g_n & =: k_n \to 0, \quad n\to\infty. \label{proof:eigeneq2}
\end{align}
%From \eqref{proof: eigeneq1} we deduce that $g \notin \ker(B)$; for, if by contradiction $g \in \ker B$, then $(-iM - \omega) f = 0$, which implies $f = %0$, since $(-iM - \omega)$ is invertible due to $\re \omega \neq 0$.
If $\omega\!=\!0$, there is nothing to show. Hence we can suppose that $\omega \!\ne\! 0$.
In this case $f_n \!\ne\! 0$ for sufficiently large $n\!\in\!\N$ since otherwise \eqref{proof:eigeneq2} would imply the contradiction $g_n\!\to\! 0$, $n\!\to\!\infty$; hence, without loss of generality we can assume that $f_n \!\ne\! 0$, $n\!\in\!\N$.

If we decompose $g_n = g_n^1 + g_n^2$ with $g_n^1 \in (\ker B)^\perp$, $g_n^2 \in \ker B = (\ran B^*)^\perp$, then $\norma{g_n^1}^2+\norma{g_n^2}^2=\norma{g_n}^2\le 1$, $n\in\N$. Now we take the scalar products with $g_n^1$ and $g_{n}^2$, respectively, in \eqref{proof:eigeneq2}, to conclude \vspace{-0.5mm} that
\begin{align}
   \langle B^*f_n,g_n^1\rangle - \omega \langle g_n^1,g_n^1\rangle & = \langle k_n,g_n^1\rangle \to 0,  \quad n\to\infty, \label{proof:eqnum2}\\
                  - \omega \langle g_n^2,g_n^2\rangle & = \langle k_n,g_n^2\rangle \to 0,  \quad n\to\infty. \label{proof:eigeneq2a}
\end{align}
Taking the scalar product with $f_n$ in \eqref{proof:eigeneq1}, we arrive at
\begin{equation}
\label{proof: eqnum1}
 \langle (-\I Q - \omega) f_n, f_n \rangle + \langle Bg_n^1, f_n\rangle  = \langle h_n,f_n\rangle \to 0, \quad n\to\infty.
\end{equation}
If we subtract the real part of \eqref{proof: eqnum1} from the real part of \eqref{proof:eqnum2}, it follows that
\[
   - \re \omega \norma{g_n^1}^2 - \re \langle (-\I Q - \omega) f_n, f_n \rangle = \re \big( \langle k_n,g_n^1\rangle-\langle h_n,f_n\rangle \big)
	 \to 0, \quad n\to\infty.
	%= 0 \:\: \Rightarrow \:\:  \re \omega (-\norma{g_1}^2 + \norma{f}^2) = 0.
\]
Since $Q=\eps^{-1/2} \sigma \eps^{-1/2}$ is a self-adjoint matrix multiplication operator, this implies
\begin{align}
\label{proof:eqnum3}
  \re \omega \big( \norma{f_n}^2 - \norma{g_n^1}^2 \big) \to 0, \quad n\to\infty.
\end{align}
If we add the imaginary parts of \eqref{proof:eqnum2} and \eqref{proof: eqnum1}, we obtain
\[
   -\im \omega \norma{g_n^1}^2 + \im \langle (-\I Q - \omega) f_n, f_n\rangle = \im \big( \langle k_n,g_n^1\rangle+\langle h_n,f_n\rangle \big)\to 0, \quad n\to\infty,
\]
and   hence
\begin{align}
\label{proof:eqnum4}
   (\im \omega) \big(\norma{g_n^1}^2 + \norma{f_n}^2 \big) + \langle Qf_n,f_n\rangle  \to 0, \quad n\to\infty.
\end{align}
Since $\norma{f_n}^2 + \norma{g_n^1}^2 + \norma{g_n^2}^2 =\norma{f_n}^2 + \norma{g_n}^2 = 1$ and $\norma{g_n^2}^2 \to 0$, $n\to\infty$, by
\eqref{proof:eigeneq2a}, we have $\norma{f_n}^2 + \norma{g_n^1}^2\to 1$, $n\to\infty$; hence we can assume without loss of generality that
$\norma{f_n}^2 + \norma{g_n^1}^2 \ge c_1 >0$ with $c_1\in(0,1]$.

Since $\omega\ne 0$, either \eqref{proof:eqnum3}  or \eqref{proof:eqnum4} shows that $f_n \to 0$, $n\to\infty$, implies the contradiction $\norma{g_n^1} \to 0$, $n\to\infty$. Hence, if $\omega \ne 0$, we can assume without loss of generality that $\norma{f_n} \ge c_2 >0$ with $c_2\in(0,1]$. Then \eqref{proof:eqnum4} can be equivalently written \vspace{-2mm} as
\begin{align}
\label{proof:eqnum5}
  \underbrace{\frac{\norma{f_n}^2}{\norma{g_n^1}^2 + \norma{f_n}^2}}_{\in [0,1]} \underbrace{\frac{\langle Qf_n,f_n\rangle }{\norma{f_n}^2}}_{\in W(Q)}
	\to - \im \omega, \quad n\to \infty.
\end{align}
Since $Q$ is self-adjoint, its numerical range $W(Q)\!:=\!\{\langle Qf,f\rangle : f\in L^2(\Omega)^3, \|f\|=1\}$ 
satisfies $W(Q) \!=\! {\rm conv} \,\sigma(Q) \!\subset\! \big[0,\frac{\sigma_{\max}}{\eps_{\min}}\big]$ \vspace{-1mm} and
thus \eqref{proof:eqnum5} implies
\[
  \im \omega \in - {\rm conv}\, \big( \overline{W(Q) \cup \{0\} \big)} = \Big[-\frac{\sigma_{\max}}{\eps_{\min}}, 0 \Big],
\]
which proves the claimed estimate on the imaginary axis.

Now suppose that $\omega \in \C\setminus \I\R$, i.e.\ $\re \omega \neq 0$. Then \eqref{proof:eqnum3} implies that
\begin{equation}
\label{proof: equalitynorms}
   \norma{f_n}^2 - \norma{g_n^1}^2 \to 0, \quad n\to\infty.
\end{equation}
 %$\norma{f_n}^2 + \norma{g_n^1}^2 \to 1$, $n\to \infty$ by normalization and \eqref{proof: eigeneq2a}, and
Noting that $\big(\norma{f_n}^2 + \norma{g_n^1}^2\big) - 2 \norma{f_n}^2 = \norma{g_n^1}^2 - \norma{f_n}^2 \to 0$, $n\to \infty$
(due to \eqref{proof: equalitynorms}) and using this in \eqref{proof:eqnum4}, we obtain that
\begin{align}
\label{eq:last-but-one}
   2 \im \omega \norma{f_n}^2  + \langle Qf_n,f_n\rangle \to 0, \quad n\to\infty,
\end{align}
and \vspace{-1.5mm} hence
\begin{align*}
   - \frac 12 \underbrace{\frac {\langle Qf_n,f_n\rangle }{\norma{f_n}^2}}_{\in W(Q)} \to \im \omega, \quad n\to\infty.
\\[-9mm]	
\end{align*}
This proves that
\begin{align}
\label{eq:last}
  \omega \in \C\setminus \I\R \ \implies \ \im \omega \in -\frac 12\overline{W(Q)} \subset \Big[-\frac 12 \frac{\sigma_{\max}}{\eps_{\min}}, - \frac 12 \frac{\sigma_{\min}^\Omega}{\eps_{\max}} \Big].
\end{align}

In order to prove the second inequality for $\omega \in \C\setminus \I\R$, we use the \emph{reduced minimum modulus} of a closed linear operator $T$, defined by
\[
   \gamma(T) := \inf_{x\in \dom T} \frac {\|Tx\|}{{\rm dist}(x,\ker T)},
	 %\inf \{ \norma{B x} : x \in \dom(B) \cap \ker(B)^\perp, \, \norma{x} = 1 \}
\]
see e.g.\ \cite[Thm.\ IV.5.2, p.\ 231]{MR1335452}. Note that $\gamma(T) > 0$ if and only if $\ran T$ is closed;  in this case
$\gamma(T) = \norma{T^+}^{-1}$ where $T^+$ is the \emph{Moore-Penrose inverse} of $T$,
$\gamma(T)=\gamma(T^*)$, see \cite[Cor.\ IV.1.9]{MR0200692}, and, if $T\not\equiv 0$,
\begin{align}
\label{eq:rmm-spec}
  \gamma(T)^2 = \min \big( \sigma(T^*T) \setminus \{0\} \big) = \min \sigma( T^*T|_{\dom (T^*T) \cap(\ker T)^\perp}),
\end{align}
comp.\ \cite{MR803833} for the bounded case. In the unbounded case, $T^*T$ is self-adjoint
and its dense domain $\dom (T^*T)$ is a core for $T$, see \cite[Thm.\ V.3.24]{MR1335452}.~Hence
\begin{align*}
   \gamma(T)^2 &= \inf_{x\in \dom T \cap (\ker T)^\perp} \frac {\|Tx\|^2}{\|x\|^2}
	 = \inf_{x\in \dom T^*T \cap (\ker T)^\perp} \frac {\|Tx\|^2}{\|x\|} \\
		&= \inf_{x\in \dom T^*T \cap (\ker T)^\perp} \frac {(T^*Tx,x)}{\|x\|^2}
	 = \min \sigma( T^*T|_{ \dom (T^*T) \cap(\ker T)^\perp}).
	\vspace{-1cm}
\end{align*}%
For $B\!=\!\I \eps^{-1/2} \curl \mu^{-1/2}$, we have $\dom B\!=\!\mu^{1/2} H(\curl,\Omega)$, $\ker B \!=\! \mu^{1/2} \ker \curl$ and thus
\begin{align*}
\gamma(B) %= \inf_{x\in \dom B} \frac {\|Bx\|}{{\rm dist}(x,\ker B)}
&= \!\! \inf_{x\in \mu^{1/2} H(\curl,\Omega)} \frac{\|\eps^{-1/2}\curl \mu^{-1/2}x\|}{{\rm dist}(x,\mu^{1/2}\ker\curl)}
= \!\! \inf_{u\in H(\curl,\Omega)} \frac{\|\eps^{-1/2}\curl u\|}{{\rm dist}(\mu^{1/2}u,\mu^{1/2}\ker\curl)} \\
& \ge \frac 1{{\eps_{\max}}^{\!\!\!\!\!1/2}} \inf_{u\in H(\curl,\Omega)} \frac{\|\curl u\|}{{\rm dist}(\mu^{1/2}u,\mu^{1/2}\ker\curl)}\\
& \ge \frac 1{{\eps_{\max}}^{\!\!\!\!\!1/2}{\mu_{\max}}^{\!\!\!\!\!1/2}} \inf_{u\in H(\curl,\Omega)}
\frac{\|\curl u\|}{{\rm dist}(u,\ker\curl)} \\
& = \frac 1{{\eps_{\max}}^{\!\!\!\!\!1/2}{\mu_{\max}}^{\!\!\!\!\!1/2}} \, \gamma(\curl)
= \frac 1{{\eps_{\max}}^{\!\!\!\!\!1/2}{\mu_{\max}}^{\!\!\!\!\!1/2}} \, \gamma(\curl_0).
\end{align*}
Here, we have used $\gamma(T)=\gamma(T^*)$ to replace $\curl$ by $\curl_0$ at the last step. Also, in the second estimate,
we have used the equality
\[
   {\rm dist}\big(\mu^{1/2}u,\mu^{1/2}\ker\curl\big)
	= \inf\limits_{y\in \ker \curl}\big\|\mu^{1/2}u - \mu^{1/2}y\big\| \le {\mu_{\max}}^{\!\!\!\!\!1/2} {\rm dist}(u,\ker\curl).
\]
If $\lambda_{\min}^\Omega=0$, then $\ran \curl_0$ is not closed and hence $\gamma(\curl_0)=0$.
If $\lambda_{\min}^\Omega>0$, then $\ran \curl_0$ is closed and thus $(\ker \curl_0)^\perp = \ran \curl \subset H(\Div 0,\Omega)$.
Hence, by \eqref{eq:rmm-spec}, in both cases, it follows \vspace{-1mm}that
\begin{align}
\label{eq:est-gamma}
  \gamma(B) \ge \frac 1{{\eps_{\max}}^{\!\!\!\!\!1/2}{\mu_{\max}}^{\!\!\!\!\!1/2}} {\lambda_{\min}^\Omega}^{\!\!\!\!\!1/2}.
\\[-8mm]
\nonumber
\end{align}

Now we can estimate
\begin{align*}
  \gamma(B)^2 \norma{f_n}^2 & \!\!=\! \gamma(B)^2 \norma{g_{n}^1}^2 \!-\! \gamma(B)^2 \big(\norma{g_{n}^1}^2 \!-\! \norma{f_n}^2 \big)
  \!\le\! \| B g_n^1 \|^2 \!-\! \gamma(B)^2 \big(\norma{g_{n}^1}^2 \!-\! \norma{f_n}^2 \big) %\\
%& = \|B g_n\|^2 - \gamma(B)^2 \big(\norma{g_1}^2 - \norma{f_n}^2 \big)
\end{align*}
and further, since $g_n^2 \in \ker B$ and $\|f_n\| \ge c_2$ because $\omega \ne 0$,
\begin{align}
\label{proof: ineq red modulus}
  \hspace{-2mm} 
	\gamma(B)^2 &
	\le \frac{\|Bg_n\!-\!(-\I Q\!-\!\omega)f_n\|^2}{\|f_n\|^2}  
	\!+\! \frac{\|(-\I Q\!-\!\omega)f_n\|^2}{\|f_n\|^2}
	\!-\! \gamma(B)^2 \frac{\norma{g_{n}^1}^2 \!-\! \norma{f_n}^2}{\|f_n\|^2}. \ \
\end{align}
For the middle term on the right-hand side we have
\begin{align}
\label{proof:ineq_M_gamma-2a}
  % \gamma(B)^2  \leq
	\frac{\|(-\I Q-\omega)f_n\|^2}{\|f_n\|^2} = \frac{\|Qf_n\|^2}{\|f_n\|^2} + 2 \im \omega \frac{\langle Qf_n,f_n\rangle }{\|f_n\|^2} + |\omega|^2.
\end{align}
Using that $Q$ is self-adjoint, we can estimate
\begin{align}
\label{proof:ineq_M_gamma-2}
  \frac{\| Qf_n \|^2}{\|f_n\|^2} = \frac{\langle Qf_n,Qf_n\rangle }{\|f_n\|^2} \le \|Q\| \frac{\langle Qf_n,f_n\rangle}{\|f_n\|^2}, \quad n\in\N.
	%\to - 2 \im \omega \|M\| = 2 |\im \omega| \|M\|, \quad n\to\infty.
\end{align}
Altogether, by \eqref{proof: ineq red modulus}, \eqref{proof:ineq_M_gamma-2a}, \eqref{proof:ineq_M_gamma-2} and since $\im \omega \le 0$ by \eqref{eq:last}, we arrive at
\begin{align*}
  \gamma(B)^2 	\!\le\! \big( \|Q\| \!-\! 2 |\im \omega| \big) \frac{\langle Qf_n,f_n\rangle }{\|f_n\|^2} \!+\! |\omega|^2
	& \!+\! \frac{\|Bg_n\!-\!(-\I Q\!-\!\omega)f_n\|^2}{\|f_n\|^2} \\
	&\!-\! \gamma(B)^2 \frac{\norma{g_{n}^1}^2\!-\! \norma{f_n}^2}{\|f_n\|^2}.
\end{align*}
If we use \eqref{eq:last-but-one} and that by \eqref{proof:eigeneq1}and \eqref{proof: equalitynorms}, the last two terms tend to $0$, together with
$\|Q\| =\| \eps^{-1/2}\sigma \eps^{-1/2}\| \le \frac{\sigma_{\max}}{\eps_{\min}}$, we obtain
\[
  \gamma(B)^2 \le \big( \|Q\| - 2 |\im \omega| \big) 2 |\im \omega|  + |\omega|^2 = 2 \|Q\| |\im \omega| - 3 |\im \omega|^2 + |\re\omega|^2.
\]
Now the remaining claimed inequality follows from \eqref{eq:est-gamma}.
\end{proof}

The following remark details the three different possible shapes of the spectral enclosure near the imaginary axis and the corresponding thresholds of $\lambda_{\min}^\Omega$.

\begin{rem}
\label{rem:spec-encl}
%\marginpar{\footnotesize \cy{New}}
Theorem \ref{thm:spec-encl} shows that $\sigma(V) \setminus \I\R$ cannot approach  $\sigma(V) \cap \I\R \subset$ $\I\Big[\!\!-\!\!\tfrac{\sigma_{\max}}{\eps_{\min}}\!,0 \Big]$ in the lower half $\I\Big[\!\!-\!\!\tfrac{\sigma_{\max}}{\eps_{\min}},\!-\!\tfrac 12\tfrac{\sigma_{\max}}{\eps_{\min}}\Big]$
%The last inequality in the spectral enclosure yields
and that there are 3 thresholds \vspace{-1mm} of $\lambda_{\min}^\Omega$
for where $\sigma(V) \!\setminus\! \I\R$ may approach the upper half $\I\Big[\!-\!\frac 12\frac{\sigma_{\max}}{\eps_{\min}}\!,0 \Big]$,~see Figure~\ref{fig:enclosure}:
%if $\lambda_{\min}^\Omega\!>\!0$:
%For example, in the physically relevant case $\sigma_{\min} \!=\! 0$, there are three thresholds for $\lambda_{\min}^\Omega$ of which only the %last one was stated above:

 \hspace{-4mm}i)  if $\lambda_{\min}^\Omega\!>\!0$, then $\sigma(V) \!\setminus\!\I\R$ does not approach $\I\Big[\!\!-\!\!\frac 12\frac{\sigma_{\max}}{\eps_{\min}}\!,0 \Big]$ near $0$;

 \hspace{-4mm}ii) if $\lambda_{\min}^\Omega\!>\!\frac 14 \frac{\!\sigma_{\max}^2 \eps_{\max} \mu_{\max}\!}{\eps_{\min}^2}\!$, then
$\sigma(V) \!\setminus \I\R$ does not approach $\I\Big[\!-\!\frac 12\frac{\sigma_{\max}}{\eps_{\min}}\!,\!0 \Big]$\,near~$-\I \frac 12\frac{\sigma_{\max}}{\eps_{\min}}$;

 \hspace{-4mm}iii) \!if $\lambda_{\min}^\Omega\!>\!\frac 13 \frac{\sigma_{\max}^2 \eps_{\max} \mu_{\max}}{\eps_{\min}^2}$, then
$\sigma(V) \!\setminus\! \I\R$ does not approach $\I\Big[\!-\!\frac 12\frac{\sigma_{\max}}{\eps_{\min}},0 \Big]$ at all.
\end{rem}

The following special case in Theorem \ref{thm:spec-encl}  of constant matrix functions  $\eps$, $\sigma$, but still varying $\mu$,
is useful e.g.\ for `limiting problems at $\infty$' if they exist.

\begin{corollary}
\label{cor:spec-encl}
%\marginpar{\footnotesize\cy{$\mu$ can now vary here.}}
If the matrix functions $\eps$, $\sigma$ are constant multiples of the identity,
$\eps\!\equiv\! \eps_\infty \id \!>\!0$, $\sigma\!\equiv\! \sigma_\infty \id \!\ge\! 0$, then $\eps_{\min} \!=\! \eps_{\max} \!=\! \eps_\infty$, $\sigma_{\min} \!=\! \sigma_{\max} \!=\! \sigma_\infty$ \vspace{-1mm} and~thus
\begin{align*}
  \sigma(V) \subset \I\,
	\Big[\!-\!\frac{\sigma_\infty}{\eps_\infty},0 \Big] \cup 	
	\Big\{ \omega\!\in\! \C \!\setminus\! \I\R: \im \omega = - \frac 12 \frac{\sigma_\infty}{\eps_\infty}, \
 %&(\re \omega)^2 \!-\! 3 (\im \omega)^2 \!+\! 2 \frac{\sigma_0}{\eps_0}  |\im \omega| \!\geq\! \frac{\lambda_{\min}^\Omega}{\eps_0\mu_{\max}}
(\re \omega)^2 \!+\! \frac 14  \frac{\sigma_\infty^2}{\eps_\infty^2} \!\geq\! \frac{\lambda_{\min}^\Omega}{\eps_\infty\mu_{\max}}
 \Big\};
\end{align*}%
in particular, $\sigma(V) \cap \I\R \!\subset\! \I \big[\!-\!\frac{\sigma_\infty}{\eps_\infty},0 \big]$ is isolated from~$\sigma(V) \setminus \I\R$
if $\lambda_{\min}^\Omega \!>\! \frac 14 \frac{\sigma_\infty \mu_{\max}}{\eps_\infty}$.
\end{corollary}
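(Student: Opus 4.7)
The plan is to deduce this corollary directly from Theorem~\ref{thm:spec-encl} by specialising the non-convex enclosure to the case of constant scalar coefficients. First I would observe that under the hypothesis $\eps \equiv \eps_\infty \id$, $\sigma \equiv \sigma_\infty \id$, the optimal constants from \eqref{conditions eps mu sigma} collapse so that $\eps_{\min}=\eps_{\max}=\eps_\infty$ and $\sigma_{\min}=\sigma_{\max}=\sigma_\infty$. Consequently the horizontal strip in the enclosure of Theorem~\ref{thm:spec-encl} where the non-imaginary spectrum can live,
\[
\Big[-\tfrac 12 \tfrac{\sigma_{\max}}{\eps_{\min}},\,-\tfrac 12 \tfrac{\sigma_{\min}}{\eps_{\max}}\Big],
\]
degenerates to a single horizontal line $\im\omega=-\tfrac 12 \tfrac{\sigma_\infty}{\eps_\infty}$. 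Thus every $\omega\in\sigma(V)\setminus\I\R$ must satisfy this equality.

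Next I would substitute $\im\omega = -\tfrac 12 \tfrac{\sigma_\infty}{\eps_\infty}$ and $\tfrac{\sigma_{\max}}{\eps_{\min}} = \tfrac{\sigma_\infty}{\eps_\infty}$ into the second inequality from Theorem~\ref{thm:spec-encl}, which reads $(\re\omega)^2 - 3(\im\omega)^2 + 2 \tfrac{\sigma_{\max}}{\eps_{\min}}|\im\omega| \ge \tfrac{\lambda_{\min}^\Omega}{\eps_{\max}\mu_{\max}}$. A short algebraic simplification gives
\[
  (\re\omega)^2 - \tfrac 34 \tfrac{\sigma_\infty^2}{\eps_\infty^2} + \tfrac{\sigma_\infty^2}{\eps_\infty^2} = (\re\omega)^2 + \tfrac 14 \tfrac{\sigma_\infty^2}{\eps_\infty^2} \ge \tfrac{\lambda_{\min}^\Omega}{\eps_\infty \mu_{\max}},
\]
which is precisely the condition appearing in the corollary. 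The imaginary-axis portion $\I[-\sigma_\infty/\eps_\infty,0]$ carries over unchanged from Theorem~\ref{thm:spec-encl}.

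For the concluding statement about isolation of $\sigma(V)\cap\I\R$ from $\sigma(V)\setminus\I\R$, I would argue that, since $\im\omega$ is pinned at $-\tfrac 12 \tfrac{\sigma_\infty}{\eps_\infty}$ on the off-axis part, any accumulation point of $\sigma(V)\setminus\I\R$ on $\I\R$ would have to be exactly $-\I \tfrac 12 \tfrac{\sigma_\infty}{\eps_\infty}$ and would require $\re\omega$ to be arbitrarily close to $0$. By the displayed inequality, this is possible only if $\tfrac 14 \tfrac{\sigma_\infty^2}{\eps_\infty^2} \ge \tfrac{\lambda_{\min}^\Omega}{\eps_\infty\mu_{\max}}$; hence the strict reverse inequality, matching case~ii) of Remark~\ref{rem:spec-encl}, guarantees isolation.

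There is no real obstacle here — the corollary is a genuine specialisation of Theorem~\ref{thm:spec-encl} — but the one point deserving care is the algebraic verification that the constants combine correctly and the threshold for isolation agrees with the one already identified in Remark~\ref{rem:spec-encl}~ii).
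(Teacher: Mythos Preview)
Your proposal is correct and follows exactly the approach the paper intends: the corollary is stated there without proof, as an immediate specialisation of Theorem~\ref{thm:spec-encl}. Your explicit verification that the strip $[-\tfrac12\tfrac{\sigma_{\max}}{\eps_{\min}},-\tfrac12\tfrac{\sigma_{\min}}{\eps_{\max}}]$ collapses to the line $\im\omega=-\tfrac12\tfrac{\sigma_\infty}{\eps_\infty}$, together with the algebraic simplification of the hyperbolic inequality, is precisely what is needed, and your identification of the isolation threshold with case~ii) of Remark~\ref{rem:spec-encl} is correct.
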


Next we prove Theorem \ref{thm:res-est-V} providing a resolvent norm estimate of $V(\cdot)$.

\begin{proof}[Proof of Theorem {\rm \ref{thm:res-est-V}.}]
Let $\omega\!\in\!\C\!\setminus\!\I\R$, $\im \omega \!<\!  -  \frac 12 \frac {\sigma_{\max}}{\eps_{\min}}$
or $\omega \in \I\R$, $\im\omega<-\frac {\sigma_{\max}}{\eps_{\min}}$. Then $\omega \!\in\! \rho(V)=\rho(\cA)$ by Theorem \ref{thm:spec-encl}~and
$\|V(\omega)^{-1}\| \!\le\! \frac 1{\min\{\eps_{\min},\mu_{\min}\}} \|(\cA\!-\!\omega)^{-1}\|$ due to the factorisation \eqref{Vfact1} where $\cA$ is the oper\-ator matrix in \eqref{eq:cA}.
In order to estimate the resolvent of $\cA$, we continue to use the notation $Q\!=\!\eps^{-1/2}\sigma\eps^{-1/2}$, $B\!=\!\eps^{-1/2}\curl\mu^{-1/2}$ introduced in  \eqref{eq:cA}.

Since $\cA$ is a self-adjoint operator perturbed by the bounded operator $\diag(-\I Q,0)$ and $W(Q)\!\subset\![0,q_{\max}]$ with
$q_{\max}\!:=\!\frac {\sigma_{\max}}{\eps_{\min}}$, a numerical range argument  for $\cA$ yields the resolvent estimate $\|(\cA\!-\!\omega)^{-1} \|\le \frac 1 {|\im\omega|-q_{\max}}$ for all $\omega \in \C$, $\im\omega<-q_{\max}$.

Now let  $\omega\!=\!x\!+\!\I y$  with $x\!>\!0$, $y\!<\! -\frac {q_{\max}}2$; the proof is analogous if $x\!<\!0$.
Let $\varphi\in (0,\frac\pi 2)$ be the argument of $x+\I \frac{q_{\max}}2$.
Let $\mathcal B=\diag(\e^{-\I \varphi},\e^{\I\varphi})$ in $L^2(\Omega)^3\oplus L^2(\Omega)^3$, and let $S\!:=\!\{z\in\C: {\rm arg}(z-\gamma)\in (-\pi+\varphi,-\varphi)\}$
be the open sector with vertex $\gamma\!:=\!x-\I q_{\max}/2$ and semi-angle $\frac \pi 2 \!-\! \varphi$. Note that $\omega\in S$.

We claim that $S\cap W(\mathcal B\cA,\mathcal B)\!=\!\emptyset$,
where $W(\mathcal B\cA,\mathcal B)\!:=\!\{z\!\in\!\C:\,0\!\in\!\overline{W(\mathcal B\cA\!-\!z\mathcal B)}\}$.
Then \cite[Thm.\ 4.1 ii)]{BM} implies $S\subset \rho(\cA)$ \vspace{-1mm} with
$$
 \|(\cA-z)^{-1}\|\leq \frac{1}{\cos(\varphi)\, \dist(z,\partial S)}, \quad z\in S.
\vspace{-1.5mm}
 $$

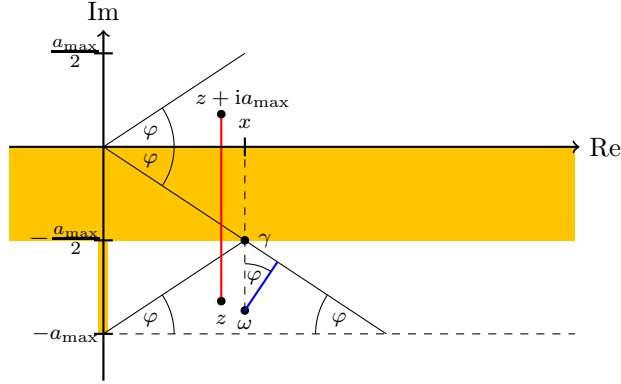
\begin{figure}[htb]
\begin{center}
\begin{tikzpicture}[scale=0.62]
\filldraw[fill=orange!45!yellow, draw=none] (-2,-2) to  (10,-2) to (10,0) to (-2,0);
\filldraw[fill=orange!45!yellow, draw=none] (-0.1,-4) to  (0.1,-4) to (0.1,0) to (-0.1,0);
\draw[thick] (-0.2,-4) -- ++ (0.4,0);
\draw[thick] (-0.2,-2) -- ++ (0.4,0);
\draw[thick] (-0.2,2) -- ++ (0.4,0);
\node at (-0.6,2) {$\frac{a_{\max}}{2}$};
\node at (-0.8,-2) {$-\frac{a_{\max}}{2}$};
\node at (-0.8,-4) {\footnotesize $-a_{\max}$};
\draw[thick] (3,-0.2) -- ++ (0,0.4) node[above] {\footnotesize$x$};
\draw (0,0)--(3,2);
\draw[dashed] (0,-4) to (10,-4);
\draw  (0,0) to (6,-4);
\draw (1.5,0) arc [start angle=0, end angle=33.69, x radius=1.5, y radius=1.5];
\node at (1,0.3) {\footnotesize$\varphi$};
\draw (1.5,0) arc [start angle=0, end angle=-33.69, x radius=1.5, y radius=1.5];
\node at (1,-0.3) {\footnotesize$\varphi$};
\draw (0,-4)--(3,-2);
\draw (1.5,-4) arc [start angle=0, end angle=33.69, x radius=1.5, y radius=1.5];
\node at (1,-3.7) {\footnotesize$\varphi$};
\draw (4.5,-4) arc [start angle=180, end angle=146.31, x radius=1.5, y radius=1.5];
\node at (5,-3.7) {\footnotesize$\varphi$};
\node[fill, draw,circle,label={[label distance=0mm]right:\footnotesize $\gamma$}, inner sep=-1] at (3,-2) {};
\draw[red, thick] (2.5,-3.3) to (2.5, 0.7);
\node[fill, draw,circle,label=below: \footnotesize$z$, inner sep=-1] at (2.5,-3.3) {};
\node[fill, draw,circle,label={[label distance=-1mm]above right:\footnotesize \hspace{-4.3mm}$z+\I a_{\max}$}, inner sep=-1] at (2.5,0.7) {};
%\node[fill, draw,circle,label=above: \footnotesize $z+\I a_{\max}$, inner sep=-1] at (2.5,0.7) {};
\node[fill, draw,circle,label={[label distance=-0.6mm]below:\footnotesize $\omega$}, inner sep=-1] at (3,-3.5) {};
\draw[dashed] (3,-3.5) to (3,0);
\draw[blue, thick] (3,-3.5) to (3.692,-2.462);
\draw (3,-2.5) arc [start angle=90, end angle=56.31, x radius=1, y radius=1];
\node at (3.2,-2.8) {\footnotesize$\varphi$};
%\draw (3,2) node {.};
%\node at (3,-1.7) {$\gamma$};
\draw[->,thick] (-2,0)--(10.1,0) node[right]{Re};
\draw[->,thick] (0,-5)--(0,2.5) node[above]{Im};
\end{tikzpicture}
\vspace{-5mm}
\end{center}
\caption{\small The geometry in the proof of Theorem \ref{thm:res-est-V}: The blue line measures the distance of $\omega\!=\!x\!+\!\I y$ to $\partial S$; since the lines meet at a right angle, the angle between the blue and the vertical dashed lines is also~$\varphi$.}
\label{fig:proof}
\vspace{-2mm}
\end{figure}

%Using % changed for better linebreak
By means of Figure \ref{fig:proof}, one can check that for $z\!=\!\omega$ we have $\dist(z,\partial S)\!=\!(|y|\!-\!\frac{q_{\max}}2) \cos(\varphi)$ and $\cos^2(\varphi)=x^2/(x^2+(\frac{q_{\max}}2)^2)$, which \vspace{-1mm} implies
$$
   \|(\cA-\omega)^{-1}\|\leq \frac 1{|y|-\frac{q_{\max}}2} \Big( 1+ \frac{(\frac{q_{\max}}2)^2}{x^2} \Big).
\vspace{-1mm}
$$

To prove that $S\cap W(\mathcal B\cA,\mathcal B)=\emptyset$, assume that there exists $z\in S\cap W(\mathcal B\cA,\mathcal B)$.
This implies that there  is a normalised sequence $((f_n,g_n)^t)_{n\in\N}\subset\dom(\cA)$ with $\big\langle \mathcal B(\cA-z)(f_n,g_n)^t,(f_n,g_n)^t\big\rangle\to 0$ as $n\!\to\!\infty$; in particular, the sequence also converges to $0$ if we take imaginary parts.
Let $t_n:=\|f_n\|^2\in [0,1]$. Then one can write $\langle Qf_n,f_n\rangle=a_n t_n$ for some $a_n\in W(Q)\subset [0,q_{\max}]$.
We obtain
$$
  \im \big(t_n\e^{-\I\varphi}(\I a_n+z)+(1-t_n)\e^{\I\varphi}z\big) \to 0, \quad n\to\infty.
$$
Note that we take convex linear combinations of points in $\e^{-\I\varphi}(z\!+\!\I [0,q_{\max}])$ and $\{\e^{\I\varphi}z\}$.
Using that $z\!\in\! S$, one can see that both of these compact sets are in the open low\-er complex half-plane, so no sequence of convex linear combinations of points there\-in can converge to the real line.\,This contradiction proves $S\cap W(\mathcal B\cA,\mathcal B)\!=\!\emptyset$.\!\!%
\end{proof}

%%%%%%%%%%%%%%%%%%%%%%%%%%%%%%%%%%%%%%%%%%%%%%%%%%

\section{Spectral relations between $V$ and $\cL$}
\label{sec:V-L}

In this section we establish the intimate relations between the spectra of the linear Maxwell pencil $V(\cdot)$ in the product space $L^2(\Omega,\C^3) \oplus L^2(\Omega,\C^3)$ and of a quadratic operator pencil $\cL$ in the first component $L^2(\Omega,\C^3)$. They will be used~later for our description of the essential spectrum and for our results on spectral pollution for the original Maxwell problem.

%\textcolor{brown}{
% \cm{The text in brown has to be adapted and moved, should be in intro and at the beginning of the two sections.}
% For $\omega \in \C\setminus\{ 0\}$, the first Schur complement of the operator \chr{matrix} ${\mathcal A}$ is the operator in $L^2(\Omega)$ given by
% \begin{equation}
% \begin{aligned}
%  S_1(\omega)
%  &:= -\I \epsilon^{-\frac 12} \sigma \epsilon^{-\frac 12} \!-\! \omega \!-\! \epsilon^{-\frac 12} \curl \mu^{-1/2} (-\omega)^{-1} \mu^{-1/2} {\curl_0} \epsilon^{-\frac 12}   %& = \omega^{-1}\epsilon^{-\frac 12} \Big( -\I\omega \sigma - \omega^2 \eps + \curl \mu^{-1} {\curl_0} \Big)\epsilon^{-\frac 12}
% \\
% &=: \ \omega^{-1} \eps^{-\frac 12} {\mathcal L}(\omega) \eps^{-\frac 12},\\
% \dom(S_1(\omega))&:=\{u\in \eps^{1/2}H_0(\curl,\Omega):\,\mu^{-1}\curl \eps^{-1/2}u\in H(\curl,\Omega)\},
% \label{def:S_1}
% \end{aligned}
% \end{equation}
The quadratic operator pencil  $\cL$ in $L^2(\Omega)^3$ appears naturally in the matrix re\-presentation of the resolvent of $V(\cdot)$, see Theorem \ref{Prop: spectra S_1 and L}, and is defined \vspace{-0.5mm} by
%introduced in Proposition \ref{pLdef}.
\begin{equation}
\label{eq:defcL}
\begin{aligned}
\cL(\omega)&:=\curl\mu^{-1}{\curl_0}-\omega(\omega\eps+\I\sigma), \\
 \dom(\cL(\omega))&:=\{E\in H_0(\curl,\Omega):\,\mu^{-1}\curl E\in H(\curl,\Omega)\}.
\end{aligned}
\end{equation}
For studying the relations between the Maxwell pencil $V(\cdot)$ and $\cL$ we require some technical lemmas.
%}

% \textcolor{brown}{
% We therefore have several different approaches to \chr{study the spectral properties} of the Maxwell system \chr{\eqref{intro: Max1} and its truncated version \eqref{intro: Maxn}}:
% \begin{enumerate}
% \item through the linear \chr{operator} pencil $V(\cdot)$;
% \item through the linear operator \chr{matrix} $\cA$;
% \item through the first Schur complement $S_1$ of $\cA$;
% \item through the spectrum of the quadratic \chr{operator} pencil $\cL$.
% \end{enumerate}
% Different characterisations are convenient in different contexts. Before studying the extent to which these approaches coincide, we require a technical lemma.
% %We will prove that the spectra and spectral properties of $S_1$ and of ${\mathcal L}$ coincide on $\C\setminus \{0\}$, in particular,
% %\[
% %  \sigma_{\rm p}(S_1) \setminus \{0\} = \sigma_{\rm p}({\mathcal L}) \setminus \{0\}, \quad \sigma(S_1) \setminus \{0\} = \sigma(\cL) \setminus \{0\}.
% %\]
% }

\begin{lemma}
\label{lem:squareroot}
In $ L^2(\Omega)^3$ define the operators $T_0:=\mu^{-1/2}{\rm curl}_0$, $\dom T_0 \!=\! H_0(\curl,\Omega)$, and
$\cW(\omega):=-\omega(\omega\epsilon+\I\sigma)$, $\omega\in\C$.
Then $C_c^{\infty}(\Omega)^3$ is a core of $(T_0^*T_0+I)^{1/2}$, $\dom((T_0^*T_0+I)^{1/2})\!=\!H_0({\rm curl},\Omega)$, and, for all
\vspace{-0.5mm} $\omega \!\in\!\C$,
$$
 \mathcal L(\omega)%=T_0^*T_0+\cW(\omega)
 =(T_0^*T_0\!+\!I)^{1/2}\left(I\!+\!(T_0^*T_0\!+\!I)^{-1/2}\big(\cW(\omega)\!-\!I\big)(T_0^*T_0\!+\!I)^{-1/2}\right)(T_0^*T_0\!+\!I)^{1/2};
\vspace{-1mm}
$$
further, for all $t\geq \epsilon_{\min}^{-1/2}$, $\cL(\I t)$ is boundedly \vspace{-1.5mm} invertible,
\begin{equation}
\label{eq:resest}
\begin{aligned}
% &\|(T_0^*T_0+I)^{-1/2}\|\leq 1, \\
& \Big\|\Big(I\!+\!(T_0^*T_0+I)^{-1/2}\big(\cW(\I t)\!-\!I\big)(T_0^*T_0\!+\!I)^{-1/2}\Big)^{-1}\Big\|\leq 1,
\\
& \|\cL(\I t)^{-1}\|\leq 1.
\end{aligned}
\end{equation}
\end{lemma}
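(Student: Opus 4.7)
My strategy is to recognise $\cL(\omega)$ as a bounded perturbation of $T_0^*T_0 + I$, factor out $(T_0^*T_0 + I)^{1/2}$ using standard self-adjoint operator theory, and then exploit the positivity of $\cW(\I t)$ for the resolvent bound at $\omega = \I t$.

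First, I would verify that $T_0 = \mu^{-1/2}\curl_0$ is closed on $\dom T_0 = H_0(\curl, \Omega)$, which is immediate since $\mu^{-1/2}$ is a bounded, boundedly invertible self-adjoint multiplication operator and $\curl_0 = \curl^*$ is closed. Taking adjoints gives $T_0^* = \curl \, \mu^{-1/2}$, hence
\[
 T_0^* T_0 = \curl \, \mu^{-1} \curl_0, \quad \dom(T_0^*T_0) = \{ E \in H_0(\curl, \Omega) : \mu^{-1} \curl E \in H(\curl, \Omega) \},
\]
which is exactly $\dom(\cL(\omega))$. Setting $S := (T_0^*T_0 + I)^{1/2}$, standard self-adjoint theory identifies $\dom S$ as the form domain of $T_0^*T_0$, namely $\dom T_0 = H_0(\curl, \Omega)$, with $\|Su\|^2 = \|T_0 u\|^2 + \|u\|^2$ equivalent (via \eqref{conditions eps mu sigma}) to $\|u\|_{H(\curl, \Omega)}^2$. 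The core property for $C_c^\infty(\Omega)^3$ then reduces to the classical density of test functions in $H_0(\curl, \Omega)$ for Lipschitz domains.

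For the factorisation, I set $K(\omega) := S^{-1}(\cW(\omega) - I) S^{-1}$, which is bounded on $L^2(\Omega)^3$ since $\cW(\omega)$ is bounded. For $E \in \dom(\cL(\omega)) = \dom(T_0^*T_0)$ one has $SE \in \dom S$, and a direct computation yields
\[
 S(I + K(\omega)) S E = S^2 E + (\cW(\omega) - I) E = (T_0^*T_0 + I) E + (\cW(\omega) - I) E = \cL(\omega) E,
\]
which is the claimed identity.

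Finally, for $\omega = \I t$ with $t > 0$, the operator $\cW(\I t) = t^2 \eps + t \sigma$ is a bounded, non-negative, self-adjoint multiplication operator with $\cW(\I t) \geq t^2 \eps_{\min} I$ by \eqref{conditions eps mu sigma}. For $t \geq \eps_{\min}^{-1/2}$ this yields $\cW(\I t) - I \geq 0$, hence $I + K(\I t) \geq I$ as self-adjoint operators. Consequently $(I + K(\I t))^{-1}$ exists with norm at most $1$, and combining this with $\|S^{-1}\| \leq 1$ in the factorisation delivers $\|\cL(\I t)^{-1}\| \leq 1$. The only step requiring care is the identification of $\dom S$ and the core property, which is classical; the rest is a clean algebraic identity coupled with an elementary positivity argument.
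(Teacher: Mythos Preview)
Your proof is correct and follows essentially the same approach as the paper: identify $\dom S=\dom T_0$ via the form domain, reduce the core statement to the density of $C_c^\infty(\Omega)^3$ in $H_0(\curl,\Omega)$, read off the factorisation from $\cL(\omega)=T_0^*T_0+\cW(\omega)$, and obtain the bounds from $\cW(\I t)-I\geq 0$ for $t\geq\eps_{\min}^{-1/2}$ together with $\|S^{-1}\|\leq 1$. The paper cites Kato's second representation theorem and a square-root domain identity explicitly where you invoke ``standard self-adjoint theory'', but the argument is the same.
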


\begin{proof}
Since $T_0^*T_0$ is self-adjoint and non-negative, the square-root  $(T_0^*T_0\!+\!I)^{1/2} \ge\!I$ is self-adjoint, uniformly positive and boundedly invertible with
\begin{equation}
\label{est-leq1}
 \|(T_0^*T_0+I)^{-1/2}\|\leq 1
\end{equation}
and, e.g.\ by \cite[Prop.\ 3.1.9]{Haa}, $\dom((T_0^*T_0\!+\!I)^{1/2}) \!=\! \dom((T_0^*T_0)^{1/2}) \!=\! \dom (|T_0|)=\dom T_0 \!=\! H_0({\rm curl},\Omega)$. By the second representation theorem \cite[Thm.~VI.2.23]{MR1335452}, a subspace of $ L^2(\Omega)^3$ is a core of $(T_0^*T_0\!+\!I)^{1/2}$ if and only if it is a core of the associated quadratic~form
$$
    {\mathfrak t}[u,v] = \langle T_0u,T_0v\rangle+\langle u,v\rangle, \quad \dom {\mathfrak t} = \dom T_0 = H_0(\curl,\Omega).
$$
Since $C_c^{\infty}(\Omega)^3$ is a core of $T_0$, the first claim follows. The second claim, i.e.\ the operator factorisation of $\mathcal L(\omega)$, is obvious since $\cW(\omega)$ is bounded.

For all $\omega=\I t$ with $t\geq \epsilon_{\min}^{-1/2}$, we have $\cW(\I t)-I \ge 0$ and hence
\begin{align}
\label{eq:ge1}
  I\!+\!(T_0^*T_0\!+\!I)^{-1/2}\big(\cW(\I t)\!-\!I\big)(T_0^*T_0\!+\!I)^{-1/2} \ge I,
\end{align}
which implies the first estimate in \eqref{eq:resest};  the latter and \eqref{est-leq1} yield the last  estimate.
\end{proof}

\vspace{-1mm}

\begin{lemma}
\label{lemma: boundedness}
Let $\omega \!\in\! \rho(\cL)$. Then $\curl_0 \cL(\omega)^{-1}$ is a bounded operator in $ L^2(\Omega)^3$ and  $\cL(\omega)^{-1} \curl$, $\curl_0 \cL(\omega)^{-1}\!\curl$ are closable operators with bounded closures in~$ L^2(\Omega)^3\!$.
\end{lemma}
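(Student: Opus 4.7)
The plan is to exploit the factorisation of $\cL(\omega)$ provided by Lemma~\ref{lem:squareroot}. Set $B := (T_0^*T_0+I)^{1/2}$, which is self-adjoint with $B \geq I$, so that $B^{-1}$ is bounded on $L^2(\Omega)^3$ with $\|B^{-1}\|\leq 1$ and $\ran B^{-1} = \dom B = H_0(\curl,\Omega)$. Lemma~\ref{lem:squareroot} then gives $\cL(\omega) = B M(\omega) B$ with $M(\omega) := I + B^{-1}(\cW(\omega) - I) B^{-1}$ bounded on $L^2(\Omega)^3$. The first step I would carry out is to show that, since $\omega \in \rho(\cL)$, the operator $M(\omega)$ is boundedly invertible and
\[
   \cL(\omega)^{-1} = B^{-1} M(\omega)^{-1} B^{-1}.
\]
This will follow by setting $u := B^{-1} M(\omega)^{-1} B^{-1} f$ for $f \in L^2(\Omega)^3$ and checking, using $\ran B^{-1} \subset \dom B$ and the factorisation, that $u \in \dom \cL(\omega)$ with $\cL(\omega) u = f$; the converse direction forces $M(\omega)$ to be boundedly invertible. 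By the spectral theorem for the self-adjoint non-negative operator $T_0^*T_0$,
\[
   \|T_0 B^{-1}\|^2 = \|B^{-1}T_0^*T_0 B^{-1}\| = \sup_{s\geq 0}\frac{s}{s+1} \leq 1,
\]
so $\curl_0 B^{-1} = \mu^{1/2}(T_0 B^{-1})$ is bounded on $L^2(\Omega)^3$. Combined with the resolvent factorisation,
\[
   \curl_0 \cL(\omega)^{-1} = (\curl_0 B^{-1})\, M(\omega)^{-1}\, B^{-1}
\]
is a composition of three bounded operators, proving the first assertion.

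For the remaining two operators I would use the dual identity $\curl = T_0^*\mu^{1/2}$ on $H(\curl,\Omega)$: for $u \in H(\curl,\Omega)$ one has $\mu^{-1/2}(\mu^{1/2}u) = u \in H(\curl,\Omega)$, so $\mu^{1/2}u \in \dom T_0^*$ and $T_0^*(\mu^{1/2}u) = \curl u$. Taking Hilbert-space adjoints of the bounded operator $T_0 B^{-1}$ yields $B^{-1}T_0^* \subseteq (T_0 B^{-1})^*$ with $(T_0 B^{-1})^*$ bounded on $L^2(\Omega)^3$; since $\dom T_0^*$ is dense, $B^{-1}T_0^*$ is bounded on its domain, hence closable with bounded closure equal to $(T_0 B^{-1})^*$. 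Consequently $B^{-1}\curl = (B^{-1}T_0^*)\mu^{1/2}$ is closable on $H(\curl,\Omega)$ with bounded closure, and the identities
\[
   \cL(\omega)^{-1}\curl = B^{-1} M(\omega)^{-1}(B^{-1}\curl),\qquad
   \curl_0 \cL(\omega)^{-1}\curl = (\curl_0 B^{-1})\, M(\omega)^{-1}(B^{-1}\curl)
\]
display the remaining two operators as bounded on $\dom\curl = H(\curl,\Omega)$, hence closable with bounded closures on $L^2(\Omega)^3$.

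The main obstacle is the rigorous justification, in the first step, of the resolvent factorisation together with the bounded invertibility of $M(\omega)$: the intermediate product $M(\omega)B$ sends $\dom B$ into $L^2$ only (not back into $\dom B$), so one must carefully track domains in order to pass the factorisation of Lemma~\ref{lem:squareroot} through to an analogous factorisation at the level of inverses. Once this algebraic-analytic backbone is established, the three assertions reduce to elementary compositions of bounded or boundedly-extensible operators obtained from the self-adjoint functional calculus and the duality between $T_0$ and $T_0^*$.
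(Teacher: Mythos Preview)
Your strategy differs from the paper's: you try to invert the factorisation $\cL(\omega)=BM(\omega)B$ globally, for every $\omega\in\rho(\cL)$, and then read off all three claims from $\cL(\omega)^{-1}=B^{-1}M(\omega)^{-1}B^{-1}$. The paper instead treats the three statements separately and more cheaply: $\curl_0\cL(\omega)^{-1}$ is bounded by the closed graph theorem (since $\dom\cL\subset\dom\curl_0$ and $\cL(\omega)$ is closed); $\cL(\omega)^{-1}\curl$ has bounded closure because its adjoint $\curl_0\cL(\omega)^{-*}$ is bounded by the same argument; only for $\curl_0\cL(\omega)^{-1}\curl$ does the paper invoke the factorisation, and then merely at the single point $\omega_0=\I t$ with $t\geq\eps_{\min}^{-1/2}$ (where $M(\omega_0)\geq I$ is trivially invertible by Lemma~\ref{lem:squareroot}), extending to general $\omega\in\rho(\cL)$ via
\[
\curl_0\cL(\omega)^{-1}\curl=\curl_0\cL(\omega_0)^{-1}\curl+\curl_0\cL(\omega)^{-1}\big(\cW(\omega)-\cW(\omega_0)\big)\cL(\omega_0)^{-1}\curl,
\]
whose right-hand side is now a sum of compositions of already-established bounded operators.

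The gap in your proposal is precisely the step you yourself flag as the ``main obstacle'': you do not actually prove that $M(\omega)$ is boundedly invertible for arbitrary $\omega\in\rho(\cL)$. The phrase ``the converse direction forces $M(\omega)$ to be boundedly invertible'' is an assertion, not an argument; injectivity of $M(\omega)$ follows easily from injectivity of $\cL(\omega)$, but surjectivity onto all of $L^2(\Omega)^3$ (not just onto $\dom B$) does not. It can be repaired: since $\ran(M(\omega)-I)\subset\ran B^{-1}=\dom B$, one checks that $M(\omega)$ maps $\dom B$ into itself, and bijectivity of $\cL(\omega)$ then forces $M(\omega)|_{\dom B}$ to be a bijection of $\dom B$; for arbitrary $w\in L^2$ one solves $M(\omega)d=-(M(\omega)-I)w\in\dom B$ for $d\in\dom B$ and sets $v=w+d$, whence $M(\omega)v=w$. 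Once this is in place your route does yield a clean uniform treatment of all three claims, but the paper's approach sidesteps the whole issue by never needing $M(\omega)^{-1}$ away from~$\omega_0$.
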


\begin{proof}
The operator $\curl_0 \cL(\omega)^{-1}$ is bounded in $ L^2(\Omega)^3$ since $\dom(\cL) \!\subset\! \dom(\curl_0)$ $= H_0(\curl, \Omega)$ and $\cL(\omega)$ is a closed operator. Since $(\cL(\omega)^{-1} \curl)^*\!=\!\curl_0 \cL(\omega)^{-*}$ is bounded by the same argument, $\cL(\omega)^{-1} \curl$ has a bounded closure in $ L^2(\Omega)^3$. The boundedness of $\curl_0 \cL(\omega_0)^{-1}\curl$ for $\omega_0=\I t$ with $t\geq \epsilon_{\min}^{-1/2}$  follows from Lemma~\ref{lem:squareroot} using that $\curl_0 (T_0^*T_0+I)^{-1/2}$ and $(T_0^*T_0+I)^{-1/2}\curl$ are bounded.
For a general $\omega\in \rho(\cL)$ the boundedness then follows from
$$\curl_0 \cL(\omega)^{-1}\curl=\curl_0 \cL(\omega_0)^{-1}\curl+\curl_0 \cL(\omega)^{-1}(\cW(\omega)-\cW(\omega_0))\cL(\omega_0)^{-1}\curl$$
since $\cW(\omega)=-\omega(\omega\eps+\I\sigma)$ is a bounded operator.
\end{proof}

%\vspace{-1mm}

\begin{rem}
\label{rem-for-e4}
The claims in Lemma \ref{lemma: boundedness} continue to hold if we replace $\cL(\omega)^{-1}$ by $(\cL(\omega)\!+\!K)^{-1}$ for any bounded operator $K$ in $ L^2(\Omega)^3$. In fact, if we choose $t \!\ge\! \eps_{\min}^{-1/2}(1\!+\!\|K\|)^{1/2}$ in Lemma~\ref{lem:squareroot}, then $\re \,( \cW(\I t) \!+\! K \!-\! I ) \ge t^2 \eps_{\min} I  \!+\! \re K \!-\! I \ge (1\!+\! \|K\|)I - (I - \re K)\ge 0$. Hence the numerical range of the modified operator on the left-hand side of \eqref{eq:ge1} satisfies
\[
 \dist\big(0,W\big( I\!+\!(T_0^*T_0\!+\!I)^{-1/2}\big(\cW(\I t)\!+\!K\!-\!I\big)(T_0^*T_0\!+\!I)^{-1/2} \big)\big) \ge 1
\]
which implies the first estimate in \eqref{eq:resest} with $\cW(\I t)$ replaced by  $\cW(\I t)\!+\!K$. Now the proof of Lemma \ref{lemma: boundedness} can be completed if we note that $\cW(\omega)\!+\!K$ is still bounded.
\end{rem}

\begin{rem}
The resolvent estimate  in Lemma~\ref{lem:squareroot}  for the quadratic operator pencil $\cL$ can be made more precise and extended to the whole region
$\big\{ z\!\in\!\C: \im z \!\notin\! \big[ 0, -\frac 12 \frac{\sigma_{\max}}{\eps_{\min}}, \big] \big\} \setminus \I \big[0, -\frac{\sigma_{\max}}{\eps_{\min}}\big]$, e.g.\ on $\I (0,\infty)$ \vspace{-1.5mm}by
\begin{equation}
    \|\cL(\I t)^{-1} \| \leq \frac 1{t^2 \eps_{\min}}, \quad t \in (0,\infty).
\end{equation}
Since we focus on the Maxwell pencil $V(\cdot)$ in this paper, we restrict ourselves to the properties in Lemmas \ref{lem:squareroot} and \ref{lemma: boundedness} which we need in order to investigate absence of spectral pollution for $V(\cdot)$.
\end{rem}

\begin{theorem}
\label{Prop: spectra S_1 and L}
The Maxwell pencil $V(\cdot)$ in \eqref{eq:Vdef} and the quadratic pencil $\cL$ in \eqref{eq:defcL} \vspace{-2mm} satisfy
\begin{equation}
\label{eq:spec-V-L}
   \rho(V) \setminus \{0\} = \rho(\cL) \setminus \{0\},\quad \sigma(V) \setminus \{0\} = \sigma(\cL) \setminus \{0\},
\end{equation}
and the resolvent of $V(\cdot)$ is given \vspace{-1mm} by
\begin{equation}
\label{eq:res-V}
 V(\omega)^{-1} \!\!=\!
 \begin{pmatrix}
 \omega \cL(\omega)^{-1} & \I \overline{\cL(\omega)^{-1}\!\curl} \mu^{-1} \\
  \!-\I \mu^{-1} \!\curl_0 \cL(\omega)^{-1} \!&\! \omega^{-1}  ( -\mu^{-1} \!\!+\! \mu^{-1} \overline{\curl_0 \cL(\omega)^{-1} \curl} \mu^{-1})
  \end{pmatrix}
\vspace{-1mm}	
\end{equation}
for $\omega\!\in\!\rho(V)$. \vspace{-0.5mm} Moreover,
$$
   \sigma_p(V) \setminus\{0\} = \sigma_p(\cL) \setminus\{0\}, \quad \sigma_c(V) \setminus\{0\} = \sigma_c(\cL) \setminus\{0\}, \quad \sigma_r(V) = \sigma_r(\cL) = \emptyset,
$$
and $\sigma_{ek}(V) = \sigma_{ek}(\cL)$ and $0\in \sigma_{ek}(V)$ for \vspace{-0.2mm} $k =1,2,3,4$.
\end{theorem}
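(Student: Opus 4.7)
The key is a Frobenius--Schur elimination of the magnetic field $H$ from $V(\omega)(E,H)^\top=(f,g)^\top$. For $\omega\neq 0$ the second row gives $H=-(\I/\omega)\mu^{-1}\curl_0 E-(1/\omega)\mu^{-1}g$, and inserting this into the first row and multiplying by $\omega$ yields $\cL(\omega)E=\omega f+\I\curl\mu^{-1}g$. This equivalence is reversible: for any $E$ solving the $\cL$-equation, the above formula for $H$ reconstructs a solution of the Maxwell system. Hence $V(\omega)$ is bijective iff $\cL(\omega)$ is, giving $\rho(V)\setminus\{0\}=\rho(\cL)\setminus\{0\}$. Solving explicitly for $(E,H)$ in terms of $(f,g)$ produces the block matrix formula~\eqref{eq:res-V}. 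The entries $\cL(\omega)^{-1}\curl\mu^{-1}$ and $\mu^{-1}\curl_0\cL(\omega)^{-1}\curl\mu^{-1}$ are a priori only defined on the proper subspace $\{g\in L^2(\Omega)^3:\mu^{-1}g\in H(\curl,\Omega)\}$, and Lemma~\ref{lemma: boundedness} provides their bounded closures on all of $L^2(\Omega)^3$, justifying the overlines and showing that $V(\omega)^{-1}$ is bounded.

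\textbf{Point, residual and continuous spectrum.} Restricting the same elimination to $(f,g)=(0,0)$ gives an explicit linear bijection $\Phi_\omega\colon\ker\cL(\omega)\to\ker V(\omega)$, $E\mapsto(E,-(\I/\omega)\mu^{-1}\curl_0 E)$, with inverse $(E,H)\mapsto E$, so $\sigma_p(V)\setminus\{0\}=\sigma_p(\cL)\setminus\{0\}$ with matching geometric multiplicities. Using $\curl_0=\curl^*$ and the self-adjointness of the matrix multiplications $\eps,\mu,\sigma$ one checks $\cL(\omega)^*=\cL(-\overline\omega)$; since all coefficients are real matrix-valued, conjugating $\cL(\omega)E=0$ yields $\cL(-\overline\omega)\overline E=0$, so $\sigma_p(\cL)=-\overline{\sigma_p(\cL)}$. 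If $\omega\in\sigma_r(\cL)$, then $\ker\cL(\omega)=\{0\}$ while $\ker\cL(\omega)^*=\ker\cL(-\overline\omega)\neq\{0\}$; by the symmetry this forces $\omega\in\sigma_p(\cL)$, a contradiction. Hence $\sigma_r(\cL)=\emptyset$, and the analogous argument for $V$, using that $V(\omega)^*$ differs from $V(-\overline\omega)$ only by conjugation with $\diag(I,-I)$ and a sign, gives $\sigma_r(V)=\emptyset$. The continuous spectrum identity then follows from the decomposition $\sigma=\sigma_p\sqcup\sigma_c\sqcup\sigma_r$.

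\textbf{Essential spectra.} For $\omega\neq 0$, $\Phi_\omega$ gives $\dim\ker V(\omega)=\dim\ker\cL(\omega)$; applying the same construction at $-\overline\omega$ together with the adjoint identifications $\ker V(\omega)^*\cong\ker V(-\overline\omega)$ and $\ker\cL(\omega)^*=\ker\cL(-\overline\omega)$ yields $\dim\ker V(\omega)^*=\dim\ker\cL(\omega)^*$, so the cokernels match, and $\ran V(\omega)$ is closed iff $\ran\cL(\omega)$ is, as a direct tracking of the Schur reduction shows. Consequently all Fredholm-theoretic data, including the index needed for $\sigma_{e4}$, coincide, giving $\sigma_{ek}(V)\setminus\{0\}=\sigma_{ek}(\cL)\setminus\{0\}$ for $k=1,2,3,4$. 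At $\omega=0$ both operators have infinite-dimensional kernels: $\ker V(0)\supset\{0\}\oplus\ker\curl\supset\{0\}\oplus\nabla\dot{H}^1(\Omega)$ and $\ker\cL(0)\supset\ker\curl_0\supset\nabla\dot{H}^1_0(\Omega)$, so neither is even semi-Fredholm, placing $0$ in $\sigma_{ek}(V)\cap\sigma_{ek}(\cL)$ for every $k$. The only genuine technical obstacle lies in the first step: justifying that the formal Schur complement produces a bounded resolvent, specifically the bounded extension of $\curl_0\cL(\omega)^{-1}\curl$. This is precisely the content of Lemma~\ref{lemma: boundedness}, resting ultimately on the square-root / form-domain identification in Lemma~\ref{lem:squareroot}.
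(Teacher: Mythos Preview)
Your treatment of the resolvent identity, the point spectra, and the emptiness of the residual spectra is correct and essentially matches the paper's approach; the symmetry $\cL(\omega)^*=\cL(-\overline\omega)$ together with complex conjugation is precisely $J$-self-adjointness, which the paper invokes by name via \cite[Lemma~III.5.4]{EE}.

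The essential-spectrum argument, however, has a genuine gap. You correctly obtain $\dim\ker V(\omega)=\dim\ker\cL(\omega)$ and, via the adjoint symmetries, $\dim\ker V(\omega)^*=\dim\ker\cL(\omega)^*$. But the claim that ``$\ran V(\omega)$ is closed iff $\ran\cL(\omega)$ is, as a direct tracking of the Schur reduction shows'' is not substantiated: the Frobenius--Schur factorisation of $V(\omega)$ about the invertible $(2,2)$-block $-\omega\mu$ carries the \emph{unbounded} operator $\curl\mu^{-1}$ in its outer triangular factor, so $V(\omega)$ does not split as a bounded bijection composed with $\diag(\omega^{-1}\cL(\omega),-\omega\mu)$, and closedness of the range does not transfer automatically. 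Without this, matching nullity and deficiency alone determines none of the $\sigma_{ek}$. The paper circumvents the problem entirely: $J$-self-adjointness (\cite[Thm.~IX.1.6]{EE}) collapses $\sigma_{e1}=\sigma_{e2}=\sigma_{e3}=\sigma_{e4}$ for each of $V$ and $\cL$ separately, so it suffices to prove the two cross-inclusions $\sigma_{e2}(V)\supset\sigma_{e2}(\cL)$ and $\sigma_{e4}(V)\subset\sigma_{e4}(\cL)$. For the first, a singular sequence $(u_n)$ for $\cL$ at $\omega$ is lifted to $(u_n,v_n)$ with $v_n:=-\I\omega^{-1}\mu^{-1}\curl_0 u_n$; the non-trivial step is $v_n\rightharpoonup 0$, obtained by rewriting $v_n$ through the bounded operator $\curl_0\cL(\omega_0)^{-1}$ for some $\omega_0\in\rho(\cL)$ (Lemma~\ref{lemma: boundedness}). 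For the second, if a compact $K$ makes $\cL(\omega)+K$ invertible, then $V(\omega)+\diag(K,0)$ is shown invertible by the same block-resolvent formula with $\cL(\omega)^{-1}$ replaced by $(\cL(\omega)+K)^{-1}$, the required boundedness being supplied by Remark~\ref{rem-for-e4}.
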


\begin{proof}
Suppose that $\omega\!\in\!\rho(\cL) \setminus \{0\}$. Then, by Lemma \ref{lemma: boundedness}, all entries in the operator matrix on the right-hand side of \eqref{eq:res-V} are bounded and it is easy to check that the latter is a two-sided inverse for $V(\omega)$. This proves $\omega \!\in\!\rho(V) \setminus \{0\}$. Vice~versa, let $\omega \in\rho(V) \setminus \{0\}$. Then, for arbitrary $f \in  L^2(\Omega)^3$, there is a unique $(u,v)^t \!\in\! \dom(V(\omega)) $ $\!=\! H_0(\curl,\Omega) \oplus H(\curl,\Omega)$ such that
$V(\omega)(u,v)^{\rm t} \!=\! (f,0)^{\rm t}$ or, \vspace{-0.5mm}equivalently,
\begin{align*}
 (-\I\sigma - \omega \eps) u  + \I\curl  v = f, \\
 -\I\curl_0 u - \omega \mu v = 0.
\\[-6mm]
\end{align*}
Because $\mu$ is strictly positive and $\omega\ne 0$, we can solve the second equation for $v$ to obtain $v\!=\! -\I\omega^{-1} \mu^{-1} \curl_0 u$.
Since $v\!\in\!  H(\curl,\Omega)$, the latter yields $u \!\in\! \dom \cL(\omega)$ and, inserted in the first \vspace{-0.5mm} equation,
\[
   \omega^{-1} \cL(\omega) u = \big( -\I\sigma - \omega \eps +  \omega^{-1}\curl \mu^{-1} \curl_0 \big) u = f.
\vspace{-0.5mm}	
\]
Since $f \in  L^2(\Omega)^3$ and $u$ was unique, it follows that $\omega \in \rho(\cL) \setminus \{0\}$.

If we set $f\!=\!0$ in the above reasoning, it follows that $\dim \ker V(\omega) \le \dim \ker \cL(\omega)$. Conversely, if $u \in \ker \cL(\omega)$ and we set $v\!:=\! -\I\omega^{-1} \mu^{-1} \curl_0 u$, then the above relations show that $(u,v)^{\rm t} \in \ker V(\omega)$. Altogether this proves that $\dim \ker V(\omega) = \dim \ker \cL(\omega)$ for $\omega\ne 0$ and hence, in particular, the identity for the point spectra.

The claim on the residual spectra follows from \cite[Lemma III.5.4]{EE} since $V(\omega)$ and $\cL(\omega)$ are $J$-self-adjoint with respect to complex conjugation  $J$ in  $L^2(\Omega)^3\oplus L^2(\Omega)^3$ and $ L^2(\Omega)^3\!$, respectively. Then $\sigma_c(V) \!\setminus\! \{0\}
\!=\! (\sigma(V) \!\setminus\! \sigma_p(V)) \!\setminus\! \{0\} \!=\! (\sigma(\cL) \!\setminus\! \sigma_p(\cL)) \!\setminus\! \{0\} \!=\! \sigma_c(\cL) \!\setminus\! \{0\}$.

Due to \cite[Thm.\ IX.1.6]{EE}, the $J$-self-adjointness also implies that all $\sigma_{ek}(V)$, $k=1,2,3,4$, and all $\sigma_{ek}(\cL)$, $k=1,2,3,4$, coincide. The last claim is proved if we show that $\sigma_{ek}(V) = \sigma_{ej}(\cL)$ for any $j$, $k\in \{1,2,3,4\}$. Here we show that $\sigma_{e2}(V) \supset \sigma_{e2}(\cL)$ and $\sigma_{e4}(V) \subset \sigma_{e4}(\cL)$. First we consider $\omega \in \C\setminus \{0\}$.

To show $\sigma_{e2}(V) \!\supset\! \sigma_{e2}(\cL)$, suppose that $\omega \in \sigma_{e2}(\cL) \setminus\{0\}$. Then, by \cite[Thm.\ IX.1.3]{EE} there exists a singular sequence $(u_n)_{n\in\N} \subset \dom \cL(\omega)$ of $\cL(\omega)$ in~$0$, i.e.\ $\|u_n\|=1$, $n\in\N$, $u_n \rightharpoonup 0$ and $\cL(\omega)u_n \to 0$ for $n\to \infty$. If we set $v_n\!:=\! - \I \omega^{-1} \mu^{-1} \curl_0 u_n$, $n\in\N$, then $v_n \!\in\! H(\curl,\Omega)$ and
%$\|v_n\|^2 \!=\! \omega^{-1} \mu\cL(\omega) u_n +\! $ $ \omega (\I \eps \!+\! \omega \sigma) u_n \rightharpoonup 0$ for $n\to \infty$.
the sequence with elements $w_n\!:=\!(u_n,v_n)^{\rm t}/\sqrt{\|u_n\|^2\!+\!\|v_n\|^2} $ $\in \dom V(\omega)$ satisfies $\|w_n\|=1$, $n\in\N$, and $V(\omega) w_n = (\omega^{-1} \cL(\omega) u_n, 0)^{\rm t} \to 0$ for $n\to \infty$.
In addition, for any $\omega_0\in\rho(\cL)$,
\begin{align*}
v_n&=-\I \omega^{-1}\mu^{-1}\curl_0 \cL(\omega_0)^{-1}\cL(\omega_0)u_n\\
&=-\I \omega^{-1}\mu^{-1}\curl_0 \cL(\omega_0)^{-1}\left(\cL(\omega)u_n+(\omega(\omega+\I\sigma)-\omega_0(\omega_0\eps+\I\sigma))u_n\right)\rightharpoonup 0;
\end{align*}
here we have used $\cL(\omega)u_n\!\to\! 0$, $u_n\!\rightharpoonup\! 0$ and that $\curl_0\cL(\omega_0)^{-1}$ is bounded by~Lem\-ma~\ref{lemma: boundedness}. Now $\sqrt{\|u_n\|^2\!+\!\|v_n\|^2}\!\geq\! \|u_n\|\!=\!1$ yields $w_n\!\rightharpoonup\! 0$.
This proves~$\omega \!\in\! \sigma_{e2}(\cL) \setminus \{0\}$.
%Since $\sigma_r(V) = \sigma_r(\cL) = \emptyset$ and $\sigma_c(V)\subset \sigma_{e\ct{2}}(V)$, $\sigma_c(\cL) \subset  \sigma_{e\ct{2}}(V)$, it suffices %to show that $(\sigma_{e1}(\chr{V})\cap \sigma_p(\chr{V})) \setminus \{0\} = (\sigma_{e1}(\chr{\cL})\cap \sigma_p(\chr{\cL}) ) \setminus \{0\}$. \dots %to be continued }

To show $\sigma_{e4}(V) \!\subset\! \sigma_{e4}(\cL)$, assume that $\omega \!\notin\! \sigma_{e4}(\cL) \setminus\{0\}$. Then, by \cite[Thm.~IX.1.4]{EE} there exists a compact operator $K$ in $ L^2(\Omega)^3$ such that $0 \!\in\! \rho (\cL(\omega)+K)$. If we set $\cK\!:=\! {\rm diag} (K,0)$, then $\cK$ is compact in $L^2(\Omega)^3 \oplus L^2(\Omega)^3$ and, using Remark \ref{rem-for-e4}, we conclude that the operator matrix obtained from the right-hand side of \eqref{eq:res-V} by replacing $\cL(\omega)^{-1}$ by $(\cL(\omega)\!+\!K)^{-1}$ is bounded and a two-sided inverse for $V(\omega)+\cK$ and hence $0 \!\in\! \rho(V(\omega)\!+\!\cK)$. Now \cite[Thm.\ IX.1.4]{EE} yields that $0\!\notin\! \sigma_{e4}(V(\omega))$,~as~required.
%\ct{What about the case that $\omega \in \sigma_{e1}(\chr{V})\cap \sigma_p(\chr{V})$ is such that $0<\dim \ker V(\omega) < \infty$ and $\ran V(\omega)$ is not closed in the previous %proof?}

Finally, it remains to consider $\omega=0$. It is not difficult to see that $V(0)(0,H)^{\rm t}=0$ for all $H\!\in\! \ker \curl$ and hence
$\{0\} \oplus \nabla \dot{H}^1_0(\Omega) \!\subset\! \ker V(0)$. This proves  $0 \!\in\! \sigma_{e2}(V)$. Further,
$\cL(0)= \curl \mu^{-1} \curl_0$ is self-adjoint with $\nabla\dot{H}^1_0(\Omega) \!\subset\! \ker \curl_0\!=\!\ker \cL(0)$ and hence also $0 \in \sigma_{e2}(\cL)$.
\end{proof}

\section{The essential spectrum of the Maxwell problem}
\label{section:5}

In this section we determine the essential spectrum of $V(\cdot)$ via the essential spectrum of the quadratic operator pencil  $\cL$. Here we assume that $\Omega$ is an infinite domain and that $\sigma$, $\mu$, $\eps$ have limits $0$, $ \eps_\infty\!\id$, $\mu_\infty\!\id$ in the sense of~\eqref{eq:coeffs-infty} at infinity, as in Theorem \ref{theorem: main res}; note that $\eps_\infty$, $\mu_\infty >0$ by assumption~\eqref{conditions eps mu sigma}.

%\ch{We} denote by $P_{\ker(\Div)}$ the orthogonal projection from
To this end, we work in the Helmholtz decomposition $L^2(\Omega)\!=\!\nabla \dot H^1_0(\Omega) \oplus H(\Div 0,\Omega)$, see e.g.\ \cite[Lemma 11]{MR3942228}, and denote by $P_{\ker(\Div)}$ the corresponding orthogonal projection from onto $H(\Div 0,\Omega)$.  We begin with a general result which applies in a wider context.

\begin{prop}
\label{thm: compactness}
Let $m:\Omega\to \C^{3\times 3}$ be a tensor-valued function \vspace{-1mm} with
\begin{align}
\label{eq:limit-gen}
\lim_{R\to\infty}\sup_{\|x\|>R} \|m(x)\|=0.
\\[-7.5mm]
\nonumber
\end{align}
Then $mP_{\ker(\Div)}$ is compact from $(H(\curl, \Omega), \norma{\cdot}_{H(\curl, \Omega)})$ to  $( L^2(\Omega)^3, \norma{\cdot}_{ L^2(\Omega)^3})$.
\end{prop}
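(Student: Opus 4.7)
The plan is to combine the regularization property of $P_{\ker(\Div)}$ with local Maxwell compactness and the decay of $\|m\|$ at infinity. The key observation is that for $u\in H(\curl,\Omega)$, the Helmholtz decomposition $u=\nabla\phi + w$ with $\phi\in\dot H^1_0(\Omega)$ and $w := P_{\ker(\Div)} u \in H(\Div 0,\Omega)$ gives $\curl w = \curl u\in L^2(\Omega)^3$ (since $\curl\nabla\phi=0$) and $\|w\|\le\|u\|$. Hence $P_{\ker(\Div)}$ maps $H(\curl,\Omega)$ boundedly into $H(\curl,\Omega)\cap H(\Div 0,\Omega)$, and any bounded sequence $(u_k)_{k\in\N}\subset H(\curl,\Omega)$ produces a sequence $w_k := P_{\ker(\Div)}u_k$ bounded in $H(\curl,\Omega)\cap H(\Div 0,\Omega)$.

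Given $\varepsilon>0$, the hypothesis \eqref{eq:limit-gen} allows one to pick $R>0$ with $\|m(x)\|<\varepsilon$ for all $x\in\Omega$ with $|x|>R$. Splitting the integral,
\[
  \|m w_k - m w_\ell\|_{L^2(\Omega)} \le \|m\|_\infty\,\|w_k-w_\ell\|_{L^2(\Omega\cap B(0,R))} + 2\varepsilon\sup_k\|w_k\|,
\]
which reduces the problem to extracting, for each fixed $R$, a subsequence that is Cauchy in $L^2(\Omega\cap B(0,R))^3$; a diagonal argument in $R\to\infty$ and $\varepsilon\to 0$ then yields a subsequence for which $(mw_k)$ is Cauchy in $L^2(\Omega)^3$.

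The remaining and main analytic step is this local $L^2$-precompactness. Choose a cutoff $\chi\in C_c^\infty(\R^3)$ with $\chi\equiv 1$ on $B(0,R)$ and $\supp\chi\subset B(0,R+1)$. Then on the bounded Lipschitz domain $\Omega_R := \Omega\cap B(0,R+1)$ the sequence $\chi w_k\in L^2(\Omega_R)^3$ satisfies the uniform bounds $\curl(\chi w_k) = \chi\curl w_k + \nabla\chi\times w_k\in L^2$ and $\Div(\chi w_k) = \nabla\chi\cdot w_k\in L^2$ (using $\Div w_k=0$), and $\chi w_k$ vanishes near $\Omega\cap\partial B(0,R+1)$. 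Applying a Maxwell-type compactness result on $\Omega_R$ (the Weber/Picard embeddings, or the Helmholtz-projection variant proved in \cite{MR3942228}) yields a subsequence of $(\chi w_k)$ convergent in $L^2(\Omega_R)^3$, and hence of $(w_k)$ in $L^2(\Omega\cap B(0,R))^3$.

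The main obstacle to address is that $(w_k)$ carries no \emph{a priori} boundary condition along $\partial\Omega\cap B(0,R+1)$, so the classical Weber/Picard compact embeddings of $H(\curl)\cap H(\Div)$ into $L^2$ do not apply off the shelf; this is overcome either by using the tangential trace identity $\nu\times w_k = \nu\times u_k$ on $\partial\Omega$, inherited from $u_k\in H(\curl,\Omega)$ via $w_k - u_k = -\nabla\phi_k$ with $\phi_k\in\dot H^1_0(\Omega)$, combined with interior $H^1$-regularity of divergence-free $L^2$-fields with $L^2$-curl, or by directly invoking the dedicated compactness lemma in \cite{MR3942228} which is tailored to precisely this situation.
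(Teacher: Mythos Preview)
Your approach is correct and essentially the same as the paper's: both observe that $w := P_{\ker(\Div)} u$ lies in $H(\curl,\Omega)\cap H(\Div 0,\Omega)$ with $\curl w = \curl u$, localize via a smooth cutoff to the bounded truncation $\Omega_R = \Omega\cap B(0,R)$, and then invoke a Maxwell compact embedding there; the paper packages the tail control as an operator-norm approximation $m = m_c + m_\delta$ with $\|m_\delta\|<\delta$ and $m_c$ compactly supported (so that $mP_{\ker(\Div)}$ is a norm limit of compact operators), while you use the equivalent Cauchy/diagonal-subsequence formulation. The paper does not discuss the boundary-condition subtlety you raise in your final paragraph---it simply cites Weber \cite{MR561375} for the compact embedding of $H(\curl,\Omega_R)\cap H(\Div,\Omega_R)$ into $L^2(\Omega_R)^3$ without further comment---so your caution there is additional care beyond what the paper spells out.
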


\begin{proof}
For any $\delta>0$ we can write $m = m_c + m_\delta$ where $m_\delta$ is a bounded {multiplication} operator with $\|m_\delta\|<\delta$ and
$m_c$ has compact support in some domain $\Omega_R := \Omega\cap B(0,R)$ for sufficiently large $R>0$. We show that $m_cP_{\ker(\Div)}$ is compact for every $\delta > 0$. Since $\norma{m P_{\ker(\Div)} \!-\! m_c P_{\ker(\Div)}}_{\cB(H(\curl,\Omega),  L^2(\Omega)^3)} \leq  \delta$ vanishes as $\delta \to 0$, we deduce that $mP_{\ker(\Div)}$ is the norm limit of the compact operators $m_c P_{\ker(\Div)}$ and hence compact.

Let $\chi_R$ be a smooth cut-off function  with $\chi_R \!=\! 1$ on $ \supp(m_c)\subset \Omega_R$ and $\chi\!=\!0$ outside $\Omega_R$.
Then there exists a constant $C_R>0$ such that, for all \vspace{-1mm} $u\in H(\curl,\Omega)$,
$$
\|(\chi_R P_{\ker(\Div)} u)|_{\Omega_R}\|_{H(\curl, \Omega_R) \cap H(\Div, \Omega_R)}
\leq C_R\|u\|_{H(\curl, \Omega)},
$$
where we use that $\Div (\chi_R P_{\ker(\Div)}u)=\nabla \chi_R\cdot P_{\ker(\Div)}u$ and
$\curl (\chi_R P_{\ker(\Div)}u)=\nabla \chi_R\times P_{\ker(\Div)}u+\chi_R \curl u$ since $\curl P_{\ker(\Div)}u=\curl u$.
The compactness of $m_c P_{\ker(\Div)}$ follows from the compactness of the composition
$$m_c P_{\ker(\Div)} u = m_c \iota(\chi_R P_{\ker(\Div)} u)|_{\Omega_R};$$
here $\iota$ is the compact embedding of ${H(\curl, \Omega_R)} \cap H(\Div, \Omega_R)$ in $L^2(\Omega_R)^3\!$, see~\cite{MR561375}.
\end{proof}

\vspace{-3mm}

\begin{definition}
\label{Lmidef}
We define quadratic pencils of closed operators acting in the Hilbert space %\vspace{-1mm}
$H(\Div 0,\Omega)$ equipped with the $ L^2(\Omega)^3$-norm \vspace{-1mm} by
\begin{align*}
&\begin{array}{ll}
L_\mu(\omega) \!:=\!  \curl \mu^{-1} \curl_{0} - \eps_\infty \omega^2 \id,  \\[1mm]
\dom(L_\mu(\omega)) \!:=\! \{u \in H_0(\curl,\Omega){\cap H(\Div 0,\Omega)} : \mu^{-1}\curl u \!\in\! H(\curl,\Omega) \},
\end{array}\hspace{-2mm} & \omega\!\in\!\C,\\[-1mm]
%\end{align*}
\intertext{%\vspace{-0.5mm}
and}
%\begin{align*}
&\begin{array}{l}
\!L_\infty(\omega) \!:=\!  \curl \mu_{\infty}^{-1}\!\curl_{0} -  \eps_\infty\omega^2\id,  \\[1mm]
%\curl^2 - \omega^2, \quad
\!\dom(L_\infty(\omega)) \!:=\! \{u \in H_0(\curl,\Omega) {\cap H(\Div 0,\Omega)}: \curl u \!\in\! H(\curl,\Omega) \},
\end{array}\hspace{-2mm} & \omega\!\in\!\C;
\vspace{-1mm}
\end{align*}
note that $L_\infty$ can be regarded as a special case of $L_\mu$, namely when $\mu\!=\!\mu_\infty\! \id$.
\end{definition}

\begin{lemma}
\label{lemma: boundedness2}
The following are true.
\begin{myenum}
\item[{\rm (i)}]
The operator $L_\mu(\omega)^{-1}\curl$ is closable and bounded from $ L^2(\Omega)^3$ to $H(\Div 0,\Omega)$.
\item[{\rm (ii)}]  For $\omega\!=\!\I t$ with $t\!\geq\! \eps_{\min}^{-1/2}$,
the operator  $\curl_{0}L_\infty(\omega)^{-1}$ is bounded in $H(\Div 0,\Omega)$, and also as an operator from $H(\Div 0,\Omega)$ to $H(\curl,\Omega)$ \vspace{-2mm} with
\begin{equation}
\label{eq:opnorm}
  \| \curl_{0}L_\infty(\omega)^{-1}\|_{\cB(H(\Div 0,\Omega),H(\curl,\Omega))}
  \leq %\ct{\mu_\infty} \Big(1+\frac{1}{\ct{\eps_\infty}|\omega|^2}\Big)^{1/2}.
\left(\frac{\mu_\infty}{\eps_\infty |\omega|^2}+\mu_\infty^2\right)^{1/2}.
\end{equation}
\end{myenum}
\end{lemma}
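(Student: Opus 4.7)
My plan is to treat the two parts in parallel, with part (ii) providing the explicit quantitative bound and part (i) obtained by an adjoint/closed-graph argument mirroring Lemma~\ref{lemma: boundedness}.

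For (ii), I would start by fixing $\omega=\I t$ with $t\ge\eps_{\min}^{-1/2}$ and noting that
$L_\infty(\I t)=\curl\mu_\infty^{-1}\curl_0+\eps_\infty t^2\id$ is self-adjoint, uniformly positive and boundedly invertible on $H(\Div 0,\Omega)$ (it is the operator associated to the closed form $\mu_\infty^{-1}\|\curl_0 u\|^2+\eps_\infty t^2\|u\|^2$). For $v\in H(\Div 0,\Omega)$ let $u:=L_\infty(\I t)^{-1}v\in\dom(L_\infty(\I t))\subset H_0(\curl,\Omega)\cap H(\Div 0,\Omega)$, so that
\[
 \mu_\infty^{-1}\curl\curl_0 u+\eps_\infty t^2 u=v.
\]
Pairing with $u$ in $L^2(\Omega)^3$ and using the symmetry of the form gives
\[
 \mu_\infty^{-1}\|\curl_0 u\|^2+\eps_\infty t^2\|u\|^2=\lgl v,u\rgl\le\|v\|\|u\|,
\]
whence $\|u\|\le\|v\|/(\eps_\infty t^2)$ and $\|\curl_0 u\|^2\le\mu_\infty\|v\|^2/(\eps_\infty t^2)$. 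Squaring the equation itself,
\[
 \|v\|^2=\mu_\infty^{-2}\|\curl\curl_0 u\|^2+2\eps_\infty t^2\mu_\infty^{-1}\|\curl_0 u\|^2+\eps_\infty^2 t^4\|u\|^2,
\]
where the cross term is non-negative, yielding $\|\curl\curl_0 u\|^2\le\mu_\infty^2\|v\|^2$. Adding the two bounds gives
\[
 \|\curl_0 u\|_{H(\curl,\Omega)}^2=\|\curl_0 u\|^2+\|\curl\curl_0 u\|^2\le\Big(\frac{\mu_\infty}{\eps_\infty t^2}+\mu_\infty^2\Big)\|v\|^2,
\]
which is exactly \eqref{eq:opnorm} since $|\omega|^2=t^2$. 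In particular $\curl_0 L_\infty(\I t)^{-1}$ is bounded on $H(\Div 0,\Omega)$.

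For (i), first I fix $\omega_0=\I t_0$ with $t_0\ge\eps_{\min}^{-1/2}$; the same form argument as above, but now with the variable $\mu$ in place of $\mu_\infty$ (using $\mu_{\min}\le\eta\cdot\mu\eta\le\mu_{\max}$ to keep the form closed and coercive), shows that $L_\mu(\omega_0)$ is self-adjoint and boundedly invertible on $H(\Div 0,\Omega)$ with $\dom(L_\mu(\omega_0))\subset H_0(\curl,\Omega)$. Hence $\curl_0 L_\mu(\omega_0)^{-1}$ is everywhere defined on $H(\Div 0,\Omega)$, closed (composition of the bounded $L_\mu(\omega_0)^{-1}$ with the closed $\curl_0$), and therefore bounded by the closed graph theorem. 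Taking adjoints, $L_\mu(\omega_0)^{-1}\curl\subseteq(\curl_0 L_\mu(\omega_0)^{-1})^*$ because $L_\mu(\omega_0)$ is self-adjoint and $\curl^*=\curl_0$; the right-hand side is a bounded operator from $L^2(\Omega)^3$ into $H(\Div 0,\Omega)$, so $L_\mu(\omega_0)^{-1}\curl$ is closable with bounded closure. For general $\omega\in\rho(L_\mu)$ I would pass from $\omega_0$ to $\omega$ via the second resolvent identity
\[
 L_\mu(\omega)^{-1}\curl=L_\mu(\omega_0)^{-1}\curl+\eps_\infty(\omega^2-\omega_0^2)L_\mu(\omega)^{-1}L_\mu(\omega_0)^{-1}\curl,
\]
whose right-hand side is manifestly bounded since $L_\mu(\omega)^{-1}$ is bounded and the bounded closure of $L_\mu(\omega_0)^{-1}\curl$ is already established.

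The only slightly delicate step is verifying self-adjointness/coercivity of $L_\mu(\omega_0)$ as an operator on the subspace $H(\Div 0,\Omega)$ (rather than on $L^2(\Omega)^3$), so that the adjoint identification $(\curl)^*=\curl_0$ can be used cleanly inside that subspace; this should follow from the Helmholtz decomposition together with the standard second representation theorem applied to the form $\lgl\mu^{-1}\curl_0 u,\curl_0 u\rgl+\eps_\infty t_0^2\|u\|^2$ on $H_0(\curl,\Omega)\cap H(\Div 0,\Omega)$.
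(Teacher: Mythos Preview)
Your proposal is correct and follows essentially the same approach as the paper. For (i) both you and the paper use the closed-graph/adjoint argument from Lemma~\ref{lemma: boundedness}; for (ii) your form computation (pairing with $u$, then squaring the equation) is precisely the variational unpacking of the paper's operator identity $\curl\mu_\infty^{-1}\curl_0 L_\infty(\omega)^{-1}=I-\eps_\infty t^2 L_\infty(\omega)^{-1}$ combined with $0\le I-\eps_\infty t^2 L_\infty(\omega)^{-1}\le I$, and yields the same two bounds $\|\curl_0 u\|^2\le \mu_\infty\|v\|^2/(\eps_\infty t^2)$ and $\|\curl\curl_0 u\|\le \mu_\infty\|v\|$.
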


\begin{proof}
The boundedness claims follow analogously as in Lemma \ref{lemma: boundedness}, using that $u\in H(\Div 0,\Omega)$
satisfies $\|u\|_{H(\Div,\Omega)}=\|u\|$. It remains to prove \eqref{eq:opnorm}.
Noting that $\|L_\infty(\omega)^{-1}\|\leq 1/(\eps_\infty t^2) \leq 1/(\eps_\infty|\omega^2|)$, we estimate, for $u\in  L^2(\Omega)^3$,
\begin{align*}
\|\curl_{0}L_\infty(\omega)^{-1}u\|^2
&= \mu_\infty \big\langle \curl \mu_\infty^{-1} \curl_0 L_\infty(\omega)^{-1}u,L_\infty(\omega)^{-1}u \big\rangle|\\
&= \mu_\infty \big\langle (I-\eps_\infty t^2 L_\infty(\omega)^{-1})u ,L_\infty(\omega)^{-1}u \big\rangle
\\[-1mm]
&\leq \mu_\infty \big\langle u,L_\infty(\omega)^{-1} u \big\rangle \leq \mu_\infty \frac{1}{\eps_\infty|\omega|^2}\|u\|^2\\[-7mm]
\end{align*}
and, since  $0 \le I-\eps_\infty t^2 L_\infty(\omega)^{-1} \le I$,
\begin{align*}
\|\curl \curl_0L_\infty(\omega)^{-1}\|_{\cB(L^2(\Omega)^3,L^2(\Omega)^3)}
\!&\!=\! \mu_\infty
\!\!\! \sup_{u \in  L^2(\Omega)^3 \atop \|u\|=1}\!\!\! \big\langle (I-\eps_\infty t^2 L_\infty(\omega)^{-1})u ,  u \big\rangle \!\leq\! \mu_\infty\!.
%&\leq 2\|u\|^2.
\\[-7mm]
\end{align*}
Together, this implies the claimed resolvent norm bound.
\end{proof}

Note that unless $\mu$ is differentiable, the intersection between the (operator) domains of {the pencils $L_\mu,L_\infty$} could be trivial. Nevertheless they have the same form domain, and the following result holds.

\begin{prop}
\label{thm: difference res}
If $\sigma$, $\eps$ and $\mu$ satisfy the limiting assumption \eqref{eq:coeffs-infty} and
$L_\mu$, $L_\infty$ are as in Definition {\rm \ref{Lmidef}},  then $\sigma_{ek}(L_\mu) \!=\! \sigma_{ek}(L_\infty) \subset \R$
for $k=1,2,3,4,5$, and \vspace{-1mm} hence	
\begin{align*}
   \sigma_{ek}(L_\mu) \setminus \{0\} \!&=\!
	 \Big( \Big( \!-\! \frac 1{ \eps_\infty\mu_\infty}\sigma_{ek}(\curl\curl_0)^{1\!/2} \Big) \!\cup
	   \Big(  \frac 1{ \eps_\infty\mu_\infty} \sigma_{ek}(\curl \curl_0)^{1\!/2}\Big) \Big) \setminus \{0\}.
\\[-6mm]	
\end{align*}
\end{prop}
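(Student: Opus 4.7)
My plan is to prove equality of the essential spectra by establishing that a suitable resolvent difference of the two pencils is compact, and then to compute $\sigma_{ek}(L_\infty)$ explicitly by recognising $L_\infty$ as a scalar shift of the self-adjoint non-negative operator $\curl\curl_0|_{H(\Div 0,\Omega)}$.

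First I would fix $\omega_0 = \I t$ with $t \ge \eps_{\min}^{-1/2}$ large enough that both $L_\mu(\omega_0)$ and $L_\infty(\omega_0)$ are boundedly invertible on $H(\Div 0,\Omega)$; at such $\omega_0$ these operators reduce to positive self-adjoint shifts of $\curl\mu^{-1}\curl_0$ and $\curl\mu_\infty^{-1}\curl_0$ respectively, so the invertibility argument in Lemma \ref{lem:squareroot} applies verbatim. From the resolvent identity together with $L_\infty(\omega_0) - L_\mu(\omega_0) = \curl(\mu_\infty^{-1}-\mu^{-1})\curl_0$ I would derive
\begin{equation*}
L_\mu(\omega_0)^{-1} - L_\infty(\omega_0)^{-1} = \overline{L_\mu(\omega_0)^{-1}\curl}\,\cdot\, m \,\cdot\, \curl_0 L_\infty(\omega_0)^{-1},
\end{equation*}
where $m := \mu_\infty^{-1}-\mu^{-1}$. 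Since $\mu$ is uniformly positive and $\mu-\mu_\infty\id$ vanishes at infinity by \eqref{eq:coeffs-infty}, so does $m$ in the sense of \eqref{eq:limit-gen}. By Lemma \ref{lemma: boundedness2}(ii), $\curl_0 L_\infty(\omega_0)^{-1}$ is bounded from $H(\Div 0,\Omega)$ into $H(\curl,\Omega)$; since any curl is distributionally divergence-free, its range actually lies in $H(\curl,\Omega)\cap H(\Div 0,\Omega)$, on which $P_{\ker(\Div)}$ acts as the identity. Proposition \ref{thm: compactness} then yields compactness of $m \circ \curl_0 L_\infty(\omega_0)^{-1}: H(\Div 0,\Omega) \to L^2(\Omega)^3$, and composing with the bounded operator $\overline{L_\mu(\omega_0)^{-1}\curl}: L^2(\Omega)^3 \to H(\Div 0,\Omega)$ from Lemma \ref{lemma: boundedness2}(i) shows that the resolvent difference above is compact on $H(\Div 0,\Omega)$.

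To transfer this to the essential spectra I would use the factorisation
\begin{equation*}
L_\mu(\omega) = L_\mu(\omega_0)\bigl(I - \eps_\infty(\omega^2-\omega_0^2)L_\mu(\omega_0)^{-1}\bigr),
\end{equation*}
and its analogue for $L_\infty$: since $L_\mu(\omega_0)$, $L_\infty(\omega_0)$ are bijective, $L_\mu(\omega)$ (resp.\ $L_\infty(\omega)$) is $k$-Fredholm iff the respective bracketed bounded operator is, and these two bracketed operators differ by the compact perturbation $\eps_\infty(\omega^2-\omega_0^2)\bigl(L_\infty(\omega_0)^{-1}-L_\mu(\omega_0)^{-1}\bigr)$. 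This gives $\sigma_{ek}(L_\mu) = \sigma_{ek}(L_\infty)$ for $k=1,\dots,4$ and, using in addition the $J$-self-adjointness of the pencils together with stability of the connected components of the Fredholm set, also for $k=5$. For the explicit shape I would observe that $L_\infty(\omega) = \mu_\infty^{-1}\bigl(\curl\curl_0|_{H(\Div 0,\Omega)} - \mu_\infty\eps_\infty\omega^2 I\bigr)$ is a scalar shift of the self-adjoint non-negative operator $\curl\curl_0|_{H(\Div 0,\Omega)}$, so $\omega\in\sigma_{ek}(L_\infty)$ iff $\mu_\infty\eps_\infty\omega^2\in\sigma_{ek}(\curl\curl_0|_{H(\Div 0,\Omega)}) \subset [0,\infty)$; solving for $\omega$ yields the two-branch formula and the inclusion $\sigma_{ek}(L_\infty)\subset\R$.

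The hard part is the compactness step: the formal perturbation $\curl(\mu_\infty^{-1}-\mu^{-1})\curl_0$ is unbounded, and one must re-express the resolvent difference as a composition in which the unbounded pieces are absorbed into bounded (or closable-to-bounded) operators via Lemma \ref{lemma: boundedness2}, leaving a genuinely compact middle factor. The geometric observation that enables this is that $\curl_0 L_\infty(\omega_0)^{-1}$ takes values in divergence-free fields, which lets Proposition \ref{thm: compactness} be applied to the multiplier $m$ without the projector $P_{\ker(\Div)}$ intervening.
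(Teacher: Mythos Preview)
Your proposal is correct and follows essentially the same route as the paper: both prove compactness of the resolvent difference via a factorisation of the form $(\text{bounded})\cdot m\cdot \curl_0 L_\infty(\omega_0)^{-1}$, invoking Proposition~\ref{thm: compactness} and Lemma~\ref{lemma: boundedness2} for exactly the purposes you describe, and then read off the explicit shape from the scalar relation $L_\infty(\omega)=\mu_\infty^{-1}(\curl\curl_0-\mu_\infty\eps_\infty\omega^2 I)$.

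The only notable difference is organisational. The paper first substitutes $z=\eps_\infty\omega^2$ and works with the underlying self-adjoint operators $C_\mu:=\curl\mu^{-1}\curl_0$ and $C_\infty:=\curl\mu_\infty^{-1}\curl_0$ on $H(\Div 0,\Omega)$; compactness of $(C_\mu-z)^{-1}-(C_\infty-z)^{-1}$ then gives $\sigma_{ek}(C_\mu)=\sigma_{ek}(C_\infty)$ for all $k$ in one stroke via \cite[Thm.~IX.2.4]{EE}, with no need for your factorisation step transferring from $\omega_0$ to general $\omega$. One caution worth flagging: as the paper remarks, $\dom(L_\mu)\cap\dom(L_\infty)$ can be trivial when $\mu$ is not differentiable, so the operator identity ``$L_\infty(\omega_0)-L_\mu(\omega_0)=\curl(\mu_\infty^{-1}-\mu^{-1})\curl_0$'' is only formal, and your resolvent identity must be derived through the common \emph{form} domain $H_0(\curl,\Omega)$, exactly as the paper does in establishing \eqref{eq:form-2nd-res.-id}.
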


\begin{proof}
Let $\omega\!\in\!\C$ and set $z\!:=\eps_\infty \omega^2\!$. Then $\omega\!\in\!\sigma_{ek}(L_\mu)$ if and only if $z\!\in\! \sigma_{ek}(C_\mu)$ and
$\omega\!\in\!\sigma_{ek}(L_\infty)$ if and only if $z\!\in\! \sigma_{ek}(C_\infty)$
where $C_\mu\!:=\!\curl \mu^{-1} \curl_0$ and $C_\infty\!:=\!\curl \mu_\infty^{-1} \curl_0$ are self-adjoint in $H(\Div 0,\Omega)$.
Thus it suffices to show that $\sigma_{ek}(C_\mu)=\sigma_{ek}(C_\infty)$ for some $k\!\in\!\{1,2,3,4,5\}$.
Since the associated quadratic forms ${\mathfrak c}_\mu$ and ${\mathfrak c}_\infty$ have the same domain,
$\dom {\mathfrak c}_\mu \!=\! \dom {\mathfrak c}_\infty \!=\! H_0(\curl,\Omega)$, the second resolvent identity takes the~form
%\begin{equation}
%\label{eq:ess-spec_infty}
%\sigma_{ek}(C_\mu)=
%\sigma_{ek}(\curl \mu^{-1} \curl_0)=  \sigma_{ek}(\curl \mu_\infty^{-1} \curl_0)
%=\sigma_{ek}(C_\infty)
%\end{equation}
%\ct{for all $k=1,2,3,4,5$}.
%Since $C_\infty$, and $C_\mu\!-\!C_\infty$ have the same form domain $H_0(\curl,\Omega)$ and the sum of their associated forms, which is the form induced by $C_\mu$, is closed and semi-bounded, the form-verison of the second resolvent identity, comp.\ e.g.\  \cite[Lemma~6.30]{MR3243083}, yields that
\begin{equation}
\label{eq:form-2nd-res.-id}
   (C_\mu\!-\!z)^{-1} \!- (C_\infty\!-\!z)^{-1} \!=\!
   \big( \curl_0 (C_\mu\!-\!\overline{z})^{-1} \big)^{\!*} (\mu_\infty^{-1} \!\!-\mu^{-1}) \curl_0 (C_\infty\!-\!z)^{-1}
   %\!,\quad z\!\in\! \C\setminus \R.
\end{equation}
for $z\!\in\! \C\setminus \R$. In fact, for arbitrary $u$, $v \in  L^2(\Omega)^3$ and $z\in\C\setminus \R$, we can write
\begin{align*}
  &\!\big\langle \big( (C_\mu\!-\!z)^{-1} \!- (C_\infty\!-\!z)^{-1} \big) u,v \big\rangle
	\!=\! \big\langle  (C_\mu\!-\!z)^{-1}  u,v \big\rangle - \big\langle u, (C_\infty\!-\!\overline z)^{-1} v \big\rangle\\
	\!&=\! \big\langle  (C_\mu\!-\!z)^{-1}  u, (C_\infty\!-\!\overline z)(C_\infty\!-\!\overline z)^{-1}v \big\rangle
	       - \big\langle  (C_\mu\!-\!z)  (C_\mu\!-\!z)^{-1}  u, (C_\infty\!-\!\overline z)^{-1} v \big\rangle\\
	\!&=\! ( {\mathfrak c}_\infty - {\mathfrak c}_\mu ) \big[	(C_\mu\!-\!z)^{-1}  u, (C_\infty\!-\!\overline z)^{-1} v  \big];
\end{align*}
together with ${\mathfrak c}_\mu \!=\! \langle\mu^{-1} \!\curl_0 \cdot, \curl_0 \cdot\rangle$ and analogously for ${\mathfrak c}_\infty$,
the identity \eqref{eq:form-2nd-res.-id}~follows.
The first factor on the right-hand side of \eqref{eq:form-2nd-res.-id} is bounded since $\dom C_\mu \!\subset\! \dom \curl_0$.
By assumption \eqref{eq:coeffs-infty}, the ten\-sor-valued function $(\mu^{-1}\!-\!\mu_\infty^{-1})\id$ satisfies condition \eqref{eq:limit-gen} of Proposition \ref{thm: compactness} and thus the operator $(\mu^{-1}\!-\!\mu_\infty^{-1})\id P_{\ker\Div}$ is com\-pact from $H(\curl,\Omega)$ to $H(\Div 0,\Omega)\!\subset\! L^2(\Omega)^3$. By Lemma \ref{lemma: boundedness2} (ii), $\curl_0 (C_\infty\!-\!z)^{-1}$ $=\!\curl_0 L_\infty(\omega)^{-1}$ is bounded from $H(\Div 0, \Omega)$ to $H(\curl,\Omega)$. Altogether, we see~that
% Then
 \begin{align*}
 (\mu_\infty^{-1} \!\!-\mu^{-1}) \curl_0 (C_\infty\!-\!z)^{-1}  = (\mu_\infty^{-1}\!-\!\mu^{-1})\id P_{\ker\Div}  \curl_0 (C_\infty\!-\!z)^{-1}
% 	&\!\!=\!  \mu_\infty (\big(\mu^{-1}\!-\!\mu_\infty^{-1})\id\big)P_{\ker\Div} \ct{(\curl_0\!|_{(\ker \curl_0)^\perp})^{-1}} \curl \mu_\infty^{-1} \curl_0 (C_\infty\!-\!z)^{-1} \\
% 	&\!=\! \mu_\infty (\mu^{-1}\!-\!\mu_\infty^{-1})\id P_{\ker\Div}  \ct{(\curl_0\!|_{(\ker \curl_0)^\perp})^{-1}} ( I + z (C_\infty\!-\!z)^{-1} )
\end{align*}
is compact.
% %$\big(\mu^{-1}\!-\!\mu_\infty^{-1}\id\big)P_{\ker\Div}$ is also $(\curl \mu^{-1} \!-\! z \curl_0$
% %and hence $(\curl \mu^{-1} \!-\! z \curl_0^{-1})U$-compact for every unitary $U\in L^2(\Omega)$..
% %$|\!\curl_0\!| (C_\mu\!-\!z)^{-1} = U (\curl \mu^{-1} \!-\! z \curl_0^{-1})^{-1}$
Hence, by \eqref{eq:form-2nd-res.-id}, the resolvent difference of $C_\mu$ and $C_\infty$ is compact and, by \cite[Thm.\ IX.2.4]{EE},
$\sigma_{ek}(C_\mu)=\sigma_{ek}(C_\infty)$ follows for all $k=1,2,3,4$, and for $k=5$ since $C_\mu$,$C_\infty$ are self-adjoint.
\end{proof}

Now we can characterise the essential spectrum of the Maxwell pencil $V(\cdot)$ and show that it lies on the real axis and on some bounded purely imaginary interval below $0$.

\begin{theorem}
\label{sigma-ess}
Suppose that $\sigma$, $\eps$ and $\mu$ satisfy the limiting assumption \eqref{eq:coeffs-infty}.
%Let $B(\omega) = \omega(\omega\eps + i \sigma)$ with  {$\dom(B(\omega))=\nabla \dot{H}^1_0(\Omega)$, viewed as an operator from the space $\nabla \dot{H}^1_0(\Omega)$ to $L^2(\Omega)$}.
Let $P_\nabla := \id - P_{\ker\Div}$ be the orthogonal projection from $ L^2(\Omega)^3\!=\!\nabla \dot H^1_0(\Omega) \oplus H(\Div 0,\Omega)$ onto $\nabla \dot{H}^1_0(\Omega)$ and recall that $\cW(\omega):=- \omega(\omega\eps + \I \sigma)$, $\omega\!\in\!\C$, in $ L^2(\Omega)^3$.
%\mm{Suppose that $\mu(x)\to\id$ as $\|x\|\to\infty$.}
Then
\[
   \sigma_{ek}(\cL) = \sigma_{ek}(L_\infty) \cup \sigma_{ek}(P_\nabla \cW(\cdot) |_{\nabla \dot H^1_0(\Omega)}), \quad k=1,2,3,4,
\]
with $\sigma_{ek}(L_\infty)\!\subset\! \R$ given in Proposition~{\rm\ref{thm: difference res}} and $ \sigma_{ek}(P_\nabla \cW(\cdot) |_{\nabla \dot H^1_0(\Omega)}) \!\subset\! \I [-\frac{\sigma_{\max}}{\eps_{\min}},0]$.
\end{theorem}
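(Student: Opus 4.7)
\emph{Proof plan.} The plan is to exploit the Helmholtz decomposition $L^2(\Omega)^3 = \nabla\dot H^1_0(\Omega)\oplus H(\Div 0,\Omega)$ to rewrite $\cL(\omega)$ as a $2\times 2$ block operator matrix, then reduce it (modulo compact perturbations) to its block diagonal, and finally apply Proposition~\ref{thm: difference res} to exchange $L_\mu$ for $L_\infty$. Writing $\cL(\omega)=\curl\mu^{-1}\curl_0+\cW(\omega)$, I would first note that $\curl_0$ annihilates $\nabla\dot H^1_0(\Omega)\subset\ker\curl_0$ and that $\ran\curl\subset H(\Div 0,\Omega)$; consequently $\curl\mu^{-1}\curl_0$ is block diagonal in the Helmholtz decomposition, with nontrivial block the operator $C_\mu=\curl\mu^{-1}\curl_0|_{H(\Div 0,\Omega)}$ on $H(\Div 0,\Omega)$ and the zero operator on $\nabla\dot H^1_0(\Omega)$. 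Hence the only mixing in $\cL(\omega)$ between the two Helmholtz summands comes from $\cW(\omega)$.

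I would then split $\cW(\omega)=-\omega^2\eps_\infty\id+R(\omega)$ with $R(\omega):=-\omega^2(\eps-\eps_\infty\id)-\I\omega\sigma$. The first summand is scalar and block diagonal, contributing $-\omega^2\eps_\infty I$ to each diagonal block; combined with $C_\mu$ this produces precisely $L_\mu(\omega)$ on $H(\Div 0,\Omega)$, which yields $L_\infty(\omega)$ modulo a compact resolvent difference by Proposition~\ref{thm: difference res}. The remainder $R(\omega)$ is multiplication by a matrix-valued symbol vanishing at infinity by \eqref{eq:coeffs-infty}, so Proposition~\ref{thm: compactness} applies: the compositions $R(\omega)P_{\ker\Div}$ are compact from $H(\curl,\Omega)$ to $L^2(\Omega)^3$. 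The resulting block representation therefore takes the form
\begin{equation*}
\cL(\omega)=\begin{pmatrix} L_\mu(\omega)+P_{\ker\Div}R(\omega)P_{\ker\Div} & P_{\ker\Div}R(\omega)P_\nabla\\ P_\nabla R(\omega)P_{\ker\Div} & P_\nabla\cW(\omega)P_\nabla\end{pmatrix},
\end{equation*}
in which every entry involving $P_{\ker\Div}$ on either side carries the compactness coming from Proposition~\ref{thm: compactness}, while $P_\nabla\cW(\omega)P_\nabla$ is a bounded multiplication-type operator on the gradient subspace.

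At this point I would invoke the abstract results of Section~\ref{sec: triangular} on essential spectra of (upper) triangular block operator matrices: those results say that, provided the off-diagonal and correcting blocks are sufficiently compact relative to the diagonal blocks, the essential spectra of all types $k=1,2,3,4$ of the full matrix coincide with the union of the essential spectra of the diagonal blocks. The compactness hypothesis is guaranteed by the previous step, using that any singular sequence for $\cL(\omega)$ inherits a uniform bound on its $H(\Div 0)$-part in $H(\curl,\Omega)$ from the equation $\cL(\omega)u_n\to 0$ (since $\curl\mu^{-1}\curl_0 u_n$ is controlled); this is what converts the Proposition~\ref{thm: compactness} compactness from $H(\curl)\to L^2$ into relative compactness on singular sequences. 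Applying the triangular machinery together with Proposition~\ref{thm: difference res} then gives
\begin{equation*}
\sigma_{ek}(\cL)=\sigma_{ek}(L_\mu)\cup \sigma_{ek}\big(P_\nabla\cW(\cdot)|_{\nabla\dot H^1_0(\Omega)}\big)=\sigma_{ek}(L_\infty)\cup \sigma_{ek}\big(P_\nabla\cW(\cdot)|_{\nabla\dot H^1_0(\Omega)}\big),
\end{equation*}
with the first set contained in $\R$ by Proposition~\ref{thm: difference res}.

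Finally, for the enclosure of the second set I would use a direct singular-sequence/numerical-range argument on $\nabla\dot H^1_0(\Omega)$: if $u_n\in\nabla\dot H^1_0(\Omega)$ is normalised with $u_n\rightharpoonup 0$ and $P_\nabla\cW(\omega)u_n\to 0$, testing against $u_n$ yields $-\omega^2\langle\eps u_n,u_n\rangle-\I\omega\langle\sigma u_n,u_n\rangle\to 0$. For $\omega\ne 0$ this gives $\omega=-\I\langle\sigma u_n,u_n\rangle/\langle\eps u_n,u_n\rangle$ in the limit, and the bounds \eqref{conditions eps mu sigma} force the limit to lie in $\I[-\sigma_{\max}/\eps_{\min},0]$. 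The main obstacle is the second paragraph: the off-diagonal blocks are not compact in the usual $L^2$-operator sense (multiplication by a function vanishing at infinity fails to be $L^2$-compact), so one must carefully verify the compactness hypothesis of the abstract block-matrix theorem from Section~\ref{sec: triangular}, which is possible only because singular sequences for $\cL(\omega)$ inherit additional $H(\curl)$-regularity from the curl-curl term in $\cL$.
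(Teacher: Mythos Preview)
Your overall strategy---Helmholtz decomposition, compact perturbation to simplify the block structure, then the triangular block-matrix machinery of Section~\ref{sec: triangular}, followed by Proposition~\ref{thm: difference res}---is exactly the paper's route. But there is a concrete gap in your compactness claim.

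You assert that ``every entry involving $P_{\ker\Div}$ on either side carries the compactness coming from Proposition~\ref{thm: compactness}.'' This is false for the block $P_{\ker\Div}R(\omega)P_\nabla$ (your $(1,2)$ entry). Proposition~\ref{thm: compactness} gives compactness of $mP_{\ker\Div}$ from $H(\curl,\Omega)$ to $L^2(\Omega)^3$, i.e.\ it needs $P_{\ker\Div}$ on the \emph{right}: the input must be divergence-free so that the Weber--Weck--Picard embedding applies after localisation. An element $\nabla\phi\in\nabla\dot H^1_0(\Omega)$ carries no extra regularity beyond $L^2$, and multiplication by a matrix vanishing at infinity is not compact on $L^2$; hence $P_{\ker\Div}R(\omega)P_\nabla$ is \emph{not} relatively compact, and you cannot reduce to the diagonal.

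The fix is that you do not need to: removing only the blocks with $P_{\ker\Div}$ on the right (your $(1,1)$-correction $P_{\ker\Div}R(\omega)P_{\ker\Div}$ and your $(2,1)$-entry $P_\nabla R(\omega)P_{\ker\Div}$, both $\cL(\omega)$-compact since they are $\curl_0$-compact) leaves a genuinely \emph{triangular} matrix
\[
\begin{pmatrix} L_\mu(\omega) & P_{\ker\Div}R(\omega)P_\nabla \\ 0 & P_\nabla\cW(\omega)|_{\nabla\dot H^1_0(\Omega)}\end{pmatrix},
\]
with the surviving off-diagonal merely bounded. This is precisely what the paper does: it packages the two compact corrections into a single $\cL(\omega)$-compact perturbation $M(\omega)$ and arrives at the triangular $\cT(\omega)$ in \eqref{eq: A^0}. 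Theorem~\ref{thm: ess spec} then applies directly---no compactness of the remaining off-diagonal is needed---once one checks the side condition $\sigma_{e2}(L_\mu(\omega))=\sigma_{e2}^*(L_\mu(\omega))$, which holds because $L_\mu(\omega)$ is a scalar shift of the self-adjoint $C_\mu$. Your last paragraph on the enclosure in $\I[-\sigma_{\max}/\eps_{\min},0]$ is fine.
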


\begin{proof}
Let $\omega\!\in\!\C$.
By Proposition~\ref{thm: compactness},  the operator $M(\omega)\!:=\!(\omega(\omega\epsilon\!+\!\I\sigma)\!-\!\omega^2)P_{\ker(\Div)}$ in $ L^2(\Omega)^3$ is $\curl_0$-compact and hence $T_0$-compact with $T_0=\mu^{-1/2}\curl_0$. Since $\cL(\omega)=T_0^*T_0 + \cW(\omega)$ where $\cW(\omega)=-\omega(\omega\eps+\I \sigma)$, bounded
sequences whose $\cL(\omega)$ graph norms are bounded have bounded $T_0$ graph norms. Hence $M(\omega)$ is $\cL(\omega)$-compact which
%are bounded in the space $H(\curl, \Omega)$. This
yields $\sigma_e(\cL(\omega))=\sigma_e(\cL(\omega)+M(\omega))$.

Since $\nabla \dot{H}^1_0(\Omega)\subset\ker(\curl_0) =\ker T_0$ and hence $T_0 P_\nabla \!=\! P_\nabla T_0^*\!=\!0$,
$\nabla \dot{H}^1_0(\Omega)$ is a reducing subspace for $T_0^*T_0$.%
%\footnote{I do not quite see why this sufficed: $\nabla %\dot{H}^1_0(\Omega)\subset\ker(\curl_0)=\ker(\curl\mu^{-1}\curl_0)\subset\dom(\cL(\omega))=\dom(\cL(\omega)+M(\omega))$ is an invariant subspace of% %$\cL(\omega)+M(\omega)$.}
Therefore the \vspace{-1mm}operator
\begin{align} \nonumber
  \!\cT(\omega)\!:=\! \cL(\omega)\!+\!M(\omega)
	& \!=\! T_0^*T_0\!-\!\omega(\omega\eps\!+\!\I\sigma) (P_\nabla\!+\! P_{\ker\Div})\!+\!(\omega(\omega\epsilon\!+\!\I\sigma)\!-\!\eps_\infty\omega^2)P_{\ker(\Div)} \\
	& \!=\! T_0^*T_0\!-\!\omega(\omega\eps\!+\!\I\sigma) P_\nabla - \omega^2 \eps_\infty P_{\ker(\Div)}
\label{eq:omcT}	
\end{align}
which is a bounded perturbation of $T_0^*T_0$ admits an operator matrix representation with respect to the decomposition
$ L^2(\Omega)^3\!=\! \nabla \dot{H}^1_0(\Omega) \oplus H(\Div0, \Omega)$ given \vspace{-1mm} by
\begin{align}
\cT(\omega) \!&=\!
\begin{pmatrix}
\hspace{5.7mm} P_\nabla \cT(\omega) |_{\nabla \dot H^1_0(\Omega)} \!&\!  \hspace{6.5mm} P_\nabla \cT(\omega) |_{H(\Div0, \Omega)}\\
P_{\ker\Div} \cT(\omega) |_{\nabla \dot H^1_0(\Omega)} \!&\! P_{\ker\Div} \cT(\omega) |_{H(\Div0, \Omega)} \!&\!
\end{pmatrix}
\nonumber
\\
\!&=\!
\begin{pmatrix}
\hspace{5.5mm} P_\nabla (-\!\omega(\omega\eps\!+\!\I\sigma)) |_{\nabla \dot H^1_0(\Omega)}  \!&\! 0 \\
P_{\ker\Div}(-\!\omega(\omega\eps\!+\!\I\sigma))  |_{\nabla \dot H^1_0(\Omega)} \!&\! P_{\ker\Div} (T_0^*T_0\!-\!\omega^2 \eps_\infty) |_{H(\Div0, \Omega)}\\
\end{pmatrix}
\nonumber
\\
\!&=\!
\begin{pmatrix}
\hspace{5.7mm} P_\nabla \cW(\omega) |_{\nabla \dot H^1_0(\Omega)} \!&\!  0  \\
P_{\ker\Div} \cW(\omega) |_{\nabla \dot H^1_0(\Omega)} \!&\!  L_\mu(\omega)
\end{pmatrix}.
\label{eq: A^0}
\\[-7mm] \nonumber
%\begin{pmatrix}
%\cL_\mu(\omega) \!&\! -P_{\ker(\Div)} B(\omega) \\
%0 \!&\! - P_\nabla B(\omega)
%\end{pmatrix},
%\quad \dom(\cT(\omega))=\dom(\cL_\mu(\omega))\oplus \nabla \dot{H}^1_0(\Omega).
\end{align}
with domain $\dom(\cT(\omega))=\nabla \dot{H}^1_0(\Omega)\oplus \dom(L_\mu(\omega))$.
%\mm{Notice that the operator appearing in the top-left corner of $\mm{\cT}(\omega)$ is $\cL_\mu(\omega)$.}
%To shorten the notation, let $\cL_\mu(\omega) := \curl \mu^{-1} \curl - \omega^2$ be the operator appearing in the upper-left corner of $\mm{\cT}(\omega)$.
Apart from $L_\mu(\omega)$, the other two matrix entries in $\cT(\omega)$ are bounded and everywhere defined,
and $\sigma_{e2}(L_\mu(\omega)) = \sigma^*_{e2}(L_\mu(\omega))$.
%for all $\omega \in \C$ and $-P_{\ker(\Div)} B(\omega)$ is bounded for all $\omega \in \C$,
Thus Theorem \ref{thm: ess spec} in Section \ref{sec: triangular} below and Proposition~\ref{thm: difference res} yield
% and the fact that $\ker(\Div)$ is an invariant subspace of $\cL_\mu(\omega)$ and $\cL_\infty(\omega)$ imply \marginpar{\mm{It is also important here that $\ker(\Div)$ is an invariant subspace both of $\cL_\mu(\omega)$ and of $\cL_\infty(\omega)$!}}
\vspace{-1mm} that
$$
    \sigma_{e2}(\cT\!(\omega))
		\!=\! \sigma_{e2}(L_\mu(\omega)) \cup \sigma_{e2}(P_\nabla \cW(\omega) |_{\nabla \!\dot H^1_0(\Omega)})
		\!=\! \sigma_{e2}(L_\infty(\omega)) \cup \sigma_{e2}(P_\nabla \cW(\omega) |_{\nabla \!\dot H^1_0(\Omega)})
$$
%for all $\omega \in \C$.
% In particular, $0 \in \sigma_e(\mm{\cT}(\omega))$ if and only if $0 \in \sigma_e(\cL_\infty(\omega)) \cup \sigma_e(-P_\nabla B(\omega))$, so
and hence, since $\omega\in\C$ was arbitrary,
\begin{align*}
\sigma_{e2}(\cL)&=\sigma_{e2}(\cL+M)=\sigma_{e2}(\cT) = \sigma_{e2}(L_\infty) \cup \sigma_{e2}(P_\nabla \cW(\cdot) |_{\nabla \dot H^1_0(\Omega)}).
\qedhere
\\[-6mm]
\end{align*}
\end{proof}

\begin{rem} \label{rem:spec-eq}
Theorem \ref{sigma-ess} generalises \cite[Thm.\ 6]{MR3942228} since we do not suppose \cite[Ass.\ 14]{MR3942228}
on $\Omega$, which requires the subspaces $K_N(\Omega)$ of $H(\Div 0, \Omega)$ and $K_T(\Omega)$ of $H_0(\Div 0, \Omega)$ to be finite dimensional. 
%An immediate consequence is that 
If the latter holds, see \cite[Prop.\ 15]{MR3942228} for a list of sufficient conditions,  
then both Theorem \ref{sigma-ess} and  \cite[Thm.\ 6]{MR3942228} apply and
we obtain the interesting equality
\begin{equation} \label{eq:spec_eq}
\sigma_{ek}\big(P_\nabla \cW(\cdot) |_{\nabla \dot H^1_0(\Omega)}\big) = 
\sigma_{ek}(\Div ( \cW(\cdot) \nabla))
%\sigma_{ek}(-\Div (\omega(\omega\eps + \I \sigma) \nabla)), 
\quad k=1,2,3,4,
\end{equation}%
where $P_\nabla \cW(\omega) |_{\nabla \!\dot H^1_0(\Omega)}$ is a bounded operator in $L^2(\Omega; \C^3)$, while \vspace{-1mm} 
$\Div (\cW(\omega) \nabla) $ is defined as a bounded operator from $\dot{H}^1_0(\Omega)$ to $\dot H^{-1}(\Omega)$ in \cite{MR3942228}.  In fact, \eqref{eq:spec_eq} follows from the identity
\[
\sigma_{ek}(L_\infty) \cup \sigma_{ek}\big(P_\nabla \cW(\cdot) |_{\nabla \dot H^1_0(\Omega)}\big) = \sigma_{ek}(\cL) 
= \sigma_{ek}(V^0_{(\cdot)}) \cup \sigma_{ek}(\Div ( \cW(\cdot) \nabla))
%\sigma_{ek}(-\Div (\omega(\omega\eps + \I \sigma) \nabla)) 
\]%
where $V^0_{(\cdot)}$ is the Maxwell pencil $\I V(\cdot)$ with constant coefficients $\eps_\infty$, $\mu_\infty$ and $\sigma \equiv 0$ defined in \cite[Thm.\ 6]{MR3942228}, if we observe $\sigma_{ek}(V^0_{(\cdot)}) = \{0\} \cup \sigma_{ek}(L_\infty)\subset\!\R$, $k=1,2,3,4$, and that the sets in \eqref{eq:spec_eq} lie on $\I\R$ and both contain $\{0\}$. 

Note that, in concrete examples, identity \eqref{eq:spec_eq} is useful to explicitly determine the purely imaginary part of the essential spectrum of the Maxwell pencil. 
\vspace{-1mm}
In fact, %noting that $\cW(\omega):=- \omega(\omega\eps + \I \sigma)$, $\omega\!\in\!\C$, we have
\begin{align*}
  \sigma_{ek}(\Div ( \cW(\cdot) \nabla)) 
  &= \{ \omega \in \C: 0 \in  \sigma_{ek}(\Div ( (\omega\eps + \I \sigma) \nabla)) \} \cup \{0\} \\
  &= \{ \I \nu \in \I \R: 0 \in  \sigma_{ek}(\Div ( (\nu\eps + \sigma) \nabla)) \} \cup \{0\}
\end{align*}
and Fredholm properties of operators $\Div (a \nabla )$ with non-definite coefficients $a$ also arise when studying Maxwell equations in media
with dielectric permittivity and/or magnetic permeability, see e.g.\ \cite{MR3200087}, \cite{MR3515306}, \cite{MR3661549} or  \cite{MR3960264}, \cite{MR3837172} for relations to spectra of Neumann-Poincar\'e operators.
\end{rem}

%%%%%%%%%%%%%%%%%%%%%%%%%%%%%%%%%%%%%%%%%%%%%%%%%%%%%%

\section{ \bf Abstract results for polynomial pencils}
\label{section:6}

Before proceeding with the analysis of the spectral pollution for the domain truncation method applied to $\cL$ we need some abstract results providing an enclosure for the set of spectral pollution of sequences of polynomial pencils. %We do this in the next section.

Let $H_0$ be a Hilbert space, $H, H_n \subset H_0$ be closed subspaces. Let $P: H_0 \to H$, $P_n: H_0 \to H_n$ be the corresponding orthogonal projections and assume that $P_n \to P$ strongly in $H_0$, which we write as $P_n\s P$. For fixed $M \in \N$, let $A_j$, $j=0, \dots, M$, be densely defined operators in $H$ and, for $n \in \N$, let $A_{j,n}$, $j=0, \dots, M$,  be densely defined operators in $H_n$.
We assume that $A_j$, $j\ne 0$, are bounded and $A_{j,n}$, $j\ne 0$, are uniformly bounded in $n\in\N$; in particular, only $A_0$ and $A_{0,n}$ may be unbounded.

In addition, we assume that there exists a ray $ \e^{\I \gamma} (-\infty,c) \subset\bigcap_{n\in\N}\rho(A_{0,n})\cap\rho(A_0)$
with $c\in\R$, $\gamma \in (-\pi,\pi]$ such \vspace{-1mm} that
\begin{equation}
\label{eq:decay}
  \lim_{t\in\e^{\I \gamma}\R, t\to -\infty}\|(A_0-t)^{-1}\|\to 0,
  \quad
	\lim_{t\in\e^{\I \gamma}\R, t\to -\infty}\,\sup_{n\in\N}\|(A_{0,n}-t)^{-1}\|\to 0.
\vspace{-1mm}	
\end{equation}
This assumption is satisfied e.g.\ if $A_0$ and $A_{0,n}$, $n\in\N$, are $m$-accretive (then with $\gamma=0$)
or self-adjoint (then with $\gamma=\frac \pi 2$ or $-\frac \pi 2$). In the sequel we assume, without loss of generality, that $\gamma=0$.
%Note that if they are not $m$-accretive but self-adjoint, then the results are applicable as well; one has to multiply the pencils by $\I$ in the proofs.

Consider the \vspace{-1.2mm}  pencils of operators acting in $H$ and $H_n$, respectively, given by
\begin{alignat*}{2}
T(\lambda) &:= \sum^M_{j=0} \lambda^j A_j,  & \quad \dom T(\lambda)&:=\dom(A_0) \subset H, \\[-1mm]
%and for $n \in \N$ define the pencil
T_n(\lambda) &:= \sum^M_{j=0} \lambda^j A_{j,n}, \quad & \dom T_n (\lambda) &:=\dom(A_{0,n}) \subset H_n, \quad n\in \N.
\\[-7mm]
\end{alignat*}
The boundedness of all higher order coefficient operators implies  that all derivatives $T^{(k)}(\la)$,  $T_n^{(k)}(\la)$, $n\in\N$, $k=1,2,\dots,M$, are bounded operators
\vspace{-1.2mm}
and that %$T(\la)^*=T^*(\overline{\la}}$, $T_n(\la)^*=T_n^*(\overline{\la}}$, $n\in\N$, with the adjoint pencils
\begin{alignat*}{3}
T^*(\la)&:=\ T(\overline{\la})^* \!&&=\! \sum^M_{j=0} \lambda^j A_j^*, \quad  & \dom T^*(\la) &=\! \dom A_0^*, \\[-1mm]
T_n^*(\la)&:=T_n(\overline{\la})^*\!&&=\! \sum^M_{j=0} \lambda^j A_{j,n}^*, \quad &  \dom T_n^*(\la) &=\! \dom A_{0,n}^*, \ n\!\in\!\N.
\\[-8mm]
\end{alignat*}

We define the \emph{region of boundedness} of the sequence $(T_n)_{n\in\N}$ by
\[
  \Delta_b((T_n)_{n\in \N}) \!:=\!
  \big\{ \la \!\in\! \C :\,\exists \,n_0 \!\in\! \N \mbox{ with } \la\!\in\! \rho(T_n), n \!\geq\! n_0, \,\sup_{n \geq n_0} \norma{T_n(\la)^{-1}} \!<\! \infty\big\};
\]
note that, for the case of monic linear operator pencils $T_n(\la)\!:=\!\lambda - A_{0,n}$, $\lambda\in\C$, with unbounded $A_{0,n}$, this notion coincides with the region of boundedness of the operator sequence $(A_{0,n})_{n\in\N}$, see \cite[Def.~2.1~(iii)]{MR3694623}.

\begin{lemma}\label{lemma.tildeDelta}
%\begin{enumerate}
%\item[\rm i)]
{\rm i)}
Let $\la\in  \Delta_b\big((T_n)_{n\in\N}\big)$ with $\la\in\rho(T_n)$ for $n\geq n_\la$.
Then there exist $r_{\lambda},m_{\lambda}>0$ such that  $B_{r_{\lambda}}(\la)\subset \Delta_b\big((T_n)_{n\in\N}\big)$ with
$$
\forall\,\mu\in B_{r_{\lambda}}(\la):\quad  \mu\in\rho(T_n), \quad \|T_n(\mu)^{-1}\|\leq m_{\la}, \quad n\geq n_\la.
$$
%\item[\rm ii)]
{\rm ii)}
Let $K\!\subset\!\Delta_b\big((T_n)_{n\in\N}\big)$ be a compact subset. Then there exist $n_K\!\in\!\N$, $m_K\!>\!0$~with
$$
\forall\,\mu\in K:\quad \mu\in\rho(T_n), \quad \|T_n(\mu)^{-1}\|\leq m_K, \quad n\geq n_K.
$$
%\end{enumerate}
\end{lemma}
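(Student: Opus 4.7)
The plan is a standard Neumann series perturbation argument, adapted to polynomial pencils by exploiting that all higher-order coefficients are (uniformly) bounded.

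For part (i), I would start from the fact that $T_n(\cdot)$ is a polynomial of degree at most $M$, so its Taylor expansion around $\la$ terminates after $M+1$ terms:
\[
T_n(\mu) = T_n(\la) + \sum_{k=1}^M \frac{(\mu-\la)^k}{k!}\,T_n^{(k)}(\la), \qquad \mu\in\C, \ n\in\N.
\]
Since $T_n^{(k)}(\la)=\sum_{j=k}^M \frac{j!}{(j-k)!}\la^{j-k}A_{j,n}$ only involves the coefficients $A_{j,n}$ with $j\ge 1$, which are uniformly bounded in $n$ by assumption, there is a constant $C_{\la}\ge 0$ (depending on $\la$, $M$, and the uniform bounds on $\|A_{j,n}\|$) such that $\|T_n^{(k)}(\la)\|\le C_\la$ for $1\le k\le M$ and all $n\in\N$. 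Factoring, for $n\ge n_\la$ one may write
\[
T_n(\mu) = T_n(\la)\Bigl(I + T_n(\la)^{-1}\sum_{k=1}^M \frac{(\mu-\la)^k}{k!}\,T_n^{(k)}(\la)\Bigr).
\]
Setting $C:=\sup_{n\ge n_\la}\|T_n(\la)^{-1}\|<\infty$ (which is finite because $\la\in\Delta_b((T_n)_{n\in\N})$), the operator in brackets has norm bounded above by $C\sum_{k=1}^M C_\la |\mu-\la|^k/k!$. Choosing $r_\la>0$ small enough, depending only on $C$, $C_\la$ and $M$, one ensures this quantity is at most $1/2$ uniformly in $n\ge n_\la$ and in $\mu\in B_{r_\la}(\la)$. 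Then Neumann series gives invertibility of $T_n(\mu)$ with
\[
\|T_n(\mu)^{-1}\| \le 2\,\|T_n(\la)^{-1}\| \le 2C =: m_\la, \qquad n\ge n_\la,\ \mu\in B_{r_\la}(\la),
\]
which establishes (i).

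For part (ii), I would apply (i) pointwise on the compact set. For each $\la\in K$, part (i) produces $r_\la,m_\la>0$ and $n_\la\in\N$ such that $B_{r_\la}(\la)\subset \Delta_b((T_n)_{n\in\N})$ with the stated uniform bound for $n\ge n_\la$. The open balls $(B_{r_\la}(\la))_{\la\in K}$ cover $K$, so by compactness one extracts a finite subcover corresponding to finitely many centres $\la_1,\dots,\la_N$. Setting $n_K:=\max_{1\le i\le N}n_{\la_i}$ and $m_K:=\max_{1\le i\le N}m_{\la_i}$ then yields the desired uniform statement: every $\mu\in K$ lies in some $B_{r_{\la_i}}(\la_i)$, so $\mu\in\rho(T_n)$ and $\|T_n(\mu)^{-1}\|\le m_{\la_i}\le m_K$ for all $n\ge n_K$.

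No step seems to present a serious obstacle; the only point that needs care is ensuring the $n$-uniformity of the Taylor-coefficient bound, which follows immediately from the standing hypothesis that $A_{j,n}$ is uniformly bounded for $j\ge 1$. The assumption on the resolvent decay along the ray $\e^{\I\gamma}(-\infty,c)$ plays no role here, and is presumably used later to populate $\Delta_b((T_n)_{n\in\N})$ with concrete points.
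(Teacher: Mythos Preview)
Your proof is correct and follows essentially the same approach as the paper: a Neumann series argument based on the finite Taylor expansion of $T_n(\cdot)$ around $\la$ for part (i), and a finite-subcover argument for part (ii). The only cosmetic differences are that the paper factors $T_n(\la)$ on the right rather than the left, and your phrase ``the operator in brackets'' should read ``the perturbation term $T_n(\la)^{-1}\sum_{k\ge 1}\dots$'' (the bracket itself contains $I$); neither affects the argument.
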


\begin{proof}
%\begin{enumerate}
%\item[\rm i)]
i) Let $\la$ satisfy the assumptions and let $n\geq n_\la$.
By a Neumann series argument, the \vspace{-1.2mm} operator
$$
T_n(\mu)=\left(I+\sum_{k=1}^{M}\frac{(\mu-\la)^k}{k!}T_n^{(k)}(\la)T_n(\la)^{-1}\right)T_n(\la), \quad n\geq n_0,
$$
is boundedly invertible if $\mu\in B_{r_\la}(\la)$ and $r_\la>0$ is so small \vspace{-1.2mm} that
$$
c_{\la}:=\sum_{k=1}^{M}\frac{r_\lambda^k}{k!}\sup_{n\geq n_\la}\|T_n^{(k)}(\la)T_n(\la)^{-1}\|<1.
$$
Note that, for every $k\!=\!1,\dots,M$, the operators $T_n^{(k)}(\la)\!=\!\sum_{j=1}^M \frac{j!}{(j-k)!}\la^{j-k}A_{j,n}$,  are bounded uniformly in $n\in\N$.  We obtain that $B_{r_\la}(\la)\!\subset\!\rho(T_n)$ for every $n\!\geq\! n_\la$,~with
$$
\|T_n(\mu)^{-1}\|\leq \frac{\sup_{n\geq n_\la}\|T_n(\la)^{-1}\|}{1-c_\la}, \quad \mu\in B_{r_\la}(\la).
$$

%\item[\rm ii)]
ii)
By i), the compact set $K$ can be covered by open disks (around each $\la\!\in\!K$) on which $\mu\mapsto \sup_{n\geq n_\la}\|T_n(\mu)^{-1}\|$ is uniformly bounded. Since $K$ is compact, there exists a finite covering of such disks. Now the claim is easy to see.
\qedhere
%\end{enumerate}
\end{proof}

\begin{prop}\label{prop.pollution}
No spectral pollution occurs in $ \Delta_b\big((T_n)_{n\in\N}\big)$.
\end{prop}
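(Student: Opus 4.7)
The statement is an almost immediate consequence of Lemma~\ref{lemma.tildeDelta}~i), so the plan is to argue by contradiction and quote that lemma.

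Let $\la\in\Delta_b((T_n)_{n\in\N})$ and suppose, for contradiction, that $\la\in\sigma_{\rm poll}((T_n)_{n\in\N})$. By the natural extension of definition \eqref{eq:poll} to general pencils, this means that $\la\in\rho(T)$ and there exists a sequence $(\la_n)_{n\in\N}$ with $\la_n\in\sigma(T_n)$ for each $n$ (or at least for infinitely many $n$) and $\la_n\to\la$. Note that for the conclusion we actually do not even need the hypothesis $\la\in\rho(T)$; the contradiction will come purely from the behaviour of the approximating pencils.

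Apply Lemma~\ref{lemma.tildeDelta}~i) at the point $\la$: there exist $r_\la>0$, $m_\la>0$ and $n_\la\in\N$ such that
\[
\mu\in\rho(T_n), \qquad \|T_n(\mu)^{-1}\|\le m_\la, \qquad \mu\in B_{r_\la}(\la),\ n\ge n_\la.
\]
Since $\la_n\to\la$, there exists $n_0\ge n_\la$ with $\la_n\in B_{r_\la}(\la)$ for every $n\ge n_0$. Hence $\la_n\in\rho(T_n)$ for all such $n$, contradicting the assumption $\la_n\in\sigma(T_n)$. Thus $\la\notin\sigma_{\rm poll}((T_n)_{n\in\N})$, which proves the claim.

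The proof is short because all the real work has already been absorbed into Lemma~\ref{lemma.tildeDelta}. The only point requiring care is that the definition of $\sigma_{\rm poll}$ is applied to the pencils $T_n$ in the obvious way (an $\la_n\in\sigma(T_n)$ converging to $\la$), and that $\Delta_b$ is indeed invariant under small perturbations of the spectral parameter, which is precisely what Lemma~\ref{lemma.tildeDelta}~i) provides via a Neumann series argument on the polynomial pencil.
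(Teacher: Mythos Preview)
Your proof is correct and follows essentially the same approach as the paper: both invoke Lemma~\ref{lemma.tildeDelta}~i) to obtain a ball $B_{r_\la}(\la)\subset\rho(T_n)$ for all $n\ge n_\la$, which immediately prevents any sequence $\la_n\in\sigma(T_n)$ from accumulating at $\la$. The only cosmetic difference is that you phrase this as a proof by contradiction, whereas the paper states the conclusion directly.
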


\begin{proof}
Let $\la\!\in\! \Delta_b\big((T_n)_{n\in\N}\big)$.Lemma~\ref{lemma.tildeDelta}~i) implies that $B_{r_{\la}}(\la)\!\subset\!\rho(T_n)$ for $n\!\geq\! n_\la$, and so, in the limit $n\!\to\!\infty$, points in $\sigma(T_n)$ cannot accumulate at~$\la$.
\end{proof}

\begin{lemma}\label{lemmagsrpencil}
Assume that there exists $\la_0\in\bigcap_{n\in\N}\rho(T_n)\cap\rho(T)$ with
\begin{equation}\label{eq.convatlm01}
T_n(\la_0)^{-1}P_n\s T(\la_0)^{-1}P, \quad n\to\infty,
\end{equation}
and that $A_{j,n}P_n\s A_jP$ for $j=1\dots,M$.
Then for every $\la\in \Delta_b\big((T_n)_{n\in\N}\big)\cap\rho(T)$,
$$
T_n(\la)^{-1}P_n\s T(\la)^{-1}P, \quad n\to\infty.
$$
\end{lemma}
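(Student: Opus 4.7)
The plan is to derive an implicit identity for $v_n := T_n(\lambda)^{-1}P_n w$ via the polynomial analogue of the second resolvent identity, turn it into a contraction estimate valid near $\lambda_0$, and extend to all of $\Delta_b((T_n)_{n\in\N})\cap\rho(T)$ by bootstrapping with the help of Lemma~\ref{lemma.tildeDelta}.

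Fix $w \in H_0$ and set $v_n := T_n(\lambda)^{-1}P_n w \in H_n$, $v := T(\lambda)^{-1}Pw \in H$. Since $T_n(\lambda) - T_n(\lambda_0) = \sum_{j=1}^M (\lambda^j - \lambda_0^j)\, A_{j,n}$ is a bounded operator, uniformly in $n$, the relation $T_n(\lambda)v_n = P_n w$ rearranges to
\begin{equation*}
v_n = S_n w - \sum_{j=1}^M (\lambda^j - \lambda_0^j)\, S_n B_{j,n} v_n,
\end{equation*}
where $S_n := T_n(\lambda_0)^{-1}P_n$ and $B_{j,n} := A_{j,n}P_n$; here I exploit that $v_n \in H_n$ to write $A_{j,n}v_n = B_{j,n}v_n$ and $T_n(\lambda_0)^{-1}y = S_n y$ for $y \in H_n$. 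The analogous identity holds for $v$ with $S := T(\lambda_0)^{-1}P$ and $B_j := A_j P$ in place of $S_n, B_{j,n}$.

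Subtracting and decomposing, for each $j$,
\begin{equation*}
S_n B_{j,n}v_n - S B_j v = S_n B_{j,n}(v_n - P_n v) + S_n(B_{j,n} - B_j) v + (S_n - S) B_j v,
\end{equation*}
the last two summands tend to zero by the hypothesised strong convergences $B_{j,n}\sto B_j$ and $S_n\sto S$ applied to the fixed vectors $v \in H$ and $B_j v \in H$, while $\|v - P_n v\| \to 0$ because $P_n\sto P$ and $Pv=v$. Banach--Steinhaus applied to $S_n \sto S$ combined with the uniform boundedness of the $A_{j,n}$ guarantees that
\begin{equation*}
C_\lambda := \sum_{j=1}^M |\lambda^j - \lambda_0^j|\,\sup_{n \in \N} \|S_n B_{j,n}\| < \infty,
\end{equation*}
and we obtain $\|v_n - v\| \le \eta_n + C_\lambda \|v_n - v\|$ for some $\eta_n \to 0$. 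For $\lambda$ close enough to $\lambda_0$ that $C_\lambda < 1$, this forces $v_n \to v$ strongly, i.e.\ $T_n(\lambda)^{-1}P_n \sto T(\lambda)^{-1}P$ on an open disk $B_r(\lambda_0)$.

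To reach an arbitrary $\lambda \in \Delta_b((T_n)_{n\in\N}) \cap \rho(T)$, I bootstrap: for any $\lambda_1 \in B_r(\lambda_0) \cap \rho(T)$ already handled, every hypothesis of the present lemma is satisfied with $\lambda_0$ replaced by $\lambda_1$, Lemma~\ref{lemma.tildeDelta}~(i) providing $\lambda_1 \in \bigcap_{n \ge n_{\lambda_1}}\rho(T_n)$ with a uniform resolvent bound, so the local argument can be iterated at $\lambda_1$. Covering a polygonal path in $\Delta_b((T_n)_{n\in\N}) \cap \rho(T)$ from $\lambda_0$ to $\lambda$ by finitely many such disks propagates the conclusion to $\lambda$. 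The main obstacle throughout is the $n$-dependent composition $S_n B_{j,n} v_n$: since strong convergence does not pass through composition with $n$-varying vectors, the contraction estimate is indispensable and only local, which in turn forces the local-to-global bootstrap along a path in $\Delta_b((T_n)_{n\in\N})$; the uniform resolvent bound encoded in the definition of $\Delta_b$ is precisely what keeps the constants in the contraction estimate controlled along this path.
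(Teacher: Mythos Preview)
Your local argument near $\lambda_0$ is correct and quite clean: the implicit identity together with the contraction constant $C_\lambda<1$ does give $T_n(\lambda)^{-1}P_n\sto T(\lambda)^{-1}P$ on a disk around $\lambda_0$, and your use of $v_n-P_nv=P_n(v_n-v)$ to come back to $\|v_n-v\|$ is the right observation.

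The gap is in the bootstrap. You tacitly assume that a polygonal path in $\Delta_b\big((T_n)_{n\in\N}\big)\cap\rho(T)$ from $\lambda_0$ to an arbitrary $\lambda$ exists, but this open set need not be connected. Take $H_n=H=H_0$, $P_n=P=I$, $M=1$, $A_{1,n}=A_1=-I$ and $A_{0,n}=A_0$ a unitary operator whose spectrum is the full unit circle; the ray--decay hypothesis~\eqref{eq:decay} of the section holds, yet $\Delta_b\cap\rho(T)=\C\setminus S^1$ has two components, and your chain of disks can never pass from the component containing $\lambda_0$ to the other one. So as written the argument proves the claim only on the component of $\lambda_0$.

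The paper's proof uses precisely the standing assumption you never invoke, namely the ray decay~\eqref{eq:decay}. Instead of propagating in the $\lambda$-plane, it regards $T_n(\lambda_0)$, $T(\lambda_0)$ as fixed operators and passes to their \emph{linear} resolvents $(T_n(\lambda_0)-t)^{-1}$ along the ray; a perturbation result for generalised strong resolvent convergence (applied to the bounded, strongly convergent perturbations $S_n(\lambda)=T_n(\lambda)-T_n(\lambda_0)$) then yields $(T_n(\lambda)-t)^{-1}P_n\sto (T(\lambda)-t)^{-1}P$ for all sufficiently negative $t$, for \emph{every} $\lambda$. Since $\lambda\in\Delta_b\cap\rho(T)$ is equivalent to $0\in\Delta_b\big((T_n(\lambda))_n\big)\cap\rho(T(\lambda))$, one more application of the linear propagation result brings $t$ back to $0$. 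The ray thus supplies, for each $\lambda$ independently, a universally accessible region where convergence can be established by bounded perturbation---exactly what your path inside $\Delta_b\cap\rho(T)$ cannot do in general.
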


\begin{proof}
%First note that a strongly convergent operator sequence is bounded, hence $\la_0\in\Delta_b\big((T_n)_{n\in\N}\big)$.
Let $\la\in \Delta_b\big((T_n)_{n\in\N}\big)\cap\rho(T)$. Define the bounded \vspace{-1mm} operators
\begin{align*}
S(\la)&:=T(\la)-T(\la_0)=\sum_{j=1}^M (\la^j-\la_0^j)A_{j},
\\[-1mm]
S_n(\la)&:=T_n(\la)-T_n(\la_0)=\sum_{j=1}^M (\la^j-\la_0^j)A_{j,n}, \quad n\in\N.
\\[-7mm]
\end{align*}
Assumption \eqref{eq:decay} together with the boundedness of the operators $A_{j,n}$, $n\!\in\!\N$, $j\!=\!1,\dots,M$, imply that, by a Neumann series argument, there exists $t_0\in\R$ such that $(-\infty,t_0)$ is contained in the (operator) region of boundedness $\Delta_b\left((T_n(\la_0)\right)_{n\in\N})$, see \cite[Def.~2.1~(iii)]{MR3694623}, and in $\rho(T(\la_0))$, with
\begin{equation}
\label{eq:conv-res-seq}
\lim_{t\in\R,t\to-\infty}\|(T(\la_0)-t)^{-1}\|=0, \quad \lim_{t\in\R,t\to-\infty}\,\sup_{n\in\N}\|(T_n(\la_0)-t)^{-1}\|=0.
\end{equation}
Then \eqref{eq.convatlm01} and \cite[Prop.\ 2.16 i)]{MR3694623} imply that, for $t\in (-\infty,t_0)$,
$$(T_n(\la_0)-t)^{-1}P_n\s (T(\la_0)-t)^{-1}P, \quad n\to\infty.$$
By the assumptions, $S_n(\la)P_n\!\s\! S(\la)P$ as $n\!\to\!\infty$.
This and \eqref{eq:conv-res-seq} show that~the perturbation result \cite[Cor.\ 3.5]{MR3694623}, applies to $T(\la)\!=\!T(\la_0)+S(\la)$, $T_n(\la)\!=\!T_n(\la_0)+S_n(\la)$,  $n\in\N$, and yields that, for all sufficiently negative $t\in (-\infty,t_0)$,
$$(T_n(\la)-t)^{-1}P_n\s (T(\la)-t)^{-1}P, \quad n\to\infty.$$
By the choice of $\la$ we have $0\!\in\!\Delta_b\left((T_n(\la)\right)_{n\in\N})\cap\rho(T(\la))$, and hence another appli\-cation of \cite[Prop.\ 2.16 i)]{MR3694623}
implies the claim.
\end{proof}

\begin{prop}\label{prop.inclusion}
Suppose that the assumptions of Lemma~{\rm \ref{lemmagsrpencil}} are satisfied.
Then, for each $\la\in\sigma_p(T)$ such that for some $\eps>0$ we have
\beq
  B_{\eps}(\la)\backslash\{\la\}\subset   \Delta_b\big((T_n)_{n\in\N}\big)\cap \rho(T),
\label{eqneighbourhoodinregionofbdd}
\eeq
there exists a sequence of elements $\la_n\in\sigma(T_n)$, $n\in\N,$ with $\la_n\to\la$, $n\to\infty$.
\end{prop}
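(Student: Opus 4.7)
The plan is to argue by contradiction. Assume that no sequence $\la_n \in \sigma(T_n)$ converges to $\la$; then, replacing $\eps$ by a smaller value if necessary, there exist $\delta \in (0, \eps)$ and $n_0 \in \N$ with $\sigma(T_n) \cap B_\delta(\la) = \emptyset$ for all $n \geq n_0$. Fix $\delta' \in (0, \delta)$. Since the compact circle $\{|z-\la| = \delta'\}$ is contained in $\Delta_b((T_n)_{n\in\N})$, Lemma \ref{lemma.tildeDelta}(ii) yields $M > 0$ and $n_1 \geq n_0$ such that $\|T_n(z)^{-1}\| \leq M$ for all $|z-\la| = \delta'$ and $n \geq n_1$. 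Under the contradiction hypothesis each $T_n(z)^{-1}$ is holomorphic on the open disk $B_\delta(\la)$, and the scalar maximum-modulus principle applied to $z \mapsto \langle T_n(z)^{-1} u, v \rangle$ extends this bound to the whole closed disk $\overline{B_{\delta'}(\la)}$.

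For each fixed $z \in B_{\delta'}(\la) \setminus \{\la\}$ we have $z \in \Delta_b((T_n)_{n\in\N}) \cap \rho(T)$, so Lemma \ref{lemmagsrpencil} gives $T_n(z)^{-1} P_n \s T(z)^{-1} P$. The uniform bound $\|T_n(z)^{-1}\| \leq M$, combined with $\|P_n\| \leq 1$, passes to the strong limit to yield $\|T(z)^{-1}\| \leq M$ for every $z \in B_{\delta'}(\la) \setminus \{\la\}$. In other words, although $\la \notin \rho(T)$, the operator norm $\|T(z)^{-1}\|$ stays uniformly bounded as $z \to \la$.

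To force a contradiction, pick a nonzero $x \in \ker T(\la)$. Applying the polynomial identity $z^j - \la^j = (z-\la)\sum_{k=0}^{j-1} z^k \la^{j-1-k}$ termwise yields $T(z)x = (T(z)-T(\la))x = (z-\la)\,\widetilde{S}(z)\,x$, where $\widetilde{S}(z) := \sum_{j=1}^M \bigl(\sum_{k=0}^{j-1} z^k \la^{j-1-k}\bigr) A_j$ is a polynomial in $z$ with bounded operator coefficients. Hence, for each $z \in B_{\delta'}(\la) \setminus \{\la\}$, applying $T(z)^{-1}$ gives
\[
 T(z)^{-1}\widetilde{S}(z)\,x \;=\; \frac{x}{z-\la}.
\]
Taking norms and setting $C := \sup_{|z-\la| \leq \delta'} \|\widetilde{S}(z)\| < \infty$, the uniform resolvent bound forces $\|x\|/|z-\la| \leq MC \|x\|$ for every $z \in B_{\delta'}(\la) \setminus \{\la\}$, which is impossible for $z$ sufficiently close to $\la$ unless $x=0$. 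This contradicts $x \neq 0$.

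The main obstacle, and the reason the statement is genuinely harder than its linear-pencil counterpart, is that the classical Riesz-projection argument does not transfer directly: already the scalar example $T(z) = (z-\la)^2$ shows that $\la \in \sigma_p(T)$ need not produce a nonzero residue of $T(z)^{-1}$ at $\la$. The device of testing the uniform resolvent bound against the specific vector $\widetilde{S}(z) x$ circumvents this by manufacturing an exact first-order pole of $T(z)^{-1}\widetilde{S}(z) x$ at $\la$ with residue $x \neq 0$, irrespective of the order of the singularity of $T(z)^{-1}$ itself.
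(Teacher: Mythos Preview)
Your proof is correct, with one small slip: the negation of ``there exists a sequence $\la_n\in\sigma(T_n)$ with $\la_n\to\la$'' only yields an infinite subset $I\subset\N$ and $\delta\in(0,\eps)$ with $\sigma(T_n)\cap B_\delta(\la)=\emptyset$ for $n\in I$, not for all $n\geq n_0$. The paper sets it up this way. Your argument works verbatim along this subsequence, since the strong convergence from Lemma~\ref{lemmagsrpencil} holds along the full sequence and hence along $I$.

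Both your proof and the paper's rest on the same key identity: for $x\in\ker T(\la)$ one has $T(\mu)x=(\mu-\la)\widetilde S(\mu)x$, where $\widetilde S(\mu)=(T(\mu)-T(\la))/(\mu-\la)$ is the bounded polynomial that the paper writes in Taylor form as $\sum_{k=0}^{M-1}\frac{(\mu-\la)^k}{(k+1)!}T^{(k+1)}(\la)$. From there the packaging diverges. The paper integrates $T(\mu)^{-1}\widetilde S(\mu)$ over a small circle to build a Riesz-type operator $Q$ with $Qx=x$, defines the analogous $Q_n$ for $T_n$, observes $Q_n=0$ for $n\in I$ by holomorphicity inside the disk, and then shows $Q_nP_nx\to QPx$ by pointwise strong convergence on the contour plus dominated convergence, forcing $x=0$. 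You instead use the maximum-modulus principle to push the uniform bound on $\|T_n(z)^{-1}\|$ from the circle into the closed disk, pass that bound to $\|T(z)^{-1}\|$ via strong convergence, and read off the contradiction directly from $\|T(z)^{-1}\widetilde S(z)x\|=\|x\|/|z-\la|$. Your route is more elementary, avoiding the contour-integral and dominated-convergence machinery entirely; the paper's Riesz-projection framing is a bit heavier but ties in more visibly with standard spectral-approximation theory.
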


\begin{proof}
Let $\la\!\in\!\sigma_p(T)$ and $\eps\!>\!0$ satisfy~\eqref{eqneighbourhoodinregionofbdd}.
Assume the claim does not hold. Then there exists a
$\delta\!\in\!(0,\eps)$ and an infinite subset $I\!\subset\!\N$ with ${\rm dist}(\la,\sigma(T_n))\!\geq\! 2\delta$, $n\!\in\! I$.
Define bounded operators $Q$ and $Q_n$, $n\in\N$, by the contour \vspace{-1mm} integrals
\begin{align*}
  Q&:=\frac{1}{2\pi\I}\int_{|z|=\delta} T(\la+z)^{-1}\sum_{k=0}^{M-1} \frac{z^k}{(k+1)!}T^{(k+1)}(\la)\,\rd z, \\
  Q_n&:=\frac{1}{2\pi\I}\int_{|z|=\delta} \!T_n(\la+z)^{-1}\sum_{k=0}^{M-1} \frac{z^k}{(k+1)!}T_n^{(k+1)}(\la)\,\rd z, \quad n\in I;
\end{align*}
recall that the sums on the right-hand side are bounded operators since all higher order coefficients of $T$ were assumed to be bounded.
Since $z\!\mapsto\! T_n(\la+z)^{-1}$ is holomorphic in $B_{2\delta}(0)$, we have $Q_n\!=\!0$, $n\!\in\! I$.
Since $\la\!\in\!\sigma_p(T)$, there exists $x\in\dom(T)$ with $\|x\|=1$ and $T(\la)x=0$.
Using this in the Taylor expansion of $T$ in $\lambda$, we conclude \vspace{-2mm} that
\begin{align*}
   T(\la+z)x=\sum_{k=1}^M \frac{z^k}{k!}T^{(k)}(\la) x, & \quad z\in B_{2\delta}(0),\\[-3mm]
\intertext{and \vspace{-1mm} hence}
 \frac 1 z x=T(\la+z)^{-1}\sum_{k=0}^{M-1} \frac{z^k}{(k+1)!} T^{(k+1)}(\la)x, &\quad z\in B_{2\delta}(0)\setminus \{0\}.
\end{align*}
Now Cauchy's integral formula implies \vspace{-1.5mm}that
\begin{equation}
\label{eq:Qx=x}
  Qx=\left(\frac{1}{2\pi\I}\int_{|z|=\delta} \frac 1 z \,\rd z\right)x=x\ne 0.
\end{equation}

For every $n\in I$, define the function $f_n:\{z\in\C:\,|z|=\delta\}\to [0,\infty)$ by
$$
   f_n(z)\!:=
	 \!\Big\|T(\la\!+\!z)^{-1}\!\sum_{k=0}^{M-1} \!\frac{z^k}{(k\!+\!1)!}T^{(k+1)}(\la)Px
	 \!-\!T_n(\la\!+\!z)^{-1}\!\sum_{k=0}^{M-1} \!\frac{z^k(\la)}{(k\!+\!1)!}T_n^{(k+1)}P_nx\Big\|.
$$
Then $$\|QPx-Q_nP_nx\|\leq \frac{1}{2\pi}\int_{|z|=\delta} f_n (z )\,\rd |z |, \quad n\in I.$$
The assumptions together with Lemma~\ref{lemmagsrpencil} imply that $f_n(z )\to 0$, $n\to\infty$, for every $z\!\in\!\C$ with $|z|\!=\!\delta$.
Note that $f_n$, $n\!\in\!\N,$ are uniformly bounded by the compactness of the circle $\{z\!\in\!\C:|z|\!=\!\delta\}$ and by Lemma~\ref{lemma.tildeDelta}~ii).
Lebesgue's dominated convergence theorem implies $\|QPx-Q_nP_nx\|\to 0$ as $n\in I$, $n\to\infty$.
%Now $Q_nP_nx=0$
Since $Q_n=0$, $n\in I$, it follows that
$QPx=0$. {However $Px\!=\!x$ since $x\!\in\! \dom(T) \!\subset\! H$ and $P$ is a projection onto $H$. Thus} $Qx\!=\!0$, a contradiction to $Qx\!=\!x\!\neq\! 0$, see~\eqref{eq:Qx=x}.
%Therefore, there exists $n_{\delta}\in\N$ such that ${\rm dist}(\la,\sigma(T_n))<2\delta$, $n\geq n_{\delta}$.
%Since $\delta$ can be chosen arbitrarily small, we finally obtain ${\rm dist}(\la,\sigma(T_n))\to 0$, $n\to\infty$.
\end{proof}

Next we define the \emph{limiting approximate point spectrum} by
\begin{alignat*}{2}
\sigma_{\rm app}\big((T_n)_{n\in\N}\big) \!&:=\!
\big\{\la\!\in\!\C:\,\exists\, I\!\subset\!\N,\,I\ \mbox{infinite},\,& \exists\,&x_n\!\in\!\dom(T_n), \, \|x_n\|\!=\!1,\,n\!\in\! I, \\
&&& \hspace{1.3cm}\text{with } \|T_n(\la)x_n\|\to 0\big\},
\\[-3mm]
%\end{align*}
\intertext{the \emph{limiting essential spectrum} by}
%\begin{align*}
\sigma_{e}\big((T_n)_{n\in\N}\big)\!&:=\!
\big\{\la\!\in\!\C:\,\exists\, I\!\subset\!\N,\,I\ \mbox{infinite},\,& \exists\,&x_n\!\in\!\dom(T_n), \, \|x_n\|\!=\!1,\,n\!\in\! I, \\
 &&& \text{with }x_n \rightharpoonup 0,\,\|T_n(\la)x_n\|\to 0\big\},
\end{alignat*}
%\cm{limiting essential numerical range was missing, used in main result}
%\ch{as well as the \emph{limiting essential numerical range} of $(T_n)_{n\in\N}$ by
%$$W_e\left((T_n)_{n\in\N}\right):=\left\{\la\in\C:\!\!\!
%\begin{array}{c}\exists\,I\subset\N\,\text{infinite},\,\exists\,(x_n)_{n\in I}\!\subset\! H, x_n\!\in \dom(T_n), \\[1mm]
%\text{with} \ \|x_n\|=1,\ x_n\stackrel{w}{\to}0,\ \langle T_n(\la)x_n,x_n\rangle\to 0\end{array}\!\!\right\}.
%$$}
It is easy to see that, as in the operator case, see \cite[Lemma~2.14 ii)]{MR3831156},
\begin{equation}\label{eq:sigmaapp}
   \C\backslash \Delta_b\big((T_n)_{n\in\N}\big)=\sigma_{\rm app}\big((T_n)_{n\in\N}\big)\cup \sigma_{\rm app}\big((T_n^*)_{n\in\N}\big)^*.
\end{equation}
%\ch{and $\sigma_e\!\left((T_n)_{n\in\N}\right)\subset W_e\left((T_n)_{n\in\N}\right)$, see \cite[Prop.\ 5.6]{MR4083777}.}

\begin{prop}\label{prop.sigmae}
Suppose that the assumptions of Lemma~{\rm\ref{lemmagsrpencil}} are satisfied.
%\marginpar{\SB{Assumption $\underset{n\in\N}{\bigcap}\rho(T_n)\cap\rho(T)\neq\emptyset$ removed as this is included in assumptions of Lemma 6.3.}}
% {and that $\underset{n\in\N}{\bigcap}\rho(T_n)\cap\rho(T)$ is non-empty}.
%\vspace{-2mm}
Then
$$
  \sigma_{\rm app}\big((T_n^*)_{n\in\N}\big)^*\subset \sigma_e\big((T_n^*)_{n\in\N}\big)^*\cup\sigma_p(T^*)^*.
$$
\end{prop}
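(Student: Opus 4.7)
The plan is to prove, equivalently via complex conjugation, the inclusion
\[
\sigma_{\rm app}\big((T_n^*)_{n\in\N}\big) \subset \sigma_e\big((T_n^*)_{n\in\N}\big) \cup \sigma_p(T^*).
\]
Given $\la$ in the left-hand side, I would extract from the definition an infinite $I\subset\N$ and unit vectors $x_n\in\dom(T_n^*(\la))$ with $T_n^*(\la)x_n\to 0$. A dichotomy on weakly convergent subsequences then arises: either some subsequence satisfies $x_n\rightharpoonup 0$, which places $\la$ directly in $\sigma_e((T_n^*)_{n\in\N})$, or every weakly convergent subsequence has a nonzero limit. In the second case, I would pick $x_n\rightharpoonup x\ne 0$ along some infinite subset and aim to show $\la\in\sigma_p(T^*)$. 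That $x\in H$ follows at once from $x_n=P_nx_n$ together with $P_n\sto P$.

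To exhibit $x$ as an eigenvector of $T^*(\la)$ at $0$, I would establish that $\langle x,T(\bar\la)y\rangle=0$ for every $y\in\dom(T(\bar\la))=\dom(A_0)$; since the zero functional is trivially bounded, the definition of the adjoint then places $x$ in $\dom(T^*(\la))$ with $T^*(\la)x=0$. The key step is to construct, for each such $y$, approximants $y_n\in\dom(A_{0,n})$ satisfying
\[
y_n\to y \quad\text{and}\quad T_n(\bar\la)y_n\to T(\bar\la)y \quad\text{strongly in } H_0.
\]
Once these are available, passing to the limit in the adjoint identity $\langle T_n^*(\la)x_n,y_n\rangle=\langle x_n,T_n(\bar\la)y_n\rangle$ yields $0=\langle x,T(\bar\la)y\rangle$ (the left side being a strong-times-bounded product, the right side a weak-times-strong one).

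For the construction I would use the distinguished point $\la_0$ supplied by the assumptions of Lemma~\ref{lemmagsrpencil}: set $z:=T(\la_0)y\in H$ and define $y_n:=T_n(\la_0)^{-1}P_nz\in\dom(A_{0,n})$. Hypothesis \eqref{eq.convatlm01} immediately gives $y_n\to T(\la_0)^{-1}Pz=y$. Writing
\[
T_n(\bar\la)y_n = P_nz+\sum_{j=1}^M(\bar\la^j-\la_0^j)\,A_{j,n}y_n
\]
and noting $P_nz\to z$ since $z\in H$, the remaining task reduces to proving $A_{j,n}y_n\to A_jy$ for each $j\ge 1$. I expect this to be the main technical point of the argument; it is settled by the decomposition
\[
A_{j,n}y_n-A_jy=A_{j,n}(y_n-P_ny)+(A_{j,n}P_ny-A_jPy),
\]
whose first term vanishes by the uniform boundedness of $(A_{j,n})_{n\in\N}$ for $j\ge 1$ together with $y_n-P_ny\to 0$ (a consequence of $y_n\to y$ and $P_n\sto P$), and whose second term vanishes by the assumed strong convergence $A_{j,n}P_n\sto A_jP$.
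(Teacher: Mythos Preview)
Your argument is correct and constitutes a genuine alternative to the paper's route. Both proofs start identically with the weak-limit dichotomy; they diverge in the nontrivial case $x_n\rightharpoonup x\neq 0$. The paper expresses $x_n$ itself through the adjoint resolvent,
\[
x_n=T_n(z)^{-*}\sum_{k=1}^M\frac{(\overline z-\overline\la)^k}{k!}T_n^{(k)}(\la)^*x_n+T_n(z)^{-*}T_n(\la)^*x_n,
\]
passes to the weak limit using strong resolvent convergence (the conclusion of Lemma~\ref{lemmagsrpencil}), and compares the resulting identity with the exact Taylor expansion of $T(\cdot)^*$ at $\overline\la$ to read off $T(\la)^*x=0$. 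You instead work on the primal side: for each $y\in\dom(A_0)$ you build $y_n:=T_n(\la_0)^{-1}P_nT(\la_0)y$ and verify $y_n\to y$, $T_n(\overline\la)y_n\to T(\overline\la)y$ directly from the standing hypotheses, then use the adjoint pairing. Your approach is slightly more economical in that it never invokes the conclusion of Lemma~\ref{lemmagsrpencil}, only its hypotheses at the single point $\la_0$, and it sidesteps the Taylor-expansion comparison. The paper's approach has the modest advantage of exhibiting $x\in\dom(T(z)^*)\subset H$ in one stroke, whereas you obtain $x\in H$ separately from $P_nx_n=x_n$ and $P_n\s P$; both are equally valid.
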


\begin{proof}
%\cm{Taylor expansion in $\la$ and variable $\la_0$ looks weird, changed role of $\la$ and $\la_0$.}
%\marginpar{\SB{Replaced $\la\mapsto z$ and $\la_0\mapsto\la$, see also my comment to Thm 6.6.}}
Let $\la\in \sigma_{\rm app}\big((T_n^*)_{n\in\N}\big)^*$.
 By definition, there exist  an infinite subset $I\!\subset\!\N$ and $x_n\!\in\!\dom(T_n^*)$, $n\!\in\! I$, with
 $\|x_n\|\!=\!1$ and $\|T_n(\la)^*x_n\|\!\to\! 0$ as $n\!\to\!\infty$. The sequence $(x_n)_{n\in\N}\subset H_0$ is bounded and thus has
 a weakly convergent subsequence $(x_n)_{n\in I_2}$ with infinite $I_2\subset I$; denote its weak limit by $x\in H_0$.
If $x\!=\!0$, then $\la\!\in\! \sigma_e\big((T_n^*)_{n\in\N}\big)^*$.

Now assume that $x\neq 0$. Define $y_n:=T_n(\la)^*x_n$, $n\in I_2$. Then $y_n\to 0$ as $n\!\to\!\infty$.
Note that, if $z\in\Delta_b((T_n)_{n\in\N})\cap\rho(T)$,  then Lemma~{\rm\ref{lemmagsrpencil}} implies $T_n(z)^{-1}P_n\s T(z)^{-1}P$,
\vspace{-2mm} and
$$
  T_n(z)^*x_n=\sum_{k=1}^M\frac{(\overline{z}-\overline{\la})^k}{k!}T_n^{(k)}(\la)^*x_n+y_n, \quad n\in I_2.
$$
\vspace{-1mm}Thus
\begin{equation}\label{eq.xn}
x_n=T_n(z)^{-*}\sum_{k=1}^M\frac{(\overline{z}-\overline{\la})^k}{k!}T_n^{(k)}(\la)^*x_n+T_n(z)^{-*}y_n, \quad n\in I_2.
\end{equation}
Let $w\in H_0$ be arbitrary.
The convergence assumptions, $y_n\to 0$ as $n\!\to\!\infty$ and $T^{(k)}(\la)=P T^{(k)}(\la)$ imply that
\begin{align*}
\langle x_n,w\rangle&=\sum_{k=1}^M\frac{(\overline{z}-\overline{\la})^k}{k!}\left\langle x_n,T_n^{(k)}(\la)T_n(z)^{-1}P_nw\right\rangle
 +\left\langle y_n,T_n(z)^{-1}P_nw\right\rangle\\
&\to \sum_{k=1}^M\frac{(\overline{z}-\overline{\la})^k}{k!}\left\langle x,P T^{(k)}(\la)T(z)^{-1}Pw\right\rangle \\ &
= \Big\langle T(z)^{-*}\sum_{k=1}^M\frac{(\overline{z}-\overline{\la})^k}{k!}T^{(k)}(\la)^*Px,w\Big\rangle
\end{align*}
as $n\!\to\!\infty$. By the uniqueness of the weak limit, we obtain \vspace{-1mm}that
$$
  x=T(z)^{-*}\sum_{k=1}^M\frac{(\overline{z}-\overline{\la})^k}{k!}T^{(k)}(\la)^*Px\in\dom(T(z)^*) \subset H
$$
hence $Px=x$ \vspace{-1mm} and
$$
   T(z)^*x=\sum_{k=1}^M\frac{(\overline{z}-\overline{\la})^k}{k!}T^{(k)}(\la)^*x.
$$
The uniqueness of the Taylor expansion of $T(\cdot)^*$ in $\la$ implies that $0=T(\la)^*x$ $=T^*(\overline{\la})x$.
Since $x\neq 0$, we conclude that $\la\in \sigma_p(T^*)^*$.
\end{proof}

Now we prove the main result of this section.

\begin{theorem}\label{thm: bad set pencils}
Assume that there exists $\la_0\in\bigcap_{n\in\N}\rho(T_n)\cap\rho(T)$ with
%\marginpar{\SB{Parameter $\la$ seems a bad choice here as we also have $\la$ below. Replaced by $\la_0$.}}
$$T_n(\la_0)^{-1}P_n\s T(\la_0)^{-1}P, \quad T_n(\la_0)^{-*}P_n\s T(\la_0)^{-*}P.$$
If also $A_{j,n}P_n\s A_jP$ and $A_{j,n}^*P_n\s A_j^*P$ for every $j=1,\dots,M$, then spectral pollution is contained \vspace{-2mm} in
\be
\sigma_e\big((T_n)_{n\in\N}\big)\cup \sigma_e\big((T_n^*)_{n\in\N}\big)^*,\label{eq.bad1}
\ee
and for every isolated $\la\!\in\!\sigma_p(T)$ not belonging to
%\ch{$\sigma_{\rm app}\big((T_n)_{n\in\N}\big)\cup \sigma_{\rm app}\big((T_n^*)_{n\in\N}\big)^*$}
$\sigma_e\big((T_n)_{n\in\N}\big)\cup \sigma_e\big((T_n^*)_{n\in\N}\big)^*$
%the set in~\eqref{eq.bad1}
there exist $\la_n\in\sigma(T_n)$, $n\in\N$, with $\la_n\to\la$.
\end{theorem}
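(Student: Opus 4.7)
If $\la$ belongs to the pollution set, then by definition $\la \in \rho(T)$ and $\la_n \to \la$ for some $\la_n \in \sigma(T_n)$. Proposition~\ref{prop.pollution} forbids $\la \in \Delta_b((T_n)_{n\in\N})$, so \eqref{eq:sigmaapp} gives
\[
   \la \in \sigma_{\rm app}((T_n)_{n\in\N}) \cup \sigma_{\rm app}((T_n^*)_{n\in\N})^*.
\]
Proposition~\ref{prop.sigmae} applied as stated yields the inclusion $\sigma_{\rm app}((T_n^*)_{n\in\N})^* \subset \sigma_e((T_n^*)_{n\in\N})^* \cup \sigma_p(T^*)^*$. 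Applying the same proposition to the sequence $(T_n^*)_{n\in\N}$ in place of $(T_n)_{n\in\N}$ (equivalently, interchanging $T_n$ and $T_n^*$) produces $\sigma_{\rm app}((T_n)_{n\in\N}) \subset \sigma_e((T_n)_{n\in\N}) \cup \sigma_p(T)$. Since $\la \in \rho(T) = \rho(T^*)^*$, the eigenvalue pieces cannot catch $\la$, so the enclosure~\eqref{eq.bad1} follows.

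\textbf{Plan for (ii): approximation at isolated eigenvalues.} Fix $\la \in \sigma_p(T)$ isolated in $\sigma(T)$ and outside $\sigma_e((T_n)_{n\in\N}) \cup \sigma_e((T_n^*)_{n\in\N})^*$. The plan is to verify the neighbourhood hypothesis~\eqref{eqneighbourhoodinregionofbdd} of Proposition~\ref{prop.inclusion} and then invoke it. Isolation supplies $\eps_1>0$ with $B_{\eps_1}(\la) \setminus \{\la\} \subset \rho(T)$. A diagonal extraction, exploiting that the coefficients $A_{j,n}$ ($j\ge 1$) are uniformly norm-bounded so that $\|T_n(\mu) - T_n(\mu')\| \le C|\mu-\mu'|$ uniformly in $n$, shows that both $\sigma_e((T_n)_{n\in\N})$ and $\sigma_e((T_n^*)_{n\in\N})^*$ are closed subsets of $\C$. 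Combining this with the two inclusions obtained in~(i), after shrinking $\eps_1$ to a small enough $\eps>0$ one has $B_\eps(\la) \setminus \{\la\} \subset \Delta_b((T_n)_{n\in\N}) \cap \rho(T)$, at which point Proposition~\ref{prop.inclusion} delivers the approximating sequence $\la_n \in \sigma(T_n)$, $\la_n \to \la$.

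\textbf{Main obstacle.} The one nontrivial manoeuvre is the swapped application of Proposition~\ref{prop.sigmae}: its proof rests on Lemma~\ref{lemmagsrpencil}, whose hypotheses are phrased for $(T_n)_{n\in\N}$ and its coefficients. To run Proposition~\ref{prop.sigmae} on the sequence $(T_n^*)_{n\in\N}$ instead, the strong-convergence hypotheses and the decay condition~\eqref{eq:decay} must be re-checked for adjoints. Taking $\overline{\la_0}$ as the reference point, one sees that exactly the extra assumptions $T_n(\la_0)^{-*}P_n \s T(\la_0)^{-*}P$ and $A_{j,n}^*P_n \s A_j^*P$ of the theorem supply the needed convergence, while $\|(A_0^*-t)^{-1}\| = \|(A_0-\overline t)^{-1}\|$ transfers the decay along the conjugate ray. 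Once the symmetric reading of Proposition~\ref{prop.sigmae} is secured, the remaining pieces are a routine assembly of Propositions~\ref{prop.pollution}, \ref{prop.inclusion} and \ref{prop.sigmae} along the lines sketched above.
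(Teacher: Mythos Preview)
Your proof is correct and follows the same route as the paper's: combine \eqref{eq:sigmaapp} with Proposition~\ref{prop.sigmae} (applied once as stated and once with $T_n$, $T_n^*$ swapped, which is exactly where the adjoint hypotheses enter) to obtain $(\C\setminus\Delta_b((T_n)_{n\in\N}))\cap\rho(T)\subset\sigma_e((T_n)_{n\in\N})\cup\sigma_e((T_n^*)_{n\in\N})^*$, and then appeal to Propositions~\ref{prop.pollution} and~\ref{prop.inclusion}. You are simply more explicit than the paper about two points it leaves implicit---the symmetric application of Proposition~\ref{prop.sigmae} and the closedness of the limiting essential spectra needed to produce the punctured neighbourhood in~\eqref{eqneighbourhoodinregionofbdd}.
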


\begin{proof}
First note that $\Delta_b\big((T_n)_{n\in\N}\big)=\Delta_b\big((T_n^*)_{n\in\N}\big)^*$, see \eqref{eq.bad1}. The latter
%Equation~\eqref{eq:sigmaapp}
and Proposition~\ref{prop.sigmae} imply that
\begin{align*}
\big(\C\backslash\Delta_b\big((T_n)_{n\in\N}\big)\big)\cap\rho(T)
& = \big(\sigma_{\rm app}\big((T_n)_{n\in\N}\big)\cup \sigma_{\rm app}\big((T_n^*)_{n\in\N}\big)^*\big)\cap\rho(T)\\
&\subset \sigma_e\big((T_n)_{n\in\N}\big)\cup \sigma_e\big((T_n^*)_{n\in\N}\big)^*;
\end{align*}
note that $\la \!\in\! \sigma_{p}(T^*)^*$ implies that $\{0\} \!\ne\! \ker T(\la)^* \!=\! \ran T(\la)^\perp$ and hence $\la \!\notin\! \rho(T)$.
Now the claims follow from Propositions~\ref{prop.pollution} and \ref{prop.inclusion}.
\end{proof}

%%%%%%%%%%%%%%%%%%%%%%%%%%%%%%%%%%%%%%%%%%%

\section{{\bf Limiting essential spectrum}}
\label{sec: limiting sigmae}

In this section, along with
%Reflecting the definition \eqref{eq:Vdef} of
the linear Maxwell pencil $V(\cdot)$ in $L^2(\Omega)^3\oplus L^2(\Omega)^3$, see \eqref{eq:Vdef0},
the associated operator matrix $\cA$ in  $L^2(\Omega)^3\oplus L^2(\Omega)^3$, see \eqref{def:calA}, and the quadratic operator pencil $\cL(\cdot)$ in $ L^2(\Omega)^3$, see \eqref{eq:defcL},
we now consider their analogues $V_n(\cdot)$ and $\cA_n$ in $L^2(\Omega_n)^3\oplus L^2(\Omega_n)^3$ and $\cL_n(\cdot)$ in $L^2(\Omega_n)^3$, respectively.

Note that all our results in Sections \ref{sec:proof1} on spectral enclosures and resolvent estimates for $V(\cdot)$ and $\cA$ as well as in Section \ref{sec:V-L} on the relations between the spectral properties of $V(\cdot)$ and $\cL(\cdot)$ hold for both bounded and unbounded domains, and thus cover, when applied on the domains $\Omega_n$, $n\in \N$,  equally $V_n(\cdot)$, $\cA_n$ and~$\cL_n(\cdot)$.

%we denote by $V_n(\cdot)$ the linear pencil given by the same differential expression in $L^2(\Omega_n)$. In line with
For convenience, we briefly recall that,
in line with \eqref{Vfact1}, \eqref{def:calA} \vspace{-1mm} and~\eqref{eq:defcL},
\begin{equation}
\begin{aligned}
\label{Vnfact1}
  &V_n(\omega) =
  \begin{pmatrix} \eps^{1/2} & 0 \\ 0 & \mu^{1/2}\end{pmatrix}(\mathcal{A}_n-\omega I) \begin{pmatrix} \eps^{1/2} & 0 \\ 0 & \mu^{1/2}\end{pmatrix},\\
  & \dom V_n(\omega)=H_0(\curl,\Omega_n)\oplus H(\curl,\Omega_n),
\end{aligned}
\end{equation}
in which
\begin{equation}
\label{deficalAn}
\begin{aligned}
   &\cA_n  :=
  \begin{pmatrix}
	-\I \epsilon^{-\frac 12} \sigma \epsilon^{-\frac 12}  & - \I \epsilon^{-\frac 12} \curl \mu^{-1/2} \\
	\I \mu^{-1/2}{ \curl_0} \epsilon^{-\frac 12} &  0
	\end{pmatrix}, \\
  &\dom \cA_n := \eps^{1/2} H_0(\curl, \Omega_n) \oplus \mu^{1/2}H(\curl, \Omega_n),
\end{aligned}	
\end{equation}
and
\begin{equation}
\label{eq:defcLn}
\begin{aligned}
 &\cL_n(\omega):=\curl\mu^{-1}{\curl_0}-\omega(\omega\eps+\I\sigma), \\
 &\dom(\cL_n(\omega)):=\{E\in H_0(\curl,\Omega_n):\,\mu^{-1}\curl E\in H(\curl,\Omega_n)\}.
\end{aligned}
\end{equation}

In the sequel, we define the orthogonal projection $P_n:L^2(\Omega)^3\to L^2(\Omega_n)^3$ by $P_nu=\chi_{\Omega_n}u$ for $u \in L^2(\Omega)^3$. Note that $L^2(\Omega_n)$ is understood as a subspace of $L^2(\Omega)$ by extending each function by zero.

\begin{prop}
\label{prop:gsrL}
Let $\omega\!=\!\I t$ with $t\!\geq\! \epsilon_{\min}^{-1/2}$.  Then $\mathcal L_n(\omega)^{-1}P_n\!\s\! \mathcal L(\omega)^{-1}$ as $n\!\to\!\infty$.
\end{prop}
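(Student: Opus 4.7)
The strategy is the standard three-step route to strong resolvent convergence: a uniform a priori bound in $H(\curl,\Omega)$, identification of weak limits via distributional testing on $C_c^\infty(\Omega)^3$, and an upgrade to norm convergence through convergence of energies. The fact that all coefficients in $\cL_n(\I t)$ are independent of $n$ (only the domain changes) and the coercivity at $\omega=\I t$ established in Lemma~\ref{lem:squareroot} make this route work without further assumptions.

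Fix $u\in L^2(\Omega)^3$ and set $w_n:=\cL_n(\I t)^{-1} P_n u$, viewed as an element of $H_0(\curl,\Omega)$ by extension by zero outside $\Omega_n$ (valid because $\nu\times w_n=0$ on $\partial\Omega_n$), and $w:=\cL(\I t)^{-1}u$. Applying Lemma~\ref{lem:squareroot} on each $\Omega_n$ gives $\|w_n\|\le\|P_n u\|\le\|u\|$. For an $H(\curl)$-bound I would take the real part of the associated sesquilinear form:
\[
 \re\langle \cL_n(\I t)w_n,w_n\rangle =\|\mu^{-1/2}\curl w_n\|^2+t^2\langle\eps w_n,w_n\rangle+t\langle\sigma w_n,w_n\rangle =\re\langle P_n u,w_n\rangle\le\|u\|^2.
\]
Since each summand on the left is non-negative, this yields $\|\curl w_n\|^2\le\mu_{\max}\|u\|^2$, so $(w_n)$ is bounded in $H_0(\curl,\Omega)$.

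Next I would extract a subsequence (not relabelled) with $w_n\rightharpoonup w^*$ in $H_0(\curl,\Omega)$, so in particular $w_n\rightharpoonup w^*$ and $\curl w_n\rightharpoonup \curl w^*$ in $L^2(\Omega)^3$. For any $\varphi\in C_c^\infty(\Omega)^3$, eventually $\supp\varphi\subset\Omega_n$, and passing to the limit in
\[
 \langle \mu^{-1}\curl w_n,\curl\varphi\rangle+t^2\langle\eps w_n,\varphi\rangle+t\langle\sigma w_n,\varphi\rangle=\langle P_n u,\varphi\rangle
\]
(using $P_n u\to u$ in $L^2(\Omega)^3$) yields $\cL(\I t)w^*=u$ in the distributional sense. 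Since the right-hand side lies in $L^2(\Omega)^3$, the equation forces $\mu^{-1}\curl w^*\in H(\curl,\Omega)$, so $w^*\in\dom\cL(\I t)$; bounded invertibility of $\cL(\I t)$ gives $w^*=w$. Uniqueness of the subsequential limit together with boundedness then forces $w_n\rightharpoonup w$ along the full sequence.

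To upgrade to strong convergence I would exploit the three-term energy identity. From $\langle\cL_n(\I t)w_n,w_n\rangle=\langle P_n u,w_n\rangle\to\langle u,w\rangle=\langle\cL(\I t)w,w\rangle$, combined with weak lower semicontinuity of each of the non-negative quadratic functionals $\|\mu^{-1/2}\curl\cdot\|^2$, $\langle\eps\cdot,\cdot\rangle$, $\langle\sigma\cdot,\cdot\rangle$ and the fact that their sum converges to the sum of the three limits, each term must converge individually. In particular $\langle\eps w_n,w_n\rangle\to\langle\eps w,w\rangle$, and $\eps\ge\eps_{\min}\id$ combined with $w_n\rightharpoonup w$ yields
\[
 \eps_{\min}\|w_n-w\|^2\le\langle\eps(w_n-w),w_n-w\rangle\to 0,
\]
completing the proof. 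The most delicate technical point is the extension by zero and the verification that the weak $H(\curl,\Omega)$-limit of functions in $H_0(\curl,\Omega)$ still lies in $H_0(\curl,\Omega)$; this is standard because $H_0(\curl,\Omega)$ is a closed (hence weakly closed) subspace of $H(\curl,\Omega)$.
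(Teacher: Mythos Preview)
Your proof is correct and takes a genuinely different route from the paper's. The paper exploits the factorisation of Lemma~\ref{lem:squareroot},
\[
\cL_n(\I t)^{-1}=(T_{0,n}^*T_{0,n}+I)^{-1/2}\Big(I+(T_{0,n}^*T_{0,n}+I)^{-1/2}(\cW_n(\I t)-I)(T_{0,n}^*T_{0,n}+I)^{-1/2}\Big)^{-1}(T_{0,n}^*T_{0,n}+I)^{-1/2},
\]
shows strong convergence of each factor via the abstract results of \cite{MR3694623} (using that $C_c^\infty(\Omega)^3$ is a core and the uniform bound \eqref{eq:resest}), and composes. Your argument instead goes through the variational formulation directly: at $\omega=\I t$ the operator $\cL_n(\I t)=\curl\mu^{-1}\curl_0+t^2\eps+t\sigma$ is self-adjoint and coercive, so a priori $H(\curl)$-bounds, weak compactness, distributional identification of the limit, and the energy method (convergence of the sum of three non-negative quadratic terms forcing convergence of each, hence strong $L^2$-convergence via $\eps\ge\eps_{\min}$) give the result.

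Your approach is more elementary and self-contained, avoiding any appeal to the abstract strong-resolvent machinery of \cite{MR3694623}; it also makes transparent why the particular choice $\omega=\I t$ with $t\ge\eps_{\min}^{-1/2}$ is convenient (all three form terms are non-negative). The paper's approach, on the other hand, is more modular: it reuses the square-root factorisation already established in Lemma~\ref{lem:squareroot} and the framework of \cite{MR3694623} employed throughout Section~\ref{section:6}, so it slots naturally into the surrounding operator-theoretic narrative. Both arguments rely on exactly the same facts about extension by zero from $H_0(\curl,\Omega_n)$ to $H_0(\curl,\Omega)$ and on the exhaustion $\Omega=\bigcup_n\Omega_n$.
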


\begin{proof}
In the sequel we use Lemma \ref{lem:squareroot} applied to both $\cL(\cdot)$ and to its truncated analogues $\cL_n(\cdot)$;
the truncated analogues of $T_0\!=\!\mu^{-1/} \curl_0$, $\dom T_0\!=\! H_0(\curl, \Omega)$, and of
$\cW(\omega)\!=\!-\omega(\omega\epsilon+\I\sigma)$, $\omega\!\in\!\C$,  in $ L^2(\Omega)^3$, are operators in $L^2(\Omega_n)^3$
which we denote by $T_{0,n}\!=\!\mu^{-1/2} \curl_0$, $\dom T_{0,n}\!=\! H_0(\curl, \Omega_n)$, and 
$\cW_n(\omega)\!=\!-\omega(\omega\epsilon+\I\sigma)$. %, $\omega\!\in\!\C$,  in~$L^2(\Omega_n)^3$.

%For $n\in\N$, let $\mathcal L_n(\omega)$, $T_{0,n}$, $\cW_n(\omega)$ be the respective operators as in Lemma~\ref{lem:squareroot} with $\Omega$ replaced by $\Omega_n$ everywhere.
%Analogously we obtain that ${\rm dom}((T_{0,n}^*T_{0,n}+I)^{1/2})=H_0({\rm curl},\Omega_n)$.

Because $\Omega\!=\!\bigcup_{n\in\N} \Omega_n$ and $C_c^{\infty}(\Omega)^3$ is a core of $(T_0^*T_0\!+\!I)^{1/2}\!\!$, see Lemma \ref{lem:squareroot}, it follows that for every $u$ there exists $N_u\in\N$ such that ${\rm supp}\,u\subset\Omega_n$ for all $n\!\geq\! N_u$. Then $(T_0^*T_0\!+\!I)^{1/2}u\!=\!(T_{0,n}^*T_{0,n}\!+\!I)^{1/2}P_nu$ for $n\!\geq\! N_u$. \vspace{-0.5mm} By~Lemma \ref{lem:squareroot}
 $\sup_{n\in\N} \|(T_{0,n}^*T_{0,n}\!+\!I)^{-1/2}\|\!\leq\! 1\!<\!\infty$ and hence \cite[Thm.\ 3.1]{MR3694623} yields that
$$
  (T_{0,n}^*T_{0,n}+I)^{-1/2}P_n\s (T_0^*T_0+I)^{-1/2}, \quad n\to\infty.
$$

It is easy to see that $(\cW_n(\omega)\!-\!I)P_n\s \cW(\omega)\!-\!I$ as $n\to\infty$ for all $\omega\!\in\!\C$. Since the product and sum of strongly convergent operators are strongly convergent, we obtain \vspace{-1mm} that
\begin{equation}
\label{eq:for-isc}
\begin{aligned}
&\left(I+(T_{0,n}^*T_{0,n}+I)^{-1/2}(\cW_n(\omega)-I)(T_{0,n}^*T_{0,n}+I)^{-1/2}\right)P_n \\
&\s I+(T_0^*T_0+I)^{-1/2}(\cW(\omega)-I)(T_0^*T_0+I)^{-1/2}, \quad n\to\infty.
\end{aligned}
\end{equation}
Now let $\omega= \I t$ with $t\geq \epsilon_{\min}^{-1/2}$.
Then Lemma \ref{lem:squareroot} implies that
\begin{equation}\label{eq:resest2}
\begin{aligned}
&\left\|\left(I+(T_0^*T_0+I)^{-1/2}(\cW(\omega)-I)(T_0^*T_0+I)^{-1/2}\right)^{\!-1}\right\|\leq 1, \\[-1mm]
&\sup_{n\in\N}\,\left\|\left(I\!+\!(T_{0,n}^*T_{0,n}\!+\!I)^{-1/2}(\cW_n(\omega)\!-\!I)(T_{0,n}^*T_{0,n}\!+\!I)^{-1/2}\right)^{\!-1}\right\|\leq 1<\infty.
\end{aligned}
\end{equation}
Hence, by \cite[Lemma 3.2]{MR3694623}, the inverses in \eqref{eq:for-isc}, converge strongly as well,
\begin{align*}
&\left(I+(T_{0,n}^*T_{0,n}+I)^{-1/2}(\cW_n(\omega)-I)(T_{0,n}^*T_{0,n}+I)^{-1/2}\right)^{\!-1}P_n \\[-1mm]
&\s \left(I+(T_0^*T_0+I)^{-1/2}(\cW(\omega)-I)(T_0^*T_0+I)^{-1/2}\right)^{\!-1}, \quad n\to\infty.
\end{align*}
Altogether, we arrive at
\begin{alignat*}{2}
\!\mathcal L_n(\omega)^{-1}\!P_n\!
&=\!(T_{0,n}^*T_{0,n}\!+\!I)^{-\frac{1}{2}}\!\!\left(I\!+\!(T_{0,n}^*T_{0,n}\!+\!I)^{-\frac{1}{2}}\!(\cW_n(\omega)\!-\!I)(T_{0,n}^*T_{0,n}\!+\!I)^{-\frac{1}{2}}\right)^{\!-1}\!\!\!\cdot \\[-1mm]
&\hspace{7.6cm} \cdot (T_{0,n}^*T_{0,n}\!+\!I)^{-\frac{1}{2}}P_n\hspace{-5mm}\\
&\s \!(T_0^*T_0\!+\!I)^{-\frac{1}{2}}\!\!\left(\! I\!+\!(T_0^*T_0\!+\!I)^{-\frac{1}{2}}(\cW(\omega)\!-\!I)(T_0^*T_0\!+\!I)^{-\frac{1}{2}}\!\right)^{\!\!-1}\!\!\!\!\!
(T_0^*T_0\!+\!I)^{-\frac{1}{2}}\\
&=\mathcal L(\omega)^{-1}. \hspace{8.7cm}
\qedhere
\end{alignat*}
\end{proof}

\vspace{2mm}

Applying Theorem \ref{thm: bad set pencils}  to the quadratic pencils $\cL_n$
and using that $\cL_n$ is $J$-self-adjoint with respect to conjugation for all $n \in \N$ so that $\sigma_e((\cL^*_n)_{n\in\N})^* = \sigma_e((\cL_n)_{n\in\N})$,
we immediately obtain
\begin{equation} \label{eq: inclusion spectral poll}
\sigma_{\rm poll}((\cL_n)_{n\in\N}) \subset \sigma_e((\cL_n)_{n \in \N}) \cup \sigma_e((\cL^*_n)_{n\in\N})^*
= \sigma_e((\cL_n)_{n\in\N}).
\end{equation}

\begin{prop} \label{prop: equality lim ess spectra1}
Suppose that $\sigma$, $\eps$ and $\mu$ satisfy the limiting assumption \eqref{eq:coeffs-infty}.
Denote by ${\cT_n}$ the triangular operator matrices given by \eqref{eq: A^0} with $\Omega$ replaced by $\Omega_n$,
i.e.\ acting in $L^2(\Omega_n)^3\!=\! \nabla \dot{H}^1_0(\Omega_n) \oplus H(\Div0, \Omega_n)$.
%i.e.\ ${\cT_n}$ is the $2\times 2$  operator representation %of $\cL_n+M_n$,
%with respect to Helmholtz decomposition in $\Omega_n$.
Then the  limiting essential spectra of %$(V_n)_{n\in\N}$, $(\cA_n)_{n\in\N}$,
$(\cL_n)_{n\in\N}$ and $({\cT_n})_{n\in\N}$ are equal,
\[
   \sigma_e((\cL_n)_{n\in\N}) = \sigma_e((\cT_n)_{n\in\N}).
\]
\end{prop}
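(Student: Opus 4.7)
The plan is to observe that $\cT_n(\omega)$ and $\cL_n(\omega)$ differ by a bounded perturbation that is negligible along singular sequences in a uniform way. A direct calculation, comparing \eqref{eq:defcLn} with the $\Omega_n$-analog of \eqref{eq:omcT}, gives
\[
M_n(\omega) := \cT_n(\omega) - \cL_n(\omega) = \bigl[\omega^2(\eps - \eps_\infty\id) + \I\omega\sigma\bigr]\, P^{(n)}_{\ker(\Div)},
\]
where $P^{(n)}_{\ker(\Div)}$ is the Helmholtz projection of $L^2(\Omega_n)^3$ onto $H(\Div 0, \Omega_n)$. The matrix-valued multiplier $\tilde m(\omega,\cdot) := \omega^2(\eps - \eps_\infty\id) + \I\omega\sigma$ vanishes at infinity for each $\omega\in\C$ by the limiting assumption \eqref{eq:coeffs-infty}, and this is what will yield a uniform-in-$n$ analog of the $\cL$-compactness of $M$ used in the proof of Theorem \ref{sigma-ess}.

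The equality will follow from the key claim: for every $\omega\in\C$ and every sequence $x_n \in H_0(\curl, \Omega_n)$ with $\|x_n\| \leq 1$, $x_n \rightharpoonup 0$ in $L^2(\Omega)^3$ (via extension by zero), and $\sup_n \|\curl x_n\| < \infty$, one has $\|M_n(\omega)x_n\| \to 0$. Granting this, if $(x_n)_{n\in I}$ witnesses $\omega \in \sigma_e((\cL_n)_{n\in\N})$, pairing $\langle \cL_n(\omega)x_n, x_n\rangle$ and using boundedness of the multiplication operator $\omega(\omega\eps+\I\sigma)$ produces a uniform bound on $\|\mu^{-1/2}\curl x_n\|^2 = \langle T_{0,n}^*T_{0,n}x_n, x_n\rangle$, so the claim gives $\|\cT_n(\omega)x_n\| \leq \|\cL_n(\omega)x_n\| + \|M_n(\omega)x_n\| \to 0$, hence $\omega \in \sigma_e((\cT_n)_{n\in\N})$. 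The reverse inclusion is symmetric, using the $\Omega_n$-version of \eqref{eq:omcT} to bound $\|\curl x_n\|$ from $\|\cT_n(\omega)x_n\|$ and $\|x_n\|$.

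For the claim I would adapt the cut-off strategy of Proposition \ref{thm: compactness}. Given $\eta > 0$, decompose $\tilde m(\omega,\cdot) = m_c + m_\eta$ with $m_c$ compactly supported in some $\Omega_N$ and $\|m_\eta\|_\infty < \eta$, so that $\|m_\eta P^{(n)}_{\ker(\Div)} x_n\| \leq \eta$ uniformly. Setting $f_n := P^{(n)}_{\ker(\Div)} x_n$, one has $\|f_n\| \leq 1$ and $\curl f_n = \curl x_n$; picking a smooth cut-off $\chi$ equal to $1$ on $\supp m_c$ and vanishing outside some $\Omega_M \supset \Omega_N$, the sequence $(\chi f_n)|_{\Omega_M}$ is bounded in $H(\curl, \Omega_M) \cap H(\Div, \Omega_M)$ (using $\Div f_n = 0$ on $\Omega_n$ for $n \geq M$). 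Weber's compact embedding \cite{MR561375}, exactly as invoked in Proposition \ref{thm: compactness}, then extracts a strongly $L^2(\Omega_M)^3$-convergent subsequence, and letting $\eta \to 0$ finishes the proof.

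The main obstacle is identifying the weak $L^2$-limit of $(f_n)$ as zero, required to conclude that the Weber limit actually vanishes; this is delicate because the projections $P^{(n)}_{\ker(\Div)}$ themselves depend on $n$. I would resolve this by testing against $g \in C_c^\infty(\Omega)^3$: for $n$ large enough that $\supp g \subset \Omega_n$ one has $\langle f_n, g\rangle = \langle x_n, P^{(n)}_{\ker(\Div)} g\rangle$, and the strong convergence $P^{(n)}_{\ker(\Div)} g \to P_{\ker(\Div)} g$ in $L^2(\Omega)^3$ (a standard domain-exhaustion result for the Dirichlet problem $\Delta\phi_n = \Div g$ on the increasing family $(\Omega_n)$) together with $\|x_n\| \leq 1$ and $x_n \rightharpoonup 0$ in $L^2(\Omega)^3$ forces $\langle f_n, g\rangle \to 0$; density of $C_c^\infty(\Omega)^3$ in $L^2(\Omega)^3$ and boundedness of $(f_n)$ then yield $f_n \rightharpoonup 0$, which pins down the weak limit in the Weber step.
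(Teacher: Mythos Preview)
Your proposal is correct and follows essentially the same route as the paper: both write $\cT_n(\omega)=\cL_n(\omega)+M_n(\omega)$ with $M_n(\omega)=\tilde m(\omega)P^{(n)}_{\ker(\Div)}$, obtain a uniform $H(\curl)$-bound from the quadratic form of either pencil, and then use the cut-off decomposition $\tilde m=m_c+m_\eta$ together with Weber's compact embedding on a fixed bounded piece $\Omega_M$ to force $M_n(\omega)x_n\to 0$ along singular sequences. The paper packages the Weber step as \emph{discrete compactness} of the family $(M_n(\omega))_n$ and then invokes ``uniqueness of the weak limit'' to identify the subsequential limit as $0$; your version bypasses that abstraction and instead proves the convergence $M_n(\omega)x_n\to 0$ directly. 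The one place you are more careful than the paper is the identification $f_n=P^{(n)}_{\ker(\Div)}x_n\rightharpoonup 0$, which you obtain from the strong convergence $P^{(n)}_{\ker(\Div)}g\to P_{\ker(\Div)}g$ for compactly supported $g$ via the exhausting Dirichlet problems; the paper's ``uniqueness of the weak limit'' tacitly uses the same fact without spelling it out.
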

%\marginpar{{More direct proof}}

\begin{proof}
The proof is closely modelled on the proofs of  Theorem \ref{sigma-ess}  and Proposition~\ref{thm: compactness}. Let $\omega\!\in\!\C$ be fixed.
Let $M_n(\omega) \!:=\! (\omega(\omega \eps\! +\! \I \sigma)\!-\! \omega^2)P_{\ker(\Div, \Omega_n)}$ in $L^2(\Omega_n)^3$. Then $\cT_n(\omega)$ is the operator matrix representation of $\cL_n(\omega)\!+\!M_n(\omega)$ in $L^2(\Omega_n)^3\!=\! \nabla \dot{H}^1_0(\Omega_n) \oplus H(\Div0, \Omega_n) $.
First note that, for any $u_n\!\in\!\dom(\cL_n)$, $\|u_n\|\!=\!1$, the sequence $(\|\cL_n(\omega) u_n\|)_{n\in\N}$ is bounded if and only if the sequence $(\|(\cL_n(\omega)\!+\!M_n(\omega))u_n\|)_{n\in\N}$ is bounded.

Now we argue that it suffices to show the following claim: if any of the above two sequences is bounded, then for any infinite subset $I\subset\N$ the sequence $(M_n(\omega)u_n)_{n\in I}$ $\subset  L^2(\Omega)^3$ has a convergent subsequence.
To see that this claim proves the theorem, assume that $u_n \rightharpoonup 0$ and $\cL_n(\omega)u_n\!\to\! 0$ as $n\!\to\!\infty$, i.e.\ $\omega\!\in\!\sigma_e((\cL_n)_{n\in\N})$.
Then, by the claim together with $u_n\rightharpoonup 0$, and the uniqueness of the weak limit, we get $M_n(\omega) u_n\to 0$ as $n\!\to\!\infty$, whence $\omega\!\in\! \sigma_e((\cL_n\!+\!M_n)_{n\in\N})$. The proof is analogous if we start with $\omega\in \sigma_e((\cL_n\!+\!M_n)_{n\in\N})$.

To prove the claim, let $(\|\cL_n(\omega) u_n\|)_{n\in\N}$ be bounded. Then $(\|u_n\|_{H(\curl,\Omega_n)})_{n\in\N}$ is bounded as well,
and thus the property that, for any infinite subset $I\subset\N$, $(M_n(\omega)u_n)_{n\in I}\subset  L^2(\Omega)^3$ has a convergent subsequence
means that
$$
  %m_n(\omega)\!:=\!M_n(\omega)|_{H(\curl,\Omega_n)}
	M_n(\omega) P_{\ker\Div}
	\!:\!(H(\curl,\Omega_n),\|\cdot\|_{H(\curl,\Omega_n)})\!\to\! (L^2(\Omega_n)^3,\|\cdot\|_{L^2(\Omega_n)^3}),
  \ n\!\in\!\N,	
$$
form a \emph{discretely compact} sequence, see \cite[Def.\ 3.1.(k)]{MR291870} or \cite[Def.\ 2.5]{MR3694623}.
%\cm{Proof of Prop.~\ref{thm: compactness} was modified, adaption was needed.}
 As in the proof of Proposition~\ref{thm: compactness},
%$m_n(\omega)=[\omega^2(\eps_c+\eps_\delta)+\I \omega)\sigma_c+\sigma_\delta)]P_{\ker(\Div,\Omega_n)}$, in which $\eps_c$, $\sigma_c$ are compactly %supported and $\eps_\delta$, $\sigma_\delta$ are $\delta$-small. If we define
%$m_{c,n}(\Omega)=[\omega(\omega\eps_c+\I\sigma_c)]P_{\ker(\Div,\Omega_n)}$, then
for any $\delta>0$ we can write $M_n(\omega) = M_{c,n}(\omega) + M_{\delta,n}(\omega)$ where
$M_{\delta,n}(\omega)$ is a bounded multiplication operator with $\|M_{\delta,n}(\omega)\|<\delta$
vanishing uniformly in~$n$ as $\delta\to 0$ and
$M_{c,n}(\omega)$ has compact support in some domain $\Omega_{R,n} := \Omega_n \cap B(0,R)\subset \Omega\cap B(0,R)=\Omega_R$ for sufficiently large $R>0$.
%\marginpar{\SB{Added inclusion because of change below.}}
%$\|m_n(\omega)-m_{c,n}(\omega)\|_{\cL(H(\curl,\Omega_n),L^2(\Omega_n))}\leq |\omega|^2\delta+|\omega|\delta$, which is vanishing as $\delta\to 0$, %uniformly in $n$.
Since the uniform limit of a discretely compact sequence is discretely compact, see \cite[Prop.\ 2.9]{MR3694623}, the sequence $(M_n(\omega))_{n\in\N}$ is discretely compact if each sequence $(M_{c,n}(\omega))_{n\in\N}$, $\delta>0$, is discretely compact.
To show the latter, let $I\subset\N$ be an infinite subset. Let $\chi_R$ be the same cut-off function as in the proof of Proposition~\ref{thm: compactness} and let $\iota$ be the compact embedding of $H(\curl, \Omega_{R}) \cap H(\Div, \Omega_{R})$
 in $L^2(\Omega_{R})^3$, see~\cite{MR561375}.
%\marginpar{{$\iota$ needs to be independent of $n$, hence changed all $\Omega_{R,n}$ to $\Omega_R$.}}
Then, for all sufficiently large $n\in I$, $\supp M_{c,n}(\omega) \subset \Omega_{R,n} \subset \Omega_n$  and
$$
  M_{c,n}(\omega)  P_{\ker\Div}u_n=M_{c,n}(\omega) \iota(\chi_R P_{\ker(\Div, \Omega_n)}u_n)|_{\Omega_{R,n}}.
$$
As in the proof of Proposition~\ref{thm: compactness}, we now deduce that $(M_{c,n}(\omega)u_n)_{n\in I}\subset  L^2(\Omega)^3$ has a convergent subsequence.
\end{proof}

\begin{prop}
\label{prop: sigma_e cL}
Suppose that $\sigma$, $\eps$ and $\mu$ satisfy the limiting assumption \eqref{eq:coeffs-infty}.
Let $L_{\mu,n}$ and $L_{\infty,n}$ be defined in the same way as $L_\mu$ and $L_\infty$, see Definition {\rm \ref{Lmidef}} with $\Omega$ replaced by $\Omega_n$. Then
\[
  \sigma_e((L_{\mu,n})_{n\in\N}) = \sigma_e((L_{\infty,n})_{n\in\N}).
\]
\end{prop}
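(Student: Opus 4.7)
The plan is to emulate the proof of Proposition \ref{thm: difference res}, substituting discrete compactness for compactness, and then to convert this into equality of limiting essential spectra via a direct Weyl-sequence construction. Write $L_{\mu,n}(\omega) = C_{\mu,n} - \eps_\infty \omega^2$ and $L_{\infty,n}(\omega) = C_{\infty,n} - \eps_\infty \omega^2$ with $C_{\mu,n} := \curl \mu^{-1} \curl_0$, $C_{\infty,n} := \curl \mu_\infty^{-1} \curl_0$ acting self-adjointly in $H(\Div 0, \Omega_n)$. Fix $\omega_0 = \I t_0$ with $t_0 > 0$ so large that both $L_{\mu,n}(\omega_0)$ and $L_{\infty,n}(\omega_0)$ are self-adjoint, uniformly positive and uniformly invertible in $n$ with norms $\leq (\eps_\infty t_0^2)^{-1}$. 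The form version of the second resolvent identity \eqref{eq:form-2nd-res.-id}, applied on each $\Omega_n$, yields
\begin{equation*}
L_{\mu,n}(\omega_0)^{-1} - L_{\infty,n}(\omega_0)^{-1} = \bigl(\curl_0 L_{\mu,n}(\omega_0)^{-1}\bigr)^* (\mu_\infty^{-1} - \mu^{-1}) \curl_0 L_{\infty,n}(\omega_0)^{-1},
\end{equation*}
with the outer factors uniformly bounded in $n$ from $H(\Div 0, \Omega_n)$ into $H(\curl, \Omega_n)$ by the truncated version of Lemma \ref{lemma: boundedness2}(ii).

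The central claim is that for any bounded sequence $x_n \in H(\Div 0, \Omega_n)$ with $x_n \rightharpoonup 0$ in $L^2(\Omega)^3$, one has $(L_{\mu,n}(\omega_0)^{-1} - L_{\infty,n}(\omega_0)^{-1}) x_n \to 0$ in $L^2(\Omega)^3$ along a subsequence. Setting $y_n := \curl_0 L_{\infty,n}(\omega_0)^{-1} x_n$, the sequence $(y_n)$ is uniformly bounded in $H(\curl, \Omega_n) \cap H(\Div 0, \Omega_n)$ and is weakly null in $L^2(\Omega)^3$; the latter follows from the analogue of Proposition \ref{prop:gsrL} for $L_{\infty,n}$, the self-adjointness of $L_{\infty,n}(\omega_0)$, and integration by parts against test functions eventually supported in $\Omega_n$. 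Since $\mu_\infty^{-1}\id - \mu^{-1}$ vanishes at infinity by \eqref{eq:coeffs-infty}, the cutoff decomposition plus local compact embedding argument used in the proofs of Proposition \ref{thm: compactness} and Proposition \ref{prop: equality lim ess spectra1} yields $(\mu_\infty^{-1} - \mu^{-1}) y_n \to 0$ in $L^2(\Omega)^3$ along a subsequence; applying the uniformly bounded outer factor $(\curl_0 L_{\mu,n}(\omega_0)^{-1})^*$ then completes the proof of the claim.

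To conclude, take $\omega \in \sigma_e((L_{\mu,n})_{n\in\N})$ with witness sequence $(u_n)_{n\in I}$ satisfying $\|u_n\| = 1$, $u_n \rightharpoonup 0$ and $L_{\mu,n}(\omega) u_n \to 0$, and define $v_n := L_{\infty,n}(\omega_0)^{-1} L_{\mu,n}(\omega_0) u_n \in \dom L_{\infty,n}(\omega)$. Since $L_{\mu,n}(\omega_0) u_n = L_{\mu,n}(\omega) u_n + \eps_\infty(\omega^2 - \omega_0^2) u_n$ is bounded in $H(\Div 0, \Omega_n)$ and weakly null, the claim applied to $x_n := L_{\mu,n}(\omega_0) u_n$ gives $v_n - u_n \to 0$ in $L^2(\Omega)^3$ along a subsequence. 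Hence $\|v_n\| \to 1$, $v_n \rightharpoonup 0$, and
\begin{equation*}
L_{\infty,n}(\omega) v_n = L_{\mu,n}(\omega) u_n - \eps_\infty(\omega_0^2 - \omega^2)(v_n - u_n) \to 0,
\end{equation*}
so that $v_n/\|v_n\|$ is a witness for $\omega \in \sigma_e((L_{\infty,n})_{n\in\N})$. The reverse inclusion follows by interchanging the roles of $\mu$ and $\mu_\infty$. The principal obstacle is the combination of the weak-null convergence $y_n \rightharpoonup 0$ with the discrete compactness of the sequence $(\mu_\infty^{-1} - \mu^{-1}) y_n$, but both are handled by direct adaptation of arguments already developed for Propositions \ref{prop:gsrL} and \ref{prop: equality lim ess spectra1}.
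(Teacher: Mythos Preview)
Your proof is correct (modulo a sign: the last display should read $L_{\infty,n}(\omega) v_n = L_{\mu,n}(\omega) u_n + \eps_\infty(\omega_0^2 - \omega^2)(v_n - u_n)$, which does not affect the conclusion). Both your argument and the paper's rest on the same core fact: via the form second resolvent identity, the resolvent difference $L_{\mu,n}(\omega_0)^{-1}-L_{\infty,n}(\omega_0)^{-1}$ factors through $(\mu_\infty^{-1}-\mu^{-1})$ acting on sequences bounded in $H(\curl,\Omega_n)\cap H(\Div 0,\Omega_n)$, and is therefore discretely compact by the cutoff/local-embedding machinery of Propositions~\ref{thm: compactness} and~\ref{prop: equality lim ess spectra1}.

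The difference is in how this is converted into equality of limiting essential spectra. The paper quotes two abstract results from \cite{MR3831156}: the spectral mapping property of $\sigma_e((\cdot)_{n\in\N})$ under the resolvent \cite[Thm.~2.5]{MR3831156}, and the invariance of $\sigma_e((\cdot)_{n\in\N})$ under perturbations $K_n$ that are discretely compact with $K_n^*P_n$ strongly convergent \cite[Thm.~2.12(ii)]{MR3831156}; the required strong convergence is read off from Proposition~\ref{prop:gsrL}. You instead bypass these citations with an explicit Weyl-sequence transfer $u_n\mapsto v_n:=L_{\infty,n}(\omega_0)^{-1}L_{\mu,n}(\omega_0)u_n$, which handles the possible discrepancy of operator domains and gives $\|v_n-u_n\|\to 0$ directly from your central claim. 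Your route is more elementary and self-contained; the paper's is shorter once the abstract machinery is in place. The one extra ingredient you need is the weak nullity of $y_n=\curl_0 L_{\infty,n}(\omega_0)^{-1}x_n$, which indeed follows by testing against compactly supported $\phi$, integrating by parts, and using the $L_{\infty,n}$-analogue of Proposition~\ref{prop:gsrL}; this is a straightforward adaptation, as you note.
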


\begin{proof} Recall that $L_{\mu,n}(\omega)=\! C_{\mu,n} \!-\! \eps_\infty \omega^2 \id$,
$L_{\infty,n}(\omega)\!=\!C_{\infty,n} \!-\! \eps_\infty \omega^2 \id$, $n \in \N$, $\omega \in \C$, are closed operators acting in the Hilbert space $H(\Div 0, \Omega_n) \subsetneq L^2(\Omega_n)^3$, endowed with the $L^2(\Omega_n)^3$-norm,
and $C_{\mu,n} = \curl\mu^{-1}\curl_0$, $C_{\infty,n} = \curl\mu_\infty^{-1}\curl_0$ are self-adjoint therein.

The proof is modelled on that of Proposition~\ref{thm: difference res}. Here %\cm{'Again' maybe not correct, there we needed all.}
it suffices to prove
$\sigma_e((L_{\mu,n}(\omega))_{n\in\N})=\sigma_e((L_{\infty,n}(\omega))_{n\in\N}$ for only one $\omega\in\C$, which we choose
as  $\omega=\I t$ with $t\geq \eps_{\min}^{-1/2}$, or equivalently $\sigma_e((C_{\mu,n})_{n\in\N})=\sigma_e((C_{\infty,n})_{n\in\N})$.
% \textcolor{red}{$t\geq \epsilon_{\min}^{-1/2}$}.
 %such that $\omega^2\in\C\backslash\R$.
%Then we have the uniform resolvent norm bounds
%$$\|\cL_{\mu,n}(\omega)^{-1}\|\leq \frac{1}{|\im (\omega^2)|}, \quad \|\cL_{\infty,n}(\omega)^{-1}\|\leq \frac{1}{|\im (\omega^2)|}.$$
By \cite[Thm.\ 2.5]{MR3831156}  the limiting essential spectrum has the spectral mapping property for the resolvent. Due to \cite[Thm.\ 2.12\,(ii)]{MR3831156} it is then enough to show that, for $z=\eps_\infty \omega^2 \le -1$,
\[
  K_n(z) := (L_{\mu,n}(\omega))^{-1} - (L_{\infty,n}(\omega))^{-1}
	= (C_{\mu,n}-z)^{-1} - (C_{\infty,n}-z)^{-1}
\]
is such that $(K_n(z))_{n\in\N}$ is discretely compact and $(K_n(z)^*P_n)_{n\in\N}$ is strongly convergent.
The strong convergence follows from Proposition \ref{prop:gsrL} which yields that
$$
   K_n(z)^*P_n
	 =   (\cL_{\mu,n}(\overline{\omega}))^{-1}P_n - (\cL_{\infty,n}(\overline{\omega}))^{-1}P_n
   \s  (\cL_{\mu}(\overline{\omega}))^{-1} - (\cL_{\infty}(\overline{\omega}))^{-1}.
$$

%We have that $(\cL_{\mu,n}(\omega))^{-1}P_n \sto (\cL_\mu(\omega))^{-1} = (\curl \mu^{-1} \curl - \omega^2)^{-1}$ and similarly $(\cL_{\infty,n}(\omega))^{-1}P_n \sto (\cL_\infty(\omega))^{-1}$, for all $\omega \in \bigcap \rho(\cL_{\mu,n}) \cap \rho(\cL_\mu)$. Since these pencils are self-adjoint, as in the proof of the previous proposition it is enough to prove that if
%\[
%K_n := (\cL_{\mu,n}(\omega))^{-1} - (\cL_{\infty,n}(\omega))^{-1}
%\]
%then $(K_n)_{n\in\N}$ is a discretely compact sequence.
Applying \eqref{eq:form-2nd-res.-id} in the proof of Proposition~\ref{thm: difference res} on $\Omega_n$, we deduce that
\begin{equation}
\label{eq:Kn}
  K_n(z) =
  %(\cL_{\mu,n}(\omega))^{-1}\curl (\mu^{-1} - \id) {P_{\ker(\Div,\Omega_n)}\curl} (\cL_{\infty,n}(\omega))^{-1}.
	 \big( \curl_0 (C_{\mu,n}\!-\!\overline{z})^{-1} \big)^{\!*} (\mu_\infty^{-1} \!\!-\mu^{-1}) \curl_0 (C_{\infty,n}\!-\!z)^{-1}.
\end{equation}

By Lemma \ref{lemma: boundedness2} (ii) on $\Omega_n$,
the operators $\curl_0 (C_{\infty,n}\!-\!z)^{-1}\!=\!\curl_{0} (\cL_{\infty,n}(\omega))^{-1}$~are bounded from {$H(\Div 0,\Omega_n)$} to $H(\curl,\Omega_n)$ with uniformly bounded operator~\vspace{-1.5mm}norms,
%\marginpar{\SB{same change as in Lemma 5.3}}
\[
  \sup_{n\in\N} \|\curl (\cL_{\infty,n}(\omega))^{-1}\|_{\cB( H(\Div 0,\Omega_n),H(\curl,\Omega_n))}
  \leq %\ct{\mu_\infty} \Big(1+\frac{1}{\ct{\eps_\infty}|\omega|^2}\Big)^{1/2}
\left(\frac{\mu_\infty}{\eps_\infty |\omega|^2}+\mu_\infty^2\right)^{1/2}
  <\infty.
\]
By Lemma \ref{lemma: boundedness2} (i) on $\Omega_n$, the  operators
$(\curl_0 (C_{\mu,n}\!-\!\overline{z})^{-1} \big)^{\!*}\!\!=
\overline{(\cL_{\mu,n}(\omega))^{-1}\curl} $ are bounded from $L^2(\Omega_n)^3$ to $H(\Div 0, \Omega_n)$. Moreover, they are strongly convergent, $\overline{(\cL_{\mu,n}(\omega))^{-1}\curl }P_n \!\s\! \overline{(\cL_{\mu}(\omega))^{-1}\curl }$ as $n\!\to\!\infty$, since for every $u\!\in\! H(\curl,\Omega)$ we have $P_n u\!\in\! H(\curl,\Omega_n)$ with $\curl P_n u = P_n \curl u$ as $\curl$ is a local operator,
and since $(\cL_{\mu,n}(\omega))^{-1}P_n \!\s\! (\cL_{\mu}(\omega))^{-1}$  as $n\!\to\!\infty$, which follows by analogy with the proof of Proposition \ref{prop:gsrL}.
Analogously to the proof of Proposition \ref{prop: equality lim ess spectra1} for $M_n(\omega)$, one can show \vspace{-0.5mm} that
$$
   (\mu^{-1} \!-\! \id) P_{\ker(\Div,\Omega_n)}:
	 (H(\curl, \Omega_n), \norma{\cdot}_{H(\curl, \Omega_n)}) \!\to\! (L^2(\Omega_n)^3, \norma{\cdot}_{L^2(\Omega_n)^3}),
	 \quad n\!\in\!\N,
$$
form a discretely compact sequence of operators. Now  \eqref{eq:Kn} and \cite[Lemma 2.8 i), ii)]{MR3694623} imply that $(K_n(z))_{n\in\N}$ is a discretely compact sequence.
\end{proof}

\begin{lemma}
\label{lemma: core}
%\cm{Lemma had assumption that $\Omega$ is bounded, but $\Omega$ is not.}
%Let $\Omega$ be a bounded open set of $\R^3$. Let
For every $n\in\N$, the closure of $\sV_{n} = C^{\infty}_c(\Omega_{n} )^3 \cap H(\Div 0, \Omega_{n})$,
with respect to the $H(\curl, \Omega_{n})$-norm equals $\cH_{n} = H_0(\curl, \Omega_{n}) \cap H(\Div 0, \Omega_{n})$.
%Then the closure of $\sV$ equals $\cH$.
\end{lemma}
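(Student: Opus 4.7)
The inclusion $\overline{\sV_n}^{H(\curl,\Omega_n)} \subseteq \cH_n$ is immediate, since $\sV_n \subseteq \cH_n$ and $\cH_n$ is closed in the $H(\curl)$-norm (continuity of the tangential trace on $H(\curl)$ and of $\Div: L^2 \to H^{-1}$). For the reverse inclusion, the plan will be a density-plus-correction argument: first approximate an arbitrary $u \in \cH_n$ by smooth compactly supported vector fields in the $H(\curl,\Omega_n)$-norm, then restore the divergence-free condition via a Bogovskii-type correction, preserving both smoothness and compact support.

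Concretely, since $\Omega_n$ is a bounded Lipschitz domain, $C^\infty_c(\Omega_n)^3$ is dense in $H_0(\curl,\Omega_n)$, so one first picks $\phi_k \in C^\infty_c(\Omega_n)^3$ with $\phi_k \to u$ in $H(\curl,\Omega_n)$. Next we will invoke a Bogovskii operator $\mathcal{B}$ on the bounded Lipschitz domain $\Omega_n$: a bounded right-inverse $\mathcal{B}: L^2_0(\Omega_n) \to H^1_0(\Omega_n)^3$ of the divergence, extending by duality to $H^{-1}(\Omega_n) \to L^2(\Omega_n)^3$, and mapping $C^\infty_c(\Omega_n) \cap L^2_0(\Omega_n)$ into $C^\infty_c(\Omega_n)^3$ (constructed via a partition of $\Omega_n$ into star-shaped Lipschitz pieces; see, e.g., Galdi, \emph{An Introduction to the Mathematical Theory of the Navier-Stokes Equations}, Ch.~III). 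Since $\Div \phi_k \in C^\infty_c(\Omega_n)$ has mean zero by the divergence theorem, the definition $v_k := \phi_k - \mathcal{B}(\Div \phi_k)$ will yield $v_k \in \sV_n$.

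It will then remain to show $\mathcal{B}(\Div \phi_k) \to 0$ in $H(\curl,\Omega_n)$. The $L^2$-convergence will follow from the chain $\|\mathcal{B}(\Div \phi_k)\|_{L^2} \leq C\|\Div \phi_k\|_{H^{-1}(\Omega_n)} \leq C\|\phi_k - u\|_{L^2} \to 0$, where the last estimate uses $\Div u = 0$ in $\Omega_n$ to write $\Div \phi_k = \Div(\phi_k - u)$ distributionally. The hard part will be proving $\curl \mathcal{B}(\Div \phi_k) \to 0$ in $L^2(\Omega_n)$: the naive estimate $\|\curl \mathcal{B} f\|_{L^2} \leq \|\mathcal{B} f\|_{H^1} \leq C\|f\|_{L^2}$ is insufficient because $\|\Div \phi_k\|_{L^2}$ need not vanish. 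This obstacle will be overcome by choosing $\phi_k$ with additional care---e.g.\ as a smooth cutoff applied to an interior mollification of $u$---so that $\Div \phi_k$ is concentrated in a boundary layer shrinking with $k$. Using the structural identity $\Div(\chi_k u) = \nabla \chi_k \cdot u$ (valid since $\Div u = 0$) together with integration by parts against test functions in $H^1_0(\Omega_n)$, one obtains both the $H^{-1}$-convergence already used and enough quantitative control to close the $\curl$-estimate, completing the proof.
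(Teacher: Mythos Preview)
The gap is exactly where you flag it, and your proposed fix does not close it. The only bound available for the curl of the Bogovskii output is the $L^2_0\to H^1_0$ mapping property, giving $\|\curl\mathcal B(\Div\phi_k)\|_{L^2}\le C\|\Div\phi_k\|_{L^2}$; the duality extension $\mathcal B:H^{-1}\to L^2$ yields no control whatsoever on $\curl\mathcal B$. Your refined choice $\phi_k=\chi_k(u*\rho_{\epsilon_k})$ makes matters worse rather than better: with $\Div u=0$ one gets $\Div\phi_k=\nabla\chi_k\cdot(u*\rho_{\epsilon_k})$, supported in a layer of width~$\delta_k$, so $\|\Div\phi_k\|_{L^2}\sim\delta_k^{-1}\|u\|_{L^2(\text{layer})}$, which for a generic $u\in L^2$ is of order $\delta_k^{-1/2}\to\infty$. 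Integration by parts against test functions shows only that $\curl\mathcal B(\Div\phi_k)\rightharpoonup 0$ weakly (via $\langle\curl\mathcal B(\Div\phi_k),w\rangle=\langle\mathcal B(\Div\phi_k),\curl w\rangle\to 0$), which is not enough. The obstruction is structural: a Bogovskii right inverse of $\Div$ has no reason to have small curl. The ``correct'' correction for the $H(\curl)$ topology is the gradient $\nabla p_k$ with $\Delta p_k=\Div\phi_k$, $p_k\in H^1_0$, since then $\curl\nabla p_k=0$ identically and $\nabla p_k=P_\nabla\phi_k\to P_\nabla u=0$ in $L^2$ --- but $\nabla p_k\notin C^\infty_c(\Omega_n)^3$, so this does not land you in~$\sV_n$. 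You are caught between a correction that preserves smoothness (Bogovskii) and one that preserves $H(\curl)$-convergence (gradient), and there is no evident way to have both.

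The paper avoids constructing approximants altogether and argues by duality: one shows that any $h\in\cH_n$ with $\langle h,v\rangle+\langle\curl h,\curl v\rangle=0$ for all $v\in\sV_n$ must vanish. For $\varphi\in C^\infty_c(\Omega_n)^3$ one decomposes $\varphi=\nabla\xi+v$ via the Dirichlet problem $\Delta\xi=\Div\varphi$, $\xi|_{\partial\Omega_n}=0$; since $\Div h=0$ and $\curl\nabla\xi=0$, the gradient part drops out of the $H(\curl)$ pairing for free, and one is reduced to the divergence-free part~$v$. Once orthogonality is shown to extend to all of $C^\infty_c(\Omega_n)^3$, density of $C^\infty_c(\Omega_n)^3$ in $H_0(\curl,\Omega_n)$ gives $\|h\|_{H(\curl)}^2=0$. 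The missing idea in your approach is precisely this shift of viewpoint: rather than forcing the \emph{approximant} to be divergence-free and then struggling to control the correction in $H(\curl)$, exploit that the \emph{limit object} $h$ is divergence-free, which makes gradients invisible in both terms of the $H(\curl)$ inner product.
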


\begin{proof}
%\cm{reformulated}
%First note that $\cH$ is a closed subspace of $H_0(\curl, \Omega)$ due to the closedness of
%\marginpar{\SB{Changed norms subscripts to make them more clear and consistent.}}
The subspace $\cH_{n}$ of $H_0(\curl, \Omega_{n})$ equipped with the norm $\|\cdot\|_{H(\curl,\Omega_n)}$ is closed since
%$H_0(\curl, \Omega)$ and
% of the operator $\Div$, $\dom(\Div) = H(\Div, \Omega)$ in $L^2(\Omega)$.
$H_0(\curl, \Omega_{n})\cap H(\Div 0,\Omega_{n})$ with its norm $\|\cdot\|_{H(\curl,\Omega_n)}+\|\cdot\|_{H(\Div,\Omega_n)}$ is closed and
the norms $\|u\|_{H(\curl,\Omega_n)}+\|u\|_{H(\Div,\Omega_n)}$ and $\|u\|_{H(\curl,\Omega_n)}$ %\ct{coincide for}
are equivalent for
 $u\in H_0(\curl, \Omega_n)\cap H(\Div 0,\Omega_{n})$.
%\cm{norms are not just equivalent, they coincide.}
Consequently, $\cH_{n}$ is a Hilbert space. Since $\sV_{n} \subset \cH_{n}$, the statement is equivalent to {proving} that
$\cH_{n} \cap \sV_{n}^\perp = \{0\}$ where the orthogonal complement is taken with respect to the inner product
%\marginpar{\SB{changed $(\cdot,\cdot)$ to $\langle\cdot,\cdot\rangle$ everywhere.}}
$\langle \cdot,\cdot\rangle+\langle \curl\cdot,\curl\cdot\rangle$. Let $h \in \cH_{n} \cap \sV_{n}^\perp$. \vspace{-1mm}Then
\begin{equation}
\label{eq:perp}
   \langle h, v\rangle + \langle\curl h, \curl v\rangle = 0, \quad v \in \sV_n.
\end{equation}
First we claim that every $\varphi \!\in\! C^\infty_c(\Omega_{n})^3$ can be represented as $\varphi \!=\! \nabla \xi \!+\! v$ with $\xi \!\in\! C^\infty_c(\Omega_{n})$, $v \!\in\! \sV_n$. Indeed, the Dirichlet problem
\[
%\begin{cases}
  -\Delta \xi =- \Div \varphi \quad \textup{in $\Omega_{n}$},  \quad%\\
  \xi = 0 \quad \textup{on $\p \Omega_{n}$}
%\end{cases}
\]
has a unique solution $\xi \in C^\infty_c(\Omega_{n})$ and we can set $v = \varphi - \nabla \xi \in \sV_n$.
Using $\varphi = \nabla \xi + v$, $\curl \nabla \xi = 0$, $\langle h, \nabla \xi\rangle{=-\langle\Div h,\xi\rangle} = 0$
and \eqref{eq:perp}, we conclude
%\cm{streamlined arguments, $h_1$ and $(h,\varphi)\!+\!(h_1, \curl \varphi) \!=\! 0$ not needed.}
\begin{equation}
\label{eq:Ccinf}
    \langle h, \varphi\rangle + \langle\curl h, \curl \varphi\rangle = \langle h, v\rangle + \langle \curl h, \curl v\rangle = \,0, \quad \varphi \in C^\infty_c(\Omega_{n})^3.
\end{equation}
Since $C_c^{\infty}(\Omega_{n})^3$ is dense in $(H_0(\curl,\Omega_{n}),\|\cdot\|_{H(\curl,\Omega_n)})$, equality \eqref{eq:Ccinf} also holds
for all $\varphi \!\in\! H_0(\curl, \Omega_{n})$. Thus we can choose $\varphi=h\in H_0(\curl,\Omega_{n})$ in \eqref{eq:Ccinf} to obtain
\[
    0 \leq \norma{\curl h}^2 = - \norma{h}^2 \leq 0,
\]
so all the inequalities are equalities and hence $h = 0$.
\end{proof}

%\cm{inclusion with $W_e((\cL_{\infty,n})_{n\in\N})$ deleted.}
\begin{theorem}
\label{thm: final inclusion}
Suppose that $\sigma$, $\eps$ and $\mu$ satisfy the limiting assumption \eqref{eq:coeffs-infty}.
%\marginpar{\SB{Changed $\mathcal W(\omega)_{n\in\N}$ to $\mathcal W_n(\omega)$.}}
Let
$\cW(\omega):=- \omega(\omega\eps + \I \sigma)$, $\omega\!\in\!\C$, in $ L^2(\Omega)^3$ and $\cW_n(\omega)$ correspondingly in  $L^2(\Omega_n)^3$.
Then the limiting essential spectrum of $({\cL}_n)_{n\in\N}$ satisfies
\begin{align*}
%\begin{split}
\sigma_e(({\cL}_n)_{n\in\N}) \!&\subset\! \sigma_e((L_{\infty,n})_{n\in\N}) \cup \sigma_e((P_\nabla \cW_n(\cdot)|_{\nabla \!\dot H^1_0(\Omega_n)})_{n\in\N})
\\
%((-P_\nabla B_n)_{n\in\N}) %\\
%&\subset W_e((\cL_{\infty,n})_{n\in\N}) \cup \sigma_e(-P_\nabla B)
\!&\subset\! W_{\!e}(L_\infty) \cup \sigma_e(P_\nabla \cW(\cdot) |_{\nabla \!\dot H^1_0(\Omega)}).
%(-P_\nabla B)\ch{.}
%\end{split}
\end{align*}
\end{theorem}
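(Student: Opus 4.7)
The argument splits naturally into the two claimed inclusions, each of which reduces to an already-established framework.

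For the first inclusion the plan is to reduce to a triangular operator matrix structure, mimicking the proof of Theorem \ref{sigma-ess} on each truncated domain. The bounded correction $M_n(\omega) := (\omega(\omega\eps+\I\sigma)-\omega^2)P_{\ker(\Div,\Omega_n)}$ turns $\cL_n(\omega)$ into the triangular operator matrix $\cT_n(\omega)$ with respect to the Helmholtz decomposition $L^2(\Omega_n)^3 = \nabla\dot H^1_0(\Omega_n) \oplus H(\Div 0,\Omega_n)$, whose diagonal entries are the bounded multiplication operator $P_\nabla\cW_n(\omega)|_{\nabla\dot H^1_0(\Omega_n)}$ and the second-order pencil $L_{\mu,n}(\omega)$. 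Proposition \ref{prop: equality lim ess spectra1} identifies $\sigma_e((\cL_n)_{n\in\N})$ with $\sigma_e((\cT_n)_{n\in\N})$, after which the abstract limiting-essential-spectrum result for triangular operator matrix pencils from Section \ref{sec: triangular} yields
\[
\sigma_e((\cT_n)_{n\in\N}) \subset \sigma_e((L_{\mu,n})_{n\in\N}) \cup \sigma_e\bigl((P_\nabla \cW_n(\cdot)|_{\nabla \dot H^1_0(\Omega_n)})_{n\in\N}\bigr).
\]
Combining with Proposition \ref{prop: sigma_e cL}, which replaces $L_{\mu,n}$ by $L_{\infty,n}$, gives the first inclusion.

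For the second inclusion the two unions are treated separately. To show $\sigma_e((L_{\infty,n})_{n\in\N}) \subset W_e(L_\infty)$, I view the $L_{\infty,n}$ as the Galerkin-type truncations of the self-adjoint pencil $L_\infty$ under the orthogonal projections $P_n$ onto $L^2(\Omega_n)^3$, using Lemma \ref{lemma: core} to ensure that the relevant cores in $H_0(\curl,\Omega_n)\cap H(\Div 0,\Omega_n)$ exhaust the form domain of $L_\infty$; the abstract spectral pollution theory for polynomial pencils from Section \ref{section:6}, combined with the essential numerical range framework of \cite{MR4083777}, then traps the limiting essential spectrum inside $W_e(L_\infty)$. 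For the remaining inclusion, I use that the operators $P_\nabla \cW_n(\omega)|_{\nabla\dot H^1_0(\Omega_n)}$ are uniformly bounded and, after extension by zero, converge strongly to $P_\nabla\cW(\omega)$; a witnessing weakly null sequence $u_n$ with $\|u_n\|=1$ and $\|P_\nabla\cW_n(\omega)u_n\|\to 0$ extends to a weakly null unit-norm sequence on $\Omega$ witnessing $\omega \in \sigma_e(P_\nabla\cW(\cdot)|_{\nabla\dot H^1_0(\Omega)})$.

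The main obstacle is the first step: to apply the triangular-matrix result from Section \ref{sec: triangular} uniformly in $n$, one needs uniform control of the $(2,1)$-off-diagonal block $P_{\ker(\Div,\Omega_n)}\cW_n(\omega)|_{\nabla\dot H^1_0(\Omega_n)}$ relative to $L_{\mu,n}(\omega)$, together with the technical symmetry-type condition (the sequence analogue of $\sigma_{e2}^*(L_{\mu,n}) = \sigma_{e2}(L_{\mu,n})$ used in Theorem \ref{sigma-ess}) for the lower-right diagonal entries. These follow from the uniform bounds in \eqref{conditions eps mu sigma}, the $J$-self-adjointness of $L_{\mu,n}$ with respect to complex conjugation, and Lemma \ref{lemma: core}, which supplies compatible cores on each $\Omega_n$. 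Modulo these structural checks, the claimed inclusions reduce cleanly to the abstract machinery already developed.
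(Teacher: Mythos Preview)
Your route to the first inclusion is exactly the paper's: Proposition~\ref{prop: equality lim ess spectra1}, then Theorem~\ref{theorem: lim ess spec BOM}, then Proposition~\ref{prop: sigma_e cL}. Two small corrections to your ``main obstacle'' paragraph: the uniform relative boundedness in Theorem~\ref{theorem: lim ess spec BOM} is of $C_n$ with respect to the \emph{upper} diagonal entry $A_n = P_\nabla\cW_n(\omega)|_{\nabla\dot H^1_0(\Omega_n)}$, not with respect to $L_{\mu,n}$ --- both off-diagonal and upper-diagonal blocks are uniformly bounded multiplication operators, so the hypothesis holds trivially with $b=0$ --- and no analogue of the condition $\sigma_{e2}^* = \sigma_{e2}$ is needed, since Theorem~\ref{theorem: lim ess spec BOM} only asserts an inclusion. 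Your argument for the $P_\nabla\cW_n$ part of the second inclusion also matches the paper's, which additionally records the key compatibility $P_{\nabla\dot H^1_0(\Omega)}f = P_{\nabla\dot H^1_0(\Omega_n)}f$ for $f$ supported in $\Omega_n$.

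For $\sigma_e((L_{\infty,n})_{n\in\N})\subset W_e(L_\infty)$ there is a genuine gap in your plan. The $L_{\infty,n}$ are \emph{not} Galerkin compressions of $L_\infty$ under $P_n$: extension by zero does not take $H(\Div 0,\Omega_n)$ into $H(\Div 0,\Omega)$ in general (a surface divergence appears on $\partial\Omega_n\cap\Omega$), nor does $P_n$ preserve the divergence-free constraint. So neither the results of Section~\ref{section:6} nor the framework of \cite{MR4083777} applies off the shelf. The paper bypasses any abstract machinery here and argues directly: given a singular sequence $w_n$ for $L_{\infty,n}$ at $\omega$, pairing with $w_n$ yields $\langle L_{\infty,n}(\omega)w_n,w_n\rangle\to 0$; Lemma~\ref{lemma: core} then replaces each $w_n$ by some $v_n\in C^\infty_c(\Omega_n)^3\cap H(\Div 0,\Omega_n)$ close in the $H(\curl)$-norm, whose zero-extension $v_n^0$ \emph{does} lie in $H_0(\curl,\Omega)\cap H(\Div 0,\Omega)$, is weakly null, and witnesses $\omega\in W_e(L_\infty)$ at the form level. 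Your instinct to invoke Lemma~\ref{lemma: core} is correct, but it enters precisely as this concrete approximation step repairing the failure of extension-by-zero, not as a verification of abstract Galerkin hypotheses.
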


\begin{proof}
By Proposition \ref{prop: equality lim ess spectra1} we have $\sigma_e((\cL_n)_{n\in\N})  = \sigma_e(({\cT_n})_{n\in\N})$.
Since ${\cT_n}(\omega)$ is a diagonally dominant operator matrix of order 0 for all $n \in \N$, $\omega \in \C$,
with bounds $a=\|\cW(\omega) |_{\nabla \dot H^1_0(\Omega)}\|$, $b=0$ in \eqref{limiting diag dom} uniform in $n$,
Theorem \ref{theorem: lim ess spec BOM} in Section \ref{sec: triangular} below implies that its limiting essential spectrum is the union of the limiting essential spectra of its diagonal entries,
\[
   \sigma_e(({\cT_n})_{n\in\N}) \subset \sigma_e((L_{\mu,n})_{n\in\N}) \cup \sigma_e((P_\nabla \cW_n(\cdot) |_{\nabla \dot H^1_0(\Omega_n)})_{n\in\N})
   %((-P_\nabla B_n)_{n\in\N}).
\]
By Proposition \ref{prop: sigma_e cL} it follows that $\sigma_e((L_{\mu,n})_{n\in\N}) \!=\! \sigma_e((L_{\infty,n})_{n\in\N})  \!\subset\! \R$.
%\cm{Proof still used limiting essential numerical ramge, which is not introduced. Sabine suggested direct proof.}
%The inclusion
Next \vspace{-1mm} we~show
\[
    \sigma_e((L_{\infty,n})_{n\in\N}) \subset W_e(L_\infty). %W_e((\cL_{\infty,n})_{n\in\N})
\]
%in the claim now follows from \cite[Thm. 2.4, 2.7]{MR3831156}. Moreover the inclusion
%\[
%W_e((\cL_{\infty,n})_{n\in\N}) \subset W_e(\cL_\infty)
%\]
%can be proved as follows.

%We will use a combination of \cite[Prop. 2.2.10(vi)]{SThesis} and Lemma \ref{lemma: core}.
If $\omega \!\in\! \sigma_e((L_{\infty,n})_{n\in\N})$, by definition there exist $w_n \!\in\! \dom L_{\infty,n}(\omega) \subset
H_0(\curl, \Omega_n)$ $\cap H(\Div0, \Omega_n)$, $\norma{w_n} = 1$, $n\in\N$, $w_n \rightharpoonup 0$ and
$L_{\infty,n}(\omega)w_n \to 0$ as $n\to\infty$. Taking the scalar product with $w_n$, we find that
\[
    \langle L_{\infty,n}(\omega)w_n,w_n\rangle =
    \norma{ \mu_\infty^{-1/2} \curl_{0} w_n}^2 - \eps_\infty^{-1} \omega^2 \to 0
\]
%\marginpar{\SB{Remove all subscripts $\Omega$, $\Omega_n$ of norms, as they were not consistent}}
 as $n\to\infty$. By Lemma \ref{lemma: core}, for each $n \in \N$ there exists $v_n \in C^\infty_c(\Omega_n)^3 \cap H(\Div 0, \Omega_n)$ with $\norma{v_n\!-\!w_n}^2 \!\leq\! 1/n$, $\norma{\curl(v_n \!-\! w_n)}^2 \!\leq\! 1/n$.
%\cm{brackets used for sequences, better $0$ as supscript}
Let $v_n^{0} \in H_0(\curl, \Omega) \cap H(\Div 0, \Omega)$ be the extension of $v_n$ to $\Omega$ by zero for $n\in\N$. \vspace{-1mm} Then
\begin{align*}
  |\norma{ \mu_\infty^{-1/2}\!\curl v_n^{0}}^2 \!\!- \!\eps_\infty^{-1} \omega^2 \norma{v_n^{0}}^2|
	&\!\leq\! |\norma{ \mu_\infty^{-1/2}\!\curl w_n}^2 \!\!-\!  \eps_\infty^{-1} \omega^2 \norma{w_n}^2| \!+\! \frac{\!\!1  \!+\!  \eps_\infty^{-1}\omega^2\!\!}{n} \!\to\! 0
\end{align*}
 as $n\to\infty$.	Since $\norma{v_n^{0}} \to 1$ as $n \to \infty$, upon renormalisation of the elements $v_n^{0}$, we obtain $\omega \in W_e(L_\infty)$.

%We are left to
%\cm{we do not seem to need  equality, should we omit these almost 3 lines?}
%\marginpar{\SB{replaced all $H_0^1(\Omega_n)$ by $\dot H_0^1(\Omega_n)$.}}
Finally, we prove that $\sigma_e((P_\nabla \cW_n(\cdot) |_{\nabla \dot H^1_0(\Omega_n)})_{n\in\N}) \!\subset\! \sigma_e(P_\nabla \cW(\cdot) |_{\nabla \dot H^1_0(\Omega)})$.  If $\omega \!\in\! \sigma_e((P_\nabla \cW_n(\cdot) |_{\nabla \dot H^1_0(\Omega)})_{n\in\N})$, there exist $u_n \!\in\! \dot H^1_0(\Omega_n)$, $\norma{\nabla u_n} \!=\! 1$, $n\!\in\!\N$, such that $\nabla u_n \!\rightharpoonup\! 0$ \vspace{-1mm} and
\[
     \norma{P_{\nabla \dot H^1_0(\Omega_n)} \omega(\omega \eps + \I \sigma) \nabla u_n} \to 0, \quad n \to \infty.
\]
Let $u_n^{0} \in \dot H^1_0(\Omega)$ be the extension of $u_n \in \dot H^1_0(\Omega_n)$ to $\Omega$ by zero for $n\in\N$. By standard properties of Sobolev spaces, $\nabla u_n^{0} = (\nabla u_n)^{0}$. Hence the sequence ${(u_n^{0})_{n\in\N}} \subset \dot H^1_0(\Omega)$ is such that $\norma{\nabla u_n^{0}} = 1$, $n\in\N$ , $\nabla u_n^{0} \rightharpoonup 0$ and
\[
   \norma{P_{\nabla \dot H^1_0(\Omega_n)} \omega(\omega \eps \!+\! \I \sigma) \nabla u_n^{0}} \to 0, \quad n \to \infty.
\]
Now the claim follows if we observe that $P_{\nabla \dot H^1_0(\Omega)} f = P_{\nabla \dot H^1_0(\Omega_n)} f$ for all $f \in  L^2(\Omega)^3$ with $\supp f \subset \Omega_n$.
%Now we have
%\[
%\norma{P_{\nabla H^1_0(\Omega)} \omega(\omega \eps + i \sigma) \nabla {(u_n)_0}} = \norma{(P_{\nabla H^1_0(\Omega)} -P_{\nabla H^1_0(\Omega_n)}) \omega(\omega \eps + i \sigma) \nabla {(u_n)_0}} + o(1).
%\]
%However, since $\supp {(u_n)_0} \subset \Omega_n$ we clearly have that $\omega(\omega \eps + i \sigma) \nabla {(u_n)_0} = 0$ a.e. in $\R^3 \setminus \Omega_n$ and $P_{\nabla H^1_0(\Omega)} f = P_{\nabla H^1_0(\Omega_n)} f$ for all $f \in L^2(\Omega)$, $\supp f \subset \Omega_n$. We finally deduce that
%\[
%\norma{P_{\nabla H^1_0(\Omega)} \omega(\omega \eps + i \sigma) \nabla {(u_n)_0}} \to 0, \quad n \to \infty,
%\]
%concluding the proof.
\end{proof}

%\cm{moved out of proof, suggestion of Marco.}
\begin{rem}
In fact, $\sigma_e((P_\nabla \cW_n(\cdot) |_{\nabla \dot H^1_0(\Omega_n)})_{n\in\N}) \!=\! \sigma_e(P_\nabla \cW(\cdot) |_{\nabla \dot H^1_0(\Omega)})$; here~the  inclusion `$\supset$' follows by \cite[Prop.~2.7]{MR3831156} since \vspace{-1mm} $P_\nabla \cW(\cdot) |_{\nabla\!\dot H^1_0(\Omega)}$, $P_\nabla \cW_n(\cdot)|_{\nabla\!\dot H^1_0(\Omega_n)}$,~$n\!\in\!\N$, are bounded and $P_\nabla \cW_n(\cdot) |_{\nabla\! \dot H^1_0(\Omega_n)}\!\!\sto\! P_\nabla \cW(\cdot) |_{\nabla\! \dot H^1_0(\Omega)}$ as $n\!\to\!\infty$, see
 \cite[Lemma~3.2]{MR3694623}. %\marginpar{\SB{Changed citation to cite publication and not PhD thesis.}} %\cite[Lemma~1.3.2]{SThesis}).
\end{rem}

\begin{proof}[Proof of Theorem {\rm \ref{theorem: main res}}.]
%From the definitions (\ref{Vfact1}), (\ref{def:calA}), \eqref{Vnfact1} and \eqref{deficalAn},
%the spectrum of $V$ (resp.\ $V_n$) coincides with that of $\cA$ (resp.\ $\cA_n$); it therefore follows that
Due to Theorem \ref{Prop: spectra S_1 and L}, we have $0 \in \sigma_e(V)=\sigma_e(\cL)$ and hence $0 \notin \sigma_{\rm poll}((V_n)_{n\in\N})$,
$0 \notin \sigma_{\rm poll}((\cL_n)_{n\in\N})$.
Then, by \eqref{eq:spec-V-L} and \eqref{eq:poll}, it follows that
%\cm{I did not see how we avoided $0$ before, with $\cA$ there was the same problem. \ct{Now I saw it.}}
\begin{equation}
\label{eq:se1}
   \sigma_{\rm poll}((V_n)_{n\in\N}) \!=\! \sigma_{\rm poll}((V_n)_{n\in\N}) \setminus \{0\} \!=\! \sigma_{\rm poll}((\cL_n)_{n\in\N})  \setminus \{0\}
   \!=\! \sigma_{\rm poll}((\cL_n)_{n\in\N}).
\end{equation}
%However since the $\cL_n$ and $\cL$ are related to the Schur complements of $\cA_n$ and $\cA$ by \eqref{def:S_1n} and \eqref{def:S_1},
%it follows that $\sigma_{poll}((\cA_n)_{n\in\N}) = \sigma_{poll}((\cL_n)_{n\in\N})$.
%The inclusion $\sigma_{\rm poll}((V_n)_{n \in \N}) = \sigma_{\rm poll}((\cL_n)_{n \in \N}) \subset \sigma_e((\cL_n)_{n \in \N})$ now follows from
Now \eqref{eq: inclusion spectral poll} and Theorem \ref{thm: final inclusion} imply that
\begin{equation}
\label{eq:se2}
  \sigma_{\rm poll}((\cL_n)_{n\in\N}) \subset
  \sigma_e((\cL_n)_{n\in\N}) \subset W_e(L_\infty) \cup \sigma_e(P_\nabla \cW(\cdot) |_{\nabla \!\dot H^1_0(\Omega)}).
\end{equation}
%is given already in Theorem \ref{thm: final inclusion}.
Since Theorems \ref{sigma-ess} and \ref{Prop: spectra S_1 and L} yield that
\[
   \sigma_e(P_\nabla \cW(\cdot) |_{\nabla \!\dot H^1_0(\Omega)}) \subset \sigma_e(\cL) =\sigma_e(V) \subset \sigma(V),
\]
we easily deduce that $\sigma_{\rm poll}((V_n)_{n\in\N}) \cap \sigma_e(P_\nabla \cW(\cdot) |_{\nabla \!\dot H^1_0(\Omega)}) = \emptyset$.
This, together with \eqref{eq:se1}, \eqref{eq:se2} shows that $\sigma_{\rm poll}((V_n)_{n\in\N})\subset W_e(L_\infty)$, as required.

The approximation of isolated eigenvalues outside of $\sigma_e((\cL_n)_{n\in\N}) \cup \sigma_e((\cL_n^*)_{n\in\N})^*$ $=\sigma_e((\cL_n)_{n\in\N})$, and hence outside of $ W_e(L_\infty) \cup \sigma_e(P_\nabla \cW(\cdot) |_{\nabla \!\dot H^1_0(\Omega)})$
%\cm{there seems to be no proof that the second part on the imaginary axis is approximated or that there are no isolated eigenvalues there.}
by \eqref{eq:se2}, is a consequence of Theorem~\ref{thm: bad set pencils}.
\end{proof}

If $\sigma = 0$, we can improve the spectral inclusion part in Theorem  \ref{theorem: main res} to \emph{all} spectral points in $\sigma(V)$.

%\cm{maybe this statement should be appended in Theorem~\ref{theorem: main res}.}
\begin{theorem} \label{thm: main res self-adj}
Assume that $\sigma = 0$. In addition to the conclusions of Theorem~{\rm \ref{theorem: main res}}, for every $\omega \!\in\! \sigma(V)$ there exists a sequence  $\omega_n \!\in\! \sigma(V_n)$, $n\!\in\!\N$, with $\omega_n \!\to\! \omega$ as~$n \!\to\! \infty$.
\end{theorem}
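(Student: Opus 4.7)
My plan proceeds as follows. The inclusion $\sigma_{\rm poll}((V_n)_{n\in\N}) \subset W_e(L_\infty)$ together with the approximation of isolated $\omega \in \sigma_p(V)$ outside $W_e(L_\infty) \cup \sigma_e(P_\nabla \cW(\cdot)|_{\nabla \dot H^1_0(\Omega)})$ are already supplied by Theorem \ref{theorem: main res}, so the only new content is that \emph{every} $\omega \in \sigma(V)$, rather than only isolated eigenvalues outside the bad set, is approximated by points in $\sigma(V_n)$. The crucial simplification when $\sigma = 0$ is that the operator $\cA$ in \eqref{def:calA} has vanishing diagonal and mutually adjoint off-diagonal blocks, and is therefore self-adjoint; the same applies to each $\cA_n$. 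By the factorisation \eqref{Vfact1}, $\sigma(V) = \sigma(\cA) \subset \R$, $\sigma(V_n) = \sigma(\cA_n) \subset \R$, and $\C \setminus \R$ lies in the common resolvent set.

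My strategy is to reduce the claim to generalised strong resolvent convergence of the self-adjoint sequence $(\cA_n)_{n\in\N}$ to $\cA$: for some $z \in \C \setminus \R$,
\begin{equation} \label{eq:gsrplan}
(\cA_n - z)^{-1}(P_n \oplus P_n) \sto (\cA - z)^{-1}, \quad n \to \infty,
\end{equation}
with $P_n$ the orthogonal projection $u \mapsto \chi_{\Omega_n} u$. Once \eqref{eq:gsrplan} is established, the classical spectral approximation theorem for self-adjoint operators under generalised strong resolvent convergence (see e.g.\ \cite{MR3831156} and the Weidmann-type results cited therein) yields $\sigma(\cA) \subset \liminf_{n\to\infty}\sigma(\cA_n)$, which after translation through \eqref{Vfact1} is exactly the required approximation statement.

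To prove \eqref{eq:gsrplan}, I would take $z = \I t$ with $t \geq \eps_{\min}^{-1/2}$. Via \eqref{Vfact1} this amounts to $V_n(\I t)^{-1}(P_n \oplus P_n) \sto V(\I t)^{-1}$, and the block representation \eqref{eq:res-V} reduces the task to strong convergence of each of the four blocks. The $(1,1)$-block convergence $\I t\, \cL_n(\I t)^{-1} P_n \sto \I t\, \cL(\I t)^{-1}$ is precisely Proposition \ref{prop:gsrL}. For the remaining blocks, involving $\overline{\cL_n(\I t)^{-1}\curl}$, $\curl_0 \cL_n(\I t)^{-1}$, and $\overline{\curl_0 \cL_n(\I t)^{-1}\curl}$, I would first derive $n$-independent operator-norm bounds by running the truncated-domain analogues of Lemma \ref{lem:squareroot} and Lemma \ref{lemma: boundedness}, exploiting in particular that the spectral-calculus bound $\|(T_{0,n}^* T_{0,n}+I)^{-1/2}\| \le 1$ is automatic. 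I would then test against the dense subspace $C_c^\infty(\Omega)^3$: for compactly supported $u$ one has $\supp u \subset \Omega_n$ for all sufficiently large $n$, so $\curl u$ has the same meaning in $L^2(\Omega)$ and $L^2(\Omega_n)$, and Proposition \ref{prop:gsrL} supplies pointwise convergence after composition with the inner resolvent; the uniform bounds together with density extend this to strong convergence on all of $L^2(\Omega)^3$.

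The step I expect to be the main obstacle is the careful tracking of the uniform-in-$n$ bounds for the auxiliary curl-composed resolvents $\overline{\cL_n(\I t)^{-1}\curl}$ and $\overline{\curl_0 \cL_n(\I t)^{-1}\curl}$ across the sequence $(\Omega_n)_{n\in\N}$, since this requires both the inner bound from Lemma \ref{lem:squareroot} and the boundedness statement from Lemma \ref{lemma: boundedness} to go through with constants independent of $n$. Once this uniformity is in hand, the rest of the argument is a standard combination of the strong convergence already provided by Proposition \ref{prop:gsrL}, a density argument, and the classical Weidmann-type spectral approximation theorem for self-adjoint operators.
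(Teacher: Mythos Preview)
Your proposal is correct and follows essentially the same route as the paper: reduce to self-adjointness of $\cA$ and $\cA_n$ when $\sigma=0$, establish generalised strong resolvent convergence $(\cA_n-z)^{-1}(P_n\oplus P_n)\sto(\cA-z)^{-1}$ via the factorisation \eqref{Vfact1}, the block formula \eqref{eq:res-V}, Proposition~\ref{prop:gsrL}, and a density argument exploiting locality of $\curl$, then invoke the classical spectral inclusion for self-adjoint strong resolvent limits. The paper's proof is terser---it uses the dense subspace $L^2(\Omega)^3\oplus\mu^{1/2}H(\curl,\Omega)$ rather than $C_c^\infty(\Omega)^3$ and cites \cite[Thm.~VIII.24~(a)]{MR751959}---but the substance is the same, and you have correctly identified the one place requiring care, namely the $n$-uniform bounds on the curl-composed resolvent blocks, which indeed follow from the domain-independent estimates in Lemma~\ref{lem:squareroot}.
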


\begin{proof}
When $\sigma \!=\! 0$,
%{the linear Maxwell pencil $V$ \ch{and the associated quadratic pen\-cil~$\cL$, see \eqref{eq:Vdef0} and \eqref{eq:defcL}, }
%and all domain-truncation approximations of both,~are self-adjoint \ch{operator pencils}.
%\ch{In fact,
the spectral problems for $V$ and $\cL$ re\-duce to  classical spectral problems for the self-adjoint operator matrix $\cA$ in \eqref{def:calA}.
%and for the self-adjoint operator $\eps^{-1/2} T_0^*T_0\eps^{-1/2} \!=\! \eps^{-1/2} T_0^* (\eps^{-1/2} T_0^* )^*$ with $T_0\!=\!\mu^{-1/2} \curl_0$, respectively.
% and  ${\cT_n}(\omega) = \eps^{-1/2}\curl \mu^{-1} \curl \eps^{-1/2} - \omega^2$, so
%\cm{Claim that $\cA$ positive was not true (only self-adjoint), not needed anyway.}
%\marginpar{\SB{We show that $\cA_n \gsr \cA$ since the classical results are known only for operators, not pencils. Also note that $V(\omega)$ and %$\cL(\omega)$ are not self-adjoint for non-real $\omega$.}}
We therefore have a domain truncation problem for a sequence of self-adjoint operators converging in strong resolvent sense, $(\cA_n\!-\!\omega)^{-1}\cP_n\!\s\! (\cA\!-\!\omega)^{-1}\!$,
%\ch{For $\cL$ this was proved in Proposition \ref{prop:gsrL}. For $V$ we note that, }
where $\mathcal P_n\!:=\!{\rm diag}(P_n,P_n)$. In fact, the strong convergence
$V_n(\omega)^{-1}\mathcal P_n\!\s\! V(\omega)^{-1}$
%$(\cA_n-\omega)^{-1}\mathcal P_n\s (\cA-\omega)^{-1}$
%\cm{new references without Schur complement}
follows from \eqref{eq:res-V} and Proposition~\ref{prop:gsrL};
here we need that $ L^2(\Omega)^3\oplus\mu^{1/2}H(\curl,\Omega)$ is dense in $L^2(\Omega)^3\oplus L^2(\Omega)^3$ and that,~for $u\!\in\! \mu^{1/2}H(\curl,\Omega)$, $P_n u\!\in\! \mu^{1/2}H(\curl,\Omega_n)$ with $\curl \mu^{1/2} P_n u\!=\!P_n \curl \mu^{1/2}u$ since $\curl$ is a local operator.
 Then $(\cA_n\!-\!\omega)^{-1}\cP_n\!\s\! (\cA\!-\!\omega)^{-1}$ by~\eqref{Vfact1}~and~\eqref{Vnfact1}.
 %\marginpar{{We need to show $\cA_n\stackrel{gsr}{\to}\cA$}}
 The~spectral inclusion
%for the sequence $({\cT_n})_{n\in\N}$
now follows from classical results, see e.g.\,\cite[Thm.\ VIII.24~(a)]{MR751959}.
\end{proof}

%%%%%%%%%%%%%%%%%%%%%%%%%%%%%%%%%%%%%%%%%%%%%%%%%%%%%%%%%%%%%%%%%%%%%%%%%%%%%%%%%%%%%%%%%%%%%%%%%%%%%%%%%%%%%%%%%%%

\section{\bf Abstract results for essential spectra and limiting essential spectra of triangular operator matrices}
\label{sec: triangular}

In this section we prove the abstract results on essential spectra and limiting essential spectra of triangular operator matrices used in Theorems~\ref{sigma-ess} and \ref{thm: final inclusion} and employed to prove our main result on spectral approximation, Theorem~\ref{theorem: main res}.
The results below are more general than what we needed there since we also admit~un\-bounded off-diagonal entries. Thus we decided to present them in a  separate~section.

In a product Hilbert space $\cH = \cH_1 \oplus \cH_2$ we consider lower triangular $2 \times 2$ operator
\vspace{-2mm} matrices
 \begin{equation} \label{def: cA}
  \cA = \begin{pmatrix}
  A & 0 \\
  C & D
  \end{pmatrix}
\end{equation}
such that $A$, $D$ are densely defined, $C$, $D$ are closable, $\dom(A) \subset \dom(C)$ and $\rho(A) \neq \emptyset$.
Then, e.g.\ by \cite[Thm.\ 2.2.8]{TreB},
$\cA$ is closable with closure
$$
 \overline{\cA} = \begin{pmatrix} A & 0 \\ C & \overline{D} \end{pmatrix}.
$$
The Schur Frobenius factorisation \cite[(2.2.10)]{TreB} of $\overline{\cA}$ simplifies to
\begin{equation} \label{eq: FSfact}
  \overline{\cA} - \la = \begin{pmatrix}
  I & 0 \\
  C(A- \la)^{-1} & I
  \end{pmatrix}
  \begin{pmatrix}
  A - \la & 0 \\
  0 & \overline{D} - \la
  \end{pmatrix},
	\quad \la \in\rho(A),
\end{equation}
and the first factor therein is bounded and boundedly invertible since $C$ is closable and $A$ is closed. Therefore,
$$
  \sigma_{ek}(\overline{\cA}) \setminus \sigma(A) = \sigma_{ek}(\overline{D}), \quad k=1,\dots,5.
$$
In the sequel we study the relation between $\sigma_{ek}(\overline{\cA})$
and the union $\sigma_{ek}(A) \cup \sigma_{ek}(\overline{D})$, mainly for $k\!=\!2$.
Here we denote the set of semi Fredholm oper\-ators with~finite nullity and finite defect by $\Phi_+$ and $\Phi_-$, respectively,
see \cite[Sect.~I.3]{EE}.

Note that even for diagonal operator matrices $\cA\!=\!{\rm diag}\,(A,D)$,
i.e.\ $C\!=\!0$, equal\-ity does not prevail for every $k\in\{1,\dots,5\}$;
in fact, by \cite[IX.\,(5.2)]{EE},
\begin{align}
\nonumber
  &\sigma_{e1}({\rm diag}\,(A,\overline{D})) \!\supset\! \sigma_{e1}(A) \cup \sigma_{ek}(\overline{D}), \\
\label{ek}	
  &\sigma_{ek}({\rm diag}\,(A,\overline{D})) \!=\! \sigma_{ek}(A) \cup \sigma_{ek}(\overline{D}), \quad k\!=\!2,3, \\
\nonumber	
	&\sigma_{ek}({\rm diag}\,(A,\overline{D})) \!\subset\! \sigma_{ek}(A) \cup \sigma_{ek}(\overline{D}), \quad k\!=\!4,5.
	%\ \ \sigma_{e2}^*({\rm diag}\,(A,\overline{D}) \!\subset\! \sigma_{e2}^*(A) \cup \sigma_{e2}^*(\overline{D});
\end{align}

\vspace{-1mm}	

It is well-known that, for $C\ne 0$, the assumption $\dom(A) \subset \dom(C)$ is essential to have the inclusion
$\sigma_{ek}(\overline{\cA}) \subset \sigma_{ek}(A) \cup \sigma_{ek}(\overline{D})$, $k\!=\!1,\dots, 5$. In fact, if $A$, $D = 0$ and $C$ is boundedly invertible with dense domain $\dom(C) \subsetneq \cH_1$, then
$\sigma_{ek}(\overline{\cA}) \!=\! \sigma_{e1}(\overline{\cA}) \!=\! \C \ne \{0\} \!=\! \sigma_{ek}(A) \!=\! \sigma_{ek}(D)$ for $k\!=\!1,\dots,5$.

On the other hand, certain relative compactness assumptions may ensure equality; e.g.\
if for some $\mu \in \rho(A) \cap \rho(\overline{D})$ the operator $(\overline{D} \!-\! \mu)^{-1}C(A\!-\!\mu)^{-1}$ is compact, \vspace{-1mm} then,
by \cite[Thm.\ 2.4.8]{TreB},
\[
    \sigma_{e3}(\overline{\cA}) = \sigma_{e3}(A) \cup \sigma_{e3}(\overline{D}).
\]

In the following, for the case $k\!=\!2$, we characterise the difference between $\sigma_{e2}(\overline{\cA})$ and the union
$\sigma_{e2}(A) \cup \sigma_{e2}(\overline{D})$ and establish criteria for equality.
Here, for a closed linear operator $T$, we set $\sigma_{e2}^*(T)\!:=\!\{\la\!\in\!\C: \ran(T\!-\!\la) \mbox{ closed, codim  } \ran (T\!-\!\la) \!<\! \infty \}$;
note that then $\la \in \sigma_{e2}^*(T)$ if and only if $\overline{\la} \in \sigma_{e2}(T^*)$; see \cite[Sect.\ IX.1]{EE}.
%\cm{definition was forgotten}

%However, in many applications it is important to have similar characterisations for $\sigma_{e2}(\overline{\cA})$ and $\sigma_{e4}(\overline{\cA})$. We note that for \emph{bounded} triangular block operator matrices, results about the essential spectra $\sigma_{e3}$, $\sigma_{e4}$ and $\sigma_{e5}$ can be found in a somewhat imprecise form in \cite{MR3415708}.

\begin{theorem}
\label{thm: ess spec}
Let $\cA$ be as in \eqref{def: cA}, i.e.\
$A$, $D$ are densely defined, $C$, $D$ are closable, $\dom(A) \subset \dom(C)$ and $\rho(A) \neq \emptyset$. Then
\begin{equation}
\label{se2}
  \big( \sigma_{e2}(A) \setminus \sigma_{e2}^*(\overline{D} ) \big) \cup \sigma_{e2}(\overline{D} ) \subset \sigma_{e2}(\overline{\cA})
	\subset \sigma_{e2}(A)  \cup \sigma_{e2}(\overline{D} ),
\vspace{-1mm}	
\end{equation}
and hence
\[
  \sigma_{e2}(\overline{\cA}) \cup \big( \sigma_{e2}(A) \cap \sigma_{e2}^*(\overline{D} ) \big)
	= \sigma_{e2}(A)  \cup \sigma_{e2}(\overline{D} );
\]
in particular, if  $\sigma_{e2}^*(\overline{D})  = \sigma_{e2}(\overline{D})$ or if $ \sigma_{e2}(A) \cap \sigma_{e2}^*(\overline{D})=\emptyset$, then
\[
  \sigma_{e2}(\overline{\cA}) = \sigma_{e2}(A)  \cup \sigma_{e2}(\overline{D} ).
\]
\end{theorem}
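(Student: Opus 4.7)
The plan is to separate the cases $\lambda\in\rho(A)$ and $\lambda\in\sigma(A)$. For $\lambda\in\rho(A)$ the Schur--Frobenius factorisation \eqref{eq: FSfact} writes $\overline{\cA}-\lambda$ as the product of a bounded, boundedly invertible operator and $\mathrm{diag}(A-\lambda,\overline D-\lambda)$; since $A-\lambda$ is bijective, both inclusions follow at once from \eqref{ek} for $k=2$. For $\lambda\in\sigma(A)$ I reason via singular sequences, using the characterisation that $\lambda\in\sigma_{e2}(T)$ iff there exists $(x_n)\subset\dom T$ with $\|x_n\|=1$, $x_n\rightharpoonup 0$, and $(T-\lambda)x_n\to 0$. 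A preliminary ingredient used throughout is that $\overline C$ is $A$-bounded: since $A$ and $\overline C$ are both closed and $\dom A\subset\dom\overline C$, the closed graph theorem applied to $\overline C$ viewed as a map $(\dom A,\|\cdot\|_A)\to\cH_2$ yields $\|\overline Cx\|\le a\|x\|+b\|Ax\|$ for $x\in\dom A$.

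For the right inclusion $\sigma_{e2}(\overline{\cA})\subset\sigma_{e2}(A)\cup\sigma_{e2}(\overline D)$, suppose $\lambda\notin\sigma_{e2}(A)\cup\sigma_{e2}(\overline D)$ and let $(x_n,y_n)\in\dom(\overline{\cA})$ be bounded and weakly null with $(\overline{\cA}-\lambda)(x_n,y_n)\to 0$. From $(A-\lambda)x_n\to 0$, $x_n\rightharpoonup 0$, and $A-\lambda\in\Phi_+$ it follows that $x_n\to 0$. The $A$-bound then gives $\overline Cx_n\to 0$, so the second component forces $(\overline D-\lambda)y_n\to 0$; together with $y_n\rightharpoonup 0$ and $\overline D-\lambda\in\Phi_+$ this yields $y_n\to 0$. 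Hence no singular sequence exists at $\lambda$ and $\lambda\notin\sigma_{e2}(\overline{\cA})$.

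The first piece of the left inclusion, $\sigma_{e2}(\overline D)\subset\sigma_{e2}(\overline{\cA})$, is the trivial lift $(y_n)\mapsto (0,y_n)$. The genuine work, which I expect to be the main obstacle, is $\sigma_{e2}(A)\setminus\sigma_{e2}^*(\overline D)\subset\sigma_{e2}(\overline{\cA})$: here $\overline D-\lambda$ has closed range and finite codimension but possibly infinite-dimensional kernel, so one must simultaneously solve $(\overline D-\lambda)y_n\approx -\overline Cx_n$ and keep $y_n$ bounded and weakly null. Starting from a singular sequence $(x_n)$ for $A-\lambda$, the $A$-bound shows that $\overline Cx_n$ is bounded, and pairing with $C^*v$ for $v$ in the dense set $\dom C^*$ (dense by closability of $C$) gives $\overline Cx_n\rightharpoonup 0$. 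Let $P$ be the orthogonal projection onto $\ran(\overline D-\lambda)^\perp=\ker((\overline D-\lambda)^*)$, which is finite-dimensional by hypothesis; hence $P\overline Cx_n\to 0$ strongly. Setting $f_n:=-\overline Cx_n+P\overline Cx_n$ places $(f_n)\subset\ran(\overline D-\lambda)$ with $f_n$ bounded, $f_n\rightharpoonup 0$, and $\overline Cx_n+f_n\to 0$. The open mapping theorem applied to $\overline D-\lambda$ as a bounded bijection from the Banach space $(\ker(\overline D-\lambda))^\perp\cap\dom(\overline D)$ (graph norm) onto the closed subspace $\ran(\overline D-\lambda)$ yields a bounded right inverse whose value at $f_n$, say $y_n$, is bounded, weakly null by weak continuity, and satisfies $(\overline D-\lambda)y_n=f_n$. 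Then $(\overline{\cA}-\lambda)(x_n,y_n)=((A-\lambda)x_n,P\overline Cx_n)\to 0$, and after normalisation $(x_n,y_n)$ is a singular sequence for $\overline{\cA}-\lambda$. The equivalent reformulation and the two special cases stated at the end of the theorem then follow from pure set algebra applied to \eqref{se2}.
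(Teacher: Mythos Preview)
Your proof is correct. The left inclusion is essentially the paper's argument: both construct the second component $y_n$ from a right inverse of $\overline D-\lambda$ modulo the finite-dimensional cokernel. The paper packages this as an ``approximate right inverse'' $R_\lambda$ with $(\overline D-\lambda)R_\lambda=I+F_\lambda$, $F_\lambda$ finite rank, and sets $y_n:=-R_\lambda Cx_n$; you build the same object by hand via the projection $P$ onto $\ran(\overline D-\lambda)^\perp$ and the open mapping theorem. Your verification that $\overline Cx_n\rightharpoonup 0$ via the dense $\dom C^*$ is a nice alternative to the paper's route through the bounded operator $C(A-\mu)^{-1}$.

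For the right inclusion your approach genuinely differs. The paper conjugates $\cA$ by $M_\mu=\mathrm{diag}(\mu I,I)$ to get $\cA_\mu$ with off-diagonal corner $\mu C$, notes $\sigma_{e2}(\overline\cA)=\sigma_{e2}(\overline{\cA_\mu})$, and then invokes the stability of $\Phi_+$ under small bounded perturbations to pass from $\mathrm{diag}(A-\lambda,\overline D-\lambda)\in\Phi_+$ to $\overline{\cA_\mu}-\lambda\in\Phi_+$ for $\mu$ small. Your argument is more elementary and self-contained: you show directly that any candidate singular sequence $(x_n,y_n)$ must satisfy $x_n\to 0$ (from $A-\lambda\in\Phi_+$, finite-dimensional kernel plus lower bound on its complement), hence $\overline Cx_n\to 0$ by the $A$-bound, hence $y_n\to 0$ by the same reasoning for $\overline D-\lambda$. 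This avoids citing Fredholm perturbation theory. The paper's $\mu$-trick, on the other hand, has the advantage (noted in its Remark) that it works verbatim for $\sigma_{ek}$, $k=3,4,5$, via the corresponding stability results, whereas your singular-sequence argument is specific to $k=2$.
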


\begin{proof}
First we prove the left inclusion in \eqref{se2}.
The enclosure $\sigma_{e2}(\overline{D} ) \subset \sigma_{e2}(\overline{\cA})$ is trivial; we just add a zero first component to a singular sequence
coming from $\overline{D}$. Now let $\la \in \sigma_{e2}(A) \setminus \sigma_{e2}^*(\overline{D} )$.
Then $\overline{D}-\la \in \Phi_-$ and hence $\overline{D}-\la$ has an approximate right inverse $R_\la \in \cB(\cH_2)$,
%\marginpar{\SB{Here $L(\cdot)$ seems to denote bounded operator (3 occurrences on this page) but before we used $\cL(\cdot)$. Change where? Explain notation?}}
see \cite[Thm.\ I.3.11]{EE}, i.e.\
$(\overline{D}-\la) R_\la = I_{\cH_2} + F_\la$ with $F_\la \in \cB(\cH_2)$ of finite rank.  Since $\la \in \sigma_{e2}(A)$, there exists $(x_n)_{n\in\N} \subset \dom A$, $\|x_n\|\!=\!1$, $x_n \!\rightharpoonup\! 0$, $(A-\la)x_n \!\to\! 0$, $n\!\to\! \infty$. This implies that $(Ax_n)_{n\in\N}$ is bounded.
Since $C$ is closable and $\dom A \!\subset\! \dom C$, $C$~is $A$-bounded and hence $(C x_n)_{n\in\N}$ is bounded as~well.

Now set $y_n:=-R_\la Cx_n$, $n\in\N$. Then $(y_n)_{n\in\N}$ is bounded and, for $n\in\N$,
\[
  Cx_n+(\overline{D}-\la)y_n = Cx_n - (\overline{D}-\la) R_\la Cx_n = Cx_n- (I_{\cH_2}+F_\la) Cx_n = - F_\la Cx_n.
\]
Since $(Cx_n)_{n\in\N}$ is bounded and $F_\la\in \cB(\cH_2)$ has finite rank,
%we can choose a subsequence $(Cx_{n_k})_{n\in\N}$ such that
upon choosing a subsequence, we may assume that
\[
  Cx_n+(\overline{D}-\la)y_n = - F_\la Cx_n \to 0, \quad n\to\infty.
\]
It remains to be shown that $y_n\!=\!-R_\la Cx_n\!\!\rightharpoonup\! 0$~for $n\!\to\!\infty$.
To this end, let $\mu\!\in\!\rho(A)$  $(\ne\! \emptyset)$. Then $C(A\!-\!\mu)^{-1}$ is bounded since $C$ is closable and $A$ is~closed. Thus
\[
  Cx_n = C(A -\mu)^{-1} \big( \underbrace{(A-\lambda) x_n}_{\to 0} + (\lambda-\mu) \underbrace{x_n}_{\rightharpoonup 0}  \big) \rightharpoonup 0,
	\quad n\to\infty,
\vspace{-2mm}	
\]
and hence, since $R_\la$ is bounded, $y_n\!\!\rightharpoonup\! 0$~for $n\!\to\!\infty$, as required.
Finally, if we set $v_n:=( x_n, y_n)$, $n\in\N$, and normalise $v_n$,
we obtain a singular sequence for $\cA$ at $\la$ and hence $\la \in \sigma_{e2}(\overline{\cA})$.

In order to prove the second inclusion in \eqref{se2}, let $\la\notin \sigma_{e2}(A) \cup  \sigma_{e2}(\overline{D})$, i.e.\
$A-\la$, $\overline{D}-\la \in \Phi_+$.
%The second ingredient is~the famous $\mu$-trick which yields that, for arbitrary (especially arbitrarily small)
For arbitrary $\mu\!>\!0$, set
\begin{equation}
\label{mu-trick}
  \cA_\mu:= M_\mu^{-1} \cA M_\mu = \begin{pmatrix} A & 0 \\ \mu C & D \end{pmatrix},
	\quad
	M_\mu:= \begin{pmatrix} \mu I & 0 \\ 0 & I \end{pmatrix}.
\end{equation}
Then $\sigma_{e2}(\overline{\cA})\!=\!\sigma_{e2}(\overline{\cA_\mu})$ because $M_\mu$ is bounded and boundedly invertible.
Due~to the stab\-ility of semi-Fredholmness, see \cite[Thm.\ I.3.22, Rem.\ I.3.27] {EE}, and since diag\,$(A\!-\!\la,\overline{D}\!-\!\la) \!\in\! \Phi_+$,
we can choose $\mu\!>\!0$ so small that $\overline{\cA_\mu}\!-\!\la \!\in\! \Phi_+$ and thus $\la\!\notin\! \sigma_{e2}(\overline{\cA})$.%

Finally, the last two claims are obvious from \eqref{se2}.
\end{proof}

\begin{rem}
For the second inclusion in \eqref{se2}, in the same way as in the proof of Theorem \ref{thm: ess spec}, one can also show that
$\sigma_{ek}(\overline{\cA}) \subset \sigma_{ek}(A) \cup \sigma_{ek}(\overline{D})$ for $k=3,4,5$.
Here the Fredholm stability results [EE, Thm.\ I.3.22 and Rem. I.3.27] for $\Phi_\pm$ and hence $\Phi$, together with the stability of the index therein, give the inclusions for $k=3,4$, while for $k=5$ the stability of bounded invertibility \cite[Thm.\ IV.1.16]{MR1335452} is used.

The first inclusion in \eqref{se2} also holds for $k\!=\!3$, i.e.\
$\sigma_{e3}(\overline{\cA}) \cup \big( \sigma_{e3}(A) \cap \sigma_{e3}^*(\overline{D} ) \big)
\!=\! \sigma_{e3}(A)  \cup \sigma_{e2}(\overline{D})$, whereas for $k\!=\!4$
the difference between $\sigma_{e4}(\overline{\cA})$ and $\sigma_{e4}(A) \cup \sigma_{e4}(\overline{D})$
has a much less elegant description.
\end{rem}

\begin{corollary}
\label{cor: J-self ess}
Let $\cA$ be as in \eqref{def: cA}. If $D$ is $\cJ$-self-adjoint
for some conjugation~$\cJ$ in $\cH_2$, i.e.\ $\cJ^2\!=\!I_{\cH_2}$, $(\cJ x,\cJ y)=(x,y)$ for $x,y \in \cH_2$, then
$$\sigma_{ek}(\overline{\cA}) = \sigma_{ek}(A) \cup \sigma_{ek}(D), \quad k= 2,3,4.$$
\end{corollary}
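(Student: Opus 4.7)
My plan is to derive each of the three equalities $\sigma_{ek}(\overline{\cA}) = \sigma_{ek}(A) \cup \sigma_{ek}(D)$, $k=2,3,4$, from Theorem~\ref{thm: ess spec} and its $k=3$ analogue announced in the remark, exploiting the fact that $\cJ$-self-adjointness of $D$ forces the symmetry $\sigma_{e2}(D) = \sigma_{e2}^*(D)$ and makes $\ind(D-\la)=0$ whenever $D-\la$ is Fredholm. The starting point is a direct computation: using $\cJ^2 = I$, the antilinearity of $\cJ$, and $\cJ D \cJ = D^*$, one has for every $\la \in \C$
\[
  (D-\la)^* = D^* - \bar{\la} = \cJ D \cJ - \bar{\la} = \cJ (D-\la) \cJ.
\]
Hence $\cJ$ maps $\ker(D-\la)$ bijectively onto $\ker(D-\la)^*$, so these two kernels have equal dimension and $\ran(D-\la)$ is closed iff $\ran(D-\la)^*$ is. Consequently $D-\la \in \Phi_+$ iff $D-\la \in \Phi_-$, which gives $\sigma_{e2}(D) = \sigma_{e2}^*(D)$, and $\ind(D-\la)=0$ whenever $D-\la$ is Fredholm; in particular $\sigma_{e2}(D) = \sigma_{e3}(D) = \sigma_{e4}(D)$. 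Moreover $D = \cJ D^* \cJ$ is closed (since $D^*$ is closed and $\cJ$ is an isometric involution), so $\overline{D}=D$.

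The case $k=2$ is immediate from the `in particular' clause of Theorem~\ref{thm: ess spec}. For $k=3$, the $k=3$ analogue stated in the remark supplies both the inclusion $\sigma_{e3}(\overline{\cA}) \subset \sigma_{e3}(A) \cup \sigma_{e3}(D)$ and the identity
\[
  \sigma_{e3}(\overline{\cA}) \cup \bigl(\sigma_{e3}(A) \cap \sigma_{e3}^*(D)\bigr) = \sigma_{e3}(A) \cup \sigma_{e3}(D).
\]
Since $\sigma_{e3}$ is self-dual for every closed operator, i.e.\ $\sigma_{e3}^*(D) = \sigma_{e3}(D)$, this identity simplifies accordingly. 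For any $\la$ on the right-hand side, either $\la \in \sigma_{e3}(D) = \sigma_{e2}(D) \subset \sigma_{e2}(\overline{\cA}) \subset \sigma_{e3}(\overline{\cA})$, or else $\la \in \sigma_{e3}(A) \setminus \sigma_{e3}(D)$ lies outside the intersection on the left, which forces $\la \in \sigma_{e3}(\overline{\cA})$.

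The case $k=4$ is the principal technical point. The inclusion $\sigma_{e4}(\overline{\cA}) \subset \sigma_{e4}(A) \cup \sigma_{e4}(D)$ is covered by the remark, and $\sigma_{e4}(D) = \sigma_{e2}(D) \subset \sigma_{e4}(\overline{\cA})$ is trivial. To show $\sigma_{e4}(A) \subset \sigma_{e4}(\overline{\cA})$, take $\la \in \sigma_{e4}(A)$; if $\la \in \sigma_{e3}(A)$ the $k=3$ case applies, so suppose $A-\la$ is Fredholm with $\ind(A-\la) \ne 0$. Assuming for contradiction that $\la \notin \sigma_{e4}(\overline{\cA})$, we have $\overline{\cA}-\la \in \Phi$ with index zero, and the $k=3$ identity forces $\la \notin \sigma_{e3}(D)$, so $D-\la$ is Fredholm with index zero. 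I will then invoke the scaling $\cA_\mu := M_\mu^{-1}\cA M_\mu$ from the proof of Theorem~\ref{thm: ess spec}: since $M_\mu$ is boundedly invertible, $\overline{\cA_\mu}-\la$ is Fredholm with the same index as $\overline{\cA}-\la$. Because $C$ is $A$-bounded, for $\mu>0$ sufficiently small the off-diagonal entry $\mu C$ is a $\operatorname{diag}(A-\la,\overline{D}-\la)$-bounded perturbation with arbitrarily small relative bound, and by the stability of the Fredholm index under such perturbations the index of $\overline{\cA_\mu}-\la$ equals $\ind(A-\la)+\ind(D-\la) = \ind(A-\la) \ne 0$, contradicting the assumption. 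The main obstacle is precisely this index-stability step: one must promote the semi-Fredholm argument used at the end of the proof of Theorem~\ref{thm: ess spec} to a full index statement, with $\cJ$-self-adjointness of $D$ entering decisively through $\ind(D-\la)=0$.
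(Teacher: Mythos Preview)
Your proof is correct and follows the same route as the paper: you derive $\sigma_{e2}(D)=\sigma_{e2}^*(D)$ and $\ind(D-\la)=0$ from $\cJ$-self-adjointness and then invoke Theorem~\ref{thm: ess spec} (and its $k=3$ analogue from the remark). The paper only writes out the case $k=2$ explicitly and leaves $k=3,4$ to the reader; your additional arguments for $k=3$ and $k=4$ (the latter via index stability under the scaling $\cA_\mu$) are exactly the natural way to fill this in.
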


\begin{proof}
We prove the claim for $k=2$; the proof for $k=3,4$ is left to the reader.
Since $D$ is $\cJ$-self-adjoint, $\dim \ker (D-\la) = \dim \ker (D^*-\overline{\la})$ for $\la\in\C$ by \cite[Lemma III.5.4]{EE}.
Hence, either $\ran(D-\la)$ is not closed  or $\dim\ker (D-\la)=\dim\ran(D-\la)^\perp$ so that $D-\la$ is a Fredholm of index $0$
for $\la\in\C$. This proves that $\sigma_{e2}(D) = \sigma_{e2}^*(D)$ and hence Theorem \ref{thm: ess spec} yields the claim.
\end{proof}

The following counter-examples show that, in general,
neither of the inclusions in \eqref{se2} is an equality, %$\sigma_{e2}(\overline{\cA}) \subsetneq \sigma_{e2}(A) \cup \sigma_{e2}(D)$,
even when all entries of $\cA$ are bounded.

\begin{example} \label{ex: sharp e2}
Let $D\in \cB(\cH_2)$ be a bounded linear operator in some Hilbert space $\cH_2$ such that $0 \in \sigma^*_{e2}(D) \setminus \sigma_{e2}(D)$,
e.g.\ $\dim \ker D <\infty$ and $\dim (\ran D)^ \perp = \infty$. For example, we can choose $D : \ell^2(\N) \to \ell^2(\N)$
%\marginpar{\cv{\footnotesize $D$ is the former $T^*$.}}
given by $D e_{k} := e_{2k}$, $k \in \N$.

i) If $A$ in $\cH_1\!=\!\cH_2$ is compact with $\sigma_{e2}(A)\!=\!\{0\}$, $C\!=\!A$ and $D$ is as above,~then,
for $\cA$ as in \vspace{-1mm} \eqref{def: cA},
\[
 0 \notin \big( \sigma_{e2}(A) \setminus \sigma_{e2}^*(\overline{D} ) \big) \cup \sigma_{e2}(\overline{D} ) = \sigma_{e2}(\overline{D}), \quad
 0 \in \sigma_{e2}(\overline{\cA});
\]
for the latter note that since $0\in \sigma_{e2}(A)$, there exists a singular sequence $(x_n)_{n\in\N}$ for $A$ and then, since $C=A$,
it follows that $((x_n,0))_{n\in\N}$ is a singular sequence for~$\cA$. This example shows that the first inclusion in \eqref{se2} is not an~equality.

ii) If $P_{(\ran D)^\perp}$ is the orthogonal projection on $(\ran D)^\perp\!\!=\!\ker D^*\!$, $A\!=\!I\!-\!P_{(\ran D)^\perp}$ in $\cH_1\!=\!\cH_2$
and $C=P_{(\ran D)^\perp}$, then $\ker A = (\ran P)^\perp$ so that $0\in\sigma_{e2}(A)$ and, for $\cA$ as in  \vspace{-1mm} \eqref{def: cA},
%\marginpar{\cv{\footnotesize modification of previous example with $i\mapsto 0$.}}
\[
  0 \notin \sigma_{e2}(\overline{\cA}), \quad
	0 \in \sigma_{e2}(A) \cap \big(\sigma^*_{e2}(D) \!\setminus\! \sigma_{e2}(D) \big) \subset \sigma_{e2}(A) {\cup} \sigma_{e2}(D).
\]
To prove the former, suppose to the contrary that $0\in \sigma_{e2}(\overline{\cA})$. Then there would exist a sequence $h_n \!=\! (x_n, y_n)^t \!\in\!
\cH_1 \oplus \cH_1$ such that $\norma{h_n} \!=\! 1$, $h_n \!\rightharpoonup\! 0$ for  \vspace{-1mm} $n\!\to\!\infty$,~and
\begin{equation}
\begin{aligned}
\label{ex: limits2}
&(I-P_{(\ran D)^\perp})x_n &\to 0, \\
&P_{(\ran D)^\perp} x_n + D y_n \!\!\! &\to 0,
\end{aligned} \qquad n\to\infty.
\end{equation}
The second relation in \eqref{ex: limits2} implies $P_{(\ran D)^\perp} x_n \!\to\! 0$ and $D y_n \!\to\! 0$ as $n \!\to\! \infty$.
To\-gether with the first relation in \eqref{ex: limits2}, we conclude that  $x_n \!\to\! 0$ and hence $\|y_n\|\!\to\! 1$ for $n\!\to\!\infty$;
hence upon choosing a subsequence we can assume that $y_n\!\ne\! 0$, $n\!\in\!\N$.
Since $h_n \!\rightharpoonup\! 0$ for $n\!\to\!\infty$ implies that $y_n \!\rightharpoonup\! 0$ for $n\!\to\!\infty$, we conclude that
$\widehat{y}_n \!:=\! y_n/\norma{y_n}$, $n\!\in\!\N$, is a singular sequence for $D$, a contradiction, since $0 \!\notin\! \sigma_{e2}(D)$.
This example proves that the second inclusion in \eqref{se2} is not an~equality.
\end{example}

\begin{rem}
We mention that Theorem \ref{thm: ess spec}, see also Corollary \ref{cor: J-self ess}, provides a direct proof of \cite[Prop.~25]{MR3942228} on Maxwell's equations. Indeed, our results apply to the lower triangular operator matrix $\widetilde{V_\omega}$ in \cite[(26)]{MR3942228} therein
whose entries $A_\omega$, $C_\omega$, $D_\omega$ are $2\times 2$ operator matrices themselves with unbounded entries.
Standard computations show that $\sigma_{e2}(D_\omega) = \sigma^*_{e2}(D_\omega)$ and hence Theorem \ref{thm: ess spec} yields
the equality $\sigma_{e2}(\widetilde{V_\omega}) = \sigma_{e2}(\cA_\omega) \cup \sigma_{e2}(\cD_\omega)$, which had to be proved
in \cite[Prop.~25]{MR3942228} for the concrete operators therein.
\end{rem}

Finally, we provide some results on the limiting essential spectrum of sequences of lower triangular operator matrices.
The first results of this kind were established in the thesis \cite[Sect.\ 2.3]{SThesis} without the assumption of triangularity
for bounded off-diagonal corners.

%\marginpar{\cv{\footnotesize supscripts to subscripts as in earlier section}}
Let $\cH_0=\cH_{1,0}\oplus \cH_{2,0}$ be a Hilbert space, $\cH_i$, $\cH_{i,n} \subset \cH_{i,0}$ be closed subspaces
for $n\in\N$ and $i=1,2$. Let $P_i: \cH_{i,0} \to \cH_i$, $P_{i,n}: \cH_{i,0} \to \cH_{i,n}$, $n\in\N$,
be the corresponding orthogonal projections and assume that $P_{i,n} \s P_i$  in $\cH_{i,0}$.

In addition to $\cA$ as in \eqref{def: cA}, in the subspaces $\cH_n=\cH_{1,n} \oplus \cH_{2,n}$ of $\cH_0=\cH_{1,0}\oplus \cH_{2,0}$, $n\in\N$, we
consider the lower triangular operator  \vspace{-1mm} matrices %$\cA^{(n)}$ with $\dom(\cA^{(n)}) \subset \cH^{(n)} := \cH^{(n)}_1 \times \cH^{(n)}_2$ given by
\begin{equation} \label{def: cA^n}
\cA_n =
\begin{pmatrix}
A_n & 0 \\
C_n & D_n
\end{pmatrix}
\end{equation}
satisfying analogous assumptions as $\cA$, i.e.\
$A_n$, $D_n$ are densely defined, $C_n$, $D_n$ are closable, $\dom(A_n) \subset \dom(C_n)$ and $\rho(A_n) \neq \emptyset$.

While the assumptions ensure that each $C_n$ is $A_n$-bounded, we suppose
that the operator sequence $(C_n)_{n\in\N}$ is uniformly $(A_n)_{n\in\N}$-bounded,  i.e.\ there exist $a$, $b > 0$ and $N \in \N$ such that %for all $n \geq N$
\begin{equation}
\label{limiting diag dom}
   \norma{C_n x}^2 \leq a \norma{x}^2 + b \norma{A_n x}, \quad x \in \dom(A_n), \ n\in \N, \ n\ge N.
\end{equation}

%\marginpar{\SB{Removed repeated definition of limiting ess spectrum.}}
%%Define the e2 limiting essential spectrum by
%\cv{The limiting essential spectrum (of type $k\!=\!2$) of the operator sequence $(\overline{\cA_n})_{n\in\N}$ is defined as, see \cite[Def.\ 2.1.1]{SThesis},}
%%\begin{multline*}
%\begin{align*}
%\sigma_{e2}((\overline{\cA_n})_{n\in\N}) \!:=\!
%\big\{ \lambda \!\in\! \C : \,\cv{\exists\, I\subset\N, \# I} \! & =\infty, \
%\exists\, h_n \!\in\! \dom(\overline{\cA_n}), \ \norma{h_n} \!=\! 1, n \!\in\! I, \\
%& \hspace{3mm} h_n \!\rightharpoonup\! 0,\ (\overline{\cA_n} \!-\! \lambda)h_n \!\to\! 0, \,\cv{n\!\in\!I, n\!\to\!\infty}\big\},
%\end{align*}
%%\end{multline*}
%\cv{and we write $\sigma_{e}((\overline{\cA_n})_{n\in\N}) := \sigma_{e2}((\overline{\cA_n})_{n\in\N})$ for short in the other sections.}

\begin{theorem}
\label{theorem: lim ess spec BOM}
Let $\cA_n$ be defined as in \eqref{def: cA^n}, $n \in \N$, and assume that
$(C_n)_{n\in\N}$ is uniformly $(A_n)_{n\in\N}$-bounded, i.e.~\eqref{limiting diag dom} holds. Then
\[
  \sigma_{e2}((D_n)_{n \in \N}) \subset \sigma_{e2}((\overline{\cA_n})_{n \in \N}) \subset \sigma_{e2}((A_n)_{n \in \N}) \cup \sigma_{e2}((D_n)_{n \in \N}).
\]
\end{theorem}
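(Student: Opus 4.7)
The strategy for the first inclusion is direct. Suppose $\la \in \sigma_{e2}((D_n)_{n\in\N})$ with singular sequence $y_n \in \dom(D_n)$ on an infinite $I\subset\N$, i.e.\ $\|y_n\|=1$, $y_n\rightharpoonup 0$ and $(D_n-\la)y_n\to 0$. Set $h_n := (0,y_n)^t \in \dom(\cA_n) \subset \dom(\overline{\cA_n})$. Then $\|h_n\|=1$, $h_n\rightharpoonup 0$, and $(\overline{\cA_n}-\la)h_n = (0,(D_n-\la)y_n)^t \to 0$, so $\la \in \sigma_{e2}((\overline{\cA_n})_{n\in\N})$.

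The plan for the second inclusion rests on a case distinction depending on how much mass of the singular sequence lives in the first component. Let $\la\in\sigma_{e2}((\overline{\cA_n})_{n\in\N})$ with singular sequence $h_n=(x_n,y_n)^t\in\dom(\overline{\cA_n})$ on some infinite $I\subset\N$. A preliminary observation (using that $A_n$ is closed because $\rho(A_n)\ne\emptyset$, that $C_n$ is $A_n$-bounded, and that $D_n$ is closable) is that
\[
\dom(\overline{\cA_n})=\{(x,y):x\in\dom(A_n),\,y\in\dom(\overline{D_n})\},
\quad
\overline{\cA_n}(x,y)=(A_nx,\,C_nx+\overline{D_n}y).
\]
Hence the singular sequence splits into the two componentwise conditions
\begin{equation}\label{eq:2comp}
(A_n-\la)x_n \to 0, \qquad C_n x_n + (\overline{D_n}-\la)y_n \to 0, \quad n\in I.
\end{equation}

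\textbf{Case 1:} $\liminf_{n\in I}\|x_n\|>0$. Passing to an infinite subsequence $I'\subset I$ with $\|x_n\|\ge c>0$, set $\widehat{x}_n:=x_n/\|x_n\|$. Since $\|x_n\|\in[c,1]$ and $x_n\rightharpoonup 0$, also $\widehat{x}_n\rightharpoonup 0$; moreover $\widehat{x}_n\in\dom(A_n)$, $\|\widehat{x}_n\|=1$ and $(A_n-\la)\widehat{x}_n\to 0$ by \eqref{eq:2comp}. Thus $\la\in\sigma_{e2}((A_n)_{n\in\N})$.

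\textbf{Case 2:} $\|x_n\|\to 0$ along an infinite $I'\subset I$. Then by \eqref{eq:2comp}, $A_nx_n = \la x_n + o(1)\to 0$, and the uniform relative boundedness \eqref{limiting diag dom} yields
\[
\|C_n x_n\|^2 \le a\|x_n\|^2 + b\|A_n x_n\| \to 0.
\]
The second relation in \eqref{eq:2comp} then gives $(\overline{D_n}-\la)y_n\to 0$. Since $\|x_n\|\to 0$ and $\|h_n\|=1$, we have $\|y_n\|\to 1$, so $\widehat{y}_n:=y_n/\|y_n\|$ is well-defined for $n\in I'$ large, $\|\widehat{y}_n\|=1$, $\widehat{y}_n\in\dom(\overline{D_n})$, $\widehat{y}_n\rightharpoonup 0$ and $(\overline{D_n}-\la)\widehat{y}_n\to 0$.

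The final (and slightly technical) step is to replace $\widehat{y}_n\in\dom(\overline{D_n})$ by elements of $\dom(D_n)$. Since $D_n$ is closable with closure $\overline{D_n}$, $\dom(D_n)$ is a core for $\overline{D_n}$, so we may pick $\widetilde{y}_n\in\dom(D_n)$ with
\[
\|\widetilde{y}_n-\widehat{y}_n\| + \|D_n\widetilde{y}_n-\overline{D_n}\widehat{y}_n\| < 1/n.
\]
Then $\widetilde{y}_n\rightharpoonup 0$, $\|\widetilde{y}_n\|\to 1$ and $(D_n-\la)\widetilde{y}_n\to 0$; after normalising to unit length on a further subsequence, we obtain $\la\in\sigma_{e2}((D_n)_{n\in\N})$.

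The main obstacle I anticipate is not in the splitting argument itself but in the bookkeeping at the boundary between $\dom(\cA_n)$ and $\dom(\overline{\cA_n})$, i.e.\ justifying the explicit description of $\dom(\overline{\cA_n})$ and the core approximation of $\widehat{y}_n$ by $\widetilde{y}_n\in\dom(D_n)$. Both steps use the standing hypotheses (closability of $C_n$, $D_n$, $A_n$ closed via $\rho(A_n)\ne\emptyset$, and the uniform $A_n$-boundedness of $C_n$) in an essential way.
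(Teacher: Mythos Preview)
Your proof is correct and matches the approach the paper sketches: the paper's own argument is a two-line pointer to Theorem~\ref{thm: ess spec} and to \cite[Prop.~2.3.1~i)]{SThesis}, and your direct case analysis on the components $(x_n,y_n)$ of the singular sequence---using the uniform relative bound \eqref{limiting diag dom} to force $C_nx_n\to 0$ when $\|x_n\|\to 0$---is exactly the detail the paper leaves to the reader. The paper additionally hints at the $\mu$-trick \eqref{mu-trick} with $\mu$ chosen uniformly in $n$, but that similarity transformation merely rescales $C_n$ to $\mu C_n$ without changing the limiting essential spectrum, and unwinding it leads back to the same componentwise estimates you carry out directly.
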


\begin{proof}
The proof is similar to the proof of the respective parts of the proof of Theorem  \ref{thm: ess spec}; note that, due to assumption
\eqref{limiting diag dom}, we can choose $\mu>0$ in the transformation of $\cA_n$, see \eqref{mu-trick}, independently of $n\in I \subset \N$.
The proof is also analogous to the proof of \cite[Prop.\ 2.3.1 i)]{SThesis} if we observe that the sequence $B_n=0$ of zero operators is discretely compact and we replace the uni\-form boundedness property of $(\|C_n\|)_{n\in\N}$ therein by \eqref{limiting diag dom}. We leave the details to the reader.
\end{proof}

%\noindent\ct{{\bf Q:} I hope the indication of the two possible proofs above is enough?}

\section*{Appendix - Computations for Example \ref{ex:waveguide}}
\label{sec:appendix}

In this appendix we provide the computations for Example \ref{ex:waveguide} where we considered the semi-infinite cylinder $\Omega\!=\!(0, \infty)\!\times\!(0,L_2)\!\times\!(0,L_3)$ and supposed that $\eps\!=\!\mu \!=\! \id$ everywhere, and $\sigma \!=\! \id$ if $x_1\!\in \!(0,1)$, else $\sigma\!=\!0$, i.e.\ $\sigma \!=\! \chi_K \id$ with $K:=(0,1)\!\times\!(0,L_2)\!\times\!(0,L_3)$. 

With this choice of the coefficients the Maxwell system in $\Omega$ in \eqref{intro: Max1} becomes
\begin{alignat*}{2}
\curl^2 E &= \omega(\omega+\I)E, \;\;\; &&0 < x_1 < 1, \\
\curl^2 E &= \omega^2 E, \;\;\; &&1 < x_1 < X_n,
\end{alignat*}
with the condition that $E$ and $\curl E$ are continuous across the interface $x_1=1$.
The boundary condition in \eqref{intro: Max1} was $\nu\times E =0$ on the boundary $\partial \Omega$. We use the notation ${\bf{n}} = (n_2, n_3)
\in \N_0^2$.

%\begin{description}
\noindent
{\bf \underline{Case 1, {\boldmath $x_1\in (0,1)$:}}}
For this range of $x_1$, if we \vspace{-1mm} set
 \[ \alpha_{\bf n}(\omega) := \sqrt{\pi^2n_2^2/L_2^2 + \pi^2n_3^2/L_3^2 - \omega(\omega+\I)}, \]
the correct ansatz to use for the solution of this problem by Fourier expansions is
\begin{align*}
E_1(x_1,x_2,x_3) &= \sum_{{\bf n}\in {\mathbb N}^2} \hat{E}_1({\bf n}) \sin\left(\frac{\pi n_2}{L_2}x_2\right) \sin\left(\frac{\pi n_3}{L_3}x_3\right)
 \frac{\cosh(\alpha_{\bf n}(\omega)x_1)}{\cosh(\alpha_{\bf n}(\omega))}, \\[-0.7mm]
E_2(x_1,x_2,x_3) &= \!\!\sum_{{\bf n}\in {\mathbb N}_0\times\N} \!\hat{E}_2({\bf n}) \cos\left(\frac{\pi n_2}{L_2}x_2\right) \sin\left(\frac{\pi n_3}{L_3}x_3\right)
 \frac{\sinh(\alpha_{\bf n}(\omega)x_1)}{\sinh(\alpha_{\bf n}(\omega))}, \\[-0.7mm]
%\intertext{\phantom{I}} 
E_3(x_1,x_2,x_3) &= \!\!\sum_{{\bf n}\in {\mathbb N}\times\N_0} \hat{E}_3({\bf n}) \sin\left(\frac{\pi n_2}{L_2}x_2\right) \cos\left(\frac{\pi n_3}{L_3}x_3\right)
 \frac{\sinh(\alpha_{\bf n}(\omega)x_1)}{\sinh(\alpha_{\bf n}(\omega))}.
\end{align*}
{\bf \underline{Case 2, {\boldmath$x_1\in (1,X_n)$:}}}
For this range of $x_1$, if we \vspace{-1mm} set
 \[ \beta_{\bf n}(\omega) := \sqrt{\pi^2n_2^2/L_2^2 + \pi^2n_3^2/L_3^2 - \omega^2}, \]
the correct ansatz to use for the solution of this problem by Fourier expansions is
\begin{align*}
E_1(x_1,x_2,x_3) &= \sum_{{\bf n}\in {\mathbb N}^2} \hat{E}_1({\bf n}) \sin\left(\frac{\pi n_2}{L_2}x_2\right) \sin\left(\frac{\pi n_3}{L_3}x_3\right)
 \frac{\cosh(\beta_{\bf n}(\omega)(X_n-x_1))}{\cosh(\beta_{\bf n}(\omega)(X_n-1))}, \\[-0.7mm]
E_2(x_1,x_2,x_3) &= \!\!\sum_{{\bf n}\in {\mathbb N}_0\times \N} \!\hat{E}_2({\bf n}) \cos\left(\frac{\pi n_2}{L_2}x_2\right) \sin\left(\frac{\pi n_3}{L_3}x_3\right)
 \frac{\sinh(\beta_{\bf n}(\omega))(X_n-x_1))}{\sinh(\beta_{\bf n}(\omega)(X_n-1))}, \\[-8mm]
\intertext{\phantom{I}}  
E_3(x_1,x_2,x_3) &= \!\!\sum_{{\bf n}\in {\mathbb N}\times\N_0} \!\hat{E}_3({\bf n}) \sin\left(\frac{\pi n_2}{L_2}x_2\right) \cos\left(\frac{\pi n_3}{L_3}x_3\right)
 \frac{\sinh(\beta_{\bf n}(\omega)(X_n-x_1))}{\sinh(\beta_{\bf n}(\omega)(X_n-1))}.
\end{align*}
%\end{description}
In the definition of $\alpha_{\bf n}(\omega)$ and $\beta_{\bf n}(\omega)$ we choose the branch of the square root with non-negative real part.
The above two ans\"atze ensure the continuity of $E$ across the interface $x_1=1$. To ensure continuity of $\curl E$ across this interface, a direct calculation shows that the first component of $\curl E$ is automatically
continuous across the interface $x_1=1$; it is therefore $\nu\times \curl E$ for $\nu=(1,0,0)$ that gives rise to non-trivial conditions. Direct calculations using the formulae above yield the condition that for some ${\bf n}\in{\mathbb N}_0^2$ with $|{\bf n}|>0$,
\[
  \alpha_{\bf n}(\omega)\coth(\alpha_{\bf n}(\omega))  + \beta_{\bf n}(\omega)\coth(\beta_{\bf n}(\omega)(X_n-1)) = 0.
\]
Next we prove equation \eqref{eq:sigmaessV}, namely
\[
  \sigma_{e}(V) =  (-\infty, -\pi/L] \cup [\pi/L, + \infty) \cup (-\I \{0,1/2,1\}), \quad L = \max\{L_2, L_3\}.
\]
Indeed, due to \cite[Thm.\ 6]{MR3942228}, see also Remark \ref{rem:spec-eq}, we have
\[
  \sigma_{e}(V) =  \sigma_{e}(V^0) \cup \sigma_{e}(\Div(\cW(\cdot)\nabla)), \quad \cW(\omega):=-\omega(\omega + \I \chi_K), \ \omega\in\C,
\]
where $V^0$ is the Maxwell pencil $\I V(\cdot)$ with $\sigma \!=\! 0$ and $\Div(\cW(\omega)\nabla)$ acts from
$\dot{H}^1_0(\Omega)$ to its dual $\dot H^{-1}(\Omega)$ for each $\omega\in\C$.
%\footnote{Note that here $\sigma_e(V^0)\!=\!\sigma_e(L_\infty)$ since $\mu$, $\eps= \id$ and so $\mu_\infty\!=\!\eps_\infty\!=\!1$.
%\mm{Note also that} the essential spectra of the pencils \mm{$P_\nabla \cW(\cdot) |_{\nabla \dot H^1_0(\Omega)}$, and $-\Div((\omega + \I \chi_K)\nabla))$
%from $\dot{H}^1_0(\Omega)$ to its dual,} coincide in this case  \cite{MR3942228}. }.
%
Clearly, $\sigma_{e}(\Div\cW(\cdot)\nabla))\!=\! \{0\} \cup \sigma_{e}(\Div(\cU(\cdot)\nabla))$ with $\cU(\omega)\!:=\! -(\omega + \I \chi_K)$, $\omega\!\in\!\C$. %
We start by \vspace{-1mm} showing
\begin{equation}
\label{mmextra} 
\sigma_{e}(\Div(\cU(\cdot)\nabla)) = -\I\{0,1/2,1\}. 
\end{equation}
By inspection, one has the inclusion $\sigma_{e}(\Div\cU(\cdot))\nabla)) \subset -\I[0,1]$. The values $\omega = 0$
and $\omega = -\I$ are both easily seen to be eigenvalues of infinite multiplicity, with eigenfunctions which are $C_c^\infty$-functions supported
entirely outside $K$ (for $\omega = 0$ where $\cU(0)\!=\!-\I \chi_K$) or in the interior of $K$ (for $\omega = -\I$  where $\cU(-\I)\!=\!-\I \chi_{\Omega\setminus K}$). It remains to examine whether any other $\omega
\in -\I[0,1]$ have the property that $0$ lies in the essential spectrum of the Dirichlet operator $-\Div((\omega + \I \chi_K)\nabla)$.
Since the coefficient $\omega + \I\chi_K$ takes only the values $\omega+\I$ and $\omega$, 
whose ratio is  $1+\I/\omega$, the results in \cite{MR4041099} suggest that the only value of $\omega$ for which this may happen is $\omega = -\I/2$, which has the property
that $1+\I/\omega = -1$. Unfortunately the hypotheses in \cite{MR4041099} do not quite cover our case, so we outline a proof by direct calculation.
 By Glazman decomposition, one shows that
\[ 
\mbox{$-\Div((\omega + \I \chi_K)\nabla)$ is invertible} \iff 
\mbox{$(-\I\omega+1)\Lambda_{L}  - \I \omega\Lambda_R$ is invertible}, \]
where $\Lambda_L$ and $\Lambda_R$ are the left- and
right-hand Dirichlet to Neumann maps on the interface $x_1= 1$. Take a basis of transverse eigenfunctions $(\psi_n(x_2,x_3))_{n\in\N}$, 
e.g.\ some ordering of $\sin\left(\frac{n_2 \pi}{L_2}x_2\right) \sin\left(\frac{n_3 \pi}{L_3}x_3\right)$, with strictly positive eigenvalues $(\kappa_n^2)_{n\in\N}$. In such a basis,
 both $\Lambda_L$ and $\Lambda_R$ are represented by diagonal matrices,
\[ 
\Lambda_L = \mbox{diag}((\kappa_n\coth(\kappa_n))_{n\in\N}), \quad \Lambda_R = \mbox{diag}((\kappa_n)_{n\in\N}). 
\]
Putting $\omega = -\I \nu$ with $\nu\in(0,1)$, we find
\[ 
(-\I\omega+1)\Lambda_{L} - \I\omega\Lambda_R = \mbox{diag}((\kappa_n((1-\nu)\coth(\kappa_n)-\nu))_{n\in\N}). 
\]
If $0<\nu<1/2$, then this infinite matrix has a bounded, positive inverse, so no $\omega \in -\I(0,1/2)$ lies in the essential spectrum.
If $1/2<\nu<1$, then the matrix has (at worst) a finite-dimensional kernel, but is still a finite-rank perturbation of a matrix with
bounded inverse. From this fact and the Glazman decomposition, one is able to argue that $0$ lies outside the essential
spectrum of $-\Div((\omega + \I \chi_K)\nabla)$ for $\omega\in -\I(1/2,1)$. It remains only to show that $\omega = -\I/2$ does
indeed have the property that $0$ lies in the essential spectrum of $-\Div((\omega + \I \chi_K)\nabla)$. We prove this by
directly verifying that the functions
\[ 
u_{n}(x_1,x_2,x_3) := 
\begin{cases}
 (1\!-\!(x_1\!-\!1)\kappa_n(\coth(\kappa_n)\!-\!1))\psi_n(x_2,x_3) { \frac{\sinh(\kappa_n x_1)}{\sinh(\kappa_n)}}, & x_1 \!\in\! (0,1), \\
 \psi_n(x_2,x_3)\exp(-\kappa_n(x_1\!-\!1)), & x_1 > 1,
 \end{cases} 
 \]
form a Weyl singular sequence for $-\Div((-\I/2 \!+\! \I \chi_K)\nabla)$ acting from $\dot{H}^1_0(\Omega)$ to $\dot H^{-1}(\Omega)$.
%, in a sense we shall make precise.  
They satisfy the compatibility conditions across $x_1\!=\!1$ and, by direct calculation,
\[ 
 -\Div((-\I/2+\I\chi_K)\nabla u_n) =  \pm (\I/2) \Delta u_n, 
\]
with $-$ for $x_1<1$ and $+$ for $x_1>1$. Since $\Delta u_n =0 $ for $x_1>1$,  we have
\[
-\mbox{div}((-\I/2+\I\chi_K)\nabla u_n) = -(\I/2)\Delta u_n 
\] 
in all cases, and it suffices to show that
\begin{equation}
\label{ccextra}
\frac{\|\Delta u_n\|_{\dot{H}^{-1}(\Omega)}}{\| u_n \|_{\dot{H}^1_0(\Omega)}} \to 0, \quad  n\to\infty. 
\end{equation}
Since the Dirichlet Laplacian in $\Omega$ has spectrum $[\kappa_1^2,\infty)$, we have $\Delta \!\geq\! \kappa_1^2$ and  thus,
by testing with $C_c^\infty(\Omega)$ functions, one may show \vspace{-1mm} that
\begin{align*}
\label{ccextra2}
   %& \| \nabla u_n \|_{L^2(\Omega)} \geq \kappa_1 \| u_n \|_{L^2(\Omega)}, \\[-1mm]
	 %& \chr{ \| u\|_{\dot H_0^1(\Omega))}^2 \geq \frac {\kappa_1}{\kappa_1\!+\!1} \big(\| \nabla u_n \|_{L^2(\Omega)} + \| u_n \|_{L^2(\Omega)} \big)
	 %&          = \frac {\kappa_1}{\kappa_1\!+\!1} \| u\|_{H_0^1(\Omega))}^2}, \\
	 \| \Delta u_n \|_{\dot{H}^{-1}(\Omega)} %\!=\! \| (-\Delta)^{-1/2} \Delta u_n \|_{L^2(\Omega)} 
	  \!\leq\! \frac 1 {\kappa_1} \| \Delta u_n \|_{L^2(\Omega)}.
\end{align*}
%Also since
%\[ 
%\| \nabla u_n \|_{L^2(\Omega)}^2 = \langle -\Delta u_n,u_n\rangle_{L^2(\Omega)} \leq \| \Delta u_n \|_{L^2(\Omega)} \| u_n \|_{L^2(\Omega)}, 
%\]
%we deduce that
%\[ 
%\| \nabla u_n \|_{L^2(\Omega)} \leq \frac{1}{\kappa_1} \| \Delta u_n \|_{L^2(\Omega)}. 
%\]
%Next we estimate $\| \Delta u_n \|_{\dot{H}^{-1}(\Omega)}$. Evidently
%\[ 
%\| \Delta u_n \|_{\chr{H}^{-1}(\Omega)} = \sup_{\phi\in C_0^\infty(\Omega)}\frac{ |\langle \Delta u_n,\phi\rangle_{L^2(\Omega)}|}{
%\| \nabla \phi \|_{L^2(\Omega)}  }
 %= \sup_{\phi\in C_0^\infty(\Omega)}\frac{ |\langle \nabla u_n,\nabla \phi\rangle_{L^2(\Omega)}|}{
%\| \nabla \phi \|_{L^2(\Omega)}  }  \]
%\[ \leq \| \nabla u_n \|_{L^2(\Omega)} \leq \frac{1}{\kappa_1} \| \Delta u_n \|_{L^2(\Omega)}. \]
By direct calculation,  $\Delta u_n$ is non-trivial only for $x_1 \leq 1$, and
\[ 
\Delta u_n = -2\kappa_n^2(\coth(\kappa_n)-1)\psi_n(x_2,x_3)\frac{\cosh(\kappa_n x_1)}{\sinh(\kappa_n)}. 
\]%
It follows from elementary estimates that, for $n\to\infty$,
\begin{align*}
&\| \Delta u_n \|_{L^2(\Omega)} \leq \mbox{O}(\kappa_n^{2}(\coth(\kappa_n)\!-\!1)) \,\|\psi_n\|_{L^2((0,L_2)\!\times(0,L_3))}, \\
&\| \nabla u_n \|_{L^2(\Omega)} \geq \| \nabla u_n \|_{L^2((1,\infty)\!\times\!(0,L_2)\!\times(0,L_3))} \geq \mbox{O}(\kappa_n^{1/2}) \,\|\psi_n\|_{L^2((0,L_2)\!\times(0,L_3))},
\end{align*}
and hence
\[ 
\frac{\| \Delta u_n \|_{\dot{H}^{-1}(\Omega)} }{\| u_n \|_{\dot{H}^1_0(\Omega)}} 
\leq \frac 1{\kappa_1} \frac{\| \Delta u_n \|_{L^2(\Omega)} }{\| \nabla u_n \|_{L_2(\Omega)}} 
= \mbox{O}(\kappa_n^{3/2}(\coth(\kappa_n)-1)). 
\]
This completes the proof of \eqref{ccextra} and hence of \eqref{mmextra}.

Regarding $\sigma_{e}(V^0)$, we can use the Fourier expansion
\begin{align*}
 E_1(x_1,x_2,x_3) &= \sum_{{\bf n}\in {\mathbb N}^2} \hat{E}_1({\bf n}) \sin\left(\frac{\pi n_2}{L_2}x_2\right) \sin\left(\frac{\pi n_3}{L_3}x_3\right)
 \cos(\xi x_1), \\[-0.7mm]
 E_2(x_1,x_2,x_3) &= \!\!\sum_{{\bf n}\in {\mathbb N}_0\times\N} \!\hat{E}_2({\bf n}) \cos\left(\frac{\pi n_2}{L_2}x_2\right) \sin\left(\frac{\pi n_3}{L_3}x_3\right)
 \sin(\xi x_1), \\[-0.7mm]
 E_3(x_1,x_2,x_3) &= \!\!\sum_{{\bf n}\in {\mathbb N}\times\N_0} \!\hat{E}_3({\bf n}) \sin\left(\frac{\pi n_2}{L_2}x_2\right) \cos\left(\frac{\pi n_3}{L_3}x_3\right)
 \sin(\xi x_1).
\\[-8mm]
\end{align*}
In the new Fourier coordinates the matrix differential \vspace{-1.5mm} expression
$\begin{pmatrix}
- \I \omega & \curl \\
\curl & \I \omega
\end{pmatrix}$
corresponds \vspace{-1mm} to
\[
{\cV}(\omega, \xi,n_2,n_3) =
\begin{pmatrix}
-\I \omega  & 0 & 0 &   0 &  \frac{\pi n_3}{L_3} & -\frac{\pi n_2}{L_2} \\
0 & -\I \omega &  0 &   -\frac{\pi n_3}{L_3} & 0 & -\xi \\
0 & 0 & -\I \omega  &   \frac{\pi n_2}{L_2} & \xi & 0  \\
0 &  -\frac{\pi n_3}{L_3} & \frac{\pi n_2}{L_2} & \I \omega & 0 & 0 \\
-\frac{\pi n_3}{L_3} & 0 & -\xi & 0 & \I \omega & 0 \\
-\frac{\pi n_2}{L_2} & \xi & 0  & 0 & 0 & \I \omega
\end{pmatrix}.
\vspace{-2mm}
\]
Then we \vspace{-2mm}have
\[
  \det {\cV}(\omega, \xi,n_2,n_3) = \omega^2 \bigg( \xi^2 + \frac{\pi^2 n_2^2}{L_2^2} + \frac{\pi^2 n_3^2}{L_3^2} - \omega^2\bigg)^2.
\]
As in \cite[Ex.\ 10]{MR3942228} the essential spectrum is the set of $\omega \in \C$ such that for some $\xi\in\R$ and
$(n_2,n_3)\neq (0,0)$, one has $\det \cV(\omega, \xi,n_2,n_3) =0$. This yields
\[
   \sigma_{e}(V^0) = \{0\} \cup \bigg\{\omega \in \R : \omega^2 \geq \frac{\pi^2}{\max \{L_2^2,L_3^2\}} \bigg\}.
\]

\vspace{3mm}

\noindent
{\small
{\bf Acknowledgements.}
The second and third authors are grateful for the support of the `Engineering and Physical Sciences Research Council'\,(EPSRC) under  grant EP\!/T000902/1, \textit{`A new paradigm for spectral localisation of operator pencils and analytic operator-valued functions'}.
The last author gratefully acknowledges the support of Schweizer Nationalfonds (SNF) through grant $200021\_204788$, \textit{`Spectral approximation beyond normality'}.
}

{\small

\bibliographystyle{abbrv}

\bibliography{Maxbib}

\begin{thebibliography}{10}

\bibitem{MR0345551}
J.~Aguilar and J.~M. Combes.
\newblock A class of analytic perturbations for one-body {S}chr\"{o}dinger
  {H}amiltonians.
\newblock {\em Comm. Math. Phys.}, 22:269--279, 1971.

\bibitem{MR3942228}
G.~S. Alberti, M.~Brown, M.~Marletta, and I.~Wood.
\newblock Essential spectrum for {M}axwell's equations.
\newblock {\em Ann. Henri Poincar\'{e}}, 20(5):1471--1499, 2019.

\bibitem{MR3837172}
H.~Ammari, B.~Fitzpatrick, H.~Kang, M.~Ruiz, S.~Yu, and H.~Zhang.
\newblock {\em Mathematical and computational methods in photonics and
  phononics}, volume 235 of {\em Mathematical Surveys and Monographs}.
\newblock American Mathematical Society, Providence, RI, 2018.

\bibitem{MR803833}
C.~Apostol.
\newblock The reduced minimum modulus.
\newblock {\em Michigan Math. J.}, 32(3):279--294, 1985.

\bibitem{MR3285902}
G.~R. Barrenechea, L.~Boulton, and N.~Boussa\"{\i}d.
\newblock Finite element eigenvalue enclosures for the {M}axwell operator.
\newblock {\em SIAM J. Sci. Comput.}, 36(6):A2887--A2906, 2014.

\bibitem{MR1294924}
J.-P. Berenger.
\newblock A perfectly matched layer for the absorption of electromagnetic
  waves.
\newblock {\em J. Comput. Phys.}, 114(2):185--200, 1994.

\bibitem{SThesis}
S.~B\"{o}gli.
\newblock {\em Spectral approximation for linear operators and applications}.
\newblock PhD thesis, Mathematisches Institut, Universit\"{a}t Bern, 2014.

\bibitem{MR3694623}
S.~B\"{o}gli.
\newblock Convergence of sequences of linear operators and their spectra.
\newblock {\em Integral Equations Operator Theory}, 88(4):559--599, 2017.

\bibitem{MR3831156}
S.~B\"{o}gli.
\newblock Local convergence of spectra and pseudospectra.
\newblock {\em J. Spectr. Theory}, 8(3):1051--1098, 2018.

\bibitem{BM}
S.~B\"{o}gli and M.~Marletta.
\newblock Essential numerical ranges for linear operator pencils.
\newblock {\em IMA J. Numer. Anal.}, 40(4):2256--2308, 2020.

\bibitem{MR4083777}
S.~B\"{o}gli, M.~Marletta, and C.~Tretter.
\newblock The essential numerical range for unbounded linear operators.
\newblock {\em J. Funct. Anal.}, 279(1):108509, 49, 2020.

\bibitem{MR3200087}
A.-S. Bonnet-Ben~Dhia, L.~Chesnel, and P.~Ciarlet, Jr.
\newblock {\tt {T}}-coercivity for the {M}axwell problem with sign-changing
  coefficients.
\newblock {\em Comm. Partial Differential Equations}, 39(6):1007--1031, 2014.

\bibitem{MR3960264}
E.~Bonnetier and H.~Zhang.
\newblock Characterization of the essential spectrum of the
  {N}eumann-{P}oincar\'{e} operator in 2{D} domains with corner via {W}eyl
  sequences.
\newblock {\em Rev. Mat. Iberoam.}, 35(3):925--948, 2019.

\bibitem{MR1935806}
A.~Buffa, M.~Costabel, and C.~Schwab.
\newblock Boundary element methods for {M}axwell's equations on non-smooth
  domains.
\newblock {\em Numer. Math.}, 92(4):679--710, 2002.

\bibitem{MR1622690}
V.~I. Burenkov.
\newblock {\em Sobolev spaces on domains}, volume 137 of {\em Teubner-Texte zur
  Mathematik [Teubner Texts in Mathematics]}.
\newblock B. G. Teubner Verlagsgesellschaft mbH, Stuttgart, 1998.

\bibitem{MR4041099}
C.~Cacciapuoti, K.~Pankrashkin, and A.~Posilicano.
\newblock Self-adjoint indefinite {L}aplacians.
\newblock {\em J. Anal. Math.}, 139(1):155--177, 2019.

\bibitem{MR3661549}
C.~Carvalho, L.~Chesnel, and P.~Ciarlet, Jr.
\newblock Eigenvalue problems with sign-changing coefficients.
\newblock {\em C. R. Math. Acad. Sci. Paris}, 355(6):671--675, 2017.

\bibitem{MR2032869}
M.~Costabel and M.~Dauge.
\newblock Computation of resonance frequencies for {M}axwell equations in
  non-smooth domains.
\newblock In {\em Topics in computational wave propagation}, volume~31 of {\em
  Lect. Notes Comput. Sci. Eng.}, pages 125--161. Springer, Berlin, 2003.

\bibitem{MR2068830}
E.~B. Davies and M.~Plum.
\newblock Spectral pollution.
\newblock {\em IMA J. Numer. Anal.}, 24(3):417--438, 2004.

\bibitem{EE}
D.~E. Edmunds and W.~D. Evans.
\newblock {\em Spectral theory and differential operators}.
\newblock Oxford Mathematical Monographs. Oxford University Press, Oxford,
  2018.
\newblock Second edition of [ MR0929030].

\bibitem{MR3959447}
M.~Eller.
\newblock Stability of the anisotropic {M}axwell equations with a conductivity
  term.
\newblock {\em Evol. Equ. Control Theory}, 8(2):343--357, 2019.

\bibitem{MR0200692}
S.~Goldberg.
\newblock {\em Unbounded linear operators: {T}heory and applications}.
\newblock McGraw-Hill Book Co., New York-Toronto, Ont.-London, 1966.

\bibitem{Haa}
M.~Haase.
\newblock {\em The functional calculus for sectorial operators}, volume 169 of
  {\em Operator Theory: Advances and Applications}.
\newblock Birkh\"{a}user Verlag, Basel, 2006.

\bibitem{MR230116}
S.~S. Holland, Jr.
\newblock On the adjoint of the product of operators.
\newblock {\em Bull. Amer. Math. Soc.}, 74:931--932, 1968.

\bibitem{MR1335452}
T.~Kato.
\newblock {\em Perturbation theory for linear operators}.
\newblock Classics in Mathematics. Springer-Verlag, Berlin, 1995.
\newblock Reprint of the 1980 edition.

\bibitem{MR1005207}
J.~B. Keller and D.~Givoli.
\newblock Exact nonreflecting boundary conditions.
\newblock {\em J. Comput. Phys.}, 82(1):172--192, 1989.

\bibitem{MR4029529}
I.~Lasiecka, M.~Pokojovy, and R.~Schnaubelt.
\newblock Exponential decay of quasilinear {M}axwell equations with interior
  conductivity.
\newblock {\em NoDEA Nonlinear Differential Equations Appl.}, 26(6):Paper No.
  51, 34, 2019.

\bibitem{MR1637445}
M.~Lassas.
\newblock The essential spectrum of the nonself-adjoint {M}axwell operator in a
  bounded domain.
\newblock {\em J. Math. Anal. Appl.}, 224(2):201--217, 1998.

\bibitem{MR2068829}
M.~Levitin and E.~Shargorodsky.
\newblock Spectral pollution and second-order relative spectra for self-adjoint
  operators.
\newblock {\em IMA J. Numer. Anal.}, 24(3):393--416, 2004.

\bibitem{MR2640293}
M.~Lewin and E.~S\'{e}r\'{e}.
\newblock Spectral pollution and how to avoid it (with applications to {D}irac
  and periodic {S}chr\"{o}dinger operators).
\newblock {\em Proc. Lond. Math. Soc. (3)}, 100(3):864--900, 2010.

\bibitem{MR1742312}
W.~McLean.
\newblock {\em Strongly elliptic systems and boundary integral equations}.
\newblock Cambridge University Press, Cambridge, 2000.

\bibitem{MR2059447}
P.~Monk.
\newblock {\em Finite element methods for {M}axwell's equations}.
\newblock Numerical Mathematics and Scientific Computation. Oxford University
  Press, New York, 2003.

\bibitem{MR592160}
J.-C. N\'{e}d\'{e}lec.
\newblock Mixed finite elements in {${\bf R}^{3}$}.
\newblock {\em Numer. Math.}, 35(3):315--341, 1980.

\bibitem{MR3515306}
H.-M. Nguyen.
\newblock Limiting absorption principle and well-posedness for the {H}elmholtz
  equation with sign changing coefficients.
\newblock {\em J. Math. Pures Appl. (9)}, 106(2):342--374, 2016.

\bibitem{MR2202183}
S.~Nicaise and C.~Pignotti.
\newblock Internal stabilization of {M}axwell's equations in heterogeneous
  media.
\newblock {\em Abstr. Appl. Anal.}, (7):791--811, 2005.

\bibitem{MR1431212}
J.~Rappaz, J.~Sanchez~Hubert, E.~Sanchez~Palencia, and D.~Vassiliev.
\newblock On spectral pollution in the finite element approximation of thin
  elastic ``membrane'' shells.
\newblock {\em Numer. Math.}, 75(4):473--500, 1997.

\bibitem{MR751959}
M.~Reed and B.~Simon.
\newblock {\em Methods of modern mathematical physics. {I}}.
\newblock Academic Press, Inc. [Harcourt Brace Jovanovich, Publishers], New
  York, second edition, 1980.
\newblock Functional analysis.

\bibitem{MR291870}
F.~Stummel.
\newblock Diskrete {K}onvergenz linearer {O}peratoren. {I}.
\newblock {\em Math. Ann.}, 190:45--92, 1970/71.

\bibitem{TreB}
C.~Tretter.
\newblock {\em Spectral theory of block operator matrices and applications}.
\newblock Imperial College Press, London, 2008.

\bibitem{MR561375}
C.~Weber.
\newblock A local compactness theorem for {M}axwell's equations.
\newblock {\em Math. Methods Appl. Sci.}, 2(1):12--25, 1980.

\bibitem{MR1337263}
S.~Zimmermann and U.~Mertins.
\newblock Variational bounds to eigenvalues of self-adjoint eigenvalue problems
  with arbitrary spectrum.
\newblock {\em Z. Anal. Anwendungen}, 14(2):327--345, 1995.

\end{thebibliography}

}
\end{document}